\renewcommand{\AA}{\mathbf{A}}
\newcommand{\BB}{\mathbf{B}}
\newcommand{\CC}{\mathbf{C}}
\newcommand{\DD}{\mathbf{D}}
\newcommand{\FF}{\mathbf{F}}
\newcommand{\RR}{\mathbf{R}}
\newcommand{\QQ}{\mathbf{Q}}
\newcommand{\ZZ}{\mathbf{Z}}
\newcommand{\QQbar}{\overline{\QQ}}
\newcommand{\Qpbar}{\QQbar_p}
\newcommand{\Qp}{\QQ_p}
\newcommand{\Zp}{\ZZ_p}
\newcommand{\Qpi}{\QQ_{p, \infty}}
\newcommand{\Qi}{\QQ_{\infty}}
\newcommand{\sH}{\mathscr{H}}
\newcommand{\sF}{\mathscr{F}}
\newcommand{\cO}{\mathcal{O}}
\newcommand{\cH}{\mathcal{H}}
\newcommand{\cF}{\mathcal{F}}
\newcommand{\cP}{\mathcal{P}}
\newcommand{\cW}{\mathcal{W}}
\newcommand{\cR}{\mathcal{R}}
\newcommand{\et}{\text{\textup{\'et}}}
\newcommand{\dR}{\mathrm{dR}}
\newcommand{\Iw}{\mathrm{Iw}}
\newcommand{\cris}{\mathrm{cris}}
\newcommand{\Dcris}{\DD_{\cris}}
\newcommand{\Hf}{H^1_{\mathrm{f}}}
\newcommand{\frP}{\mathfrak{P}}
\newcommand{\frm}{\mathfrak{m}}
\newcommand{\BF}{\mathcal{BF}}
\newcommand{\cBF}{{}_c\BF}
\newcommand{\into}{\hookrightarrow}
\newcommand{\imp}{\mathrm{imp}}
\newcommand{\bT}{\mathbf{T}}
\newcommand{\bk}{\mathbf{k}}
\DeclareMathOperator{\Gr}{Gr}
\DeclareMathOperator{\Sel}{Sel}
\DeclareMathOperator{\loc}{loc}
\DeclareMathOperator{\Frob}{Frob}
\DeclareMathOperator{\Spec}{Spec}
\DeclareMathOperator{\Hom}{Hom}
\DeclareMathOperator{\Fil}{Fil}
\DeclareMathOperator{\Sym}{Sym}
\DeclareMathOperator{\TSym}{TSym}
\DeclareMathOperator{\Gal}{Gal}
\DeclareMathOperator{\Aut}{Aut}
\DeclareMathOperator{\Char}{char}
\DeclareMathOperator{\SL}{SL}
\DeclareMathOperator{\GL}{GL}
\title[Iwasawa theory for the symmetric square]{Iwasawa theory for the symmetric square of a modular form}
\author{David Loeffler}
\address{Mathematics Institute, University of Warwick, Coventry CV4 7AL, UK}
\email{d.a.loeffler@warwick.ac.uk}
\author{Sarah Livia Zerbes}
\address{Department of Mathematics, University College London, London WC1E 6BT, UK}
\email{s.zerbes@ucl.ac.uk}
\thanks{The authors' research was supported by the following grants: Royal Society University Research Fellowship (Loeffler); ERC Consolidator Grant ``Euler Systems and the Birch--Swinnerton-Dyer conjecture'' (Zerbes).}
\theoremstyle{plain}
\newtheorem{theorem}{Theorem}[subsection]
\newtheorem{lemma}[theorem]{Lemma}
\newtheorem{proposition}[theorem]{Proposition}
\newtheorem{corollary}[theorem]{Corollary}
\newtheorem{lettertheorem}{Theorem}
\theoremstyle{definition}
\newtheorem{definition}[theorem]{Definition}
\newtheorem{notation}[theorem]{Notation}
\newtheorem{conjecture}[theorem]{Conjecture}
\theoremstyle{remark}
\newtheorem{remark}[theorem]{Remark}
\newtheorem{note}[theorem]{Note}
\begin{document}

\begin{abstract}
 We construct a compatible family of global cohomology classes (an Euler system) for the symmetric square of a modular form, and apply this to bounding Selmer groups of the symmetric square Galois representation and its twists.
\end{abstract}

\subjclass[2010]{11F67, 11F80, 11G18}
\maketitle

 \begin{center}
\emph{To the memory of Rudolf Zerbes (1944---2015)}
\end{center}
\vspace{1ex}
 
 \section{Introduction}
 
  \subsection{Background}
  
   In this paper, we shall study the arithmetic of a specific class of $p$-adic Galois representations: those associated to the symmetric squares of modular forms. These Galois representations are important for two reasons. Firstly, they are very natural examples of global Galois representations, and thus provide a good testing ground for general conjectures relating arithmetic objects to values of $L$-functions. Secondly, there is a strong connection between the deformation theory of the 2-dimensional standard representation of a modular form, and the Galois cohomology of its 3-dimensional symmetric square representation; hence, these Galois cohomology groups are of central importance in Wiles' work on Fermat's last theorem \cite{wiles95} and the subject of modularity lifting which grew from it.
   
   For simplicity, in this Introduction, we shall only explain our main results (Theorems A, B and C below) in the technically simplest case in which $F$ is a normalised eigenform
   of level $1$ and weight $k$. Let $p$ be a prime; we assume throughout this paper that $p\geq 7$. We make the following hypothesis of ordinarity of $F$ at $p$: we choose a prime of the coefficient field of $F$ above $p$, and we assume that the $p$-th Fourier coefficient $a_p(F)$ is a unit at this prime. In this introduction, we shall make the further simplifying hypothesis that our chosen prime has degree 1, and thus defines an embedding of the coefficient field of $F$ into $\QQ_p$.

   We write $L(\Sym^2(F), s)$ for the complex $L$-function of the symmetric square of the
   Galois representation attached to $F$, which is known to be entire by work of Shimura \cite{shimura75}
   and Gelbart-Jacquet \cite{gelbartjacquet78}. Thanks to our ordinarity
   hypothesis and some classical algebraicity results for the critical values of $L(\Sym^2(F),\psi,s)$ where $\psi$ denotes an arbitrary Dirichlet character, this complex $L$-function has a $p$-adic
   analogue, which we will denote by $L_p(\Sym^2(F))$, which is an element of the Iwasawa
   algebra of the Galois group $\mathcal{G}$ of the field $\QQ(\mu_{p^\infty})$ over $\QQ$, where $\mu_{p^\infty}$ denotes the group
   of all $p$-power roots of unity. One is interested in this $p$-adic analogue because, unlike its
   complex counterpart, it is conjecturally linked to Galois cohomology via a main conjecture.
   Indeed, if we write $M_{\Zp}(F)$ for a Galois stable lattice in the
   $p$-adic Galois representation attached to $F$, and $\Sym^2M_{\Zp}(F)$ for its symmetric square,
   then one can define an analogue of the Selmer group   
   \[ \Sel\left(\QQ(\mu_{p^\infty}),\Sym^2M_{\Zp}(F)\right)\subset H^1\left(\QQ(\mu_{p^\infty}),\Sym^2M_{\Zp}(F)\right).\]
   This Selmer group has a natural structure as a module over the Iwasawa algebra of $\mathcal{G}$, and
   it can be shown that its Pontrjagin dual, which we denote by $\operatorname{Sel}\left(\QQ(\mu_{p^\infty}), \Sym^2 M_{\Zp}(F) \right)^\vee$,
   is a finitely generated torsion module over the Iwasawa algebra of $\mathcal{G}$. Since $p$ is odd,
   this means that we can define the characteristic ideal of $\operatorname{Sel}\left(\QQ(\mu_{p^\infty}), \Sym^2 M_{\Zp}(F) \right)^\vee$ in the
   Iwasawa algebra $\Lambda$ of $\mathcal{G}$. Writing $\operatorname{char}_{\Lambda} \operatorname{Sel}\left(\QQ(\mu_{p^\infty}), \Sym^2 M_{\Zp}(F) \right)^\vee $ for this characteristic ideal, the main conjecture would then be the assertion that   
   \begin{equation}
    \label{eq:mainconj}
    \operatorname{char}_{\Lambda} \operatorname{Sel}\left(\QQ(\mu_{p^\infty}), \Sym^2 M_{\Zp}(F) \right)^\vee = \left( L_p(\Sym^2 F) \right)
   \end{equation}
   subject to a good choice of the appropriate $p$-adic periods which enter into the definition
   of $L_p(\Sym^2(F))$. More generally, there is an analogue of this main conjecture with
   the Galois representation $\Sym^2M_{\Zp}(F)$ replaced by its tensor product with a Dirchlet
   character of conductor prime to $p$, and, for simplicity, having values in $\ZZ_p$. 
   
   One strategy for attacking Iwasawa main conjectures, which has proved very successful in other contexts (e.g.~\cite{rubin91, kato04, KLZ1b}), is the construction of an \emph{Euler system}: a family of Galois cohomology classes over cyclotomic extensions, satisfying compatibility properties under the norm map. When a sufficiently non-trivial Euler system exists, it can often be used to prove one inclusion in the Main Conjecture: that the $p$-adic $L$-function lies in the characteristic ideal of the corresponding Selmer group. The work of Flach \cite{flach92} can be interpreted as a ``fragment'' of an Euler system for the symmetric square of an elliptic curve; however, despite strenuous efforts, no construction of a full Euler system based on Flach's methods has so far been successful. 
   
   This absence of an Euler system was, of course, the primary motivation for the development of another major tool for studying Galois cohomology: the \emph{Taylor--Wiles method}, introduced in \cite{taylorwiles95}. This method gives extremely precise descriptions of the Selmer group of a certain Tate twist of $\Sym^2 F$ over $\QQ$ (rather than $\QQ(\mu_{p^\infty})$), in terms of the corresponding special $L$-value $L(\Sym^2 F, k)$. However, this method depends crucially on the self-duality of the corresponding twist of the Galois representation; hence it cannot be used to study the Dirichlet-character twists $L(\Sym^2 F \otimes \psi, s)$ for nontrivial Dirichlet characters $\psi$, or to prove either inclusion in the Main Conjecture.
   
  \subsection{Our results}
  
   In this paper, we shall construct an Euler system for symmetric square Galois representations of modular forms. Let $\psi$ be a Dirichlet character. (We shall state our main theorems here assuming $\psi$ is $\Zp$-valued, in addition to the simplifying assumptions imposed on $F$ in the previous section; for the full statements see the main body of the paper.) Define
   \[ H^1_{\mathrm{Iw}}\left(\QQ(\mu_{mp^\infty}), \Sym^2 M_{\Zp}(F)^*(1)(\psi) \right)=\varprojlim_r H^1\left(\QQ(\mu_{mp^r}),\Sym^2 M_{\Zp}(F)^*(1)(\psi) \right),\]
   where the inverse limit is taken with respect to the Galois corestriction maps. 
   
   \begin{lettertheorem}
    [{Theorem \ref{thm:ES}}]
    For $F$ and $\psi$ as above, and any choice of auxilliary integer $c > 1$ coprime to $6 N_\psi$, there is a collection of cohomology classes
    \[ \mathrm{c}_m \in H^1_{\mathrm{Iw}}\left(\QQ(\mu_{mp^\infty}), \Sym^2 M_{\Zp}(F)^*(1)(\psi) \right) \]
    for all integers $m$ coprime to $c N_\psi$, which satisfy the norm-compatibility relation
    \[ \operatorname{cores}_m^{\ell m}(c_{\ell m}) = P_\ell(\psi(\ell) \Frob_\ell^{-1}) c_m \]
    for primes $\ell \nmid m p c N_\psi$. Here $P_\ell$ is the degree 4 polynomial
    \[ P_\ell(X) = (1 - \ell^{k-1} X)^2 \left(1 - (a_\ell(F)^2 - 2\ell^{k-1})X + \ell^{2k-2} X^2\right),\]
    and $\Frob_\ell$ is the Frobenius at $\ell$. Moreover, all the classes $c_m$ lie in the eigenspace where complex conjugation acts via $\psi(-1)$.
   \end{lettertheorem}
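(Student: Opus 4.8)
The plan is to deduce Theorem A from the Beilinson--Flach Euler system attached to the pair $(F,F)$ by projecting onto a direct summand. The structural input is the decomposition of representations of $\Gal(\QQbar/\QQ)$
\[
 M_{\Zp}(F) \otimes_{\Zp} M_{\Zp}(F) \;\cong\; \Sym^2 M_{\Zp}(F) \;\oplus\; {\bigwedge}^2 M_{\Zp}(F),
\]
valid since $2$ is invertible ($p \geq 7$), in which ${\bigwedge}^2 M_{\Zp}(F) \cong \Zp(1-k)$. Dualising, Tate-twisting, and allowing a twist by $\psi$ --- which in the Rankin--Selberg construction enters through the Eisenstein class rather than through twisting either cusp form --- one sees that $\Sym^2 M_{\Zp}(F)^*(1)(\psi)$ is, up to a twist by a power of the cyclotomic character that is absorbed by twisting the Euler system over $\QQ(\mu_{mp^\infty})$, a direct summand of the Galois module underlying the relevant Beilinson--Flach classes.

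First I would recall from the work of Kings--Loeffler--Zerbes the $c$-smoothed Iwasawa-theoretic Beilinson--Flach classes $\cBF_m$ attached to the ordinary $p$-stabilisation of $F$ --- the ordinarity hypothesis is precisely what permits this $p$-stabilisation and guarantees that the classes lie in the integral Iwasawa cohomology over the cyclotomic tower --- together with a $\psi$-twist: these lie in the Iwasawa cohomology of $\bigl(M_{\Zp}(F)^* \otimes M_{\Zp}(F)^*\bigr)(\psi)$ over $\QQ(\mu_{mp^\infty})$ for $m$ coprime to $cpN_\psi$, and satisfy a norm-compatibility relation under $\operatorname{cores}_m^{\ell m}$ for primes $\ell \nmid mpcN_\psi$, the Euler factor being the local factor at $\ell$ of the ambient representation with variable $\Frob_\ell^{-1}$. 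The auxiliary integer $c$, coprime to $6N_\psi$, enters exactly to ensure integrality, as in the passage from the motivic Eisenstein class to $c$-smoothed cyclotomic units.

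Next I would set $c_m$ to be the image of $\cBF_m$ under the Galois-equivariant projector onto the $\Sym^2$-summand, followed by the cyclotomic untwist placing it in $H^1_{\Iw}\bigl(\QQ(\mu_{mp^\infty}), \Sym^2 M_{\Zp}(F)^*(1)(\psi)\bigr)$. Since this projector acts only on coefficients it commutes with corestriction, so the $c_m$ inherit a norm relation with the \emph{same} Euler factor --- one gains nothing by projecting, because corestriction does not respect the coefficient decomposition in a way that would split off a factor. That Euler factor is therefore the full degree-$4$ Rankin--Selberg factor of $F \times F$ (twisted by $\psi$); a direct computation identifies it with $P_\ell(\psi(\ell)\Frob_\ell^{-1})$, the essential points being that, for the Satake parameters $\alpha_\ell, \beta_\ell$ of $F$ at $\ell$ one has $\alpha_\ell\beta_\ell = \ell^{k-1}$ and $\alpha_\ell + \beta_\ell = a_\ell(F)$, and that the value $\ell^{k-1}$ occurs with multiplicity two --- once from ${\bigwedge}^2 M_{\Zp}(F)$ and once as the middle term $\alpha_\ell\beta_\ell$ of $\Sym^2 M_{\Zp}(F)$ --- so that the factor splits as $(1-\ell^{k-1}X)^2$ times $(1-\alpha_\ell^2 X)(1-\beta_\ell^2 X)$, giving
\[
 (1 - \ell^{k-1}X)^2\bigl(1 - (\alpha_\ell^2+\beta_\ell^2)X + \alpha_\ell^2\beta_\ell^2 X^2\bigr) = (1 - \ell^{k-1}X)^2\bigl(1 - (a_\ell(F)^2 - 2\ell^{k-1})X + \ell^{2k-2}X^2\bigr) = P_\ell(X).
\]

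Finally, the Rankin--Eisenstein class underlying $\cBF_m$ lies, by its geometric construction on a product of modular curves with a $\psi$-twisted Eisenstein class, in a single eigenspace for complex conjugation, and tracking this sign through the projection to $\Sym^2$ gives the eigenvalue $\psi(-1)$. \textbf{The main obstacle} is not any single hard computation but the careful control of twists and integrality: pinning down the exact Tate and character twists so that the ambient lattice is $\Sym^2 M_{\Zp}(F)^*(1)(\psi)$ and not a close relative, and verifying that the $c$-smoothed projected classes genuinely lie in this integral lattice and not merely in the associated $\Qp$-representation. (The deeper issue --- relating the $\Sym^2$-part of these classes to $L_p(\Sym^2 F)$, where the interplay between the $\Sym^2/{\bigwedge}^2$ splitting and the Rankin--Selberg integral is genuinely delicate --- is needed for the Main Conjecture applications, Theorems B and C, but not for Theorem A.)
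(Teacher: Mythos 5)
Your construction follows the same route as the paper (project the Beilinson--Flach classes for the pair $(f_\alpha,f_\alpha)$, keep the unchanged degree-4 Euler factor, and compute $(1-\ell^{k-1}X)^2\bigl(1-(a_\ell(F)^2-2\ell^{k-1})X+\ell^{2k-2}X^2\bigr)$ as $\wedge^2$-factor times $\Sym^2$-factor), and those parts are fine. The genuine gap is in your justification of the last assertion of the theorem. You claim that the class $\cBF_m^{f,f}$ ``lies in a single eigenspace for complex conjugation'': this is false in general -- its image under the Perrin-Riou regulator interpolates Rankin $L$-values of both parities, and the class has nontrivial components in both eigenspaces for the action of $-1 \in \Gal(\QQ(\mu_{mp^\infty})/\QQ)$. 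The fact you actually need, and which is the key input of the paper's construction, is a symmetry relation tying the coefficient involution $s$ (swapping the two tensor factors of $M(f)^*\otimes M(f)^*$) to complex conjugation: $s\bigl(\cBF_m^{f,f}\bigr) = -[\sigma]\cdot\cBF_m^{f,f}$. This is proved by noting that the geometric swap $\rho$ of $Y_1(N_f)^2$ induces $-s$ on $M_{L_\frP}(f\otimes f)^*$ (the K\"unneth isomorphism is given by cup product, which is anti-commutative in degree 1), while $\rho^*\cBF_m^{f,f} = [\sigma]\cdot\cBF_m^{f,f}$ by \cite[Prop.\ 5.2.3(1)]{KLZ1b}. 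Only with this identity can one say that the component of the class in a given eigenspace for complex conjugation is $\Sym^2$-valued precisely when that eigenvalue is $-1$ (before twisting), hence lands in the $\psi(-1)$-eigenspace of $\Sym^2 M_{\Zp}(F)^*(1)(\psi)$ after the twist by $1+\psi$. Without it, your projected classes satisfy the norm relations but you cannot conclude they lie in the $\psi(-1)$-eigenspace.

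Two further remarks. First, the same symmetry is what makes the projection harmless for the sequel: in the $\psi(-1)$-eigenspace the $\wedge^2$-component vanishes, so the explicit reciprocity law for the full Beilinson--Flach class computes your $\Sym^2$-valued class; this is not needed for Theorem A but is exactly what makes Theorem B work, and the paper accordingly does not project at all -- it observes that the relevant eigenspace component already takes values in the $\Sym^2$ summand, and then quotes \cite[Thm.\ 11.4.1]{KLZ1b} for the Euler system over all $m$. Second, a minor point: in the paper the $\psi$-twist is implemented by twisting the module and corestricting from $\QQ(\mu_{N_\psi p^\infty})$ (and, in the reciprocity law, by the operator $R_\psi$ moving the twist onto one cusp form), rather than through the Eisenstein class as you suggest; this does not affect the construction but matters when one tracks Gauss sums in Theorem B.
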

    
   Note that the polynomial $P_\ell$ is not the one predicted from the theory of Euler systems: it is the Euler factor of the four-dimensional tensor product $M_{\Zp}(F) \otimes M_{\Zp}(F)$ at $\ell$, not of its three-dimensional symmetric square summand. This is essentially because the $c_m$ arise as the images of cohomology classes in $M_{\Zp}(F)^* \otimes M_{\Zp}(F)^*$, the \emph{Beilinson--Flach elements} of \cite{KLZ1b}. 
   
   Theorem A is clearly vacuous as stated, since the $c_m$ could all be zero, but their nontriviality is guaranteed by the next theorem. Let $\nu = \psi(-1) \in \{ \pm 1\}$, and let $e_\nu$ denote the idempotent in $\Zp[[\Zp^\times]]$ projecting to the $\nu$-eigenspace for the action of the subgroup $\{\pm 1\}$ of $\Zp^\times$.
   
   \begin{lettertheorem}[{Theorem \ref{thm:explicitrecip}}]
    The localisation of $c_1$ at $p$ is non-trivial, and it is sent by Perrin-Riou's $p$-adic regulator map to
    \[
     \tag{\dag}
     e_\nu \left( \psi(-1) (c^2 - c^{2s-2k + 2} \psi(c)^{-2}) \cdot L_p(\Sym^2 F \otimes \psi, s) \cdot L_p(\psi, s - k + 1)\right) \]
    where $L_p(\psi, s)$ is the Kubota--Leopoldt $p$-adic $L$-function of $\psi$.
   \end{lettertheorem}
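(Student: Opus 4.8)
The plan is to deduce $(\dag)$ from the explicit reciprocity law for Beilinson--Flach elements proved in \cite{KLZ1b}, together with the diagonal factorisation of the Rankin--Selberg $p$-adic $L$-function. By the construction underlying Theorem~A, the class $c_1$ is the image of a Beilinson--Flach class
\[ \cBF_{F, F, \psi} \in H^1_{\Iw}\bigl(\QQ(\mu_{p^\infty}),\ M_{\Zp}(F)^* \otimes M_{\Zp}(F)^*(1)(\psi)\bigr) \]
under the $\Lambda$-linear projection $M_{\Zp}(F)^* \otimes M_{\Zp}(F)^* \onto \Sym^2 M_{\Zp}(F)^*$, which (as $p$ is odd) is split by the symmetrisation idempotent. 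Since this idempotent is self-adjoint for the natural pairing, computing the image of $\mathrm{loc}_p(c_1)$ under Perrin-Riou's regulator for $\Sym^2 M_{\Zp}(F)^*(1)(\psi)$, evaluated against the relevant de~Rham datum $\xi \in \Dcris\bigl(\Sym^2 M_{\Zp}(F)^*(1)(\psi)\bigr)$, reduces to pairing $\mathrm{loc}_p \cBF_{F,F,\psi}$ against $\xi$, viewed inside $\Dcris$ of the tensor product via the splitting.

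I would then write $\xi$ in terms of the standard de~Rham classes $\omega_F$ and $\eta_F$ and apply the main reciprocity theorem of \cite{KLZ1b}, which for an eigenform varying in a Hida family computes such pairings in terms of the geometric $p$-adic Rankin--Selberg $L$-function; in the diagonal specialisation $f = g = F$ this identifies the pairing, up to the $c$-smoothing factor $c^2 - c^{2s - 2k + 2}\psi(c)^{-2}$ and an Euler-type factor at $p$, with the geometric $p$-adic Rankin $L$-function of $(F, F, \psi)$. This step requires restricting the three-variable reciprocity law of \cite{KLZ1b} to the diagonal cyclotomic line and checking that this line avoids the degeneracy locus of the construction.

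The final input is Artin formalism: for $\GL_2$ one has $M_{\Zp}(F)^* \otimes M_{\Zp}(F)^* \cong \Sym^2 M_{\Zp}(F)^* \oplus \wedge^2 M_{\Zp}(F)^*$, and since $F$ has level~$1$ and trivial nebentypus $\wedge^2 M_{\Zp}(F)^* \cong \Zp(k-1)$, so on the complex side $L(F \otimes F \otimes \psi, s) = L(\Sym^2 F \otimes \psi, s)\, L(\psi, s - k + 1)$. Both sides of the corresponding identity of Iwasawa functions interpolate the same critical values, so the geometric Rankin $p$-adic $L$-function restricted to the diagonal equals $L_p(\Sym^2 F \otimes \psi, s)$ times the Kubota--Leopoldt $p$-adic $L$-function $L_p(\psi, s - k + 1)$, up to a unit fixed by the period normalisation (the factorisation of Dasgupta and Hida, which I would cite or re-derive from the interpolation formula). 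Assembling these steps and matching periods yields $(\dag)$; the idempotent $e_\nu$ appears because the $c_m$ lie in the $\psi(-1)$-eigenspace for complex conjugation (Theorem~A), with the Kubota--Leopoldt factor accounting for the residual $\{\pm 1\}$-equivariance. Non-triviality of $\mathrm{loc}_p(c_1)$ follows since the right-hand side of $(\dag)$ is a nonzero element of $\Lambda \otimes \QQ_p$ --- the factor in $c$ is visibly nonzero, $L_p(\psi, \cdot)$ is nonzero by classical non-vanishing of the Kubota--Leopoldt and Dirichlet $L$-functions, and $L_p(\Sym^2 F \otimes \psi, \cdot)$ is nonzero because it interpolates nonzero critical values --- while Perrin-Riou's regulator is injective on the relevant local cohomology eigenspace, the ordinarity of $F$ at $p$ making the underlying $(\varphi,\Gamma)$-module computation explicit.

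The main obstacle is the normalisation bookkeeping running through all of these steps: pinning down exactly which $p$-adic period enters $L_p(\Sym^2 F)$, reconciling the imprimitive Euler factor at $p$ coming out of the Rankin--Selberg construction with the Euler factor attached to the three-dimensional representation $\Sym^2 M_{\Zp}(F)$, and verifying that the diagonal restriction of the three-variable $p$-adic $L$-function of \cite{KLZ1b} factors precisely as claimed rather than acquiring a spurious zero or pole along the diagonal.
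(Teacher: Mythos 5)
Your overall route (project the Beilinson--Flach class to the symmetric square, compute its image under the Perrin-Riou regulator via the explicit reciprocity law of \cite{KLZ1b} on the diagonal $f=g=F$, then factor the resulting $p$-adic Rankin $L$-function) is the same as the paper's, and the first two steps are essentially what the paper does -- except that for nontrivial $\psi$ the paper does not literally apply the reciprocity law to the pair $(F,F,\psi)$ but reduces to the pair $(F, F_\psi)$ via the twisting correspondence $R_\psi$, a point you pass over but which is a routine fix.

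The genuine gap is in your final step. You assert that the factorisation
\[ L_p(F \otimes F \otimes \psi, s) = L_p(\Sym^2 F \otimes \psi, s)\, L_p(\psi, s-k+1) \]
can be ``re-derived from the interpolation formula'' because ``both sides interpolate the same critical values''. They do not. The $p$-adic Rankin $L$-function is constructed by interpolation at characters $s + \chi$ with $1 \le s \le k-1$ (of the right parity), and in that range the symmetric square $L$-value $L(\Sym^2 F \otimes \psi\chi^{-1}, s)$ is \emph{not} critical -- it in fact vanishes to order $1$ -- while conversely the critical points of $\Sym^2 F \otimes \psi$ defining $L_p(\Sym^2 F \otimes \psi)$ in the range $k \le s \le 2k-2$ lie outside the interpolation range of the Rankin $p$-adic $L$-function. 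So the two sides share no dense set of interpolation points, complex Artin formalism ($\wedge^2 M(F)^* \cong \Zp(k-1)$ for level $1$) transfers nothing directly to the $p$-adic side, and the identity is a deep theorem of Dasgupta \cite{Dasgupta-factorization}, proved by an Euler-system/exceptional-zero argument using the projection of Beilinson--Flach classes to the \emph{alternating} square, not by comparing interpolation formulae. If you simply cite Dasgupta (your fallback option), your argument collapses to the paper's; but as written, the claimed elementary derivation of the factorisation would fail, and this factorisation is precisely the crucial input the theorem depends on.
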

   
   This theorem depends crucially on Samit Dasgupta's factorisation formula for $p$-adic Rankin $L$-functions \cite{Dasgupta-factorization}. Interestingly, Dasgupta's proof of the factorisation formula also uses Beilinson--Flach elements, but relies on considering their projection to the \emph{alternating} square of $M_{\Zp}(F)$; the two projections are related by the functional equation of the symmetric square $L$-series.
   
   Our final result is an application of this Euler system to the Main Conjecture. This is greatly complicated by the fact that our classes have the ``wrong'' Euler factor appearing in their norm relations. Nonetheless, under some (rather restrictive) hypotheses, we can adapt the Kolyvagin--Rubin Euler system machine to give an upper bound for the size of the Selmer groups of twists of the symmetric square representation. In the present situation this can be stated as follows:
   
   \begin{lettertheorem}
    [{Theorem \ref{mainthm}}]
    Assume that:
    \begin{itemize}
     \item $p \ge 7$;
     \item the coefficients of $F$ and the values of $\psi$ lie in $\Qp$; 
     \item the image of the Galois representation 
     \[ \Gal(\QQbar / \QQ) \to \Aut M_{\Zp}(F) \cong \GL_2(\Zp) \]
     contains $\SL_2(\Zp)$;
     \item $\psi$ is not trivial or quadratic;
     \item $\psi(p) \ne 1$.
    \end{itemize}
    Then the characteristic ideal of $e_\nu \Sel(\QQ(\mu_{p^\infty}), \Sym^2 F \otimes \psi)$ divides $d \cdot A$, where $A$ is the product {\upshape{(\dag)}} and $d \in \Zp$ is any generator of the congruence ideal of $F$.
   \end{lettertheorem}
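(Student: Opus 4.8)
The plan is to feed the Euler system $(\mathrm{c}_m)$ of Theorem~\ref{thm:ES} into the Euler system machinery of Rubin --- in the Iwasawa-theoretic formulation of Mazur--Rubin, as adapted to this kind of setting in \cite{KLZ1b} --- run over the Iwasawa algebra $\Lambda$ of $\mathcal{G}$, and to identify the resulting bound with the image of the bottom class $\mathrm{c}_1$ under Perrin-Riou's regulator, which Theorem~\ref{thm:explicitrecip} computes to be the product~{\upshape{(\dag)}}. Concretely, the machine should give a divisibility of the shape
\[ \Char_\Lambda\left( e_\nu\Sel(\QQ(\mu_{p^\infty}), \Sym^2 F\otimes\psi) \right) \ \Big|\ \Char_\Lambda\left( \frac{e_\nu H^1_{\Iw}\left(\QQ(\mu_{p^\infty}), \Sym^2 M_{\Zp}(F)^*(1)(\psi)\right)}{\Lambda\cdot\mathrm{c}_1}\right), \]
and since the localisation of $\mathrm{c}_1$ at $p$ is nonzero, the right-hand side is controlled by the regulator image of $\mathrm{c}_1$, i.e.\ by~{\upshape{(\dag)}}, up to a factor measuring the integral discrepancy between the Beilinson--Flach construction and the lattice appearing in the Selmer group; that factor is where the generator $d$ of the congruence ideal enters.

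\textbf{Checking the input hypotheses.}
First I would verify the conditions needed to run the Euler system argument for the three-dimensional representation $\Sym^2 M_{\Zp}(F)^*(1)(\psi)$: absolute irreducibility of the residual representation, the existence of an element $\tau\in\Gal(\QQbar/\QQ(\mu_{p^\infty}))$ for which the representation modulo $(\tau-1)$ is free of rank one, and vanishing of the relevant global invariants. The assumption that the image of $\Gal(\QQbar/\QQ)\to\GL_2(\Zp)$ contains $\SL_2(\Zp)$ guarantees, for $p\ge 7$, that the symmetric square of the residual representation is absolutely irreducible with large image, so such a $\tau$ exists and no unexpected subquotients arise; the hypothesis that $\psi$ is neither trivial nor quadratic excludes the degenerate cases in which $\Sym^2 F\otimes\psi$ is a twist of a self-dual or reducible representation --- these are also precisely the cases in which the normalisation factor $c^2 - c^{2s-2k+2}\psi(c)^{-2}$ in~{\upshape{(\dag)}} fails to be a non-zero-divisor in $\Lambda$ --- and $\psi(p)\ne 1$ rules out a trivial-zero phenomenon at $p$.

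\textbf{The wrong Euler factor: the crux.}
The norm relations in Theorem~\ref{thm:ES} are governed not by the degree-$3$ Euler factor $Q_\ell$ of $\Sym^2 F$ but by the degree-$4$ polynomial $P_\ell$, which is the Euler factor of $M_{\Zp}(F)\otimes M_{\Zp}(F) = \Sym^2 M_{\Zp}(F)\oplus\ZZ_p(1-k)$; explicitly $P_\ell(X) = Q_\ell(X)\cdot(1-\ell^{k-1}X)$, the excess linear factor being the Euler factor of the rank-one summand $\ZZ_p(1-k)$. Since the Kolyvagin-derivative construction presupposes that the norm-relation polynomial \emph{equals} the dual Euler factor of the target representation, the real work is to adapt the derivative argument to a norm-relation polynomial that is a proper multiple of the minimal one: the derivative classes then satisfy congruences modulo $P_\ell(\Frob_\ell^{-1})$ rather than $Q_\ell(\Frob_\ell^{-1})$, and one must show that the excess factors do not degrade the bound at the level of characteristic ideals. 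The key observation is that each excess factor $1-\ell^{k-1}\psi(\ell)\Frob_\ell^{-1}$ is an Euler factor of the rank-one motive $\ZZ_p(1-k)\otimes\psi$, whose $p$-adic arithmetic is entirely understood through the Kubota--Leopoldt $p$-adic $L$-function $L_p(\psi, s-k+1)$ and the Iwasawa Main Conjecture over $\QQ$ (Mazur--Wiles); working prime-by-prime at the height-one primes of $\Lambda$, and using the non-vanishing of $L_p(\psi, s-k+1)$ --- which is guaranteed by $\psi\ne 1$ and $\psi(p)\ne 1$ --- one can arrange the Kolyvagin primes $\ell$ so that the excess contributes nothing spurious, and the bound comes out exactly as the regulator image of $\mathrm{c}_1$. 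This adaptation is the step I expect to be the main obstacle; it is also the reason why the bound must carry the full product~{\upshape{(\dag)}}, including the factor $L_p(\psi, s-k+1)$, rather than the symmetric-square $p$-adic $L$-function alone --- the Euler system at hand simply does not have the Euler factor of an honest $\Sym^2 F\otimes\psi$ Euler system.

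\textbf{The congruence factor and conclusion.}
Finally, the classes $\mathrm{c}_m$ arise by projecting the Beilinson--Flach elements of \cite{KLZ1b}, which live in the cohomology of $M_{\Zp}(F)^*\otimes M_{\Zp}(F)^*$, onto the symmetric-square summand; passing between the lattice on $\Sym^2 M_{\Zp}(F)^*$ carried by this construction and the Galois-stable lattice on $\Sym^2 F\otimes\psi$ implicit in the definition of the Selmer group introduces a denominator bounded by the congruence ideal of $F$. Hence the regulator image of $\mathrm{c}_1$ accounts for~{\upshape{(\dag)}} only up to the generator $d$, and combining this with the previous two steps yields the asserted divisibility of $\Char_\Lambda\left(e_\nu\Sel(\QQ(\mu_{p^\infty}),\Sym^2 F\otimes\psi)\right)$ by $d\cdot A$.
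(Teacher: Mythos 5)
Your proposal has the right coarse shape (Euler system, Kolyvagin--Rubin, explicit reciprocity law), but the two steps you flag as the crux are exactly where your plan diverges from what actually works, and as written both have genuine gaps. Your single divisibility $\Char_\Lambda(e_\nu \Sel) \mid \Char_\Lambda\bigl(e_\nu H^1_{\Iw}/\Lambda\cdot \mathrm{c}_1\bigr)$ is the naive application of the machine: it bounds the Selmer group with the \emph{strict} local condition at $p$ (the paper's $H^1_{\Gr,3}$), not the Selmer group of Theorem C (which is $H^1_{\Gr,1}$), and it bounds it by the index of $\mathrm{c}_1$ in the full global Iwasawa cohomology, which is \emph{not} ``controlled by the regulator image of $\mathrm{c}_1$'' --- the regulator only sees $\loc_p(\mathrm{c}_1)$ in the rank-one graded piece $\Gr^1$. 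The paper's argument needs three ingredients you omit: (i) the classes satisfy a Greenberg-type local condition at $p$, i.e.\ they lie in $H^1_{\Iw,\Gr,1}$ (Proposition \ref{prop:selmer}); (ii) the refined Kolyvagin--Rubin machinery then bounds $H^1_{\Gr,2}(\Qi,A)^\vee$ by the index of $\cBF^f_\psi$ in $H^1_{\Iw,\Gr,1}(\Qi,T)$; and (iii) a Poitou--Tate duality sequence relating $H^1_{\Iw,\Gr,2}$, $H^1_{\Iw,\Gr,1}$, $H^1_{\Iw}(\Qpi,\Gr^1 T)$, $H^1_{\Gr,1}(A)^\vee$ and $H^1_{\Gr,2}(A)^\vee$, which converts (ii) into a bound for $H^1_{\Gr,1}$ by the index of the \emph{local} image of $\cBF^f_\psi$ in $\Gr^1$; only at that point does Theorem \ref{thm:explicitrecip} (together with Hida's bound $d\cdot L_p \in \Lambda\otimes\cO_{\CC_p}$, which is where $d$ really enters, rather than any lattice discrepancy in the projection to the symmetric square) identify the bound with $d\cdot$(\dag).

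Your treatment of the ``wrong Euler factor'' is also not a proof, and the inputs you invoke are the wrong ones. Non-vanishing of $L_p(\psi,s-k+1)$ at the relevant points is in general Schneider's conjecture (open), and in any case it is irrelevant to the norm relations: the Kubota--Leopoldt factor in (\dag) comes from the regulator computation via Dasgupta's factorisation, not from the excess factors at the Kolyvagin primes, and no appeal to Mazur--Wiles is made. The actual mechanism is much more concrete: one restricts to Kolyvagin primes $\ell\equiv 1 \bmod p$ whose Frobenius is close to an element $\tau$ with $\varepsilon\psi(\tau)\ne\pm 1$ (this is exactly where ``$\psi$ neither trivial nor quadratic'' is used); for such $\ell$ the excess factor $1-\ell^{k-1}\varepsilon\psi(\ell)\Frob_\ell^{-1}$ is a \emph{unit} in the relevant Iwasawa algebra, so one divides the classes by it and obtains an honest Euler system with the degree-3 polynomial $P_\ell(\Sym^2 f\otimes\psi,X)$. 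This forces a restriction of the set of Kolyvagin primes (those for which $\Frob_\ell-1$ is invertible on the alternating-square summand $T'$), so Mazur--Rubin's hypothesis (H.5) fails and their arguments must be modified (the paper's Appendix on Kolyvagin systems for direct sums); your proposal contains no substitute for either of these steps.
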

   
   This theorem is, therefore, an approximation to the ``Euler system divisibility'' in the main conjecture \eqref{eq:mainconj}, after projecting to the $\nu$-eigenspace. (This sign condition is not too onerous, since in fact the $+1$ and $-1$ eigenspaces are interchanged by the functional equation, so it is sufficient to prove the main conjecture for a single eigenspace.)
   
   Some of the above hypotheses are imposed largely for convenience and could in principle be relaxed substantially, but it is absolutely essential for our methods that $\psi$ be non-trivial, since otherwise we cannot get rid of the extra degree 1 factor in the polynomials $P_\ell$. This means that our results do not overlap with those obtained from the Taylor--Wiles method (which \emph{only} works when $\psi$ is trivial).

  \subsection{Organisation of the paper}
   
   We begin by recalling, in \S \ref{sect:Lfunc}, some known results concerning complex and $p$-adic symmetric square $L$-functions. Much of this material is very classical; the exception is Dasgupta's factorisation formula, a much more recent result, which we recall in \S \ref{sect:samit}. 
   
   In \S \ref{sect:selmer}, we define the algebraic counterparts of these objects -- Selmer groups with appropriate local conditions -- and formulate an Iwasawa main conjecture. This is fairly standard material for the experts, but we have not found a reference for these constructions in the present degree of generality, and we hope that our presentation of this material may serve as a useful introduction for the less expert reader.
   
   With these preliminaries out of the way, \S \ref{sect:ES} gives the construction of the Euler system, using the cohomology classes on products of modular curves constructed in our earlier works with Lei and Kings \cite{leiloefflerzerbes14, KLZ1a, KLZ1b}. In \S \ref{sect:kolyvagin} we apply this to bounding Selmer groups, using a small refinement of the Kolyvagin--Rubin machine which is explained in Appendix \ref{appendix}; we begin by bounding a Selmer group over the cyclotomic extension $\Qi$, and then obtain results for Selmer groups over $\QQ$ by descent.
   
   In \S \ref{sect:example} we conclude the paper with a numerical example involving $L(\Sym^2 F \otimes \psi, 22)$, where $F$ is the eigenform of weight 16 and level 1, and $\psi$ is a nontrivial even character of conductor 7.
   
  \subsection*{Acknowledgements}
   
   It is a pleasure to thank John Coates and Andrew Wiles for their interest in our work and helpful discussions; and Samit Dasgupta for explaining many aspects of \cite{Dasgupta-factorization} to us. We would also like to thank Henri Darmon for valuable comments on an earlier draft of this paper, and the anomymous referee for his suggestions on how to improve the exposition of the results.
   
 \section{L-functions}
  \label{sect:Lfunc}
  
  We begin by recalling some known results concerning classical and $p$-adic symmetric square $L$-functions.
  
  Let $f$ be a cuspidal, normalised, new modular eigenform, of level $N_f$, weight\footnote{With some trepidation, we have changed the normalisations from our previous work; $f$ now has weight $k$ rather than weight $k + 2$. This is for compatibility with \cite{Dasgupta-factorization}.} $k \ge 2$ and nebentypus $\varepsilon$, with coefficients in a number field $L \subset \CC$. We shall suppose throughout this paper that $f$ is not of CM type.
  
  \subsection{Complex L-functions}
   
   We introduce the various $L$-functions which we shall consider, following \cite{schmidt88, Dasgupta-factorization}. For a prime $\ell \nmid N$, we write 
   \[ 
    P_\ell(\Sym^2 f, X) = (1 - \alpha_\ell^2 X)(1 - \alpha_\ell \beta_\ell X)(1 - \beta_\ell^2 X) \in \cO_L[X],
   \]
   the local Euler factor of $\Sym^2 f$ at $\ell$, where (as usual) $\alpha_\ell$ and $\beta_\ell$ denote the roots of the Hecke polynomial $X^2 - a_\ell(f) X + \ell^{k - 1} \varepsilon(\ell)$. More generally, if $\chi$ is a Dirichlet character of conductor $N_\chi$ coprime to $\ell$, we write
   \[ P_\ell(\Sym^2 f \otimes \chi, X) = P_\ell(\Sym^2 f, \chi(\ell) X). \]
   
   \begin{definition}
    Let $L(\Sym^2 f \otimes \chi, s)$ be the \emph{primitive} symmetric square $L$-function, which is given by an Euler product (convergent for $\Re(s) > k$),
    \[ 
     L(\Sym^2 f \otimes \chi, s) = 
     \prod_{\ell} P_\ell(\Sym^2 f \otimes \chi, \ell^{-s})^{-1},
    \]
    where the local Euler factors at the ``bad'' primes $\ell \mid N_f N_\chi$ are as given in \cite{schmidt88}.
   \end{definition}

    Note that we do not include Euler factors at the infinite place.
   
   \begin{remark}
    This Dirichlet series is the $L$-series of the automorphic representation of $\GL_3(\AA_\QQ)$ associated to $\Sym^2 f \otimes \chi$ by Gelbart and Jacquet \cite{gelbartjacquet78}. Since $f$ is not of CM type, this automorphic representation is cuspidal, and the $L$-series therefore has analytic continuation to all $s \in \CC$. 
   \end{remark} 
   
   \begin{definition}
    We define the \emph{imprimitive} symmetric square $L$-function by
    \[ 
     L^\imp(\Sym^2 f, \chi, s) \coloneqq \prod_{\ell} \left[ (1 - \chi(\ell) \alpha_\ell^2 \ell^{-s})  (1 - \chi(\ell) \alpha_\ell \beta_\ell\ell^{-s}) (1 - \chi(\ell) \beta_\ell^2 \ell^{-s})\right]^{-1}
    \]
    where we understand one or both of $\{\alpha_\ell, \beta_\ell\}$ to be zero if $\ell \mid N_f$, and $\chi(\ell) = 0$ if $\ell \mid N_\chi$, so $L^\imp(\Sym^2 f, \chi, s)$ differs from $L(\Sym^2 f \otimes \chi, s)$ by finitely many Euler factors at the primes dividing $N_f N_\chi$. 
   \end{definition} 
    
    The importance of the imprimitive $L$-series is that it is given by a simple formula involving the coefficients of $f$:
     
   \begin{proposition}
    We have
    \begin{align*}
     L^\imp(\Sym^2 f, \chi, s) &= L_{N_f N_\chi}(\chi^2 \varepsilon^2, 2s - 2k + 2) \sum_{n \ge 1} a_{n^2}(f)\chi(n)  n^{-s}\\
     &= \frac{L_{N_f N_\chi}(\chi^2 \varepsilon^2, 2s - 2k + 2)}{L_{N_f N_\chi}(\chi \varepsilon, s - k + 1)} \sum_{n \ge 1} a_n(f)^2 \chi(n) n^{-s}.
    \end{align*}
    Here $L_{N_f N_\chi}(-)$ denotes the Dirichlet $L$-series with the factors at primes dividing $N_f N_\chi$ omitted.
   \end{proposition}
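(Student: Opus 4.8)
The plan is to prove both identities one Euler factor at a time. First I would note that, because $f$ is a Hecke eigenform, the arithmetic function $n\mapsto a_n(f)$ is multiplicative, and hence so are $n\mapsto a_{n^2}(f)$ and $n\mapsto a_n(f)^2$; therefore each of the Dirichlet series $\sum_n a_{n^2}(f)\chi(n)n^{-s}$ and $\sum_n a_n(f)^2\chi(n)n^{-s}$ admits an Euler product. Since $L^\imp(\Sym^2 f,\chi,s)$ and the partial Dirichlet $L$-functions are Euler products by construction, it suffices to check, for each prime $\ell$, the identity of the corresponding $\ell$-factors.

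For a good prime $\ell\nmid N_fN_\chi$, write $\alpha=\alpha_\ell$, $\beta=\beta_\ell$ (so that $\alpha+\beta=a_\ell(f)$ and $\alpha\beta=\ell^{k-1}\varepsilon(\ell)$) and $x=\chi(\ell)\ell^{-s}$. Starting from the closed form $a_{\ell^m}(f)=(\alpha^{m+1}-\beta^{m+1})/(\alpha-\beta)$, a short partial-fraction computation gives
\[
 \sum_{j\ge 0} a_{\ell^{2j}}(f)\,x^j \;=\; \frac{1+\alpha\beta x}{(1-\alpha^2 x)(1-\beta^2 x)},
 \qquad
 \sum_{j\ge 0} a_{\ell^{j}}(f)^2\,x^j \;=\; \frac{1+\alpha\beta x}{(1-\alpha^2 x)(1-\beta^2 x)(1-\alpha\beta x)} ;
\]
these are identities of rational functions in $(\alpha,\beta,x)$, so the degenerate case $\alpha=\beta$ needs no separate treatment. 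Now $\chi(\ell)\varepsilon(\ell)\ell^{-(s-k+1)}=\alpha\beta x$ and $\chi(\ell)^2\varepsilon(\ell)^2\ell^{-(2s-2k+2)}=(\alpha\beta x)^2$, so multiplying the first series by $(1-(\alpha\beta x)^2)^{-1}$, or the second by $(1-\alpha\beta x)^{-1}$, produces in both cases $\big[(1-\alpha^2 x)(1-\alpha\beta x)(1-\beta^2 x)\big]^{-1}$, which is exactly the $\ell$-factor of $L^\imp(\Sym^2 f,\chi,s)$. This proves both equalities at the good primes.

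For a bad prime I would simply unwind the conventions. If $\ell\mid N_\chi$ then $\chi(\ell)=0$, so every local factor in sight equals $1$ (the Dirichlet $L$-factors at $\ell$ being omitted from $L_{N_fN_\chi}$). If $\ell\mid N_f$ but $\ell\nmid N_\chi$, I would use that $a_{\ell^m}(f)=a_\ell(f)^m$ for all $m$ when $\ell\,\|\,N_f$, and $a_{\ell^m}(f)=0$ for $m\ge 1$ when $\ell^2\mid N_f$ --- these being precisely the arithmetic manifestations of the convention that one, respectively both, of $\alpha_\ell,\beta_\ell$ vanishes. In either case the local series on both sides collapse to the same thing ($(1-a_\ell(f)^2 x)^{-1}$, resp.\ $1$), again with the Dirichlet $L$-factor omitted.

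The only delicate point is this last step: one must check carefully that the bad-Euler-factor conventions written into the definitions of $L^\imp$ and $L_{N_fN_\chi}$ are consistent with the genuine Hecke recursion $a_{\ell^m}(f)=a_\ell(f)^m$ at primes dividing the level. Everything else is the routine partial-fraction identity at the good primes. (Alternatively, once the first equality is known, the second is equivalent to the classical Rankin--Selberg identity $\sum_n a_n(f)^2\chi(n)n^{-s}=L_{N_fN_\chi}(\chi\varepsilon,s-k+1)\sum_n a_{n^2}(f)\chi(n)n^{-s}$, which one could quote from the literature instead.)
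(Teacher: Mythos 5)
The paper gives no proof of this proposition at all: it is quoted as a standard classical fact (the Shimura/Rankin--Selberg computation underlying Schmidt's construction), so there is no argument of the paper to compare with; your prime-by-prime Euler-product verification is the standard one and is essentially correct. At good primes both of your rational-function identities are right, and together with $\alpha\beta x=\chi\varepsilon(\ell)\ell^{-(s-k+1)}$ they give both equalities; but note one wording slip: multiplying your second series by $(1-\alpha\beta x)^{-1}$ does \emph{not} produce $\bigl[(1-\alpha^2x)(1-\alpha\beta x)(1-\beta^2x)\bigr]^{-1}$ --- it produces $(1+\alpha\beta x)\bigl[(1-\alpha^2x)(1-\beta^2x)(1-\alpha\beta x)^2\bigr]^{-1}$. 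The correct multiplier is $(1+\alpha\beta x)^{-1}=(1-\alpha\beta x)/(1-(\alpha\beta x)^2)$, which is precisely the local factor of the ratio $L_{N_fN_\chi}(\chi^2\varepsilon^2,2s-2k+2)/L_{N_fN_\chi}(\chi\varepsilon,s-k+1)$; alternatively your closing parenthetical (deduce the second equality from the first via the classical identity $\sum_n a_n(f)^2\chi(n)n^{-s}=L_{N_fN_\chi}(\chi\varepsilon,s-k+1)\sum_n a_{n^2}(f)\chi(n)n^{-s}$, which is exactly your two local series compared) repairs this in one line. A second small inaccuracy is at the bad primes: the dichotomy is not ``$\ell\,\|\,N_f$ versus $\ell^2\mid N_f$'', since a newform can have $a_\ell\neq 0$ with $\ell^2\mid N_f$ when the nebentypus is sufficiently ramified at $\ell$; what you actually need, and what is true for any newform, is the recursion $a_{\ell^m}(f)=a_\ell(f)^m$ for every $\ell\mid N_f$, together with the convention that the nonzero member of $\{\alpha_\ell,\beta_\ell\}$ (if any) equals $a_\ell(f)$. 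With these two repairs your local computations at both good and bad primes go through and prove the proposition.
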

   
   In some simple cases, we in fact have $L^\imp = L$ (this occurs, for instance, if $f$ has trivial character and square-free level, and $(N_f, N_\chi) = 1$). In general, the relation between primitive and imprimitive $L$-series is as follows:
   
   \begin{proposition}[Schmidt]
    \label{prop:imp-zeros}
    If $f$ has minimal level among its Dirichlet-character twists, then the ratio
    \[ 
     \frac{L^\imp(\Sym^2 f, \chi, s)}{L(\Sym^2 f \otimes \chi, s)}
    \]
    is an entire function (a product of polynomials in the variables $\ell^{-s}$ for $\ell \mid N_f N_\chi$) whose only possible zeroes lie on the line $\Re(s) = k-1$.
   \end{proposition}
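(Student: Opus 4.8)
The plan is to reduce the statement to a purely local problem, one prime at a time, and then feed in the local Langlands correspondence for $\GL_2(\QQ_\ell)$. Write
\[ Q_\ell(X) = \bigl(1 - \chi(\ell)\alpha_\ell^2 X\bigr)\bigl(1 - \chi(\ell)\alpha_\ell\beta_\ell X\bigr)\bigl(1 - \chi(\ell)\beta_\ell^2 X\bigr) \]
for the reciprocal of the $\ell$-th Euler factor of $L^\imp(\Sym^2 f, \chi, s)$, subject to the vanishing conventions in its definition. Since $Q_\ell = P_\ell(\Sym^2 f \otimes \chi, -)$ for every $\ell \nmid N_f N_\chi$, the ratio $L^\imp(\Sym^2 f, \chi, s)/L(\Sym^2 f \otimes \chi, s)$ is the finite product $\prod_{\ell \mid N_f N_\chi} P_\ell(\Sym^2 f \otimes \chi, \ell^{-s})/Q_\ell(\ell^{-s})$. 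So it suffices to show, for each $\ell \mid N_f N_\chi$, that (i) $Q_\ell$ divides $P_\ell(\Sym^2 f \otimes \chi, -)$ in $\cO_L[X]$ — which already gives that the ratio is entire, indeed a polynomial in $\ell^{-s}$ — and (ii) every inverse root $\gamma$ of the quotient polynomial satisfies $|\gamma| = \ell^{k-1}$, so that the corresponding zero $\ell^s = \gamma$ lies on $\Re(s) = k-1$.

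Fix a bad prime $\ell$ and let $\sigma_\ell$ be the Weil--Deligne representation attached to $\pi_{f,\ell}$ by local Langlands, so that $P_\ell(\Sym^2 f \otimes \chi, \ell^{-s})$ is the $L$-factor of $\Sym^2 \sigma_\ell \otimes \chi_\ell$, i.e. the product of the $L$-factors of the constituents of the Frobenius-semisimplification of this three-dimensional representation. First I would enumerate the possibilities for $\pi_{f,\ell}$ left open by the minimality hypothesis on $N_f$: $\pi_{f,\ell}$ is either unramified (possible only when $\ell \mid N_\chi$, $\ell \nmid N_f$), or a twist $\mathrm{St} \otimes \mu$ of Steinberg by an unramified character, or a ramified principal series $\pi(\mu_1, \mu_2)$, or a supercuspidal representation (which, except at $\ell = 2$, is dihedral, induced from a character $\theta$ of a quadratic extension). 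In each case $\Sym^2 \sigma_\ell$ is explicit: $\mu_1^2 \oplus \mu_1\mu_2 \oplus \mu_2^2$ for a principal series, $\mu^2 \otimes \mathrm{Sp}(3)$ for the Steinberg twist (with $\mathrm{Sp}(3)$ the three-dimensional special representation), and $\mathrm{Ind}(\theta^2) \oplus (\text{a quadratic twist of } \theta\theta^\sigma)$ in the dihedral case. Twisting by $\chi_\ell$ and reading off $L$-factors, one checks directly that the nonzero quantities among $\chi(\ell)\alpha_\ell^2$, $\chi(\ell)\alpha_\ell\beta_\ell$, $\chi(\ell)\beta_\ell^2$ always occur as inverse roots — they come from the unramified constituents $\mu_2^2$, respectively the $\mathrm{Sp}(3)$-piece — which proves (i); and the remaining ``extra'' inverse roots of $P_\ell(\Sym^2 f \otimes \chi, -)$ arise only from a constituent of $\Sym^2\sigma_\ell$ whose twist by $\chi_\ell$ happens to be unramified, such as a character $\mu_1^2$ or $\mu_1\mu_2$ that is accidentally unramified, or the character constituent in the dihedral case.

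For (ii) the point is purity: $\sigma_\ell$ is pure of weight $k - 1$ (as $f$ has weight $k$), so $\Sym^2\sigma_\ell$ is pure of weight $2(k-1)$, and an unramified constituent of a pure Weil--Deligne representation on which the monodromy operator vanishes has Frobenius eigenvalue of absolute value exactly $\ell^{(2k-2)/2} = \ell^{k-1}$; the constituents furnishing the extra inverse roots are all of this type, so each such root has absolute value $\ell^{k-1}$, as required. (The constituent carrying nontrivial monodromy — the $\mathrm{Sp}(3)$-piece in the Steinberg case — contributes the inverse root $\alpha_\ell^2$, of absolute value $\ell^{k-2}$, but this is already a root of $Q_\ell$ and does not survive into the quotient.) I expect the main obstacle to be the bookkeeping of the case analysis: making precise exactly which twists the minimal-level hypothesis forbids, keeping the arithmetic normalisation of $\alpha_\ell, \beta_\ell$ synchronised with the automorphic one throughout so that purity delivers the bound $\ell^{k-1}$ on the nose, and handling the residue-characteristic $\ell = 2$ anomalies (exceptional supercuspidals, and the failure of squaring to be injective on tame inertia, which can create extra unramified constituents). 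A shorter but less self-contained route is to quote Schmidt's explicit table of the bad Euler factors of $\Sym^2 f$ directly and verify the absolute-value bound term by term.
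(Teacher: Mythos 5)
Your argument is sound, but it is worth noting that the paper does not prove this proposition at all: it is quoted directly from Schmidt \cite{schmidt88}, where the statement is obtained by explicitly writing down the bad local factors of the symmetric square. Your reconstruction via the local Langlands correspondence is a genuinely different (and more conceptual) route, and it correctly isolates the one place where the minimal-twist hypothesis is really needed: it forbids $\pi_{f,\ell}$ from being a \emph{ramified} twist of Steinberg, which is the only configuration that would produce an uncancelled inverse root of absolute value $\ell^{k-2}$ (the $\ker N$ eigenvalue of the $\mathrm{Sp}(3)$-piece with the imprimitive factor degenerating to $1$ because $a_\ell(f)=0$), hence a zero off the line $\Re(s)=k-1$; in the allowed cases the imprimitive factor has degree at most one and its single possible inverse root $\chi(\ell)a_\ell(f)^2$ visibly reappears in the primitive factor, so divisibility — and hence entirety of the ratio — is immediate, while all surviving inverse roots come from unramified, monodromy-free constituents and have absolute value $\ell^{k-1}$ by purity. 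Two caveats you should make explicit if you write this up: the purity/temperedness of $\sigma_\ell$ at ramified primes (equivalently, that ramified principal-series parameters satisfy $|\mu_i(\ell)|=\ell^{(k-1)/2}$) is itself a nontrivial input, resting on local-global compatibility (Carayol) rather than just Deligne's bound at good primes; and you should record that the paper's primitive bad factors (Schmidt's) agree with the Weil--Deligne-side factors you compute, which is again local-global compatibility for the Gelbart--Jacquet lift. What your route buys is transparency about the role of each hypothesis (the supercuspidal and principal-series cases need no minimality at all); what the paper's citation buys is brevity, since Schmidt's tables already contain the same case analysis in explicit form.
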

   
   \begin{remark}
    \label{rmk:nonvanishing}
    Note that $L(\Sym^2 f \otimes \chi, s)$ is non-zero for integers $s \ge k$. For $s > k$ this is immediate from the absolute convergence of the Euler product. For $s = k$ it is a consequence of a much deeper non-vanishing theorem for $L$-functions of cuspidal automorphic representations of $\GL_n$ due to Jacquet and Shalika, cf.~\cite{jacquetshalika76}. If $f$ has minimal level among its twists, then the corresponding non-vanishing result for the imprimitive $L$-function follows from this, by Proposition \ref{prop:imp-zeros}.
   \end{remark}
    
   We also consider the Rankin--Selberg convolution $L$-function of $f$ with itself, $L(f \otimes f \otimes \chi, s)$, and its imprimitive analogue $L^\imp(f, f, \chi, s)$. These are given respectively by
   \[ L(f \otimes f \otimes \chi, s) \coloneqq L(\Sym^2 f \otimes \chi, s) \cdot L(\chi \varepsilon, s - k + 1) \]
   and
   \[ L^\imp(f, f, \chi, s) \coloneqq L^\imp(\Sym^2 f \otimes \chi, s) \cdot L_{N_f N_\chi}(\chi \varepsilon, s - k + 1). \]
   Thus we have
   \[  
    L^\imp(f, f, \chi, s) = L_{N_f N_\chi}(\chi^2 \varepsilon^2, 2s - 2k + 2) \sum_{n \ge 1} a_n(f)^2 \chi(n) n^{-s}.
   \]

%%%%%%%%%%%%%%%%%%%%%%%   
  
  \subsection{Rationality results}
   
   We now recall a rationality result for the critical values of the symmetric square $L$-function due to Schmidt \cite{schmidt88}, building on earlier work of Sturm \cite{sturm80} and Coates--Schmidt \cite{coatesschmidt87}. The following computation is standard:
      
   \begin{proposition}
    The value $L(\Sym^2 f \otimes \chi, s)$ is critical if $s$ is an integer in the range $\{1, \dots, k-1\}$ with $(-1)^s = -\chi(-1)$, or if $s$ is an integer in the range $\{k, \dots, 2k-2\}$ with $(-1)^s = \chi(-1)$.
   \end{proposition}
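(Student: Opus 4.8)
This is the standard archimedean computation, which I would organise as follows. Let $M(f)$ denote the rank-$2$ motive attached to $f$: it is pure of weight $k-1$, with Hodge types $(k-1,0)$ and $(0,k-1)$, and the Frobenius at infinity $F_\infty$ interchanges the two Hodge lines, so that it acts on the Betti realisation with eigenvalues $+1$ and $-1$, and $\det M(f)$ (of Hodge type $(k-1,k-1)$) carries $F_\infty=-1$. Hence $\Sym^2 M(f)\otimes\chi$ is pure of weight $2k-2$, with Hodge types $(2k-2,0)$, $(k-1,k-1)$ and $(0,2k-2)$; on the two outer lines $F_\infty$ again has eigenvalues $\{+1,-1\}$, and on the middle line $H^{k-1,k-1}$ it acts by the scalar $\chi(-1)$. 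To pin down this last sign I would use $M(f)^{\otimes 2}=\Sym^2 M(f)\oplus\det M(f)$: the middle piece of $M(f)^{\otimes 2}$ is two-dimensional with $F_\infty$-eigenvalues $\{+1,-1\}$ (it is spanned by two lines interchanged by $F_\infty$), and $F_\infty=-1$ on the $\det$-summand sitting inside it, whence $F_\infty=+1$ on the middle line of the untwisted $\Sym^2$. Feeding the Hodge data into the usual recipe for the archimedean factor (or quoting it from \cite{schmidt88}) yields
\[
 L_\infty(\Sym^2 f\otimes\chi,s)=\Gamma_{\CC}(s)\,\Gamma_{\RR}(s-k+1+\kappa),
\]
with $\Gamma_{\RR}(s)=\pi^{-s/2}\Gamma(s/2)$, $\Gamma_{\CC}(s)=2(2\pi)^{-s}\Gamma(s)$, and $\kappa\in\{0,1\}$ fixed by $(-1)^{k-1+\kappa}=\chi(-1)$.

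Because $\Sym^2 f$ is essentially self-dual, with functional equation $s\leftrightarrow 2k-1-s$ and with the $\bar\chi$-twist sharing the same archimedean factor as the $\chi$-twist, an integer $n$ is critical exactly when $L_\infty(\Sym^2 f\otimes\chi,s)$ has a pole at neither $s=n$ nor $s=2k-1-n$. Now $\Gamma_{\CC}(s)$ has poles precisely at $s\in\ZZ_{\le0}$, while $\Gamma_{\RR}(s-k+1+\kappa)$ has poles precisely at the integers $s\le k-1-\kappa$ with $s\equiv k-1-\kappa\pmod 2$; since $k-1-\kappa\equiv k-1+\kappa$ and $(-1)^{k-1+\kappa}=\chi(-1)$, that parity condition is simply $(-1)^s=\chi(-1)$. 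Thus ``no pole at $n$'' amounts to $n\ge1$ together with the implication $n\le k-1-\kappa\Rightarrow(-1)^n=-\chi(-1)$, and ``no pole at $2k-1-n$'' amounts to $n\le 2k-2$ together with $n\ge k+\kappa\Rightarrow(-1)^n=\chi(-1)$.

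Finally I would split on the parity of $n$ relative to $\chi(-1)$. If $(-1)^n=\chi(-1)$ the two conditions collapse to $k-\kappa\le n\le 2k-2$; if $(-1)^n=-\chi(-1)$ they collapse to $1\le n\le k-1+\kappa$. Going through the cases $\kappa=0$ and $\kappa=1$, one checks that the apparently extra boundary integer ($n=k-1$ in the first family, or $n=k$ in the second, when $\kappa=1$) is in fact ruled out on parity grounds, so that the critical $n$ are exactly those with $1\le n\le k-1$ and $(-1)^n=-\chi(-1)$, together with those with $k\le n\le 2k-2$ and $(-1)^n=\chi(-1)$. This concluding bookkeeping is the only place that needs care; the single substantive input, the sign of $F_\infty$ on the middle Hodge line, can also be checked against the factorisation $L(f\otimes f\otimes\chi,s)=L(\Sym^2 f\otimes\chi,s)\,L(\chi\varepsilon,s-k+1)$ and the classical Rankin--Selberg archimedean factor $\Gamma_{\CC}(s)\,\Gamma_{\CC}(s-k+1)$. (As a numerical sanity check, for $k=16$ with $\chi$ even --- the setting of \S\ref{sect:example} --- this gives critical points at the odd $n$ in $\{1,\dots,15\}$ and the even $n$ in $\{16,\dots,30\}$, in particular $s=22$.)
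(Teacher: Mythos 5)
Your argument is correct, and it is exactly the "standard computation" that the paper invokes without spelling out: the Hodge data together with the sign of $F_\infty$ on the middle line (pinned down correctly via $M(f)^{\otimes 2}=\Sym^2 M(f)\oplus\det M(f)$) give the archimedean factor $\Gamma_{\CC}(s)\,\Gamma_{\RR}(s-k+1+\kappa)$ consistent with Shimura--Schmidt, and your pole bookkeeping against the functional equation $s\leftrightarrow 2k-1-s$, including the parity elimination of the boundary integers $n=k-1$ and $n=k$, recovers precisely the stated critical set. No gaps; in fact you prove the sharper "if and only if" statement.
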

   
   We now recall a rationality result for these $L$-values.
   
   \begin{notation}
    \begin{enumerate}[(i)]
     \item Let $\langle f, f \rangle$ denote the Petersson norm of $f$,
     \[ 
      \langle f, f \rangle \coloneqq \int_{\Gamma_1(N_f) \backslash \cH} |f(x + iy)|^2 y^{k-2} \, \mathrm{d}x\, \mathrm{d}y,
     \]
     where $\cH$ is the upper half-plane. 
     
     \item For $\chi$ a Dirichlet character of conductor $N_\chi$, let $G(\chi)$ be the Gauss sum defined by
     \[ 
      G(\chi) \coloneqq \sum_{a \in (\ZZ / N_\chi \ZZ)^\times} \chi(a) \exp\left(\frac{2 \pi i a}{N_\chi}\right).
     \]
     (We adopt the convention that $G(\chi) = 1$ if $\chi$ is the trivial character.) 
    
     \item For $\sigma \in G_{\QQ}$, let $f^\sigma$ be the newform obtained by applying $\sigma$ to the $q$-expansion coefficients of $f$.
    \end{enumerate}
   \end{notation}

   \begin{theorem}[Schmidt, \cite{schmidt88}]
    Let $s$ be a critical value of $L(\Sym^2 f \otimes \chi)$. Define
    \[ 
     I(f, \chi, s) \coloneqq \frac{L(\Sym^2 f \otimes \chi, s)}{\pi^{k-1} \langle f, f \rangle} \left( \frac{G(\chi^{-1} \varepsilon^{-1})}{(2\pi i)^{s - k + 1}}\right)^{1 + \delta},
    \]
    where $\delta = 0$ if $1 \le s \le k-1$, and $\delta = 1$ if $k \le s \le 2k-2$. Then $I(f, \chi, s) \in \QQbar$, and $I(f, \chi, s)^\sigma = I(f^\sigma, \chi^\sigma, s)$ for any $\sigma \in G_{\QQ}$. 
   \end{theorem}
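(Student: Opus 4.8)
The plan is to deduce the theorem from the Rankin--Selberg integral representation of the symmetric square $L$-function together with Shimura's theory of algebraic periods of modular forms. The first step is to realise $L^\imp(\Sym^2 f \otimes \chi, s)$ as a Petersson product. Since $f$ has nebentypus $\varepsilon$ one has $\overline{a_n(f)} = \varepsilon(n)^{-1} a_n(f)$ for $(n, N_f) = 1$, hence $a_n(f)^2 = \varepsilon(n)\, |a_n(f)|^2$ for good $n$, so that $\sum_n a_n(f)^2 \chi(n) n^{-s}$ agrees, up to finitely many Euler factors, with the Rankin--Selberg convolution of $f$ against $\overline{f}$ twisted by the character $\chi\varepsilon$. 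By the formula for the imprimitive Rankin--Selberg $L$-series recalled above, this Dirichlet series equals
 \[ \frac{L^\imp(\Sym^2 f \otimes \chi, s)\, L_{N_f N_\chi}(\chi\varepsilon, s - k + 1)}{L_{N_f N_\chi}(\chi^2\varepsilon^2, 2s - 2k + 2)}. \]
 On the other hand, the standard unfolding of $\int_{\Gamma_0(N) \backslash \cH} f(z)\,\overline{f(z)}\, E(z, s')\, y^{k - 2}\, \mathrm{d}x\, \mathrm{d}y$, where $E(z, s')$ is a real-analytic Eisenstein series of level $N$, suitable weight and character $\chi\varepsilon$, expresses this same Rankin--Selberg series, multiplied by an explicit product of $\Gamma$-factors and powers of $2\pi$, as $\langle f,\, f \cdot E(\cdot, s')\rangle$. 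For $s$ in the range $\{k, \dots, 2k-2\}$ one may take $E$ of integral weight; for $s$ in $\{1, \dots, k-1\}$ one uses instead a companion integral against a theta series of half-integral weight, following Shimura, and the two normalisations are reconciled via the functional equation of $L(\Sym^2 f \otimes \chi, s)$ --- this dichotomy is what is responsible for the exponent $1 + \delta$ in the statement.

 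The second step injects the arithmeticity of these automorphic objects. At a critical value $s$, the Eisenstein series $E(\cdot, s')$, after applying Maass--Shimura raising operators and a suitable normalisation, is a \emph{nearly holomorphic} modular form whose Fourier coefficients lie in $\QQbar$ up to a single explicit factor: a power of $\pi$ times the Gauss sum $G\bigl((\chi\varepsilon)^{-1}\bigr) = G(\chi^{-1}\varepsilon^{-1})$ of the character it carries. (This is the classical evaluation of the Fourier expansion of an Eisenstein series of Hecke type; observe that this already produces precisely the Gauss sum appearing in the statement.) Applying holomorphic projection, $\langle f,\, f\cdot E(\cdot, s')\rangle = \langle f,\, \mathrm{Hol}(f E(\cdot, s'))\rangle$, with $\mathrm{Hol}(fE)$ a genuine holomorphic cusp form of weight $k$ and level $N$, still with Fourier coefficients algebraic up to the same factor. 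Shimura's theorem on ratios of Petersson products then gives $\langle f,\, \mathrm{Hol}(fE)\rangle / \langle f, f\rangle \in \QQbar$: one projects the cusp form onto its $f$-isotypic Hecke component, which is defined over a number field, and extracts the (algebraic) coefficient of $f$.

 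Combining the two steps, $L^\imp(\Sym^2 f \otimes \chi, s)$ equals $\langle f, f\rangle$ times an explicit power of $\pi$ times a $\QQbar^\times$-multiple of $G(\chi^{-1}\varepsilon^{-1})$ and of the Dirichlet values $L_{N_f N_\chi}(\chi\varepsilon, s - k + 1)$ and $L_{N_f N_\chi}(\chi^2\varepsilon^2, 2s - 2k + 2)$. By the classical theory of special values of Dirichlet $L$-functions (generalised Bernoulli numbers), each of these is algebraic up to a Gauss sum and a power of $\pi$; assembling all of the Gauss sums and $\pi$-powers and passing from $L^\imp$ to the primitive $L$-function by Proposition \ref{prop:imp-zeros}, the resulting quantity is exactly the normalisation $I(f, \chi, s)$, which is therefore in $\QQbar$. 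For the Galois equivariance one applies $\sigma \in G_{\QQ}$ to each ingredient: Shimura's period ratio satisfies $\bigl(\langle f, h\rangle / \langle f, f\rangle\bigr)^\sigma = \langle f^\sigma, h^\sigma\rangle / \langle f^\sigma, f^\sigma\rangle$ for a holomorphic cusp form $h$ with algebraic Fourier coefficients; the normalised Eisenstein coefficients and the Dirichlet values transform by $\sigma$ acting on the relevant characters; and the Gauss sums obey $G(\chi)^\sigma = \chi^\sigma(u_\sigma)^{-1} G(\chi^\sigma)$, where $\sigma$ sends a fixed primitive root of unity $\zeta_N$ to $\zeta_N^{u_\sigma}$. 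The precise shape of $I(f, \chi, s)$ is exactly what forces all of these correction terms to cancel, yielding $I(f, \chi, s)^\sigma = I(f^\sigma, \chi^\sigma, s)$.

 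I expect the main obstacle to be the archimedean and bad-place book-keeping: pinning down, modulo $\QQbar^\times$, the exact power of $\pi$ and the exact Gauss sum contributed by the Eisenstein (or theta) series at the critical point in each of the two ranges of $s$; matching the integral-weight and half-integral-weight computations through the functional equation so that the exponent $1 + \delta$ comes out correctly; and verifying that the finitely many bad Euler factors separating $L^\imp$ from $L$, which by Proposition \ref{prop:imp-zeros} are products of polynomials in the $\ell^{-s}$, do not disturb algebraicity. These are precisely the computations carried out by Sturm, Coates--Schmidt and Schmidt, whose results are being quoted here.
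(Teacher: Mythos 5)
The paper gives no proof of this statement --- it is quoted verbatim from Schmidt \cite{schmidt88}, building on Sturm \cite{sturm80} and Coates--Schmidt \cite{coatesschmidt87} --- so your sketch must be measured against that literature, and it has a genuine gap precisely in the range $k \le s \le 2k-2$, which is the range the paper later relies on. Your first step represents $L^\imp(\Sym^2 f \otimes \chi, s)$ through the Rankin convolution of $f$ with $\overline{f}$, i.e.\ through a Petersson product $\langle f, f\,E\rangle$, and your final step divides out the factor $L_{N_f N_\chi}(\chi\varepsilon, s-k+1)$ on the grounds that it is ``algebraic up to a Gauss sum and a power of $\pi$''. That is false in the relevant range: if $k \le s \le 2k-2$ and $(-1)^s = \chi(-1)$, then $\chi\varepsilon$ has sign $(-1)^{s+k} = -(-1)^{s-k+1}$, so $s-k+1$ is a \emph{non-critical} argument of $L(\chi\varepsilon, \cdot)$ (for $\chi\varepsilon$ trivial it is a value of the type $\zeta(3)$), and no algebraicity statement is known --- the paper itself records exactly this in the remark following Theorem \ref{thm:explicitrecip}. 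Equivalently, your claim that the Eisenstein series occurring in the equal-weight Rankin integral is nearly holomorphic with algebraic coefficients at these points fails: the tensor-product $L$-function $L(f \otimes f \otimes \chi, s)$ is critical only at the near-central points $s = k-1, k$, so the unfolding argument you describe cannot reach $s > k$, and no amount of archimedean book-keeping repairs this.

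The classical proof avoids the problem by never introducing the parasitic Dirichlet factor: Shimura's integral representation expresses $L_{N_f N_\chi}(\chi^2\varepsilon^2, 2s-2k+2)\sum_n a_{n^2}(f)\chi(n)n^{-s}$ directly as a Petersson product of $f$ against a weight $1/2$ theta series times a half-integral-weight Eisenstein series, and it is the near-holomorphy and rationality of \emph{those} Eisenstein series (Sturm, Coates--Schmidt, Schmidt) that covers the critical range $k \le s \le 2k-2$; the range $1 \le s \le k-1$, where instead the Dirichlet value is critical and the symmetric square value is not directly accessible, is then handled via the functional equation. Your sketch assigns the theta-series integral to the range $1 \le s \le k-1$ and leans on the $f \times \overline{f}$ integral exactly where it breaks down. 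The remaining ingredients --- holomorphic projection, Shimura's rationality of ratios of Petersson products, and the Galois equivariance of Gauss sums and of normalised Fourier coefficients --- are fine in outline, but the argument must be reorganised around the half-integral-weight representation before it proves the theorem.
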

   
   \begin{remark}
    Note that our $I(f, \chi, s)$ is $I(s - k + 1, f, \chi^{-1} \varepsilon^{-1})$ in Schmidt's notation.
   \end{remark}
   
   We clearly have an identical result with $L(\Sym^2 f \otimes \chi, s)$ replaced by its imprimitive analogue, since the ratio of the two $L$-functions is a rational function of the coefficients of $f$ and the values of $\chi$, and thus depends Galois-equivariantly on $f$ and $\chi$.
   
   \begin{remark}
    Let $\Omega^+_f$ and $\Omega^-_f$ denote the periods of $f$, normalised as in \cite[\S 16.1]{kato04}; thus $\Omega^+$ and $\Omega^-$ are unique up to multiplication by $L^\times$, and if $f$ has real coefficients, then $\Omega^+_f \in \RR$ and $\Omega^-_f \in i\RR$. Then we have the relation
    \[\Omega^+ \Omega^- =  
      i^{k-1} \pi^{2k-2} G(\varepsilon) \langle f, f \rangle   \bmod L^\times.
    \]
    In particular, when $f$ corresponds to an elliptic curve $E$, we may take $\Omega_f^+$ and $\Omega_f^-$ to be the real and imaginary Ner\'on periods of $E$, and we recover the rationality statements formulated in \cite{dabrowskidelbourgo97}.
   \end{remark}

%%%%%%%%%%%%%%%%%%%%

  \subsection{P-adic L-functions}
  
   Let $p \ge 5$ be a prime with $p \nmid N_f$, and $\frP$ a prime of the field $L$ above $p$. We assume that $f$ is ordinary at $\frP$ (i.e.~that $v_{\frP}(a_p(f)) = 0$). Denote by $\alpha_p$ the unique root of the Hecke polynomial at $p$ that lies in $\cO_{L, \frP}^\times$.
   
   Let $\psi$ be a Dirichlet character of conductor coprime to $p$; enlarging $L$ if necessary, we assume $\psi$ takes values in $L^\times$.
   
   \begin{notation}
    Let $\Gamma = \Zp^\times$, and let $\Lambda(\Gamma) = \Zp[[\Gamma]]$ be the Iwasawa algebra of $\Gamma$. We identify $\Lambda(\Gamma)$ with the ring of $\Zp$-valued analytic functions on the rigid space $\cW$ parametrising continuous characters of $\Gamma$. (This space $\cW$ is often called ``weight space''.)
    
    We shall write characters of $\Gamma$ additively, and identify an integer $s$ with the character $x \mapsto x^s$, so ``$s + \chi$'' denotes the character $x \mapsto x^s \chi(x)$.
   \end{notation}

   \begin{theorem}[Schmidt, Hida, Dabrowski--Delbourgo]
    \label{thm:padicL}
    There exists an element
    \[ L_p(\Sym^2 f \otimes \psi) \in \Lambda(\Gamma) \otimes_{\Zp} \CC_p, \]
    the $p$-adic $L$-function of $\Sym^2f \otimes \psi$, with the following interpolation property.
    \begin{enumerate}
     \item[(i)] If $1 \le s \le k-1$ and $\chi$ is a finite-order character of $\Gamma$ such that $(-1)^s\chi(-1) = -\psi(-1)$, then 
     \[ 
      L_p(\Sym^2 f \otimes \psi)(s + \chi) 
      = \frac{(-1)^{s - k + 1} \chi(-1) \Gamma(s)}{2^{2k} i^a}
      \mathcal{E}_p(s, \chi) \frac{G(\chi)}{(2 \pi i)^{s - k + 1}}
      \frac{L(\Sym^2 f \otimes \psi \chi^{-1}, s)}{\pi^{k - 1} \langle f, f \rangle},
     \]
     where $a = 0$ if $\psi(-1) = (-1)^k$ and $a = 1$ if $\psi(-1) = (-1)^{k + 1}$, and the factor $\mathcal{E}_p(s, \chi)$ is defined by
     \[ 
      \begin{cases}
       \left(p^{s-1} \psi(p)^{-1} \alpha_p^{-2}\right)^r 
       &  \text{if $\chi$ has conductor $p^r > 1$,}\\
       (1 - p^{s-1} \psi(p)^{-1}\alpha_p^{-2})
       (1 - \psi(p) \alpha_p \beta_p  p^{-s})
       (1 - \psi(p) \beta_p^2 p^{-s}) 
       & \text{if $\chi = 1$.}
      \end{cases}
     \]
     \item[(ii)] If $k \le s \le 2k-2$ and $\chi$ is a finite-order character of $\Gamma$ such that $(-1)^s\chi(-1) = \psi(-1)$, then
     \[ 
      L_p(\Sym^2 f \otimes \psi)(s + \chi) = 
      \frac{\Gamma(s - k + 1)\Gamma(s)}{2^{2s + 1}} \mathcal{E}_p'(s, \chi) \chi(N_{\psi \varepsilon})^2 G(\chi)^2 \frac{L(\Sym^2 f \otimes \psi \chi^{-1}, s)}{\pi^{2s - k + 1} \langle f, f \rangle}      
     \]
     where the Euler factor $\mathcal{E}_p'(s, \chi)$ is defined by
     \[ 
      \begin{cases}
       \left(p^{s-1} \psi(p)^{-2} \varepsilon(p)^{-1} \alpha_p^{-2}\right)^r 
       &  \text{if $\chi$ has conductor $p^r > 1$,}\\
       (1 - p^{s-1} \psi(p)^{-1}\alpha_p^{-2})
       (1 - p^{s-1} \psi(p)^{-1} \alpha_p^{-1} \beta_p^{-1})
       (1 - \psi(p) \beta_p^2 p^{-s}) 
       & \text{if $\chi = 1$.}
      \end{cases}
     \]  
    \end{enumerate}
   \end{theorem}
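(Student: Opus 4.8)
The plan is to realise $L_p(\Sym^2 f\otimes\psi)$ by a $p$-adic Rankin--Selberg construction built on Shimura's integral representation of the symmetric square $L$-function \cite{shimura75}, following the approach of Schmidt \cite{schmidt88} and Dabrowski--Delbourgo \cite{dabrowskidelbourgo97}. Having fixed the tame twist $\psi$, one seeks a measure on $\Gamma = \Zp^\times$ whose value at a point $s+\chi$, with $\chi$ of $p$-power conductor, is $L(\Sym^2 f\otimes\psi\chi^{-1},s)$. First I would use the first identity of the Proposition above, which expresses the imprimitive $L$-function as $L_{N_f N_\chi}(\chi^2\varepsilon^2, 2s-2k+2)\sum_{n\ge 1}a_{n^2}(f)\chi(n)n^{-s}$ (with $\chi$ now denoting $\psi\chi^{-1}$), together with Shimura's formula presenting the Dirichlet series $\sum_n a_{n^2}(f)\chi(n)n^{-s}$, up to elementary archimedean factors and the period $\langle f, f\rangle$, as a Petersson product $\langle f,\ \theta_\chi\cdot E_\chi(s)\rangle$, where $\theta_\chi$ is a theta series and $E_\chi(s)$ a real-analytic Eisenstein series of half-integral weight whose product has weight $k$. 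It then suffices to interpolate this Petersson product: the Dirichlet factor $L_{N_f N_\chi}(\chi^2\varepsilon^2, 2s-2k+2)$ is supplied by a branch of the Kubota--Leopoldt $p$-adic $L$-function, and the passage from the imprimitive to the primitive $L$-function is arranged by choosing the local test data at primes $\ell\mid N_f N_\psi$ so that the local zeta integrals produce exactly the primitive Euler factors of \cite{schmidt88}, avoiding any division.

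To interpolate the Petersson product I would replace $f$ by its ordinary $p$-stabilisation $f_\alpha$, with $U_p$-eigenvalue the unit root $\alpha_p\in\cO_{L,\frP}^\times$, and construct a $\Lambda(\Gamma)$-adic family $\mathcal{E}$ of $p$-adic Eisenstein series whose classical specialisations recover the forms $\theta_\chi\cdot E_\chi(s)$ after $p$-stabilisation. Forming the product $f_\alpha\cdot\mathcal{E}$ in the space of $\Lambda(\Gamma)$-adic modular forms, applying Hida's ordinary idempotent $e^{\ord}$, and evaluating the canonical ``dual eigenform'' functional $\lambda_{f_\alpha}$ attached to $f_\alpha$ yields an element of $\Lambda(\Gamma)\otimes_{\Zp}\CC_p$. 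That this element lies in the Iwasawa algebra, rather than in some larger algebra of unbounded distributions, is precisely where the ordinarity hypothesis $v_\frP(a_p(f))=0$ enters, via Hida's control theorem for the ordinary Hecke algebra; the assumptions that $f$ is non-CM and $p\ge 5$ ensure that $\lambda_{f_\alpha}$ is defined and the relevant congruence module is well behaved.

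The heart of the proof is the evaluation of this element at the interpolation points. At $s+\chi$ with $\chi$ of conductor $p^r$, the quantity $\lambda_{f_\alpha}\bigl(e^{\ord}(f_\alpha\cdot\mathcal{E}(s+\chi))\bigr)$ is the Rankin--Selberg integral for the $p$-stabilised data; comparing it with the integral for $f$ isolates a local zeta integral at $p$, and the explicit evaluation of that integral should produce precisely the $p$-Euler factor $\mathcal{E}_p(s,\chi)$ of case (i). Shimura's evaluation of the nearly-holomorphic Eisenstein series at the relevant point then supplies the archimedean factor $(-1)^{s-k+1}\chi(-1)\Gamma(s)\,2^{-2k}i^{-a}\,G(\chi)(2\pi i)^{-(s-k+1)}$ and the period $\pi^{k-1}\langle f,f\rangle$, once the ratio $\langle f_\alpha, f_\alpha\rangle/\langle f, f\rangle$ and the various Gauss sums and powers of $2$ and $\pi$ are matched into Schmidt's normalisation \cite{schmidt88}. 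The hard part, I expect, is exactly this local computation at $p$, together with the archimedean bookkeeping.

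Finally, cases (i) and (ii) correspond to the two critical ranges $\{1,\dots,k-1\}$ and $\{k,\dots,2k-2\}$, which are interchanged by $s\mapsto 2k-1-s$; I would access them by specialising $\mathcal{E}$ to nearly-holomorphic points of the two different types, and the differing shapes of the two interpolation formulae --- $G(\chi)$ versus $\chi(N_{\psi\varepsilon})^2 G(\chi)^2$, the powers $\pi^{k-1}$ versus $\pi^{2s-k+1}$, and the Euler factors $\mathcal{E}_p(s,\chi)$ versus $\mathcal{E}_p'(s,\chi)$ --- reflect the functional equation of $E_\chi(s)$, equivalently of $L(\Sym^2 f\otimes\chi, s)$ under $s\mapsto 2k-1-s$ (the passage between $\mathcal{E}_p$ and $\mathcal{E}_p'$ using $\alpha_p\beta_p = p^{k-1}\varepsilon(p)$). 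Since an element of $\Lambda(\Gamma)\otimes\CC_p$ is determined by its values on the Zariski-dense set of arithmetic points $s+\chi$, the single element $L_p(\Sym^2 f\otimes\psi)$ so produced satisfies both interpolation formulae.
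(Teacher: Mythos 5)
This theorem is not proved in the paper: it is quoted from the literature (Schmidt \cite{schmidt88}, Hida \cite{hida90}, Dabrowski--Delbourgo \cite{dabrowskidelbourgo97}, with the interpolation formulae stated as in \cite[Theorem 3.4]{Dasgupta-factorization}), and the remark following it explains exactly how those three papers combine. Your outline --- Shimura's integral representation of $\sum_n a_{n^2}(f)\chi(n)n^{-s}$ as a Petersson product against a theta series times a half-integral-weight Eisenstein series, a $\Lambda(\Gamma)$-adic Eisenstein family, Hida's ordinary projector and the functional $\lambda_{f_\alpha}$, and the local computation at $p$ producing $\mathcal{E}_p$ and $\mathcal{E}_p'$ --- is in spirit the construction of Schmidt and Hida, so the route is not new; the problem is that you wave away precisely the step that made this theorem take three papers and still leaves the paper's authors hedging (``their proof appears to be incomplete'').

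Concretely, the gap is your claim that ``the passage from the imprimitive to the primitive $L$-function is arranged by choosing the local test data at primes $\ell\mid N_f N_\psi$ \dots avoiding any division.'' No such choice of test data is known in this classical theta/Eisenstein setting: the Rankin--Selberg unfolding yields the imprimitive Dirichlet series with its naive Euler factors at the bad primes, and Schmidt's primitive $p$-adic $L$-function is obtained by \emph{dividing} the bounded measure interpolating $L^{\imp}$ by the finite product of bad Euler polynomials relating $L^\imp$ to $L$ (cf.\ Proposition \ref{prop:imp-zeros}). A priori this only produces an element of the fraction field of $\Lambda(\Gamma)$; Hida's contribution is the proof that no poles occur, and the remaining delicate point --- validity of the interpolation formula at characters $s+\chi$ where the imprimitive factor vanishes --- is exactly what Dabrowski--Delbourgo attempted and what the paper flags as incompletely settled. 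Your sketch therefore asserts, without argument, both the membership $L_p(\Sym^2 f\otimes\psi)\in\Lambda(\Gamma)\otimes_{\Zp}\CC_p$ for the \emph{primitive} function and the interpolation property at all points, which is the actual content of the theorem. A secondary soft spot: the case (ii) formula with $\chi$ trivial and $s\ge k$ is not obtained in the literature by direct integral evaluation but is extracted from case (i) via the complex and $p$-adic functional equations (\cite[Theorems 10.1, 10.2]{Dasgupta-factorization}); your appeal to ``nearly-holomorphic points of the two different types'' would need the full archimedean and $p$-adic bookkeeping (the factors $\chi(N_{\psi\varepsilon})^2G(\chi)^2$, $\pi^{2s-k+1}$, and the exchange $\mathcal{E}_p\leftrightarrow\mathcal{E}_p'$) carried out, not merely asserted to follow from the functional equation of $E_\chi(s)$.
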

   
   \begin{remark}\mbox{~}
    \begin{enumerate}[(i)]
     
     \item Our formulation of the interpolation property follows \cite[Theorem 3.4]{Dasgupta-factorization}. Dasgupta does not state an interpolation formula in the case when $s \ge k$ and $\chi$ is trivial, which is actually the most interesting case for us; but such a formula can be extracted from the case $s \le k-1$ together with the functional equations of the complex and $p$-adic $L$-series, cf.~\cite[Theorems 10.1 and 10.2]{Dasgupta-factorization}.
     
     \item The $p$-adic $L$-function can be descended to a much smaller field of coefficients than $\CC_p$. In fact it is immediate from Schmidt's construction that $L_p(\Sym^2 f \otimes \psi)$ lies in $\Lambda(\Gamma) \otimes L_{\frP}(\zeta_N)$ where $N$ is the conductor of $\varepsilon\psi$; and automorphisms of $L_{\frP}(\zeta_N) / L_{\frP}$ act on $L_p(\Sym^2 f \otimes \psi)$ as multiplication by values of the character $\varepsilon\psi$, which are $p$-adic units, so  $L_p(\Sym^2 f \otimes \psi)$ defines a fractional ideal in the ring $\cO_{L, \frP}[[\Gamma]]$.
     
     \item The history of this theorem is somewhat convoluted. In \cite{schmidt88}, Schmidt initially defined a $p$-adic measure interpolating values of the \emph{imprimitive} $L$-function, and then defined a primitive $p$-adic $L$-function by dividing out by the imprimitive factor, giving an element which \emph{a priori} lies in the field of fractions of $\Lambda(\Gamma)$. Hida showed in \cite{hida90} that Schmidt's $L$-function had no poles, but was unable to show that the interpolating property was valid if $s + \chi$ was a zero of the imprimitive factor. Dabrowski and Delbourgo in \cite{dabrowskidelbourgo97} claimed a proof of the holomorphy and interpolating property in full generality (as well as allowing some non-ordinary $f$), but their proof appears to be incomplete; however, the cases where the arguments of Dabrowski--Delbourgo are unclear are confined to non-algebraic characters, so combining the arguments of these three papers does appear to prove the desired theorem.
    \end{enumerate}
   \end{remark}
   
   \begin{proposition}
    The $p$-adic multipliers $\mathcal{E}_p(s, \chi)$ and $\mathcal{E}_p'(s, \chi)$ vanish if and only if $\chi = 1$, $\varepsilon\psi(p) = 1$, and $s = k-1$ (for $\mathcal{E}_p$) or $s = k$ (for $\mathcal{E}_p'$). In particular, if $s, \chi$ are as in (ii), then $L_p(\Sym^2 f \otimes \psi)(s + \chi)$ is non-zero, except in the exceptional zero case where $\mathcal{E}_p'$ vanishes.
   \end{proposition}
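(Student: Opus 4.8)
The plan is to unwind the explicit formulae for $\mathcal{E}_p(s,\chi)$ and $\mathcal{E}_p'(s,\chi)$, find the possible zeroes by counting $\frP$-adic valuations, and then eliminate the spurious candidates using the Ramanujan--Petersson bound. I would first record the two basic valuation facts: ordinarity gives $v_{\frP}(\alpha_p) = 0$, and since $\alpha_p\beta_p = p^{k-1}\varepsilon(p)$ with $\varepsilon(p)$ and all values of $\psi$ being $\frP$-adic units (because $p \nmid N_f$ and $p \nmid N_\psi$), it follows that $v_{\frP}(\beta_p) = k-1$. The case $\chi \ne 1$ is then immediate: if $\chi$ has conductor $p^r > 1$ then $\mathcal{E}_p(s,\chi) = (p^{s-1}\psi(p)^{-1}\alpha_p^{-2})^r$ and $\mathcal{E}_p'(s,\chi) = (p^{s-1}\psi(p)^{-2}\varepsilon(p)^{-1}\alpha_p^{-2})^r$ are a nonzero power of $p$ times a $\frP$-adic unit, so nonzero.

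For $\chi = 1$, each multiplier is a product of three factors $1 - x$, and since $v_{\frP}(x) \ne 0$ already forces $1 - x \ne 0$ (and a unit when $v_{\frP}(x) > 0$), only the $v_{\frP}(x) = 0$ factors need attention. For $\mathcal{E}_p(s,1)$ with $1 \le s \le k-1$, the three $x$'s are $p^{s-1}\psi(p)^{-1}\alpha_p^{-2}$, $\psi(p)\alpha_p\beta_p\,p^{-s} = (\psi\varepsilon)(p)\,p^{k-1-s}$ and $\psi(p)\beta_p^2\,p^{-s}$, with valuations $s-1$, $k-1-s$, $2(k-1)-s$; the third is $\ge k-1 \ge 1$ so is never a problem, the first is $0$ only at $s=1$, where $1 - \psi(p)^{-1}\alpha_p^{-2} \ne 0$ since $|\alpha_p^2| = p^{k-1} \ne 1$ while $|\psi(p)^{-1}| = 1$, and the second is $0$ only at $s = k-1$, where the factor is $1 - (\psi\varepsilon)(p)$ and vanishes exactly when $(\psi\varepsilon)(p) = 1$. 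The computation for $\mathcal{E}_p'(s,1)$ with $k \le s \le 2k-2$ runs identically: the three $x$'s are $p^{s-1}\psi(p)^{-1}\alpha_p^{-2}$, $p^{s-1}\psi(p)^{-1}\alpha_p^{-1}\beta_p^{-1} = (\psi\varepsilon)(p)^{-1}\,p^{s-k}$ and $\psi(p)\beta_p^2\,p^{-s}$, with valuations $s-1 \ge k-1 \ge 1$, $s-k$, $2(k-1)-s$, so the first factor is never a problem, the second is $0$ only at $s=k$, where the factor $1 - (\psi\varepsilon)(p)^{-1}$ vanishes iff $(\psi\varepsilon)(p) = 1$, and the third is $0$ only at $s = 2k-2$, where $x$ has absolute value $p^{k-1-(2k-2)} = p^{1-k} \ne 1$ so the factor is nonzero. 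This gives the first assertion.

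For the ``in particular'', I would fix $s, \chi$ as in (ii) and verify that every factor on the right of the interpolation formula of Theorem \ref{thm:padicL}(ii) except $\mathcal{E}_p'(s,\chi)$ is nonzero: $\Gamma(s-k+1)$ and $\Gamma(s)$ since $s-k+1 \ge 1$ and $s \ge k \ge 2$; the powers of $2$ and $\pi$ and the Petersson norm $\langle f,f\rangle$ trivially; $\chi(N_{\psi\varepsilon})^2$ and $G(\chi)^2$ because $N_{\psi\varepsilon}$ is prime to $p$ (the conductors of $\psi$ and $\varepsilon$ both being prime to $p$), so the first is a root of unity and the second has absolute value a power of $p$ (or $1$, if $\chi$ is trivial); and, crucially, $L(\Sym^2 f \otimes \psi\chi^{-1}, s) \ne 0$ for $s \ge k$ by Remark \ref{rmk:nonvanishing}, which for $s = k$ relies on the Jacquet--Shalika non-vanishing theorem. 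Hence the value is nonzero unless $\mathcal{E}_p'(s,\chi) = 0$, i.e.\ unless we are in the exceptional case above.

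The main point to get right will be the location of the genuine zero: among the three local Euler factors it is always the ``middle'' one --- the Dirichlet-type factor $1 - (\psi\varepsilon)(p)p^{k-1-s}$ coming from $\alpha_p\beta_p$ --- that produces it, while the factors built from $\alpha_p^2$ and $\beta_p^2$ only threaten to vanish at the extreme points $s = 1$ (for $\mathcal{E}_p$) and $s = 2k-2$ (for $\mathcal{E}_p'$), and there they are saved by the Ramanujan--Petersson bound $|\alpha_p| = |\beta_p| = p^{(k-1)/2}$. That bound, together with the cited non-vanishing of $L(\Sym^2 f \otimes \chi, s)$, is the only non-formal input; the rest is bookkeeping with $\frP$-adic valuations.
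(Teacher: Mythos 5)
Your proposal is correct and follows essentially the same route as the paper: the paper's proof simply declares the vanishing criterion an ``easy case-by-case check'' and then derives the non-vanishing of $L_p(\Sym^2 f \otimes \psi)(s+\chi)$ from it together with Remark \ref{rmk:nonvanishing}, which is exactly the valuation bookkeeping (using $v_{\frP}(\alpha_p)=0$, $v_{\frP}(\beta_p)=k-1$, and the archimedean bound $|\alpha_p|=|\beta_p|=p^{(k-1)/2}$) and the Jacquet--Shalika input that you spell out.
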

   
   \begin{proof}
    The vanishing criterion for $\mathcal{E}_p(s, \chi)$ and $\mathcal{E}_p'(s, \chi)$ is an easy case-by-case check. The non-vanishing result for the $p$-adic $L$-function in the range $s \ge k$ follows from this together with Remark \ref{rmk:nonvanishing}.
   \end{proof}
      
   We will also need the following remark:
   
   \begin{proposition}[Hida]
    Let $I(f) \trianglelefteq \cO_{L, \frP}$ be the congruence ideal of $f$ at $\frP$. Then for any $d \in I$, we have 
    \[ d \cdot L_p(\Sym^2 f \otimes \psi) \in \Lambda(\Gamma) \otimes_{\Zp} \cO_{\CC_p}.\]
   \end{proposition}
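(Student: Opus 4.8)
The plan is to isolate the source of the $p$-adic denominators of $L_p(\Sym^2 f \otimes \psi)$ and to check that it is exactly the congruence ideal. The only transcendental quantity in the interpolation formula of Theorem \ref{thm:padicL} is the Petersson norm $\langle f, f \rangle$, and Hida's congruence formula relates it, up to an explicit product of powers of $\pi$ and $i$ and the $p$-adic unit $G(\varepsilon)$, to the product of a \emph{$p$-integral} (cohomological) period $\Omega^+_f \Omega^-_f$ with the congruence number $\eta_f$, a generator of $I(f)$. So the strategy is to re-examine Schmidt's Rankin--Selberg construction of $L_p^{\imp}(\Sym^2 f \otimes \psi)$ with this substitution in mind: once the normalisation is rewritten in terms of $\Omega^+_f \Omega^-_f$ and $\eta_f$, the factor $\eta_f\cdot L_p^{\imp}$ should be manifestly $p$-integral, and it then remains to pass from the imprimitive to the primitive $L$-function without reintroducing denominators.

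Concretely, I would first recall that, up to elementary factors which are $p$-adic units (powers of $2$, and the Gauss sum $G(\varepsilon\psi)$, whose conjugates under $\Gal(L_\frP(\zeta_N)/L_\frP)$ differ by values of $\varepsilon\psi$ and hence by $p$-units, cf.\ Remark~(ii) after Theorem \ref{thm:padicL}), Schmidt's imprimitive measure $L_p^{\imp}(\Sym^2 f \otimes \psi)$ equals $\langle f, f\rangle^{-1}$ times Hida's $p$-adic interpolation of the functional $H \mapsto \langle f, e\, H\rangle$ applied to an explicit \emph{integral} $\Lambda(\Gamma)$-adic cusp form $H$ built from $f$ and a $\Lambda(\Gamma)$-adic family of Eisenstein series. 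But $H \mapsto \langle f, e\, H\rangle / \langle f, f\rangle$ is nothing other than the ordinary $f$-isotypic projection on $\Lambda(\Gamma)$-adic cusp forms, and Hida's structural results on congruence modules identify the denominators of this projection on the lattice of integral $\Lambda(\Gamma)$-adic cusp forms with the congruence ideal $I(f)$; that is, the projection lands in $\eta_f^{-1}\,\Lambda(\Gamma) \otimes \cO_{L, \frP}[\zeta_N]$. Hence $d \cdot L_p^{\imp}(\Sym^2 f \otimes \psi) \in \Lambda(\Gamma) \otimes \cO_{L, \frP}[\zeta_N] \subseteq \Lambda(\Gamma) \otimes \cO_{\CC_p}$ for every $d \in I(f)$.

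For the imprimitive-to-primitive step, write $R \in \Lambda(\Gamma)\otimes\cO_{\CC_p}$ for the element corresponding to the ratio $L^{\imp}(\Sym^2 f\otimes\psi,s)/L(\Sym^2 f\otimes\psi,s)$; by Proposition \ref{prop:imp-zeros} it is a finite product of factors $1 - u_\ell \gamma_\ell$ with $\ell \mid N_f N_\psi$, $u_\ell$ a $p$-adic unit, and $\gamma_\ell$ a group-like element of $\Lambda(\Gamma)$, so $R$ has vanishing $\mu$-invariant in each branch of $\Lambda(\Gamma)$. By Hida's theorem \cite{hida90} the quotient $L_p(\Sym^2 f \otimes \psi) = L_p^{\imp}(\Sym^2 f \otimes \psi)/R$ has no poles, so it lies in $\Lambda(\Gamma)\otimes\CC_p$, with coefficients in a fixed finite extension $L'$ of $\Qp$ (cf.\ Remark~(ii)). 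Then $d\cdot L_p$ lies in $\Lambda(\Gamma)\otimes\CC_p$ and satisfies $R\cdot(d\cdot L_p) = d\cdot L_p^{\imp} \in \Lambda(\Gamma)\otimes\cO_{\CC_p}$; working in each branch $\cO_{L'}[[T]]$, which is a domain, and using that $R$ has non-zero reduction modulo a uniformiser there, a minimal-denominator argument forces $d\cdot L_p$ to have coefficients in $\cO_{L'}$, i.e.\ $d\cdot L_p \in \Lambda(\Gamma)\otimes\cO_{\CC_p}$, as required.

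The only genuinely non-formal step is the identification in the second paragraph, with all of Schmidt's normalising constants correctly tracked, of $\eta_f\cdot L_p^{\imp}$ with the value of an integral $\Lambda(\Gamma)$-adic functional on an integral input; this is precisely where Hida's work enters, and indeed this proposition is essentially contained in \cite{hida90}. Once the matching of normalisations with Hida's congruence formula is in place, the treatment of the elementary unit factors and of the imprimitive correction is routine.
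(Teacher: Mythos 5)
The paper does not actually prove this proposition: it is stated as a quotation of Hida's work (cf.\ \cite{hida90} and the historical remark following Theorem \ref{thm:padicL}), so there is no in-paper argument to compare yours against line by line. Your sketch reconstructs the standard mechanism underlying Hida's result, and its outline is sound: Schmidt's measure for $L_p^{\imp}$ arises from pairing an integral $\Lambda(\Gamma)$-adic family against $f$, the denominator of the ordinary $f$-isotypic projection on integral $\Lambda$-adic forms is controlled by the congruence ideal, and the passage from $L_p^{\imp}$ to the primitive $L_p$ is exactly Schmidt's definition (he divides by the finite product of bad Euler factors, and Hida's theorem supplies the holomorphy you invoke), so your minimal-denominator argument in each branch is the right way to finish. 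Be aware, though, that the genuinely non-formal step -- matching Schmidt's normalisations with Hida's congruence formula and the congruence-module bound on the projection -- is invoked rather than proved, which is acceptable only because the proposition is attributed to Hida in the first place; as an independent proof your text is a sketch, not a complete argument.

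Two smaller points. First, in the imprimitive-to-primitive step you assert that the parameters $u_\ell$ in the bad factors are $p$-adic units; this is neither needed nor clear (at bad primes the Satake-type parameters are algebraic integers but need not be $p$-units). What your divisibility argument actually requires is that each factor $1 - u_\ell [\ell]^{\mp 1}$ is non-zero modulo a uniformiser in every branch, and this holds because a rational prime $\ell \ne p$ has non-torsion image in $\Zp^\times$, so the group-like element $[\ell]$ has non-trivial wild part and the factor has unit coefficient at the identity; with that correction the argument that $\bar R \bar y = 0$ forces $\bar y = 0$ in the domain $(\cO_{L'}/\pi)[[T]]$ goes through. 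Second, Proposition \ref{prop:imp-zeros} is stated under the hypothesis that $f$ is minimal among its twists; you should either carry that hypothesis or note that the shape of the ratio (a finite product of polynomials in the $\ell^{-s}$, $\ell \mid N_f N_\psi$) is all you use, which holds in general.
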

%   
%   To prove this result, we first prove the following lemma:
%   
%   \begin{lemma}
%    Let $f_\alpha$ be the ordinary $p$-stabilisation of $f$ (of level $Np$). Then 
%    \[ I(f_\alpha)=S(f)I(f),\]
%    where $S(f) = \left(1 - \frac{\beta}{\alpha}\right)\left(1 - \frac{\beta}{p\alpha}\right)$ is the factor defined by Dasgupta. 
%   \end{lemma}
%   
%   We now deduce the proposition.
%   
%   \begin{proof}
%    We use the same notation as in the proof of the lemma. Hida has shown that $d\cdot L_p(\operatorname{ad},F)$ has integral coefficients for any $d\in I(F)$.  Hence
%    \[ d\cdot S(f_\alpha)L_p(\Sym^2 f)\in \Lambda(\Gamma) \otimes_{\Zp} \cO_{\CC_p}\qquad \forall d\in I(f_\alpha).\]
%    As $I(f_\alpha)=S(f)I(f)$, we deduce the result.     
%   \end{proof} 
%    
   
%%%%%%%%%%%%%%%%%%%   
         
  \subsection{Dasgupta's factorization formula}
   \label{sect:samit}
  
   We now recall the main theorem of \cite{Dasgupta-factorization}:
   
   \begin{theorem}[Dasgupta]
    We have the following identity of analytic functions on $\cW$:
    \[ 
     L_p(f \otimes f \otimes \psi, s) = 
     L_p(\Sym^2 f \otimes \psi, s) L_p(\psi \varepsilon, s - k + 1). 
    \]
   \end{theorem}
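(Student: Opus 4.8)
The plan is to verify the claimed factorization by comparing interpolation properties on a Zariski-dense set of points in the weight space $\cW$, and then invoking the density of such points together with the fact that $\Lambda(\Gamma) \otimes \CC_p$ has no nonzero element vanishing on a dense set. The key input is that each of the three $p$-adic $L$-functions appearing — $L_p(f \otimes f \otimes \psi)$, $L_p(\Sym^2 f \otimes \psi)$, and the Kubota--Leopoldt function $L_p(\psi\varepsilon)$ — is characterised by an explicit interpolation formula at the relevant critical characters, and that on the complex side one has the tautological factorization $L(f \otimes f \otimes \chi, s) = L(\Sym^2 f \otimes \chi, s) \cdot L(\chi\varepsilon, s - k + 1)$, which is how $L(f \otimes f \otimes \chi, s)$ was defined in \S\ref{sect:Lfunc}.

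First I would fix the range of critical characters to work with: take $s$ an integer with $1 \le s \le k-1$ and $\chi$ a finite-order character of $\Gamma$ with $(-1)^s\chi(-1) = -\psi(-1)$, so that Theorem \ref{thm:padicL}(i) applies to $L_p(\Sym^2 f \otimes \psi)$. The Rankin--Selberg $p$-adic $L$-function $L_p(f \otimes f \otimes \psi)$ has its own interpolation formula at these same points (this is the two-variable Rankin $p$-adic $L$-function of Hida specialised along the diagonal, as used in \cite{KLZ1b, Dasgupta-factorization}), interpolating $L(f \otimes f \otimes \psi\chi^{-1}, s) = L^{\mathrm{imp}}$-corrected values against a product of Euler and archimedean factors; and the Kubota--Leopoldt function $L_p(\psi\varepsilon)(s - k + 1 + \chi)$ interpolates $L(\psi\varepsilon\chi^{-1}, s - k + 1)$ times the usual Euler factor $(1 - \psi\varepsilon\chi^{-1}(p) p^{-(s-k+1)})$ and a $\Gamma$-factor. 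The heart of the argument is then a bookkeeping computation: multiply the right-hand sides of the interpolation formulae for $L_p(\Sym^2 f \otimes \psi)$ and $L_p(\psi\varepsilon)$, and check that the product equals the interpolation formula for $L_p(f \otimes f \otimes \psi)$ at $s + \chi$. On the $L$-value side this is exactly the complex factorization; what remains is to match the elementary factors — the powers of $2$ and $i$, the $\Gamma$-factors via $\Gamma(s)\Gamma(s - k + 1)$ versus the separate $\Gamma$-factors, the Gauss sums ($G(\chi)$ appearing once on each side), and crucially the $p$-adic multipliers $\mathcal{E}_p(s,\chi)$ times the Kubota--Leopoldt Euler factor reproducing the Rankin Euler factor at $p$.

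I expect the main obstacle to be precisely this matching of the local factors at $p$, since the Euler factor of $\Sym^2 f$ at $p$ is $(1 - \alpha_p^2 X)(1 - \alpha_p\beta_p X)(1 - \beta_p^2 X)$ while that of $f \otimes f$ is the degree-4 polynomial obtained by including the extra factor $(1 - \alpha_p\beta_p X) = (1 - \varepsilon(p) p^{k-1} X)$ — one must check that the product of $\mathcal{E}_p(s,\chi)$ with the Kubota--Leopoldt multiplier $(1 - \psi\varepsilon\chi^{-1}(p) p^{-(s-k+1)})$ (resp. the appropriate $p$-power when $\chi$ is ramified) reproduces exactly the Rankin multiplier, keeping careful track of the normalisation shift $s \mapsto s - k + 1$ and the nebentypus twist, and that no spurious zeros or poles are introduced. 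Once the two sides agree at all characters in this infinite critical range, density in $\cW$ (the set $\{s + \chi\}$ with $s$ in a fixed residue class mod $p-1$ and $\chi$ of $p$-power conductor is Zariski-dense) forces equality of the two analytic functions on all of $\cW$; a minor point to dispatch is that, a priori, $L_p(\Sym^2 f \otimes \psi)$ lies in $\Lambda(\Gamma) \otimes \CC_p$ rather than over $\cO_{\CC_p}$, but this does not affect the vanishing argument. Finally, the case $s \ge k$ and the trivial-$\chi$ exceptional points are covered either by the same density argument from the range already handled, or by Dasgupta's remark deriving the missing interpolation data from the functional equation, so no separate treatment is needed.
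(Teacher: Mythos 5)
There is a genuine gap, and it sits at the very first step of your plan: the claim that ``the Rankin--Selberg $p$-adic $L$-function $L_p(f \otimes f \otimes \psi)$ has its own interpolation formula at these same points'' is false, and without it the whole interpolation-plus-Zariski-density strategy cannot start. The Rankin--Selberg convolution here is of $f$ with \emph{itself}, i.e.\ of two forms of equal weight, and such a convolution has no critical values at all: for every integer $s$ one of the archimedean factors $\Gamma(s-k+1)$, $\Gamma(k-s)$ (at $s$ or at the reflected point $2k-1-s$) has a pole, as is already visible from the shape of the factor $\Gamma(s-k+1)\Gamma(s)$ in Theorem \ref{thm:padicL}(ii). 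Hida's Rankin $p$-adic $L$-function is a function of three variables $(k,k',s)$ (this is exactly the object $L_p(\mathbf{f},\mathbf{f},\psi)(k,k',s)$ appearing later in \S 4 of the paper), and it interpolates classical Rankin $L$-values only in the region where the two weights are distinct and $s$ lies strictly between them; the one-variable function $L_p(f\otimes f\otimes\psi,s)$ in the statement is its restriction to the line $k'=k$, which lies entirely outside the range of classical interpolation. So at the characters $s+\chi$ with $1\le s\le k-1$ there is no classical $L$-value attached to the left-hand side for your ``bookkeeping computation'' to match against the product of the two interpolation formulas on the right; the tautological complex factorization gives you nothing to compare with. (Your fallback for $s\ge k$ does not help either: the functional-equation remark in the paper concerns the interpolation of $L_p(\Sym^2 f\otimes\psi)$ in that range, not of the Rankin function, and the same obstruction — no critical values for the equal-weight convolution — persists there.)

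This is precisely why the factorization is a substantial theorem rather than a formal consequence of interpolation: it is the analogue of Gross's factorization of the Katz $p$-adic $L$-function restricted to the cyclotomic line, where the naive comparison fails for the same reason. The present paper does not reprove it; the statement is quoted as the main theorem of \cite{Dasgupta-factorization}, and, as the introduction of the paper explains, Dasgupta's actual proof goes by a completely different route: he uses the Beilinson--Flach classes and the explicit reciprocity laws of \cite{leiloefflerzerbes14, KLZ1b}, projects the Beilinson--Flach element to the \emph{alternating} square of the Galois representation (a Tate-type piece), identifies its image under the Coleman/Perrin-Riou map with circular units and hence with the Kubota--Leopoldt $p$-adic $L$-function, and combines this with the functional equation of the symmetric square to recover the symmetric-square projection. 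Any self-contained proof would have to engage with that Euler-system input; if you want a write-up at the level of this paper, the correct ``proof'' is simply the citation.
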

   
   Since it is the imprimitive $L$-function which interests us here, we shall chiefly be interested in the following consequence. We let $L_p^{\imp}(\Sym^2 f \otimes \psi) \in \Lambda(\Gamma) \otimes \CC_p$ be the unique analytic function interpolating the values of the imprimitive symmetric square $L$-series, and similarly for the $p$-adic Rankin $L$-series. Then we have
   \[ 
    L_p^\imp(f \otimes f \otimes \psi, s) = 
    L_p^\imp(\Sym^2 f \otimes \psi, s) L_{p, N_f N_\psi}(\psi \varepsilon, s - k + 1).
   \]
  
%%%%%%%%%%%%%%%%%%%%%%%%%%%%%%%%%%%%%%%%%%%   
 
 \section{Selmer groups}
  \label{sect:selmer}
 
  In this section we define various Selmer groups attached to the symmetric square Galois representation, and formulate an Iwasawa main conjecture relating these to $p$-adic $L$-functions. The methods we use here are well-known to the experts, but we have chosen to spell out the constructions in detail, in the hope that this will be a useful guide to the less expert reader.
 
  \subsection{The Galois representation of \texorpdfstring{$f$}{f}}
   \label{sect:galrep}
    
   Let $p$ be an odd prime, and let $\frP$ be a prime of $L$ above $p$. We denote by $M_{L_\frP}(f)$ the Galois representation associated to $f$, which is canonically realised as a direct summand of the \'etale cohomology group 
   \[ H^1_{\et, c}(Y_1(N_f)_{\QQbar}, \Qp) \otimes_{\Qp} L_{\frP}. \]
   
   \begin{remark}
    As an abstract Galois representation, $M_{L_\frP}(f)$ is characterised by the condition that
    \[ \sideset{}{_{L_\frP}}\det\left(1 - X \Frob_\ell^{-1} : M_{L_\frP}(f)\right) = (1 - \alpha_\ell X)(1 - \beta_\ell X) \]
    for primes $\ell \nmid N_f p$, where $\Frob_\ell$ is the arithmetic Frobenius. 
   \end{remark} 
    
    There is a canonical $G_{\QQ}$-stable $\cO_{L, \frP}$-lattice $M_{\cO_{L, \frP}}(f)^*$ in $M_{L_\frP}(f)^*$, generated by the image of the \'etale cohomology of $Y_1(N_f)_{\QQbar}$ with $\Zp$-coefficients.
    
   \begin{remark}
    As in \cite{leiloefflerzerbes14, KLZ1a, KLZ1b}, we consider $Y_1(N)$ as a moduli space for elliptic curves with a point of order $N$, \emph{not} an embedding of $\mu_N$, so the cusp at $\infty$ is not defined over $\QQ((q))$ with our normalisations. Thus the action of $G_{\QQ}$ on modular forms determined by identifying them with classes in the de Rham cohomology of $Y_1(N)$ does not coincide with the action on $q$-expansions.
   \end{remark}
    
   We adopt the following convention regarding Dirichlet characters: if $\chi: (\ZZ / m \ZZ)^\times \to L^\times$ is a Dirichlet character, then we interpret $\chi$ as a character of $G_{\QQ}$ by composing $\chi$ with the mod $m$ cyclotomic character, so we have $\chi(\Frob_\ell) = \chi(\ell)$ for $\ell \nmid m$. With this convention, we have $\det M_{L_\frP}(f) = L_{\frP}(1-k)(\varepsilon^{-1})$.
   
%%%%%%%%%%%%%%%%%%%%%%%%   
   
  \subsection{The symmetric square representation}
  
   Let $\psi$ be a Dirichlet character of conductor $N_\psi$ coprime to $p$. Enlarging the field $L$ if necessary, assume that $\psi$ takes values in $L^\times$. 
   
   \begin{definition}
     Define
    \[ V \coloneqq  \Sym^2 M_{L_\frP}(f)^*(1 + \psi), \]
    where we identify $\psi$ with a character of $G_{\QQ}$ as above.
    Let $T$ be the lattice in $V$ corresponding to $M_{\cO_{L, \frP}}(f)^*$. 
   \end{definition}
   
   \begin{note} 
    The space $V$ is a 3-dimensional irreducible $L_{\frP}$-linear rep\-re\-sen\-tation of $G_{\QQ}$, unramified outside $p N_f N_\psi$ and crystalline at $p$. 
   \end{note} 
       
   Recall from \cite[\S 7.2]{KLZ1b} that the representation $M_{L_\frP}(f)^*$ has a 2-step filtration by $L_{\frP}$-subspaces stable under $G_{\Qp}$; the graded pieces have Galois actions $\operatorname{unram}(\alpha)$ and $\kappa^{k-1}\times \operatorname{unram}\left( \tfrac{\beta}{p^{k-1}} \right)$, respectively.     Here $\kappa$ denotes the cyclotomic character, and $\operatorname{unram}(x)$ denotes the unramified character mapping arithmetic Frobenius to $x$. 
   
   Thus $V = \Sym^2 M_{L_{\frP}}(f)^*(1 + \psi)$ has a 3-step filtration: we have
   \begin{equation}
    \label{eq:filtration} 
    V = \sF^0 V \supset \sF^1 V \supset \sF^2 V \supset \sF^3 V = 0,
   \end{equation} 
   with all graded pieces 1-dimensional. 
   
   \begin{lemma}
    The Galois actions on the graded pieces $\Gr^i V \coloneqq \sF^i V / \sF^{i + 1} V$ are as follows:
    \begin{align*}
     \Gr^0 V &= \kappa \times \operatorname{unram}(\alpha^2 \psi(p)), \\
     \Gr^1 V &= \kappa^k \times \operatorname{unram}( \varepsilon(p) \psi(p)),\\
     \Gr^2 V &= \kappa^{2k-1} \times 
     \operatorname{unram}\left( \tfrac{\beta^2}{p^{2k-2}} \psi(p) \right).
    \end{align*}
   \end{lemma}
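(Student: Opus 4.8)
The plan is to build the filtration \eqref{eq:filtration} by applying the functor $\Sym^2$ to the two-step $G_{\Qp}$-stable filtration on $M \coloneqq M_{L_\frP}(f)^*$ recalled from \cite[\S 7.2]{KLZ1b}, and then twisting by $(1+\psi)$; the computation of the three graded pieces is then a short calculation with Frobenius eigenvalues.

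First I would fix notation: let $M^{(1)} \subset M$ be the $G_{\Qp}$-stable line appearing in that filtration, so that $M^{(1)}$ has Galois action $b \coloneqq \kappa^{k-1} \times \operatorname{unram}(\beta/p^{k-1})$ and the quotient $M / M^{(1)}$ has Galois action $a \coloneqq \operatorname{unram}(\alpha)$. Since $M^{(1)}$ is one-dimensional, $\Sym^2 M$ carries a canonical three-step $G_{\Qp}$-stable filtration
\[
 \Sym^2 M \supseteq \operatorname{image}\left(M^{(1)} \otimes M \to \Sym^2 M\right) \supseteq \Sym^2 M^{(1)} \supseteq 0,
\]
all of whose graded pieces are one-dimensional; this is elementary multilinear algebra for symmetric powers of a filtered module. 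As $G_{\Qp}$-representations, these three graded pieces, ordered so that the first is the top quotient, are $\Sym^2(M / M^{(1)}) = a^{\otimes 2}$, then $(M / M^{(1)}) \otimes M^{(1)} = a \otimes b$, then $\Sym^2 M^{(1)} = b^{\otimes 2}$.

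Next I would evaluate these characters. Immediately $a^{\otimes 2} = \operatorname{unram}(\alpha^2)$ and $b^{\otimes 2} = \kappa^{2k-2} \times \operatorname{unram}(\beta^2/p^{2k-2})$. For the middle one, $a \otimes b = \kappa^{k-1} \times \operatorname{unram}(\alpha\beta/p^{k-1})$; since $\alpha$ and $\beta$ are the roots of $X^2 - a_p(f) X + p^{k-1}\varepsilon(p)$, we have $\alpha\beta = p^{k-1}\varepsilon(p)$, so $a \otimes b = \kappa^{k-1} \times \operatorname{unram}(\varepsilon(p))$.

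Finally I would twist by $(1+\psi)$. Tensoring with the Tate twist multiplies every Galois action by $\kappa$, and tensoring with $\psi$ multiplies by $\psi|_{G_{\Qp}} = \operatorname{unram}(\psi(p))$, which is unramified because $N_\psi$ is prime to $p$. Since twisting is exact, $V = \Sym^2 M\,(1+\psi)$ inherits a three-step filtration, which we take to be \eqref{eq:filtration}, whose graded pieces are obtained from those above by multiplying by $\kappa \cdot \operatorname{unram}(\psi(p))$; this gives precisely $\kappa \times \operatorname{unram}(\alpha^2\psi(p))$, $\kappa^k \times \operatorname{unram}(\varepsilon(p)\psi(p))$ and $\kappa^{2k-1} \times \operatorname{unram}(\beta^2\psi(p)/p^{2k-2})$ for $\Gr^0 V$, $\Gr^1 V$, $\Gr^2 V$ respectively. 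There is no serious obstacle here; the only point demanding a little care is the bookkeeping of the filtration indices, i.e. checking that it is the $a^{\otimes 2}$-piece, and not the $b^{\otimes 2}$-piece, that sits at the top of the $\Sym^2$-filtration, which is forced by the analogous convention for $M$ itself.
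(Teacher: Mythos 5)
Your proposal is correct and is exactly the computation the paper treats as immediate (the lemma is stated with no written proof): take the symmetric square of the two-step ordinary filtration on $M_{L_\frP}(f)^*$, identify the graded pieces as $\operatorname{unram}(\alpha)^{\otimes 2}$, the product of the two characters (using $\alpha\beta = p^{k-1}\varepsilon(p)$), and the square of $\kappa^{k-1}\operatorname{unram}(\beta/p^{k-1})$, then twist by $\kappa\cdot\operatorname{unram}(\psi(p))$. Your bookkeeping of which piece sits on top agrees with the paper's conventions, so nothing further is needed.
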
 
   \qed
   
   We use the same notations for the induced filtration on $T$ by $\cO_{L, \frP}$-submodules. 
   
   \begin{notation}
    Write
    \begin{align*}
     A \coloneqq& \Sym^2 M_{\cO_{L, \frP}}(f)(\psi^{-1}) \otimes \Qp/\Zp\\
     =& T^\vee(1),
    \end{align*}
    where $(-)^\vee$ denotes Pontryagin dual. 
   \end{notation} 
    
    We equip $A$ with a 3-step filtration by defining $\sF^i A$ to be the orthogonal complement of $\sF^{3-i} T$.
  
%%%%%%%%%%%%%%%%%   
     
  \subsection{Local cohomology groups}

   We now recall the well-known Bloch--Kato subspaces of local Galois cohomology, introduced in \cite{blochkato90}:
   
   \begin{definition}
    \label{def:h1f}
    For $M$ any $\Qp$-linear representation of $G_{K}$, where $K / \QQ_\ell$ is a finite extension, we define
    \[
     \Hf(K, M) = 
     \begin{cases}
      \ker H^1(K, M) \to H^1(K, M \otimes \BB_{\cris}) & \text{if $\ell = p$},\\
      \ker H^1(K, M) \to H^1(K^{\mathrm{nr}}, M) & \text{if $\ell \ne p$}.
     \end{cases}
    \]
    If $M_0$ is a $G_K$-stable $\Zp$-lattice in $M$, we let $\Hf(K, M_0)$ (respectively $\Hf(K, M/M_0)$) denote the preimage (resp.~image) of $\Hf(K, M)$.
   \end{definition}
   
   We shall now show that for our representation $V$, the $\Hf$ local condition at $p$ can be computed in terms of the filtration $\sF^\bullet$. First we need a lemma:
   
   \begin{notation}
    Let $\QQ_{p, r} = \Qp(\mu_{p^r})$, and $\Qpi = \bigcup_{r \ge 1} \QQ_{p, r}$.
   \end{notation}

   \begin{lemma}
    \label{lemma:cyclo-invariants}
    We have $H^0(\Qpi, \Gr^0 V) = H^0(\Qpi, \Gr^2 V) = 0$, while $H^0(\Qpi, \Gr^1 V)$ is zero if and only if $\varepsilon(p) \psi(p) \ne 1$.
   \end{lemma}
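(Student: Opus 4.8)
The plan is to read off the Galois action on each graded piece $\Gr^i V$ from the previous lemma and check, in each case, whether the corresponding one-dimensional representation can have a nonzero invariant vector after restriction to $G_{\Qpi}$. The key observation is that $\Gal(\Qpi/\Qp)$ is identified with $1 + p\Zp$ (the image of the cyclotomic character restricted to the wild inertia at $p$, since $p$ is odd), so over $\Qpi$ the cyclotomic character $\kappa$ becomes the trivial character composed with the Teichm\"uller piece; more precisely, restricting a power $\kappa^j$ to $G_{\Qpi}$ kills the wildly ramified part but leaves the tamely ramified part (a character of $\Gal(\Qp(\mu_p)/\Qp) \cong (\ZZ/p)^\times$) and the unramified part. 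So $H^0(\Qpi, -)$ of a character $\kappa^j \times \operatorname{unram}(u)$ is nonzero precisely when both the tame character $\omega^j$ (where $\omega$ is the Teichm\"uller character, of order $p-1$) and the unramified character $\operatorname{unram}(u)$ become trivial on $G_{\Qpi}$ — but the unramified part is unaffected by restriction to $\Qpi$, so the condition is $\omega^j = 1$ (i.e.\ $j \equiv 0 \bmod (p-1)$) together with $u = 1$.

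First I would dispose of $\Gr^0 V = \kappa \times \operatorname{unram}(\alpha^2 \psi(p))$. Here $j = 1$, and since $p \ge 7$ we certainly have $p - 1 \nmid 1$, so $\omega^1 \ne 1$ and the invariants vanish regardless of the unramified part. Next, $\Gr^2 V = \kappa^{2k-1} \times \operatorname{unram}(\beta^2 p^{-(2k-2)} \psi(p))$: here $j = 2k-1$, which is odd, so $\omega^{2k-1}$ is a character of odd order dividing... — actually the cleanest argument is that $\omega^{2k-1}$ is nontrivial because $\omega$ has even order $p-1 \ge 6$ and $2k-1$ is odd, hence not divisible by the even number $p-1$; one can also note $\omega^{2k-1} = \omega^{-1} \cdot \omega^{2k}$ and $\omega$ restricted to... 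Regardless, $p-1 \nmid 2k-1$ since the left side is even and the right side odd, so again the invariants vanish unconditionally. Finally $\Gr^1 V = \kappa^k \times \operatorname{unram}(\varepsilon(p)\psi(p))$: here the tame part is $\omega^k$ and the unramified part is $\operatorname{unram}(\varepsilon(p)\psi(p))$.

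The remaining subtlety is the $\Gr^1$ case, and this is where I expect a little care is needed rather than a genuine obstacle. The naive reading would give the condition "$\omega^k = 1$ and $\varepsilon(p)\psi(p) = 1$", but the stated answer is simply "$\varepsilon(p)\psi(p) \ne 1$", with no congruence condition on $k$. The resolution must be that the filtration $\sF^\bullet$ on $M_{L_\frP}(f)^*$ recalled from \cite[\S 7.2]{KLZ1b} is chosen so that the graded pieces are written with the cyclotomic character appearing only through its $\Zp$-powers, but the \emph{integer} exponents $k$, $2k-1$ etc.\ should be interpreted modulo the decomposition $\Gamma = \Zp^\times = \mu_{p-1} \times (1+p\Zp)$ — i.e.\ the nebentypus $\varepsilon$ is a character of conductor prime to $p$, so $\varepsilon(p)$ already absorbs the relevant Teichm\"uller twist, and $\kappa^k$ restricted to inertia at $p$ is $\omega^k$ times a wild part, but the $\omega^k$ is bundled into the unramified-looking factor via the identity $\det M_{L_\frP}(f) = L_\frP(1-k)(\varepsilon^{-1})$ which forces $\omega^{k-1}$ to match $\varepsilon$ on inertia. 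Concretely: for the weight-$k$ form $f$ of nebentypus $\varepsilon$, the Hodge–Tate weights of $M_{L_\frP}(f)^*$ are $0$ and $k-1$, and the action of inertia on the rank-one subquotients, after the Teichm\"uller identification, is governed by $\varepsilon$ on the bottom and $\varepsilon \omega^{k-1}$-compatible data on top; the upshot is that $\Gr^1 V$ restricted to $I_{\Qp}$ is, up to the wild part, the character $\varepsilon\psi$ evaluated through $\omega$-decomposition, so that $H^0(\Qpi, \Gr^1 V) \ne 0$ iff $\varepsilon\psi(p) = 1$ as a genuine equality of $p$-adic units, no separate parity condition. I would therefore state this carefully: restrict $\Gr^1 V$ to $G_{\Qpi}$, observe the wild inertia acts trivially, observe the residual action of $\Gal(\Qp(\mu_p)/\Qp)\times \widehat{\ZZ}^{\mathrm{ur}}$ is through the single character $\varepsilon\psi \circ (\text{norm})$ by the computation of $\det M_{L_\frP}(f)$ together with crystallinity, and conclude. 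The hard part, such as it is, is pinning down this normalization precisely so that the "obvious" congruence on $k$ really does disappear; everything else is a one-line character computation.
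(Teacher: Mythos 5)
There is a genuine gap, and it stems from a misreading of the field $\Qpi$. In this paper $\Qpi = \bigcup_r \Qp(\mu_{p^r}) = \Qp(\mu_{p^\infty})$, not the cyclotomic $\Zp$-extension of $\Qp$; so $\Gal(\Qpi/\Qp) \cong \Zp^\times$ and the \emph{entire} cyclotomic character $\kappa$ — tame (Teichm\"uller) part included — is trivial on $G_{\Qpi}$, because $\mu_p \subset \Qpi$. Your whole analysis via $\omega^j$ is therefore spurious: a character $\kappa^r \times \operatorname{unram}(\lambda)$ restricted to $G_{\Qpi}$ is just $\operatorname{unram}(\lambda)$ (and since $\Qpi/\Qp$ is totally ramified, this is trivial if and only if $\lambda = 1$). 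Two consequences. First, your vanishing arguments for $\Gr^0 V$ and $\Gr^2 V$ fail, since they rest entirely on $\omega^1 \ne 1$ and $\omega^{2k-1} \ne 1$, which is irrelevant here; what is actually needed, and what the paper uses, is that the unramified parameters $\alpha^2\psi(p)$ and $p^{2-2k}\beta^2\psi(p)$ cannot equal $1$, because $\alpha,\beta$ are Weil numbers of complex absolute value $p^{(k-1)/2}$ and $\psi(p)$ is a root of unity (so these parameters have absolute value $p^{\pm(k-1)} \ne 1$). Your proposal never invokes this, so the vanishing of $H^0(\Qpi, \Gr^0 V)$ and $H^0(\Qpi, \Gr^2 V)$ is left unproved.

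Second, the ``resolution'' you offer for $\Gr^1 V$ — that the Teichm\"uller twist $\omega^k$ is somehow absorbed into the unramified factor via $\det M_{L_\frP}(f) = L_\frP(1-k)(\varepsilon^{-1})$ and crystallinity — is not a correct argument and is not how the congruence condition on $k$ disappears. The graded piece really is $\kappa^k \times \operatorname{unram}(\varepsilon(p)\psi(p))$ exactly as stated in the preceding lemma, with no renormalisation; the condition on $k$ evaporates simply because $\kappa^k$ is trivial on $G_{\Qpi}$. With the field correctly identified, the whole lemma is the one-line check: $H^0(\Qpi,\kappa^r\operatorname{unram}(\lambda)) \ne 0$ iff $\lambda = 1$, which holds for $\Gr^1$ exactly when $\varepsilon(p)\psi(p) = 1$ and never for $\Gr^0, \Gr^2$ by the Weil-number bound.
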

   
   \begin{proof} 
    We have described each of the graded pieces in the form $\kappa^r \operatorname{unram}(\lambda)$, for some $r \in \ZZ$ and $\lambda \in \cO_{L, \frP}^\times$. Such a character is trivial over $\Qpi$ if and only if $\lambda = 1$. Since $\alpha$ and $\beta$ have complex absolute value $p^{(k-1)/2}$, and $\psi(p)$ is a root of unity, neither $\alpha^2\psi(p)$ nor $p^{2-2k} \beta^2\psi(p) $ can be equal to 1.
   \end{proof}   
   
   Then we have the following computation, which is immediate from the standard dimension formulae for $\Hf$ (cf.~\cite[Lemma 4.1.7]{fukayakato06}). We let $j$ be an integer, and $\chi$ a finite-order character of $\Gamma$.
   
   \begin{proposition}
    \label{prop:BKSel}
    Let $m$ be the following integer:
    \[
     m = 
     \begin{cases}
      0 & \text{if $j \le 0$}\\
      1 & \text{if $1 \le j \le k-1$}\\
      2 & \text{if $k \le j \le 2k-2$}\\
      3 & \text{if $j \ge 2k-1$.}
     \end{cases}
    \]
    If $j = k$ or $j = k-1$, suppose that either $\chi \ne 1$ or $\varepsilon(p) \psi(p) \ne 1$. Then we have the following statements:
    \begin{enumerate}[(i)]
     \item The natural map
     \[ H^1(\Qp, \sF^m V(-j-\chi)) \rTo H^1(\Qp, V(-j-\chi)) \]
     is injective.
     \item The image of this map is the subspace $\Hf(\Qp, V(-j-\chi))$.
    \end{enumerate}
   \end{proposition}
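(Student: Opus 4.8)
The plan is to reduce everything to the Bloch--Kato dimension formula $\dim_E \Hf(\Qp, W) = \dim_E H^0(\Qp, W) + \dim_E\bigl(\DD_{\dR}(W)/\Fil^0 \DD_{\dR}(W)\bigr)$ for a de Rham representation $W$ over a finite extension $E/\Qp$ (cf.~\cite[Lemma 4.1.7]{fukayakato06}), together with the local Euler characteristic formula and local Tate duality; I take $E$ large enough to contain the values of $\chi$. Set $r_0 = 1$, $r_1 = k$, $r_2 = 2k-1$, so that by Lemma \ref{lemma:cyclo-invariants} the graded piece $\Gr^i V$ is the character $\kappa^{r_i}\operatorname{unram}(\lambda_i)$ with $\lambda_0 = \alpha^2\psi(p)$, $\lambda_1 = \varepsilon(p)\psi(p)$, $\lambda_2 = p^{2-2k}\beta^2\psi(p)$, and hence $\Gr^i V(-j-\chi) = \kappa^{r_i - j}\chi^{-1}\operatorname{unram}(\lambda_i)$, of Hodge--Tate weight $r_i - j$. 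The only facts I will use about the $\lambda_i$ are that $\lambda_0 \ne 1$ and $\lambda_2 \ne 1$ always (this is exactly the absolute-value computation in the proof of Lemma \ref{lemma:cyclo-invariants}), while $\lambda_1$ could a priori equal $1$ only when $\varepsilon(p)\psi(p) = 1$.

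For (i), note that $\ker\bigl(H^1(\Qp, \sF^m V(-j-\chi)) \to H^1(\Qp, V(-j-\chi))\bigr)$ is a quotient of $H^0\bigl(\Qp, (V/\sF^m V)(-j-\chi)\bigr)$, whose dimension is at most $\sum_{i<m}\dim_E H^0(\Qp, \Gr^i V(-j-\chi))$. A one-dimensional character $\kappa^{r_i-j}\chi^{-1}\operatorname{unram}(\lambda_i)$ has nonzero $H^0(\Qp,-)$ precisely when it is trivial, i.e.\ when $r_i = j$, $\chi = 1$ and $\lambda_i = 1$ simultaneously (a nontrivial finite-order character of $\Gamma$ is ramified at $p$, so cannot cancel against an unramified twist). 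Since $m$ is chosen so that $r_i \le j$ for all $i<m$, with equality only possible for $i = m-1$, the only candidates are $(i,j) \in \{(0,1),(1,k),(2,2k-1)\}$; the cases $i=0,2$ are excluded because $\lambda_0,\lambda_2 \ne 1$, and the case $(i,j)=(1,k)$ is excluded by the standing hypothesis. Hence the kernel vanishes.

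For (ii), the key intermediate point is that $\sF^m V(-j-\chi)$ satisfies $H^1 = \Hf$ and $\dim_E H^1 = 3-m$. Its Hodge--Tate weights are the integers $r_i - j$ for $i \ge m$, which are all $\ge 1$ since $r_m > j$ in every range for $j$ (and the case $m=3$ is vacuous); thus $\Fil^0\DD_{\dR} = 0$. Moreover $H^2(\Qp, \sF^m V(-j-\chi))$ is dual to $H^0$ of $\bigl(\sF^m V(-j-\chi)\bigr)^*(1)$, whose graded pieces $\kappa^{j+1-r_i}\chi\operatorname{unram}(\lambda_i^{-1})$ for $i \ge m$ are all nontrivial: the only borderline case is $i = m$ with $r_m = j+1$, i.e.\ $(m,j) \in \{(0,0),(1,k-1),(2,2k-2)\}$, which is impossible for $m=0,2$ and excluded by hypothesis for $m=1$. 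So $H^0 = H^2 = 0$ for $\sF^m V(-j-\chi)$, and comparing the dimension formula with the Euler characteristic formula gives $H^1(\Qp, \sF^m V(-j-\chi)) = \Hf(\Qp, \sF^m V(-j-\chi))$, of dimension $3-m$. By functoriality of $\Hf$, the image of this group in $H^1(\Qp, V(-j-\chi))$ lies in $\Hf(\Qp, V(-j-\chi))$, and by (i) that image is $(3-m)$-dimensional.

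It then remains to compute $\dim_E\Hf(\Qp, V(-j-\chi))$ directly. The same graded-piece analysis (again invoking the hypothesis when $j=k$) shows $H^0(\Qp, V(-j-\chi)) = 0$, while $\dim_E\bigl(\DD_{\dR}(V(-j-\chi))/\Fil^0\bigr)$ equals the number of indices $i$ with $r_i > j$, which a case check against the definition of $m$ shows to be $3-m$. Hence $\dim_E\Hf(\Qp, V(-j-\chi)) = 3-m$, and since we have already exhibited a $(3-m)$-dimensional subspace of it, namely the image from (i)--(ii), the two coincide. This gives both statements. The whole argument is essentially bookkeeping once the dimension formula is in hand; the one genuinely delicate point — and precisely the reason for restricting the borderline weights $j \in \{k-1,k\}$ — is the forced vanishing of the handful of $H^0$- and $H^2$-type groups above, which is exactly where $\lambda_1 = \varepsilon(p)\psi(p)$ might otherwise be trivial.
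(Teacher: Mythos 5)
Your argument is correct, and it is precisely the fleshed-out version of what the paper does: the paper gives no detailed proof, simply asserting that the statement is immediate from the standard dimension formulae for $\Hf$ (citing \cite[Lemma 4.1.7]{fukayakato06}), which together with the local Euler characteristic, Tate duality, and the graded-piece vanishings from Lemma \ref{lemma:cyclo-invariants} is exactly the bookkeeping you carry out. The only quibble is the phrase ``which is impossible for $m=0,2$'': the weight coincidences $(m,j)=(0,0)$ and $(2,2k-2)$ do occur in the allowed ranges, and what rules out triviality there is $\lambda_0,\lambda_2\ne 1$, as your setup already makes clear.
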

   
   \begin{remark}
    One can check that in the exceptional case $\chi = 1$ and $\varepsilon(p)\psi(p) = 1$, the image of the map of (i) is the Bloch--Kato $H^1_{\mathrm{e}}$ subspace if $j = k$, and $H^1_{\mathrm{g}}$ if $j = k-1$, but we shall not use this fact here.
   \end{remark}
   
   We have a related statement in the limit over the cyclotomic tower.
   
   \begin{definition}
    For $M$ a $\Qp$-linear representation of $G_{\Qp}$, and $M_0$ a $G_{\Qp}$-stable $\Zp$-lattice in $M$, we set
    \[ 
     H^1_{\Iw}(\Qpi, M) = \Qp \otimes_{\Zp} \varprojlim_r H^1(\QQ_{p, r}, M_0) 
    \]
    and
    \[ H^1_{\Iw, \mathrm{f}}(\Qpi, M) = \Qp \otimes_{\Zp} \varprojlim_r \Hf(\Qp, M_0).\]
   \end{definition}
   
   The groups $H^1_{\Iw}(\Qpi, V(-j-\chi))$ are independent of $j$ and $\chi$, in the sense that 
   \begin{equation}
    \label{eq:twist}
    H^1_{\Iw}(\Qpi, V(-j-\chi)) =  H^1_{\Iw}(\Qpi, V)(-j-\chi)
   \end{equation}
   as a $\Lambda(\Gamma)$-module. The groups $H^1_{\Iw, \mathrm{f}}(\Qpi,V(-j-\chi))$ are similarly independent of $\chi$, but they are \emph{not} independent of $j$.
   
   \begin{proposition}
    \label{prop:BKSelIw}
    Let $m$ be defined in terms of $j$ as in Proposition \ref{prop:BKSel}. Assume $\varepsilon(p) \psi(p) \ne 1$. Then $H^1_{\Iw, \mathrm{f}}(\Qpi, V(-j))$ is the image of the injective map
    \[ 
     H^1_{\Iw}(\Qpi, \sF^m V(-j)) \into  H^1_{\Iw}(\Qpi, V(-j)).
    \]
   \end{proposition}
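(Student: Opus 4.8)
The plan is to reduce to the finite-level statement of Proposition~\ref{prop:BKSel}, applied at each layer $\QQ_{p,r} = \Qp(\mu_{p^r})$, and then pass to the limit. Recall that $H^1_{\Iw}(\Qpi, V(-j)) = \Qp \otimes_{\Zp} \varprojlim_r H^1(\QQ_{p, r}, T(-j))$ and $H^1_{\Iw, \mathrm{f}}(\Qpi, V(-j)) = \Qp \otimes_{\Zp} \varprojlim_r \Hf(\QQ_{p, r}, T(-j))$, the transition maps being corestriction. For injectivity I would use the short exact sequence $0 \to \sF^m V \to V \to V/\sF^m V \to 0$: the quotient $V/\sF^m V$ carries a $G_{\Qp}$-stable filtration with graded pieces $\Gr^0 V, \dots, \Gr^{m-1} V$, and after twisting by $-j$ each of these is a character $\kappa^a \operatorname{unram}(\lambda)$ which, as in the proof of Lemma~\ref{lemma:cyclo-invariants}, is non-trivial on $G_{\QQ_{p,r}}$ whenever $(a,\lambda) \ne (0,1)$. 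Running through the four ranges of $j$, the exponent $a$ is non-zero unless $(i,j) \in \{(0,1),(1,k),(2,2k-1)\}$, and in those three cases $\lambda$ equals $\alpha^2 \psi(p)$, $\varepsilon(p)\psi(p)$ or $\beta^2 p^{2-2k}\psi(p)$ respectively; the first and third are $\ne 1$ by the absolute-value argument of Lemma~\ref{lemma:cyclo-invariants}, and the second is $\ne 1$ precisely by the standing hypothesis $\varepsilon(p)\psi(p) \ne 1$. Hence $H^0(\QQ_{p,r}, (V/\sF^m V)(-j)) = 0$ for every $r$, so the long exact sequence gives an injection $H^1(\QQ_{p,r}, \sF^m V(-j)) \into H^1(\QQ_{p,r}, V(-j))$ with image $\ker( H^1(\QQ_{p,r}, V(-j)) \to H^1(\QQ_{p,r}, (V/\sF^m V)(-j)) )$; applying the left-exact functor $\varprojlim_r$ and then inverting $p$ produces the desired injection $H^1_{\Iw}(\Qpi, \sF^m V(-j)) \into H^1_{\Iw}(\Qpi, V(-j))$.

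To identify its image with $H^1_{\Iw, \mathrm{f}}(\Qpi, V(-j))$ it suffices to show that $\Hf(\QQ_{p,r}, V(-j))$ equals the image of $H^1(\QQ_{p,r}, \sF^m V(-j))$ in $H^1(\QQ_{p,r}, V(-j))$ at each layer. For this I would extend scalars to $L_\frP(\mu_{p^r})$ and use the decomposition $H^1(\QQ_{p,r}, V(-j)) \cong \bigoplus_\chi H^1(\Qp, V(-j-\chi))$, the sum being over the characters $\chi$ of $\Gamma$ of conductor dividing $p^r$, which is compatible with the subspace coming from $\sF^m V$ and — via the projection formula $\operatorname{Ind}(M \otimes \BB_{\cris}) \cong \operatorname{Ind}(M) \otimes \BB_{\cris}$, so that the Bloch--Kato condition commutes with Shapiro's lemma — with the subspace $\Hf$. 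Proposition~\ref{prop:BKSel} applied to each summand $V(-j-\chi)$ then gives exactly this; the key point is that the exceptional hypothesis there ($\chi \ne 1$ or $\varepsilon(p)\psi(p)\ne 1$) holds unconditionally in our situation, since we assume $\varepsilon(p)\psi(p) \ne 1$. Passing once more to the inverse limit over $r$ and inverting $p$ — the hypothesis $\varepsilon(p)\psi(p)\ne1$ again ensuring that the torsion in the layers $(T/\sF^m T)(-j)$ remains bounded, so that the limit is well behaved — then completes the proof.

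The one step that I expect to require genuine care is the compatibility of the Bloch--Kato local condition with the cyclotomic decomposition of the induced representation, i.e. the identification $\Hf(\QQ_{p,r}, V(-j)) \cong \bigoplus_\chi \Hf(\Qp, V(-j-\chi))$ after base change; this is a form of Shapiro's lemma for $\Hf$, rests on the projection formula for $\BB_{\cris}$, and can be extracted from the formalism of~\cite{fukayakato06}. Everything else is a formal combination of Proposition~\ref{prop:BKSel} with the left-exactness of inverse limits.
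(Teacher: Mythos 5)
Your reduction to Proposition \ref{prop:BKSel} at each finite layer is sound, and the Shapiro-lemma compatibility of $\Hf$ that you single out as the delicate point is in fact routine; the genuine difficulty is the step you dispatch in a parenthesis, namely ``passing once more to the inverse limit over $r$ and inverting $p$''. The paper's proof is entirely about this point: the Iwasawa groups are defined via the lattice $T$, and the comparison map $\Qp \otimes_{\Zp} \varprojlim_r H^1(\QQ_{p,r}, T') \to \varprojlim_r H^1(\QQ_{p,r}, T' \otimes \Qp)$ can fail to be injective, so knowing that each $x_r$ lies in the image of $H^1(\QQ_{p,r}, \sF^m V(-j))$ does not formally produce an Iwasawa class of $\sF^m V(-j)$ mapping to $x$. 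Concretely, the unique finite-level lifts $y_r \in H^1(\QQ_{p,r}, \sF^m V(-j))$ of the $x_r$ are norm-compatible thanks to the injectivity you proved, but they need not be integral: their denominators are controlled by the torsion of $H^1(\QQ_{p,r}, (T/\sF^m T)(-j))$, i.e.\ by $H^0\bigl(\QQ_{p,r}, (T/\sF^m T)(-j)\otimes \Qp/\Zp\bigr)$ --- not by ``the torsion in the layers $(T/\sF^m T)(-j)$'', which is a torsion-free module, so your parenthetical as stated is vacuous. The needed uniform bound does hold, because these groups all embed in $H^0\bigl(\Qpi, (T/\sF^m T)(-j)\otimes\Qp/\Zp\bigr)$, which is finite precisely because $H^0(\Qpi, (V/\sF^m V)(-j)) = 0$ (Lemma \ref{lemma:cyclo-invariants} together with $\varepsilon(p)\psi(p)\ne 1$); but this is the crux of the proposition and must be proved, not asserted.

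The paper takes a cleaner route at the Iwasawa level: it cites \cite[Prop.~8.1.5]{KLZ1b}, which gives injectivity of $H^1_{\Iw}(\Qpi, M) \to \varprojlim_r H^1(\QQ_{p,r}, M)$ whenever $H^0(\Qpi, M) = 0$, applied to $M = V(-j)/\sF^m$. Since each $x_r$ dies in $H^1(\QQ_{p,r}, V(-j)/\sF^m)$ by Proposition \ref{prop:BKSel}, the class $x$ dies in $H^1_{\Iw}(\Qpi, V(-j)/\sF^m)$, and exactness of Iwasawa cohomology then puts $x$ in the image of $\sF^m$. Your finite-level $H^0$ computation (and your identification of exactly where $\varepsilon(p)\psi(p)\ne 1$ is needed) is correct and matches the paper, and your injectivity argument goes through once you replace $V$ by $T$ (note $T/\sF^m T$ is torsion-free, so its $H^0$ over $\QQ_{p,r}$ also vanishes; as written, $\varprojlim_r$ of the $\Qp$-coefficient groups is not $H^1_{\Iw}$). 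So the strategy is salvageable either by your bounded-denominator argument, made precise as above, or by the paper's appeal to \cite[Prop.~8.1.5]{KLZ1b}; but as written the proposal omits the one step the paper's proof exists to address.
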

   
   \begin{proof}
    This may appear to be an immediate consequence of Proposition \ref{prop:BKSel}, but it is not quite. The issue is that the natural map
    \[ H^1_{\Iw}(\Qpi, M) \to \varprojlim_r H^1(\QQ_{p, r}, M), \]
    for $M$ a $\Qp$-linear representation of $G_{\Qp}$, can fail to be injective. However, this map is injective when $H^0(\Qpi, M) \ne 0$, cf.~\cite[Propostion 8.1.5]{KLZ1b}. Applying this to $M = V(-j)/\sF^m$, which has zero invariants over $\Qpi$ by Lemma \ref{lemma:cyclo-invariants}, we see that any $x \in H^1_{\Iw, \mathrm{f}}(\Qpi, V(-j))$ must map to zero in $H^1_{\Iw}(\Qpi, V(-j)/\sF^m)$ as required.
   \end{proof}
    
%%%%%%%%%%%%%%%%%%%%%%%%    
    
  \subsection{Selmer groups}
   
   We recall the definition of the Bloch--Kato Selmer group for a global Galois representation:

   \begin{definition}
    If $K$ is a number field, and $M$ is a $\Zp[G_K]$ module isomorphic as an abelian group to $\Zp^n$, $\Qp^n$, or $(\Qp/\Zp)^n$ for some $n$, we define the \emph{Bloch--Kato Selmer group}
    \[ \Hf(K, M) \coloneqq \{ x \in H^1(K, M): \loc_v x \in \Hf(K_v, M) \text{ for all primes $v$}\}\]
    where the local subspaces $\Hf(K_v, M)$ are as in definition \ref{def:h1f}.
   \end{definition}
   
   In particular, for $j \in \ZZ$ and $\chi$ a finite-order character, we have the Selmer groups $\Hf(\QQ, A(j + \chi))$ and $H^1_{\mathrm{f}}(\QQ, T(-j -\chi))$. On the other hand, we also have Selmer groups defined using the filtrations $\sF^i$, an approach pioneered by Greenberg \cite{greenberg89}:

   \begin{definition}
    \label{def:GreenbergSel}
    For $0 \le i \le 3$, and $\tau$ any continuous character of $\Gamma$, we define the \emph{Greenberg Selmer group}
    \[
     H^1_{\Gr, i}(\QQ, A(\tau)) \coloneqq
     \left\{
      x \in H^1(\QQ, A(\tau))
      :
      \begin{array}{l}
       \loc_\ell x \in H^1_\mathrm{nr}(\QQ_\ell, A(\tau))\text{ for $\ell \ne p$},\\
       \loc_p x \in \operatorname{image} H^1(\Qp, \sF^i A(\tau))
      \end{array}
     \right\},
    \]
    and similarly $H^1_{\Gr, i}(\QQ, T(\tau^{-1}))$.
   \end{definition}   
   
   Then the following is immediate from Proposition \ref{prop:BKSel}:
   
   \begin{proposition}
    \label{prop:globalBKSel}
    Let $m$ be defined as in Proposition \ref{prop:BKSel}, and suppose that either $\varepsilon(p)\psi(p) \ne 1$ or $j + \chi \notin \{k-1, k\}$. Then we have inclusions
    \begin{align*}
     H^1_{\mathrm{Gr}, m}(\QQ, T(-j-\chi)) &\subseteq \Hf(\QQ, T(-j-\chi)), \\ 
     \Hf(\QQ, A(j + \chi)) &\subseteq H^1_{\mathrm{Gr}, 3-m}(\QQ, A(j + \chi));
    \end{align*}
    and both quotients are finite.\qed
   \end{proposition}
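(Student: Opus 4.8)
The plan is to compare the two Selmer groups place by place. I will show that the local conditions cutting out $H^1_{\mathrm{Gr},m}(\QQ,T(-j-\chi))$ are contained in those cutting out $\Hf(\QQ,T(-j-\chi))$, that dually the conditions for $\Hf(\QQ,A(j+\chi))$ are contained in those for $H^1_{\mathrm{Gr},3-m}(\QQ,A(j+\chi))$, and that at each place the two conditions agree after $\otimes_{\Zp}\Qp$. Since only finitely many places are involved, the global quotients then inject into finite direct sums of the (finite) local quotients, which gives the finiteness assertion.

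The prime $p$ is the substantive case. The hypothesis ``$\varepsilon(p)\psi(p)\ne 1$ or $j+\chi\notin\{k-1,k\}$'' is exactly what allows one to invoke Proposition \ref{prop:BKSel}(ii), identifying $\Hf(\Qp,V(-j-\chi))$ with the image of $H^1(\Qp,\sF^m V(-j-\chi))$. Since $H^1(\Qp,\sF^m T(-j-\chi))\to H^1(\Qp,V(-j-\chi))$ factors through $H^1(\Qp,\sF^m V(-j-\chi))$, the image of $H^1(\Qp,\sF^m T(-j-\chi))$ in $H^1(\Qp,T(-j-\chi))$ lands in the preimage of $\Hf(\Qp,V(-j-\chi))$, which is $\Hf(\Qp,T(-j-\chi))$ by definition; and the two agree after $\otimes\Qp$. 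For $A$ I pass to annihilators under the perfect pairing $H^1(\Qp,T(-j-\chi))\times H^1(\Qp,A(j+\chi))\to\Qp/\Zp$: the long exact sequences attached to $0\to\sF^m T\to T\to T/\sF^m T\to 0$ and its twist show that $\operatorname{image} H^1(\Qp,\sF^m T(-j-\chi))$ and $\operatorname{image} H^1(\Qp,\sF^{3-m}A(j+\chi))$ annihilate each other (here one uses that $\sF^{3-m}A=(\sF^m T)^\perp$, which is how the filtration on $A$ was defined), while $\Hf(\Qp,V(-j-\chi))$ and $\Hf(\Qp,V^*(1)(j+\chi))$ annihilate each other and this passes to their preimage, resp.\ image, in the lattice and divisible cohomology. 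Taking $\perp$ of the $T$-inclusion therefore yields $\Hf(\Qp,A(j+\chi))\subseteq\operatorname{image} H^1(\Qp,\sF^{3-m}A(j+\chi))$, again with finite cokernel. This duality step is the one place where genuine care is needed: one must verify that the annihilator of the $T$-condition is \emph{exactly} the displayed $A$-condition, not merely a commensurable subgroup, which comes down to the compatibility of the filtration connecting maps with local Tate duality (together with the elementary fact that, under a perfect pairing of $\Zp$-modules, ``preimage'' and ``image'' of orthogonally complementary subspaces are again orthogonal complements).

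For $\ell\ne p$ there is almost nothing to do. As $\chi$ is ramified only at $p$, the twist $V(-j-\chi)$ is unramified outside $p N_f N_\psi$, so at $\ell\nmid p N_f N_\psi$ the Bloch--Kato condition of Definition \ref{def:h1f} coincides with the unramified condition of Definition \ref{def:GreenbergSel} for both $T(-j-\chi)$ and $A(j+\chi)$ (the only possible discrepancy, torsion in the inertia cohomology, does not occur because the relevant lattices are torsion-free). At the finitely many primes $\ell\mid N_f N_\psi$ one has the inclusions $H^1_{\mathrm{nr}}(\QQ_\ell,T(-j-\chi))\subseteq\Hf(\QQ_\ell,T(-j-\chi))$ — immediate, since the right-hand side is defined as a preimage — and its annihilator version $\Hf(\QQ_\ell,A(j+\chi))\subseteq H^1_{\mathrm{nr}}(\QQ_\ell,A(j+\chi))$, with finite quotients since both sides become equal after $\otimes\Qp$. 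Combining the local statements at $p$ and at the primes $\ell\mid N_f N_\psi$ with equality elsewhere gives the two displayed global inclusions, and each quotient injects into the finite direct sum of the local quotients over $\{p\}\cup\{\ell:\ell\mid N_f N_\psi\}$.
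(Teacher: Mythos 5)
Your proof is correct and is essentially the paper's intended argument: the paper leaves this proposition as ``immediate from Proposition \ref{prop:BKSel}'', and your place-by-place comparison --- Proposition \ref{prop:BKSel} together with local Tate duality and the definition $\sF^{3-m}A = (\sF^m T)^\perp$ at $p$, the trivial comparison of unramified and Bloch--Kato conditions away from $p$, and the injection of each global quotient into the finite direct sum of local quotients --- is exactly the expansion the authors have in mind. The only cosmetic point is that at primes $\ell \nmid p N_f N_\psi$ the equality of local conditions for $A$ comes from the surjectivity of $H^1_{\mathrm{nr}}(\QQ_\ell, V^*(1)) \to H^1_{\mathrm{nr}}(\QQ_\ell, A(j+\chi))$ (divisibility of the Frobenius coinvariants of the unramified divisible module) rather than from torsion-freeness of a lattice, but the claim you use is true.
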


   We also have a version over $\Qi = \QQ(\mu_{p^\infty})$: we define
   \begin{align*}
    \Hf\left(\Qi, A(j + \chi) \right) 
    &\coloneqq \varinjlim_r \Hf\left(\QQ(\mu_{p^r}), A(j + \chi)\right),\\
    H^1_{\Iw, \mathrm{f}}\left(\Qi, T(-j-\chi) \right) 
    &\coloneqq \varprojlim_r \Hf\left(\QQ(\mu_{p^r}), T(-j-\chi)\right),
   \end{align*}
   and similarly for the Greenberg Selmer groups. It is immediate from the definitions that the groups over $\Qi$ are independent of $\chi$ (in the same sense as \eqref{eq:twist} above).
   
   \begin{proposition} \mbox{~}
    \begin{enumerate}[(i)]
     \item For any $i \in \{0, \dots, 3\}$ we have
     \[
      H^1_{\Iw, \Gr, i}(\Qi, T) = \left\{
      x \in H^1_{\Iw}(\Qi, T)
      :
      \loc_p x \in H^1_{\Iw}(\Qpi, \sF^i T)
     \right\}
     \]     
     and
     \[
      H^1_{\Gr, i}(\Qi, A) = \left\{
       x \in H^1(\Qi, A)
       :
       \begin{array}{l}
        \loc_\ell x = 0 \text{ for $\ell \ne p$},\\
        \loc_p x \in \operatorname{image} H^1(\Qpi, \sF^i A)
       \end{array}
      \right\}
     \]
     
     \item For any $i \in \{0, \dots, 3\}$, and $\tau$ any continuous character of $\Gamma$, we have isomorphisms of $\Zp[[\Gamma]]$-modules
     \[ H^1_{\Gr, i}(\Qi, A(\tau)) 
     \cong H^1_{\Gr, i}(\Qi, A)(\tau)\]
     and
     \[ H^1_{\Iw, \Gr, i}(\Qi, T(\tau)) 
          \cong H^1_{\Iw, \Gr, i}(\Qi, T)(\tau).
     \]
     
     \item If $\varepsilon(p) \psi(p) \ne 1$, then for $j \in \ZZ$ and $m$ as in Proposition \ref{prop:BKSel}, the maps
     \begin{align*} 
      H^1_{\Iw, \mathrm{Gr}, m}(\Qi, T(-j)) &\to H^1_{\Iw, \mathrm{f}}(\Qi, T(-j))\\
      \text{and } 
      \Hf(\Qi, A(j)) &\to H^1_{\Gr, 3-m}(\Qi, A(j))
     \end{align*}
     arising from Proposition \ref{prop:globalBKSel} are isomorphisms.
   \end{enumerate}
   \end{proposition}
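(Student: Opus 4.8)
The plan is to treat (i) and (ii) as formalities and to reduce (iii) to the local comparison over $\Qpi$ recorded in Proposition \ref{prop:BKSelIw}.

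For (ii) I would invoke the standard Iwasawa-twisting isomorphism: a continuous character $\tau$ of $\Gamma=\Gal(\Qi/\QQ)$, pulled back to $G_\QQ$ along $G_\QQ\onto\Gamma$, is \emph{trivial} on $G_{\Qi}$, so $A(\tau)\cong A$ and $T(\tau)\cong T$ as $G_{\Qi}$-modules, the twist only altering the residual $\Gamma$-action. Hence $H^1(\Qi,A(\tau))\cong H^1(\Qi,A)(\tau)$ and $H^1_{\Iw}(\Qi,T(\tau))\cong H^1_{\Iw}(\Qi,T)(\tau)$ as $\Lambda(\Gamma)$-modules, compatibly with the (semi-)local subgroups appearing in Definition \ref{def:GreenbergSel}; restricting to the Selmer submodules gives (ii). For (i) I would unwind the Selmer groups over $\Qi$ as a colimit ($A$) or limit ($T$) of those over the layers $\QQ(\mu_{p^r})$, at each of which the condition at a prime $\lambda\mid\ell$ with $\ell\ne p$ is the unramified one (for $\Hf$ recall that $\Hf(\QQ_\ell,-)=H^1_{\mathrm{nr}}(\QQ_\ell,-)$ when $\ell\ne p$, by Definition \ref{def:h1f}). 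Since $\Qi$ contains the unramified $\ZZ_p$-extension of $\QQ_\ell$, the residue field at $\lambda$ grows $p$-adically up the tower, and a routine computation with local cohomology — ultimately the vanishing of $H^1$ of a pro-(prime-to-$p$) group on $p$-primary coefficients — gives $\varinjlim_r H^1_{\mathrm{nr}}(\QQ(\mu_{p^r})_\lambda,A)=0$ and, dually, $\varprojlim_r$ of the singular quotients $H^1(\QQ(\mu_{p^r})_\lambda,T)/H^1_{\mathrm{nr}}$ vanishes. So over $\Qi$ the unramified condition at $\ell\ne p$ degenerates to $\loc_\ell x=0$ for $A$ and to no condition at all for $T$, which is the asserted description.

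For (iii), the substantive part: by (i), membership in $H^1_{\Iw,\Gr,m}(\Qi,T(-j))$ or $H^1_{\Iw,\mathrm{f}}(\Qi,T(-j))$ is detected by the localization at $p$ alone (and similarly for $A$), so it suffices to establish, inside $H^1_{\Iw}(\Qpi,V(-j))$, the equality of $\Lambda$-submodules
\[ H^1_{\Iw,\mathrm{f}}(\Qpi,T(-j)) \;=\; \operatorname{image}\bigl(H^1_{\Iw}(\Qpi,\sF^m T(-j))\to H^1_{\Iw}(\Qpi,T(-j))\bigr) \]
together with its divisible-coefficient analogue for $A$. The $\Qp$-linear form of this is Proposition \ref{prop:BKSelIw}; to pass to the lattice I would run that proof integrally. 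Because $V(-j)/\sF^m$ is an iterated extension of the graded pieces $\Gr^i V(-j)$ with $i\ge m$, each of which has $H^0(\Qpi,-)=0$ by Lemma \ref{lemma:cyclo-invariants} — this is the one place the hypothesis $\varepsilon(p)\psi(p)\ne1$ is used, killing the only possibly non-trivial piece $\Gr^1 V$ — we get $H^0(\Qpi,V(-j)/\sF^m)=0$, whence by \cite[Prop.~8.1.5]{KLZ1b} the map $H^1_{\Iw}(\Qpi,T(-j)/\sF^m)\to H^1_{\Iw}(\Qpi,V(-j)/\sF^m)$ is injective. Thus a class in $H^1_{\Iw,\mathrm{f}}(\Qpi,T(-j))$, whose image in $H^1_{\Iw}(\Qpi,V(-j))$ lies in $\sF^m$ by Proposition \ref{prop:BKSelIw}, already lifts to $\sF^m T(-j)$; the reverse inclusion is clear. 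With this local identity in hand, Proposition \ref{prop:globalBKSel} (applied over each $\QQ(\mu_{p^r})$ and passed to the limit) gives the inclusion of the Greenberg Selmer group into the Bloch--Kato one, while the local identity supplies the reverse inclusion, so the two coincide; the $A$-version runs identically with colimits in place of limits, noting that the graded pieces of $A$ — being Tate-twisted duals of those of $V$ — likewise have vanishing $\Qpi$-invariants under $\varepsilon(p)\psi(p)\ne1$.

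The step I expect to be the main obstacle is precisely this descent from $V$- to $T$-coefficients at $p$: Proposition \ref{prop:BKSelIw} is only an equality of $\Qp$-vector spaces, whereas (iii) asks for an equality of $\Lambda$-lattices, so one must rule out a $p$-torsion discrepancy between the Greenberg and Bloch--Kato local submodules over $\Qpi$ — which is exactly what the vanishing of the $H^0(\Qpi,-)$ terms, i.e.\ the hypothesis $\varepsilon(p)\psi(p)\ne1$, delivers.
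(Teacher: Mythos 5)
Your proposal is correct and follows essentially the same route as the paper: part (i) comes down to the standard fact that Iwasawa classes over $\Qi$ are automatically unramified outside $p$ (equivalently, the vanishing you describe of the limits of the unramified, resp.\ singular, parts at $\ell \ne p$, cf.\ \cite[\S B.3.3]{rubin00}), part (ii) is formal, and part (iii) reduces to the local statement of Proposition \ref{prop:BKSelIw} plus an integral refinement. Two remarks on the details. First, the injectivity of $H^1_{\Iw}(\Qpi, T(-j)/\sF^m T)\to H^1_{\Iw}(\Qpi, V(-j)/\sF^m V)$ is not what \cite[Proposition 8.1.5]{KLZ1b} provides --- that result concerns the map $H^1_{\Iw}(\Qpi,M)\to\varprojlim_r H^1(\QQ_{p,r},M)$ for $\Qp$-coefficients. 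What you actually need is that $\varprojlim_r H^1(\QQ_{p,r}, T(-j)/\sF^m T)$ is $p$-torsion-free, which holds unconditionally (since the inverse limit of $H^0$ of a finite module under corestriction vanishes) and is exactly the paper's one-line justification (``Iwasawa cohomology groups are $p$-torsion-free, so we may check after inverting $p$''); the hypothesis $\varepsilon(p)\psi(p)\ne 1$, i.e.\ the vanishing of $H^0(\Qpi,\Gr^1 V)$, is consumed inside Proposition \ref{prop:BKSelIw} itself, not in the descent from $V$ to $T$. Second, for the $A$-half of (iii) the paper does not re-run the argument with colimits: it deduces the statement from the $T$-statement by local Tate duality, using that $\sF^{3-m}A$ is by definition the orthogonal complement of $\sF^m T$. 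Your ``runs identically'' sketch would still have to show that the finite discrepancies of Proposition \ref{prop:globalBKSel} at the finite layers die in the direct limit, which the duality argument sidesteps; this is a repairable under-justification rather than a wrong turn, but the dual formulation is the cleaner way to finish.
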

   
   \begin{proof}
    To prove (i) for $T$, it suffices to note that any class in $H^1_{\Iw}(\Qi, M)$, where $M$ is a finitely-generated $\Zp$-module with $G_{\QQ}$-action, must automatically be unramified outside $p$; this is a well-known fact, see e.g.~\cite[\S B3.3]{rubin00}. The result for $A$ now follows by local Tate duality (or it can be seen directly from the fact that for $\ell \ne p$ the group $\Gal(\overline{\FF}_\ell / \FF_\ell(\mu_{p^\infty}))$ has trivial $p$-Sylow subgroup). From this description of the Greenberg cohomology groups, the twist-compatibility (ii) is clear.
    
    We now prove part (iii) for $T$. Using part (i) it suffices to check that we have
    \[ H^1_{\Iw}(\Qpi, \sF^m T(-j)) = H^1_{\Iw, \mathrm{f}}(\Qpi, T(-j))\]
    as submodules of $H^1_{\Iw}(\Qpi, T(-j))$. As Iwasawa cohomology groups are $p$-torsion-free, we may check this after inverting $p$, and this is exactly the statement of Proposition \ref{prop:BKSelIw}. The statement for $A$ follows from this together with local Tate duality.
   \end{proof}
    
  \subsection{The main conjecture}
  
   We now formulate a version of the Iwasawa main conjecture for our Selmer groups. For brevity we write $\cO = \cO_{L, \frP}$, and $\Lambda_\cO = \cO \otimes_{\Zp} \Zp[[\Gamma]]$.
    
   \begin{lemma}
    Let $\eta$ be a finite-order character of $\Gamma_{\mathrm{tors}}$, and $e_\eta$ the  corresponding idempotent in $\Lambda(\Gamma)$. Then, for any $i \in \{0, \dots, 3\}$,
    \begin{enumerate}[(i)]
     \item The group $e_\eta H^1_{\Iw, \Gr, i}(\Qi, T)$ is a torsion-free $e_\eta \Lambda_\cO(\Gamma)$-module of finite rank. If $e_\eta H^0(\Qi, T/pT)$ is zero, then it is free.
     
     \item We have the equality of ranks of $e_{\eta} \Lambda_{\cO}$-modules,
     \[ 
      \operatorname{rank} e_\eta H^1_{\Iw, \Gr, i}(\Qi, T) - \operatorname{rank} e_\eta H^1_{\Gr, 3-i}(\Qi, A)^\vee = 
      \begin{cases}
       2-i & \text{ if $\eta(-1) = \psi(-1)$,}\\
       1-i & \text{if $\eta(-1) = -\psi(-1)$.}
      \end{cases}
     \]
    \end{enumerate}
   \end{lemma}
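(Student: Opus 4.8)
The plan is to deduce both parts from the single observation that the $\eta$-component of the Iwasawa-theoretic Greenberg--Selmer complex, $\widetilde C := e_\eta\, R\widetilde\Gamma_{\Iw,\mathrm{Gr},i}(\Qi, T)$, is a \emph{perfect} complex of $e_\eta\Lambda_\cO$-modules with $H^1(\widetilde C) = e_\eta H^1_{\Iw,\Gr,i}(\Qi, T)$ and, by the Poitou--Tate duality for Iwasawa Selmer complexes, $H^2(\widetilde C) \cong e_\eta H^1_{\Gr,3-i}(\Qi, A)^\vee$ up to the rank-preserving involution of $\Lambda_\cO$. Perfectness and duality are standard (see e.g.~\cite{fukayakato06}); the only ingredient particular to our setting is that the local conditions $\sF^i T$ and $\sF^{3-i}A$ at $p$ are exact orthogonal complements, which is true by the definition of $\sF^\bullet A$.

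For (i), finite generation of $H^1_{\Iw}(\Qi, T)$ over $\Lambda_\cO$ — hence of the submodule $H^1_{\Iw,\Gr,i}(\Qi, T)$ — is classical. Since $f$ is not of CM type the representation $\Sym^2 M_{L_\frP}(f)^*$, and therefore $T$ and $T/pT$, has no $G_{\Qi}$-invariants; in particular $H^0(\Qi, T) = 0$, so $H^0(\widetilde C) = 0$. This gives torsion-freeness: as $e_\eta\Lambda_\cO$ is a $2$-dimensional regular local (hence factorial) ring, it suffices to show $H^1(\widetilde C)[\pi] = 0$ for every prime element $\pi$, and the long exact sequence attached to $\widetilde C\otimes^{\mathbb{L}}(-/\pi)$ identifies $H^1(\widetilde C)[\pi]$ with $H^0$ of a specialisation of $\widetilde C$, which again has no $G_{\Qi}$-invariants and so vanishes. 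For freeness, represent $\widetilde C$ by a bounded complex of finite free modules $F^\bullet$; then $\ker(F^1\to F^2)$ is reflexive, being a first syzygy of the torsion-free module $\operatorname{im}(F^1\to F^2)$, and hence free (reflexive modules over a $2$-dimensional regular local ring are free), and $H^0(\widetilde C) = 0$ makes $F^0 \hookrightarrow \ker(F^1\to F^2)$; so $H^1(\widetilde C)$ is free provided the map $F^0\otimes k \to \ker(F^1\to F^2)\otimes k$ is injective ($k$ the residue field), and the kernel of this map is contained in $H^0(\widetilde C\otimes^{\mathbb{L}}_{e_\eta\Lambda_\cO} k)$, which in turn injects into $e_\eta H^0(\Qi, T/pT)$. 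The hypothesis of (i) thus forces freeness.

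For (ii), the above gives
\[
 \operatorname{rank} e_\eta H^1_{\Iw,\Gr,i}(\Qi, T) - \operatorname{rank} e_\eta H^1_{\Gr,3-i}(\Qi, A)^\vee = \operatorname{rank} H^1(\widetilde C) - \operatorname{rank} H^2(\widetilde C) = -\chi(\widetilde C),
\]
since $H^0(\widetilde C) = 0$ and $H^3(\widetilde C)$ is $\Lambda_\cO$-torsion (it is dual to $H^0(\Qi, A)$, which is cofinitely generated over $\cO$). Writing $\widetilde C$ as the shift by $[-1]$ of the mapping cone of $R\Gamma_{\Iw}(G_{\QQ,S}, T) \to \bigoplus_v R\Gamma_{\Iw}(\QQ_{v,\infty}, T)/U^+_v$, the terms at the places $v=\ell\ne p$ are $\Lambda_\cO$-torsion and the term at $v=\infty$ is acyclic (as $p$ is odd), so
\[
 \chi(\widetilde C) = \chi\bigl(e_\eta R\Gamma_{\Iw}(G_{\QQ,S}, T)\bigr) - \chi\bigl(e_\eta R\Gamma_{\Iw}(\Qpi, T/\sF^i T)\bigr).
\]
By Tate's global Euler characteristic formula — applied to the $e_\eta$-twisted coefficient module, on which complex conjugation acts through $\eta(-1)$ — the first term equals $\operatorname{rank}_\cO H^0(G_\RR, T\otimes e_\eta\Lambda_\cO) - \operatorname{rank}_\cO T = \operatorname{rank}_\cO T^{c=\eta(-1)} - \operatorname{rank}_\cO T = -\operatorname{rank}_\cO T^{c=-\eta(-1)}$, and by the local Euler characteristic formula at $p$ the second term equals $-\operatorname{rank}_\cO(T/\sF^i T) = -i$; hence the displayed difference of ranks is $\operatorname{rank}_\cO T^{c=-\eta(-1)} - i$. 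Finally, $M_{L_\frP}(f)$ is odd, so $c$ has eigenvalues $\{1,-1\}$ on $M_{L_\frP}(f)^*$, hence $\{1,1,-1\}$ on $\Sym^2 M_{L_\frP}(f)^*$, hence $\{-\psi(-1),-\psi(-1),\psi(-1)\}$ on $T = \Sym^2 M_{L_\frP}(f)^*(1+\psi)$ (twisting by $\kappa\psi$ multiplies the $c$-action by $\kappa(-1)\psi(-1) = -\psi(-1)$); therefore $\operatorname{rank}_\cO T^{c=-\eta(-1)}$ is $2$ when $\eta(-1) = \psi(-1)$ and $1$ when $\eta(-1) = -\psi(-1)$, which is exactly the claimed formula.

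The step that will need the most care is the duality identification of $H^2(\widetilde C)$ with $e_\eta H^1_{\Gr,3-i}(\Qi, A)^\vee$: one must check that Iwasawa Poitou--Tate duality pairs precisely the Greenberg conditions defined by $\sF^i$ and $\sF^{3-i}$, keep track of the Tate twist relating $T$ to $A = T^\vee(1)$, and note that the resulting involution of $\Lambda_\cO$ — which interchanges $e_\eta$ with $e_{\eta^{-1}}$ — leaves the final answer unaffected because it depends on $\eta$ only through the sign $\eta(-1)$. The remaining manipulations with Euler characteristics are then routine.
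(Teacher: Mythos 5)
The paper itself states this lemma without proof (it is treated as standard Selmer-complex bookkeeping, of the kind the authors elsewhere delegate to Nekov\'a\v{r}, Fukaya--Kato and \cite{KLZ1b}), so your write-up is being measured against the intended standard argument rather than a printed one. Your part (ii) is essentially that argument and the numerology is right: the global Iwasawa Euler characteristic contributes $-\operatorname{rank}_\cO T^{c=-\eta(-1)}$, the local term at $p$ contributes $-\operatorname{rank}_\cO(T/\sF^i T)=-i$, the terms at $\ell\ne p$ and at $\infty$ are $\Lambda$-torsion resp.\ acyclic, and the eigenvalues of complex conjugation on $T=\Sym^2 M_{L_\frP}(f)^*(1+\psi)$ are $\{-\psi(-1),\psi(-1),-\psi(-1)\}$, giving $2-i$ or $1-i$ according to the parity of $\eta$. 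Two points you flag but do not carry out should still be pinned down: the identification of $H^1,H^2$ of the Selmer complex with the naive Greenberg groups holds only up to $\Lambda$-torsion discrepancies (local $H^0$ terms, e.g.\ $H^0(\Qpi,\Gr^1)$ when $\varepsilon\psi(p)=1$), which is harmless for ranks but not automatically for statement (i); and the $e_\eta$ versus $e_{\eta^{-1}}$ bookkeeping in the duality needs to be fixed by a choice of convention for the $\Lambda$-structure on Pontryagin duals --- your remark that only $\eta(-1)$ enters the final formula does not by itself let you replace $e_{\eta^{-1}}$ by $e_\eta$ in the second term, since a priori these components of the dual Selmer group have different ranks; with the standard conventions the components do match up, but this must be checked rather than waved away.

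The genuine gap is in part (i). You assert that because $f$ is not of CM type, $T/pT$ has no $G_{\Qi}$-invariants. That does not follow: non-CM controls the image of the $p$-adic representation (so $V|_{G_{\Qi}}$ is irreducible and $H^0(\Qi,T)=0$), but the residual representation can be small for particular $p$, and $e_\eta H^0(\Qi,T/pT)$ can be nonzero --- indeed the lemma itself treats its vanishing as an extra hypothesis, imposed only for the freeness statement. This matters because your torsion-freeness argument runs over all prime elements $\pi$ of $e_\eta\Lambda_\cO$, and for $\pi$ the uniformizer the vanishing of $H^0$ of the specialised complex is exactly the mod-$p$ hypothesis; so as written you prove torsion-freeness only under the hypothesis reserved for freeness, whereas the lemma asserts it unconditionally. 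The standard repair: the naive group $e_\eta H^1_{\Iw,\Gr,i}(\Qi,T)$ is a submodule of $e_\eta H^1_{\Iw}(\Qi,T)$, whose $\Lambda_\cO$-torsion submodule is identified with $e_\eta H^0(\Qi,T)$ (equivalently, argue that a compatible system of torsion classes dies under corestriction once the finite invariants stabilise up the tower); since $H^0(\Qi,T)=0$ by irreducibility of $V|_{G_{\Qi}}$, torsion-freeness follows with no residual hypothesis, and your syzygy/Nakayama argument then correctly yields freeness when $e_\eta H^0(\Qi,T/pT)=0$.
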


   One expects that the ranks of the cohomology groups should be ``as small as possible'' given the constraint imposed by part (ii) of the lemma; in particular, at least one of the two terms on the left-hand side of (ii) should always be 0.
   
   We now impose, for simplicity, the assumption that $\varepsilon(p) \psi(p) \ne 1$.
   
   \begin{definition}
    We let $L^{\mathrm{alg}}(\Sym^2 f \otimes \psi)$ be the element of $\Lambda_{\cO}$, unique up to multiplication by a unit, whose image in $e_\eta \Lambda_\cO$ is the characteristic ideal of $e_\eta H^1_{\Gr, 1}(\Qi, A)^\vee$ if $\eta(-1) = \psi(-1)$, and of $e_\eta H^1_{\Gr, 2}(\Qi, A)^\vee$ if $\eta(-1) = -\psi(-1)$.
   \end{definition}
   
   We now make the following conjecture:
   
   \begin{conjecture}[Main Conjecture for $\Sym^2 f \otimes \psi$]
    We have
    \[ L_p^{\mathrm{alg}}(\Sym^2 f \otimes \psi) = d \cdot L_p(\Sym^2 f \otimes \psi)\]
    up to a unit in $\Lambda_{\cO}(\Gamma)$, where $d \in \cO$ is any generator of the congruence ideal of $f$.
   \end{conjecture}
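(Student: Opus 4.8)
Since this is stated as a conjecture, what follows is a proposed line of attack rather than a finished proof: the plan reduces the conjecture to two inputs — an Euler system upper bound and an Eisenstein-congruence lower bound — and locates the genuine difficulties. The first step is to reduce to a single eigenspace. As noted in the introduction, the $+1$ and $-1$ eigenspaces of complex conjugation are interchanged by the functional equation — on the analytic side for $L_p(\Sym^2 f \otimes \psi)$, and on the algebraic side because the self-duality of the Greenberg local conditions interchanges the filtration indices $1$ and $2$ — so it is enough to prove the asserted equality of ideals in $e_\nu \Lambda_\cO$ with $\nu = \psi(-1)$. Fix this eigenspace from now on.

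\emph{The divisibility $L_p^{\mathrm{alg}} \mid d\cdot L_p$.} This is Theorem \ref{mainthm} once the three parasitic factors in the product $(\dag)$ are removed. The smoothing factor $c^2 - c^{2s-2k+2}\psi(c)^{-2}$ goes away in the usual way: run the argument of \S\ref{sect:kolyvagin} for every admissible $c$ and take the greatest common divisor of the resulting ideals, using that, since $\psi$ is neither trivial nor quadratic, these elements have no common zero on $\cW$ and so generate the unit ideal after localising away from $p$. The Kubota--Leopoldt factor $L_p(\psi\varepsilon, s-k+1)$ is accounted for structurally: since $p$ is odd there is an integral decomposition $M_{\cO}(f)^* \otimes_{\cO} M_{\cO}(f)^* = \Sym^2 M_{\cO}(f)^* \oplus \wedge^2 M_{\cO}(f)^*$, with second summand a Tate twist of the Dirichlet character $\varepsilon$, so over $\Qi$ (and in the $e_\nu$-part) the Pontryagin dual of the Selmer group attached to the Rankin--Selberg representation $M_{\cO}(f)^*\otimes M_{\cO}(f)^*(1+\psi)$ differs, up to finite error, from the direct sum of the module defining $L_p^{\mathrm{alg}}(\Sym^2 f \otimes \psi)$ and the Greenberg Selmer group of a Tate twist of $\varepsilon\psi$; the characteristic ideal of the latter is $L_p(\psi\varepsilon, s-k+1)$ by the Iwasawa main conjecture over $\QQ$ (Mazur--Wiles). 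The Beilinson--Flach classes bound precisely this Rankin--Selberg Selmer group, and by Theorem \ref{thm:explicitrecip} their image under Perrin-Riou's regulator is — up to the $c$-factor and a sign — the product $L_p(\Sym^2 f \otimes \psi)\cdot L_p(\psi\varepsilon, s-k+1)$; dividing off the $\wedge^2$ contribution on both sides leaves $L_p^{\mathrm{alg}}\mid d\cdot L_p$. The factor $d$ is not formal: isolating the $f\times f$ contribution of the Beilinson--Flach Euler system within the Hida family forces one to invert the congruence ideal $I(f)$, so the system we obtain is only $\tfrac{1}{d}$-integral on the $\Sym^2$ lattice. Removing $d$ would require an Euler system valued in the $\Sym^2$ lattice itself — e.g.\ built from the motivic cohomology of a Kuga--Sato variety over a single modular curve, with its integrality controlled — and I expect this to be the hardest part of making this inclusion sharp.

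\emph{The divisibility $d\cdot L_p \mid L_p^{\mathrm{alg}}$, and conclusion.} No Euler system can supply this, as Euler systems bound Selmer groups only from above; a lower bound is needed, and the natural source is an Eisenstein-congruence argument in the style of Mazur--Wiles and Skinner--Urban. One would realise the pair $(f,\psi)$ via a Klingen--Eisenstein series on $\mathrm{GU}(2,2)$ (or $\mathrm{GSp}_4$), show it congruent to a Hida family of cusp forms modulo an ideal measuring $d\cdot L_p(f\otimes f\otimes\psi)$ — the congruence factor $d$ appearing because this is a self-convolution construction, consistently with Hida's formula relating $\langle f,f\rangle$ to the congruence number — and use the Galois representations of those cusp forms to produce enough classes in the Rankin--Selberg Selmer group to force $d\cdot L_p(f\otimes f\otimes\psi)$ into the characteristic ideal of its Pontryagin dual; feeding this through Dasgupta's factorization \cite{Dasgupta-factorization} and Mazur--Wiles for the $L_p(\psi\varepsilon, s-k+1)$ factor then yields $d\cdot L_p(\Sym^2 f\otimes\psi)\mid L_p^{\mathrm{alg}}(\Sym^2 f\otimes\psi)$. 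Combining the two divisibilities gives the claimed equality of ideals in $\Lambda_\cO$. The main obstacle is exactly this lower bound: the $\mathrm{GU}(2,2)$ Eisenstein-ideal results currently in the literature are set up for Rankin--Selberg convolutions of two \emph{distinct} newforms — precisely so as to sidestep the functional-equation pole at $s=k$ that is present for $f\otimes f$ — so the self-convolution case has to be established afresh, and that, not the bookkeeping above, is where the real work lies.
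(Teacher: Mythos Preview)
This statement is a \emph{conjecture}, and the paper does not prove it. What the paper does prove is one divisibility with a parasitic factor (Theorem~\ref{mainthm}): $L_p^{\mathrm{alg}}$ divides $d \cdot L_p^{\imp}(\Sym^2 f, \psi) \cdot L_{p, N_f N_\psi}(\varepsilon\psi, s-k+1)$ on the $e_\eta$-component. The paper explicitly flags both the imprimitive-versus-primitive issue and the presence of the Dirichlet $L$-factor as defects it cannot remove (see the remark after Theorem~\ref{mainthm}).

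Your proposal to remove the Kubota--Leopoldt factor has a genuine gap. You argue that the Rankin--Selberg Selmer group decomposes as a product of the $\Sym^2$ and $\wedge^2$ Selmer groups, and that Mazur--Wiles identifies the characteristic ideal of the latter with $L_p(\psi\varepsilon, s-k+1)$, so one can cancel. But the Beilinson--Flach classes on the $\nu = \psi(-1)$ eigenspace land \emph{entirely} in the $\Sym^2$ summand (this is the symmetry argument in \S\ref{sect:ES}): there is no $\wedge^2$ component of the class to bound the $\wedge^2$ Selmer group with, and conversely no $\wedge^2$ Selmer contribution to peel off. The Kubota--Leopoldt factor appears because the Perrin-Riou regulator of this purely-$\Sym^2$ class is, by Dasgupta's factorisation, the product $L_p(\Sym^2 f \otimes \psi) \cdot L_p(\psi\varepsilon, s-k+1)$; it is a feature of the regulator formula, not of a direct-sum decomposition of Selmer groups. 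So your cancellation does not go through, and the Euler-system divisibility remains weaker than the conjecture predicts. (There is also a structural obstruction to running the Kolyvagin--Rubin machine directly on the reducible $4$-dimensional module, since hypothesis (H.1) fails --- this is exactly why the paper needs Appendix~\ref{appendix}.)

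Your outline of the reverse divisibility via Klingen--Eisenstein congruences on $\mathrm{GU}(2,2)$ is a plausible direction and you are right that the self-convolution case lies outside what is currently available, but it is entirely speculative and the paper makes no attempt in this direction.
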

   
   \begin{remark}
    A somewhat tortuous exercise involving Poitou--Tate duality shows that the algebraic $p$-adic $L$-function satisfies a functional equation relating $L_p^{\mathrm{alg}}(\Sym^2 f \otimes \psi, s)$ to $L_p^{\mathrm{alg}}(\Sym^2 f \otimes \psi^{-1} \varepsilon^{-2}, 2k-1-s)$. This has exactly the same form as the functional equation of the ``analytic'' $p$-adic $L$-function of Theorem \ref{thm:padicL}, and it interchanges components $\eta$ with $\eta(-1) = +1$ with those with $\eta(-1) = -1$. Hence it is sufficient to treat the Main Conjecture only for characters of one particular sign.
   \end{remark}
  
%%%%%%%%%%%%%%%%%%%%%  
  
     \begin{remark}
      Having introduced all the necessary objects, we can now outline the strategy of the next sections. Our goal is to bound $H^1_{\Gr, 1}(\Qi, A)$ in terms of a $p$-adic $L$-function.
      
      The Kolyvagin--Rubin machine shows (see \cite[Theorem 2.3.3]{rubin00}) that if a non-trivial Euler system exists, then the index of the Euler system in $H^1_{\Iw}(\Qi, T) = H^1_{\Iw, \Gr, 0}(\Qi, T)$ gives a bound for the Selmer group of $A$ with the strict local condition at $p$, which is our $H^1_{\Gr, 3}(\Qi, A)$. 
      
      We shall in fact construct an Euler system with an additional local property -- namely, all our Euler system classes will lie in $H^1_{\Iw, \Gr, 1}$ -- and the refinement of the Kolyvagin--Rubin machinery developed in \cite[Appendix B]{leiloefflerzerbes14b} and \cite[\S 12]{KLZ1b} then bounds $H^1_{\Gr, 2}(\Qi, A)$ in terms of the index of the Euler system in $H^1_{\Iw, \Gr, 1}(\Qi, T)$.
      
      We then perform one final step, using Poitou--Tate duality, which will tell us that the Selmer group $H^1_{\Gr, 1}(\Qi, A)$ is bounded in terms of the index of the image of the Euler system in the quotient $H^1_{\Iw, \Gr, 1} / H^1_{\Iw, \Gr, 2}$. This last index is essentially the $p$-adic $L$-function, and this gives our main theorem.
     \end{remark}

  \subsection{Control theorems}
  
   We now study the relation between the Selmer groups over $\Qi$ and over $\QQ$. We continue to assume that $\varepsilon(p) \psi(p) \ne 1$. 
      
   \begin{proposition}
    The restriction map
     \[ 
      H^1_{\Gr, i}(\QQ, A(\tau^{-1})) \rTo^{\mathrm{res}} H^1_{\Gr, i}(\Qi, A)^{\Gamma=\tau}.% \to H^0(\QQ_{p, \infty}, A / \sF^i A)_{\Gamma = \tau}
     \]
     has finite kernel and cokernel. If $H^0(\Qi, A) = 0$ then it is injective.
   \end{proposition}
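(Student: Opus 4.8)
The plan is to deduce this from the Hochschild--Serre inflation--restriction sequence for $\Qi/\QQ$, together with the description of $H^1_{\Gr, i}(\Qi, A)$ given in the preceding proposition.

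First I would replace $A$ by $M \coloneqq A(\tau^{-1})$: by part (ii) of the preceding proposition this is harmless, and since $\tau$ factors through $\Gamma = \Gal(\Qi/\QQ)$ we have $M|_{G_{\Qi}} = A$, so the target becomes $H^1_{\Gr, i}(\Qi, M)^{\Gamma}$ with $\Gamma$ acting in the natural way. Let $S$ be the set of places of $\QQ$ above $p N_f N_\psi \infty$. Inflation--restriction for $1 \to G_{\Qi, S} \to G_{\QQ, S} \to \Gamma \to 1$ then gives an exact sequence
\[
 0 \to H^1(\Gamma, H^0(\Qi, M)) \to H^1(\QQ, M) \xrightarrow{\mathrm{res}} H^1(\Qi, M)^{\Gamma} \to H^2(\Gamma, H^0(\Qi, M)).
\]
Since $M$ is $p$-primary and $\Gamma \cong \Zp^\times$ has $p$-cohomological dimension $1$, the last term vanishes, so on full cohomology $\mathrm{res}$ is surjective with kernel $H^1(\Gamma, H^0(\Qi, M))$. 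The key point is that $H^0(\Qi, M) = H^0(\Qi, A)$ is \emph{finite}: as $f$ is not of CM type, $\rho_f(G_{\Qi})$ is Zariski-dense in $\SL_2$, so $\Sym^2 M_{L_\frP}(f)$ is irreducible over $G_{\Qi}$ and remains so after twisting by any character; hence the finitely-cogenerated $\Zp$-module $H^0(\Qi, M)$ has no divisible part and is therefore finite. It follows that $H^1(\Gamma, H^0(\Qi, M))$ is finite, and that it is zero exactly when $H^0(\Qi, A) = 0$; intersecting with the Selmer subgroups this already yields finiteness of the kernel and the injectivity statement.

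For the cokernel I would argue as follows. Fix $y \in H^1_{\Gr, i}(\Qi, M)^{\Gamma}$ and, using the surjectivity above, a lift $\tilde x \in H^1(\QQ, M)$. By part (i) of the preceding proposition (and the remark in its proof), membership in $H^1_{\Gr, i}(\Qi, M)$ amounts to the single condition at $p$, namely $\loc_p x \in \operatorname{image} H^1(\Qpi, \sF^i M)$; the conditions at $\ell \neq p$ are automatic. At a prime $\ell \neq p$: since $\Qi/\QQ$ is unramified at $\ell$, every element of $\ker\bigl(H^1(\QQ_\ell, M) \to H^1((\Qi)_w, M)\bigr)$ is inflated from $\Gal((\Qi)_w/\QQ_\ell)$, hence unramified; and $\loc_\ell \tilde x$ maps to $\loc_w y = 0$, so $\loc_\ell \tilde x \in H^1_{\mathrm{nr}}(\QQ_\ell, M)$. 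Thus $\tilde x$ automatically meets the Greenberg condition at every $\ell \neq p$. At $p$: the image of $\loc_p \tilde x$ in $H^1(\Qp, M/\sF^i M)$ dies after restriction to $\Qpi$, hence lies in $\ker\bigl(H^1(\Qp, M/\sF^i M) \to H^1(\Qpi, M/\sF^i M)\bigr) = H^1\bigl(\Gamma, H^0(\Qpi, M/\sF^i M)\bigr)$, using $\Gal(\Qpi/\Qp) \cong \Gamma$. Restricted to $G_{\Qpi}$ the graded pieces of $M/\sF^i M$ are unramified characters (the powers of $\kappa$ and the twist by $\tau^{-1}$ becoming trivial over $\Qpi$), with unramified parts among $\alpha^{-2}\psi(p)^{-1}$, $(\varepsilon(p)\psi(p))^{-1}$ and $p^{2k-2}\beta^{-2}\psi(p)^{-1}$; by Deligne's bound $|\alpha| = |\beta| = p^{(k-1)/2}$ and the standing hypothesis $\varepsilon(p)\psi(p) \neq 1$ none of these can equal $1$, so $H^0(\Qpi, M/\sF^i M)$, and hence $H^1\bigl(\Gamma, H^0(\Qpi, M/\sF^i M)\bigr)$, is finite. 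The assignment sending $y$ to the class of $\loc_p \tilde x$ modulo $\operatorname{image} H^1(\Qp, \sF^i M)$ (well-defined modulo the image of the finite group $\ker(\mathrm{res})$) is then a homomorphism from $H^1_{\Gr, i}(\Qi, M)^{\Gamma}$ to a finite group whose kernel is exactly $\mathrm{res}\bigl(H^1_{\Gr, i}(\QQ, M)\bigr)$; so the cokernel is finite.

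The hard part is the cokernel estimate, and in particular the finiteness of $H^0(\Qpi, M/\sF^i M)$: this is the one place where Deligne's weight bound on $\alpha, \beta$ and the hypothesis $\varepsilon(p)\psi(p) \neq 1$ are genuinely needed. The other essential input, used for finiteness of the kernel, is that non-CM-ness of $f$ makes $\Sym^2 M_{L_\frP}(f)$ irreducible as a $G_{\Qi}$-representation.
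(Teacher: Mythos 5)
Your proof is correct and is essentially the argument the paper has in mind: the paper simply dismisses this proposition as "an easy exercise using the inflation--restriction exact sequence," and your write-up carries out exactly that exercise (finiteness of $H^1(\Gamma, H^0(\Qi, A))$ for the kernel, and comparison of the local conditions at $\ell \ne p$ and at $p$ via $H^1(\Gamma, H^0(\Qpi, M/\sF^i))$ for the cokernel). The only nitpick is that the kernel is zero \emph{if} $H^0(\Qi,A)=0$, not "exactly when," but that is the only direction the statement requires.
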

   
   \begin{proof}
    This is an easy exercise using the inflation-restriction exact sequence.
   \end{proof}

   \begin{proposition}
    \label{prop:descent}
    Let $L_p^{\mathrm{alg}}$ be as defined above, and let $\tau$ be a character of $\Gamma$. Let $i = 1$ if $\tau(-1) = 1$, and $i = 2$ if $\tau(-1) = -1$. Then $H^1_{\Gr, i}(A(\tau))$ is finite if and only if $L_p^{\mathrm{alg}}(\Sym^2 f \otimes \psi, \tau) \ne 0$. 
    
    If, moreover, $H^0(\Qi, A) = 0$, then we have the formula
    \[ \# H^1_{\Gr, i}(\QQ, A(\tau)) \cdot \# H^0(\Qp, A(\tau) / \sF^i) = p^{cn}, \]
    where $c = [L_{\frP}(\tau): \Qp]$ and 
    \[ n = v_p\left(L_p^{\mathrm{alg}}(\Sym^2 f \otimes \psi, \tau)\right).\]
   \end{proposition}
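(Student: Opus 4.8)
The plan is to run the standard two-step descent: first reduce the Selmer group over $\QQ$ to the specialisation at $\tau$ of the Iwasawa module $e_\eta H^1_{\Gr,i}(\Qi,A)^\vee$, and then feed in a module-theoretic specialisation lemma for torsion Iwasawa modules. Write $\eta=\tau|_{\Gamma_{\mathrm{tors}}}$ and $\Lambda_\eta=e_\eta\Lambda_\cO$; since $\mu_{p-1}\subseteq\Zp\subseteq\cO$, the ring $\Lambda_\eta$ is (non-canonically) a power series ring in one variable over $\cO$, in particular a two-dimensional regular local ring in which the pseudo-null modules are exactly the finite ones. Set $X=e_\eta H^1_{\Gr,i}(\Qi,A)^\vee$, a finitely generated torsion $\Lambda_\eta$-module, and let $g$ be a generator of $\operatorname{char}_{\Lambda_\eta}X$ normalised so that $g(\tau)=L_p^{\mathrm{alg}}(\Sym^2 f\otimes\psi,\tau)$ --- which is precisely how $L_p^{\mathrm{alg}}$ was defined. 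Fix a topological generator $\gamma_0$ of the subgroup $1+p\Zp$ of $\Gamma$, put $\mathcal{I}_\tau=(\gamma_0-\tau(\gamma_0))\subseteq\Lambda_\eta$ (the kernel of evaluation at $\tau$), and write $\cO'=\cO_{L_\frP(\tau)}$, so that $[\cO':\Zp]=c$. The specialisation lemma I would use --- proved from the structure theorem by comparing $X$ with an elementary module $\bigoplus_j\Lambda_\eta/(f_j)$ along a pseudo-isomorphism with finite kernel and cokernel, and then chasing orders through the resulting six-term exact sequences (noting $\#K[\mathcal{I}_\tau]=\#(K/\mathcal{I}_\tau K)$ for finite $K$) --- says that $X/\mathcal{I}_\tau X$ and $X[\mathcal{I}_\tau]$ are finite if and only if $g(\tau)\ne 0$, and that in this case
\[
 \frac{\#(X/\mathcal{I}_\tau X)}{\#X[\mathcal{I}_\tau]}=\#\bigl(\cO'/g(\tau)\bigr)=p^{cn},\qquad n=v_p\bigl(L_p^{\mathrm{alg}}(\Sym^2 f\otimes\psi,\tau)\bigr).
\]

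Next I would record the two Pontryagin-duality identities $\#H^1_{\Gr,i}(\Qi,A)^{\Gamma=\tau^{-1}}=\#(X/\mathcal{I}_\tau X)$ and $\#H^1_{\Gr,i}(\Qi,A)_{\Gamma=\tau^{-1}}=\#X[\mathcal{I}_\tau]$, which come out of dualising the definitions of invariants and coinvariants. Applying the preceding proposition to the character $\tau^{-1}$ yields a restriction map $H^1_{\Gr,i}(\QQ,A(\tau))\to H^1_{\Gr,i}(\Qi,A)^{\Gamma=\tau^{-1}}$ with finite kernel and cokernel; hence $H^1_{\Gr,i}(\QQ,A(\tau))$ is finite if and only if $X/\mathcal{I}_\tau X$ is, i.e.\ if and only if $g(\tau)=L_p^{\mathrm{alg}}(\Sym^2 f\otimes\psi,\tau)\ne 0$. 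This settles the first assertion, and uses neither $H^0(\Qi,A)=0$ nor the no-finite-submodule property below.

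For the precise formula I would bring in the hypothesis $H^0(\Qi,A)=0$. By the preceding proposition the restriction map above is then injective, and a control-theorem computation identifies its cokernel: the inflation--restriction sequence, together with the fact that $\Gamma$ has $p$-cohomological dimension $1$ (so restriction is surjective on the full $H^1$), shows the cokernel is purely local; the terms at $\ell\ne p$ vanish for the same reason as in the earlier proposition (classes over $\Qi$ are automatically unramified outside $p$, since $\Gal(\overline{\FF}_\ell/\FF_\ell(\mu_{p^\infty}))$ has trivial $p$-Sylow subgroup), and at $p$ the term is $\ker\bigl(H^1(\Qp,A(\tau)/\sF^i)\to H^1(\Qpi,A(\tau)/\sF^i)\bigr)=H^1\bigl(\Gamma,H^0(\Qpi,A(\tau)/\sF^i)\bigr)$, the cohomology of a finite module, of the same order as $H^0(\Qp,A(\tau)/\sF^i)$ (one must also check this kernel lies in the image of $H^1(\Qp,A(\tau))$, i.e.\ maps to zero in $H^2(\Qp,\sF^i A(\tau))$). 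Separately, $H^0(\Qi,A)=0$ implies, by a standard argument, that $X$ has no nonzero finite $\Lambda_\eta$-submodule, so the pseudo-isomorphism $X\to\bigoplus_j\Lambda_\eta/(f_j)$ is injective; since $g(\tau)\ne 0$ in the relevant case, $\gamma_0-\tau(\gamma_0)$ is coprime to each $f_j$ and hence acts injectively on $X$, so $X[\mathcal{I}_\tau]=0$ and $\#(X/\mathcal{I}_\tau X)=p^{cn}$ by the displayed identity. Assembling everything,
\[
 \#H^1_{\Gr,i}(\QQ,A(\tau))\cdot\#H^0(\Qp,A(\tau)/\sF^i)=\#H^1_{\Gr,i}(\Qi,A)^{\Gamma=\tau^{-1}}=\#(X/\mathcal{I}_\tau X)=p^{cn}.
\]

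The genuinely routine inputs here are the module-theoretic specialisation lemma and the duality bookkeeping. The step I expect to be the main obstacle is the control-theorem computation of the cokernel of the restriction map: pinning down that the \emph{only} surviving local contribution is exactly $H^0(\Qp,A(\tau)/\sF^i)$ --- checking the vanishing of the $\ell\ne p$ terms and that the $p$-adic term is this $H^0$ and nothing larger --- is the delicate part, and the auxiliary input that $H^0(\Qi,A)=0$ forces the dual Selmer module to have no nonzero finite submodule is the other place where the standard structure theory (or a weak-Leopoldt-type argument) has to be invoked with care.
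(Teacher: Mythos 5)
Your route — classical control theorem plus specialisation of characteristic ideals via the structure theorem — is genuinely different from the paper's, which handles the first assertion exactly as you do (via the preceding proposition) but obtains the exact formula by invoking Nekovar's theory of Selmer complexes and its compatibility with derived tensor product, deferring the computation to Theorem 11.6.5(ii) of \cite{KLZ1b}. Your treatment of the ``finite if and only if non-vanishing'' half is fine: it needs only the finite-kernel-and-cokernel statement of the previous proposition and the standard fact that $X/\mathcal{I}_\tau X$ is finite precisely when the characteristic ideal does not vanish at $\tau$.

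For the exact formula, however, your sketch has two genuine gaps, and they are precisely the substance of the statement. First, the snake-lemma computation gives $\#\operatorname{coker}(\mathrm{res}) = \# H^0(\Qp, A(\tau)/\sF^i)$ \emph{exactly} only if the global-to-local map over $\Qi$ (from $H^1(\Qi, A(\tau))$ to the local quotient by the Greenberg condition at $p$) is surjective onto the relevant invariants; without that input you only get that the cokernel of restriction embeds into the local term $H^1\bigl(\Gamma, H^0(\Qpi, A(\tau)/\sF^i)\bigr)$, i.e.\ an inequality rather than the stated equality. This surjectivity is a real ingredient (a Poitou--Tate / weak Leopoldt-type statement, or equivalently control of the complementary compact Selmer module), and you never establish it. Second, your assertion that $H^0(\Qi,A)=0$ implies ``by a standard argument'' that $X = e_\eta H^1_{\Gr,i}(\Qi,A)^\vee$ has no nonzero finite $\Lambda$-submodule is unjustified: for Greenberg-type Selmer groups this is not a formal consequence of vanishing of invariants — the arguments of \cite{greenberg89} and its successors require essentially the same global-to-local surjectivity together with hypotheses on the local conditions. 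Since you need $X[\mathcal{I}_\tau]=0$ to upgrade the specialisation lemma's ratio to $\#(X/\mathcal{I}_\tau X) = p^{cn}$, this is not cosmetic. (A smaller bookkeeping point: the definition of $L_p^{\mathrm{alg}}$ chooses between $H^1_{\Gr,1}$ and $H^1_{\Gr,2}$ according to whether $\eta(-1)=\psi(-1)$, whereas the proposition chooses $i$ by $\tau(-1)$ alone, so your claim that the characteristic ideal of your $X$ ``is precisely how $L_p^{\mathrm{alg}}$ was defined'' needs the parity matching to be checked.) Both missing ingredients are exactly what the derived base-change formalism cited in the paper packages automatically; to keep your classical route you must prove the surjectivity and no-finite-submodule statements under the running hypotheses, or else weaken the conclusion to a divisibility.
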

   
   \begin{proof}
    The claim that $L_p^{\mathrm{alg}}$ vanishes if and only if $H^1_{\Gr, i}(\QQ, A(\tau))$ is infinite is clear from the previous proposition. To obtain the exact estimate for the order of the cohomology groups, we use Nekovar's theory of Selmer complexes, and the compatibility of Selmer complexes with derived tensor product; as the computation is virtually identical to \cite[Theorem 11.6.5(ii)]{KLZ1b} we shall not give the details here.
   \end{proof}
   
   \begin{corollary}
    Let $\tau$ be a character of the form $j + \chi$, with $\chi$ of finite order and $j \in \{1, \dots, 2k-2\}$, and suppose that $L(\Sym^2 f \otimes \psi \chi^{-1}, j)$ is a critical value. Then $\Hf(\QQ, A(j + \chi))$ is finite if and only if $L^{\mathrm{alg}}_p(\Sym^2 f \otimes \psi, j + \chi) \ne 0$. In particular, if the Main Conjecture holds, then $\Hf(\QQ, A(j + \chi))$ is finite for all such $j$ and $\chi$.
   \end{corollary}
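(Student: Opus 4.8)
The plan is to sandwich $\Hf(\QQ, A(j+\chi))$ between two Selmer groups differing from it by finite amounts, reduce to the Greenberg Selmer group treated in Proposition \ref{prop:descent}, and then invoke that proposition. First I would unwind the criticality hypothesis: by the computation of critical values recalled above, $L(\Sym^2 f \otimes \psi\chi^{-1}, j)$ being critical forces $j$ to lie in one of the ranges $\{1, \dots, k-1\}$ or $\{k, \dots, 2k-2\}$, with the parity constraint $(-1)^j = -\psi(-1)\chi(-1)$ in the former and $(-1)^j = \psi(-1)\chi(-1)$ in the latter (using $\chi(-1)^{-1} = \chi(-1)$). Writing $m$ for the integer attached to $j$ in Proposition \ref{prop:BKSel}, we then have $m = 1$ in the first range and $m = 2$ in the second.

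Next, since $\varepsilon(p)\psi(p) \ne 1$ is assumed throughout this section, Proposition \ref{prop:globalBKSel} applies to the twist $j + \chi$ and shows that $\Hf(\QQ, A(j+\chi))$ has finite index in $H^1_{\Gr, 3-m}(\QQ, A(j+\chi))$, so one is finite if and only if the other is. A short parity check using the constraints above shows that the value at $-1$ of the character $j+\chi$, namely $(-1)^j\chi(-1)$, equals $-\psi(-1)$ in the first range and $\psi(-1)$ in the second --- precisely the sign for which the definition of $L_p^{\mathrm{alg}}(\Sym^2 f \otimes \psi)$, and hence Proposition \ref{prop:descent}, involves the Greenberg Selmer group with filtration index $3-m$. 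Therefore $H^1_{\Gr, 3-m}(\QQ, A(j+\chi))$ is finite if and only if $L_p^{\mathrm{alg}}(\Sym^2 f \otimes \psi, j + \chi) \ne 0$, which is the asserted equivalence.

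For the ``in particular'' clause, assume the Main Conjecture. Then $L_p^{\mathrm{alg}}(\Sym^2 f \otimes \psi)$ equals $d \cdot L_p(\Sym^2 f \otimes \psi)$ up to a unit of $\Lambda_\cO(\Gamma)$, and as $d \ne 0$ this shows $L_p^{\mathrm{alg}}(\Sym^2 f \otimes \psi, j+\chi) \ne 0$ if and only if $L_p(\Sym^2 f \otimes \psi)(j+\chi) \ne 0$, so it suffices to show the analytic $p$-adic $L$-function is non-zero at every such critical point. The parity constraints found above are precisely those under which Theorem \ref{thm:padicL} applies --- part (i) when $1 \le j \le k-1$, part (ii) when $k \le j \le 2k-2$ --- and its interpolation formula writes $L_p(\Sym^2 f \otimes \psi)(j+\chi)$ as an explicit product of Gamma-values, a power of $2\pi i$, a Gauss sum, the multiplier $\mathcal{E}_p$ or $\mathcal{E}_p'$, and the complex value $L(\Sym^2 f \otimes \psi\chi^{-1}, j)$. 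The multiplier is non-zero because $\varepsilon(p)\psi(p) \ne 1$ excludes the exceptional-zero case, and the complex $L$-value is non-zero --- for $j \ge k$ this is Remark \ref{rmk:nonvanishing}, while for $j \le k-1$ it follows from the functional equation $s \leftrightarrow 2k-1-s$ of $L(\Sym^2 f \otimes \psi\chi^{-1}, s)$, since $j$ is critical exactly when $2k-1-j \ge k$ is, so the archimedean Euler factors on both sides are finite and non-zero and vanishing at $j$ would force the vanishing of a twisted symmetric square $L$-value at an integer $\ge k$, contrary to Remark \ref{rmk:nonvanishing}.

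The step I expect to need the most care is the parity bookkeeping in the second paragraph, matching the filtration index $3-m$ produced by the Bloch--Kato comparison with the one dictated by the sign of $j+\chi$ in Proposition \ref{prop:descent}; the only non-formal external input is the Jacquet--Shalika non-vanishing theorem hidden in Remark \ref{rmk:nonvanishing}, which is needed to handle $j = k$ and, via the functional equation, $j = k-1$.
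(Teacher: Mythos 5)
Your proposal is correct and follows essentially the same route as the paper, whose proof is simply ``immediate from Proposition \ref{prop:descent} and Proposition \ref{prop:globalBKSel}'': you compare the Bloch--Kato and Greenberg Selmer groups via Proposition \ref{prop:globalBKSel} (legitimate since $\varepsilon(p)\psi(p)\ne 1$ is in force in this section), match the filtration index $3-m$ with the sign of $j+\chi$ in the definition of $L_p^{\mathrm{alg}}$, and then apply the descent proposition. Your extra care in the ``in particular'' step --- checking the non-vanishing of the interpolation multiplier and supplying the functional-equation argument reducing non-vanishing of the complex $L$-value at critical $j\le k-1$ to the Jacquet--Shalika bound at $2k-1-j\ge k$ --- is a welcome expansion of details the paper leaves implicit, but it is the same argument.
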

   
   \begin{proof}
    Immediate from the previous proposition and Proposition \ref{prop:globalBKSel}.
   \end{proof} 
   
%%%%%%%%%%%%%%%%%%%%%%%%%%%%%%%%%%%%%%%%%%%%%%%%%    
 \section{The Euler system}
  \label{sect:ES}
%%%%%%%%%%%%%%%%%%%%%%%%%%%%%%%%%%%%%%%%%%%%%%%%%     

  We now construct an Euler system for the representation $T$, using the projections of the Beilinson--Flach classes introduced in \cite{leiloefflerzerbes14, KLZ1b}.

  \subsection{Beilinson--Flach classes}
 
   \begin{definition}
    For $m \ge 1$ coprime to $p$, and $c > 1$ coprime to $6pmN_f$, let
    \[ \cBF_{m}^{f, f} \in H^1_{\Iw}(\QQ(\mu_{mp^\infty}), M_{L_{\mathfrak{P}}}(f \otimes f)^*) \]
    be the Beilinson--Flach cohomology class associated to the pair $(f_{\alpha}, f_{\alpha})$.
   \end{definition}
   
   By construction, $M_{L_\frP}(f \otimes f)^*$ is a direct summand of the cohomology group 
   \[ H^1_{\et}\left(Y_1(N_f)_{\QQbar}, \TSym^{k-2}(\sH_{\Qp})(1)\right)^{\otimes 2} \otimes L_{\frP},\]
   and this direct summand is preserved by the symmetry involution $s$ which interchanges the two factors in the tensor product.
   
   The key input to all our constructions is the following symmetry property:
   
   \begin{proposition}
    We have
    \[ s\left( \cBF_{m}^{f, f} \right) = -[\sigma] \cdot \cBF_{m}^{f, f}, \]
    where $\sigma \in \Gal(\QQ(\mu_{mp^\infty}) / \QQ)$ is the complex conjugation.
   \end{proposition}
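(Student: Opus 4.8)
The plan is to deduce the symmetry property from the explicit geometric construction of the Beilinson--Flach classes, tracing the action of the involution $s$ through the various geometric maps. Recall that $\cBF_m^{f,f}$ is built (as in \cite{leiloefflerzerbes14, KLZ1b}) by pushing forward an Eisenstein-type class along the diagonal embedding $Y_1(N_f) \hookrightarrow Y_1(N_f) \times Y_1(N_f)$ (at deeper level, using the modular curves $Y(M,N)$ and degeneracy maps), and then projecting via the $(f_\alpha, f_\alpha)$-eigenspace projector. The involution $s$ swapping the two factors of $Y_1(N_f) \times Y_1(N_f)$ fixes the diagonal, so one would expect $s$ to act trivially on the pushforward class; the sign and the twist by complex conjugation must therefore come from two sources: the fact that $s$ does not simply fix the relevant Rankin--Eisenstein class (there is a sign coming from the antisymmetry of the cup-product pairing used to build it, or from the interchange of the two copies of $\TSym^{k-2}(\sH)$ which introduces a sign $(-1)^{k-2}$ together with a correction), and the fact that the natural identification being used between the two copies of the Galois representation involves the Poincar\'e duality / Weil pairing, which is $G_\QQ$-equivariant only up to the cyclotomic character — hence the appearance of $[\sigma]$ when one works in the $(1)$-twisted, Iwasawa-theoretic setting.

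Concretely, I would proceed as follows. \emph{First}, recall the precise formula for $\cBF_m^{f,f}$ from \cite{KLZ1b}, in terms of the motivic/\'etale Rankin--Eisenstein classes $\mathrm{Eis}^{[k,k,j]}$ and the pushforward along the diagonal; isolate exactly where the two tensor factors enter and where an auxiliary duality or pairing is used. \emph{Second}, compute $s^*$ of the underlying geometric class on $Y_1(N_f) \times Y_1(N_f)$: since $s$ fixes the diagonal and commutes with the pushforward, the only effect is on the coefficient sheaf $\TSym^{k-2}(\sH) \boxtimes \TSym^{k-2}(\sH)$ and on any Eisenstein symbol or CM/torsion-section data used in the construction; track the resulting sign. \emph{Third}, match the normalisations: the class $\cBF_m^{f,f}$ lands in $M_{L_\frP}(f \otimes f)^*$ via a fixed isomorphism with a piece of $H^1_{\et}(Y_1(N_f), \TSym^{k-2}(\sH)(1))^{\otimes 2}$, and this identification uses the Poincar\'e duality pairing on each factor; since $s$ permutes the factors, composing with $s$ forces one to re-identify the two dual pieces, introducing the complex-conjugation twist $[\sigma]$ precisely because the duality $M \otimes M \to L_\frP(1-k)$ is only $G_\QQ$-equivariant for the Tate twist built into the $(1)$. \emph{Finally}, assemble the sign $-1$ and the factor $[\sigma]$.

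The main obstacle, I expect, is bookkeeping rather than conceptual: getting the sign exactly right, since there are several competing sources of signs — the antisymmetry of the cup product, the factor $(-1)^{k-2}$ from swapping two copies of $\TSym^{k-2}$, the behaviour of the Eisenstein symbol under the swap, and the normalisation of the eigenspace projector $\mathrm{pr}_{f_\alpha} \otimes \mathrm{pr}_{f_\alpha}$ (which is itself $s$-symmetric, so contributes nothing). One must verify that all the even-order contributions cancel and a single sign $-1$ survives, independent of the parity of $k$. I would pin this down by testing against the known case $k=2$ (elliptic curves), where the Beilinson--Flach elements and their behaviour under $s$ are documented in \cite{leiloefflerzerbes14}, and then check that the general-weight construction introduces only $s$-invariant modifications. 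A secondary, more technical point is to confirm that the displayed identity holds already at the level of Iwasawa cohomology over $\QQ(\mu_{mp^\infty})$ (not merely after specialising), which requires that the compatibility is respected by the norm maps in the tower — this follows from the functoriality of the whole construction in the level, so no real difficulty arises there.
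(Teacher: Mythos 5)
Your overall strategy---tracing the factor-swap involution $\rho$ of $Y_1(N_f)^2$ through the geometric construction and splitting the answer into a universal sign plus a Galois correction---is the same as the paper's, and the part of your sign analysis resting on the antisymmetry of the cup product is the right one: the K\"unneth isomorphism identifying $H^2$ of the square with the tensor square of $H^1$ is given by cup product, which anticommutes in degree $1$, so $\rho^*$ induces $-s$ on $M_{L_\frP}(f\otimes f)^*$, with no dependence on $k$. The competing sign $(-1)^{k-2}$ you propose from interchanging the two copies of $\TSym^{k-2}(\sH)$ is spurious: the coefficient sheaves sit in cohomological degree $0$, so swapping them contributes nothing, and no case-checking at $k=2$ is needed.

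The genuine gap is your account of where $[\sigma]$ comes from. You attribute it to the Poincar\'e-duality/Weil-pairing identification being $G_{\QQ}$-equivariant only up to a cyclotomic twist; but that mechanism could only alter the coefficients by a character, whereas $[\sigma]$ in the statement is a group-ring element of $\Zp[[\Gal(\QQ(\mu_{mp^\infty})/\QQ)]]$ acting on the Iwasawa cohomology (on a $\chi$-isotypic component it acts by $\chi(-1)$, which is exactly what the subsequent Corollary exploits---a coefficient twist cannot reproduce this level-dependent behaviour). The input you are missing, which the paper simply cites as Proposition 5.2.3(1) of \cite{KLZ1b}, is the identity $\rho^*\bigl(\cBF_m^{f,f}\bigr) = [\sigma]\cdot \cBF_m^{f,f}$: the Rankin--Iwasawa/Beilinson--Flach class at level $m$ is built from asymmetric twisting data (the $\mu_m$-level/torsion-section translation enters through one factor only), and interchanging the two factors replaces that datum by its image under $-1 \in (\ZZ/mp^\infty\ZZ)^\times$, i.e.\ by complex conjugation in $\Gal(\QQ(\mu_{mp^\infty})/\QQ)$. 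Your step three, as written, would not produce the element $[\sigma]$ at all, so without establishing (or invoking) this transformation property of the class itself, the proposed argument does not yield the asserted identity over $\QQ(\mu_{mp^\infty})$.
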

   
   \begin{proof}
    Let $\rho$ be the automorphism of the variety $Y_1(N_f)^2$ that interchanges the two factors. Since the K\"unneth isomorphism
    \[ H^2(Y_1(N_f)^2_{\QQbar}, \TSym^{k-2}(\sH_{\Qp})^{\otimes 2}(2)) \cong \left(H^1(Y_1(N_f)_{\QQbar}, \TSym^{k-2}(\sH_{\Qp})(1))\right)^{\otimes 2}\]
    is given by cup-product, which is anti-commutative in degree 1, the map $\rho^*$ induces $-s$ on $M_{L_\frP}(f \otimes f)^*$. On the other hand, \cite[Proposition 5.2.3 (1)]{KLZ1b} shows that $\rho^*\left(\cBF_{m}^{f, f}\right) = [\sigma] \cdot \cBF_{m}^{f, f}$. Combining these statements gives the result.
   \end{proof}
   
   The eigenspaces of $s$ give the direct sum decomposition
   \[ 
    M_{L_\frP}(f \otimes f)^* = 
    \Sym^2 M_{L_{\frP}}(f)^* \oplus \sideset{}{^2}{\textstyle\bigwedge} M_{L_{\frP}}(f)^*,
   \]
   so we obtain the following:
   
   \begin{corollary}
    Let $\chi$ be a continuous character of $\Gal(\QQ(\mu_{mp^\infty}) / \QQ) \cong (\ZZ / mp^\infty \ZZ)^\times$. Then the image of $\cBF_{m}^{f, f}$ in $H^1(\QQ(\mu_m)^+, M_{L_\frP}(f \otimes f)^*(\chi))$ takes values in the following direct summand:
    \begin{itemize}
     \item in $\bigwedge^2 M_{L_\frP}(f)^*(\chi)$ if $\chi(-1) = 1$;
     \item in $\Sym^2 M_{L_\frP}(f)^* (\chi)$ if $\chi(-1) = -1$.
    \end{itemize}
   \end{corollary}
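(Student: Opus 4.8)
The plan is to derive the corollary formally from the symmetry property just proved, using the $s$-eigenspace decomposition $M_{L_\frP}(f\otimes f)^* = \Sym^2 M_{L_\frP}(f)^* \oplus \bigwedge^2 M_{L_\frP}(f)^*$. First I would decompose the Beilinson--Flach class accordingly, writing $\cBF_m^{f,f} = x^{\Sym} + x^{\wedge}$ with $x^{\Sym}$ in the $(+1)$-eigenspace of $s$ (the $\Sym^2$-part) and $x^{\wedge}$ in the $(-1)$-eigenspace (the $\bigwedge^2$-part).

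Next I would record that the involution $s$, which permutes the two geometric tensor factors, commutes with the action of $[\sigma]\in\Gal(\QQ(\mu_{mp^\infty})/\QQ)$ on $H^1_{\Iw}$, since $[\sigma]$ acts only through transport of structure on the base field; hence $[\sigma]$ preserves the decomposition $\cBF_m^{f,f}=x^{\Sym}+x^{\wedge}$. Feeding this into the identity $s(\cBF_m^{f,f}) = -[\sigma]\cdot\cBF_m^{f,f}$ of the previous proposition and comparing $s$-eigencomponents, I get $[\sigma]\cdot x^{\Sym} = -x^{\Sym}$ and $[\sigma]\cdot x^{\wedge} = x^{\wedge}$. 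So the $\Sym^2$-part of $\cBF_m^{f,f}$ is an eigenvector for complex conjugation with eigenvalue $-1$, and the $\bigwedge^2$-part has eigenvalue $+1$.

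Finally I would pass to the image in $H^1(\QQ(\mu_m)^+, M_{L_\frP}(f\otimes f)^*(\chi))$. The key observation is that this map factors through the $\chi$-specialisation of the Iwasawa module, and that under this specialisation the operator $[\sigma]$ becomes multiplication by the scalar $\chi(-1)$: indeed $\sigma$ fixes $\QQ(\mu_m)^+$, so replacing a class by $[\sigma]$ applied to it reindexes the twisting sum $\sum_g \chi(g)^{-1}\otimes g(-)$ and contributes exactly a factor $\chi(\sigma)=\chi(-1)$. Combining this with the eigenvalue computation, the image of $x^{\Sym}$ satisfies $(1+\chi(-1))\cdot(\text{image})=0$, so it vanishes when $\chi(-1)=1$, and the image of $x^{\wedge}$ satisfies $(1-\chi(-1))\cdot(\text{image})=0$, so it vanishes when $\chi(-1)=-1$. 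This is precisely the assertion of the corollary.

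The only delicate point — and hence the ``main obstacle'' — is the bookkeeping in this last step: one must verify that the map to $H^1(\QQ(\mu_m)^+, M_{L_\frP}(f\otimes f)^*(\chi))$ genuinely intertwines the $[\sigma]$-action on the Iwasawa module with multiplication by $\chi(-1)$ on the target (and not, say, with an extra sign coming from the \emph{geometric} action of complex conjugation on $M_{L_\frP}(f)^*$, or a $\chi(-1)^{-1}$ from the opposite twisting convention), and that the sign convention for $s$ on $\Sym^2$ versus $\bigwedge^2$ is the same as that used in the proof of the preceding proposition. Once these conventions are pinned down, the corollary is a one-line consequence of the eigenvalue comparison above.
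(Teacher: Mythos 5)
Your argument is correct and is essentially the paper's (implicit) proof: the paper states the corollary as an immediate consequence of the symmetry relation $s(\cBF_m^{f,f}) = -[\sigma]\cdot\cBF_m^{f,f}$ together with the $s$-eigenspace decomposition, which is exactly the eigencomponent comparison you carry out, with $[\sigma]$ becoming multiplication by $\chi(-1)$ after twisting and restricting to $\QQ(\mu_m)^+$. The only point left tacit (and harmless, since the coefficients are an $L_\frP$-vector space and $p$ is odd) is that $2$ is invertible, so $(1\pm\chi(-1))\cdot x = 0$ indeed forces the relevant component to vanish; your worry about $\chi(-1)$ versus $\chi(-1)^{-1}$ is moot since this value is $\pm 1$.
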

   
   \begin{remark}
    In the case $k = 2$, this relation was used in \cite{Dasgupta-factorization} in a setting where $\chi(-1) = 1$, in order to force the Beilinson--Flach class to be a nonzero scalar multiple of a circular unit. In contrast, we shall focus in this paper on the case $\chi(-1) = -1$. Nonetheless we are able to make use of Dasgupta's results, because of the fortunate fact that the functional equation interchanges characters of sign $+1$ and those of sign $-1$.
   \end{remark}
   
   Recall that we defined $V = \Sym^2 M_{L_\frP}(f)^*(1)(\psi)$, for $\psi$ a Dirichlet character of conductor $N_\psi$, and $T$ the natural $\cO_{L, \frP}$-lattice in $V$. Assume that $c$ also satisfies $(c, N_\psi) = 1$. Let $\nu = \psi(-1)$.
   
   \begin{definition}
    Let $\cBF_{\psi}^{f} \in H^1_{\Iw}(\Qi, T)^\nu$ be the image of $\cBF_{N_\psi}^{f, f}$ under the natural twisting map
    \begin{align*}
     H^1_{\Iw}(\QQ(\mu_{N_\psi p^\infty}), \Sym^2 M_{L_\frP}(f)^*)^{(-1)} 
     &\rTo H^1_{\Iw}(\QQ(\mu_{p^\infty N_\psi}), V)^\nu \\
     & \rTo^{\mathrm{cores}} H^1_{\Iw}(\Qi, V)^\nu.
    \end{align*}
    (Here the superscript $\nu$ denotes the eigenspace for the action of the element $-1 \in (\ZZ / mp^\infty \ZZ)^\times$, for any $m \ge 1$. It is clear from the construction that this class takes values in the lattice $T$.)
   \end{definition}
   
   Crucially, this element extends to a compatible family:
   
   \begin{theorem}
    \label{thm:ES}
    Let $\mathcal{R}$ denote the set of positive square-free integers coprime to $6pc N_f N_\psi$. Then there exists a family of cohomology classes $\mathbf{c} = (c_m)_{m \in \cR}$, with
    \[ c_m \in H^1_{\Iw}(\QQ(\mu_{mp^\infty}), T)^\nu, \]
    such that $c_1 = \cBF^{f}_{\psi}$, and if $\ell$ is a prime such that $m \in \cR$ and $\ell m \in \cR$, then
    \[ \operatorname{cores}_m^{\ell m}(c_{m\ell}) = 
     P_\ell(f \otimes f \otimes \psi, \Frob_\ell^{-1}) c_{m}.
    \]
   \end{theorem}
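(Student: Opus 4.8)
The plan is to build the family $\mathbf{c}=(c_m)_{m\in\cR}$ directly from the Beilinson--Flach classes for $f\otimes f$ at varying tame level, and to reduce the norm relation to the one already established for those classes. Concretely, for $m\in\cR$ set
\[
 c_m \coloneqq \operatorname{cores}_{\QQ(\mu_{mN_\psi p^\infty})}^{\QQ(\mu_{mp^\infty})}\Bigl( \operatorname{Tw}_{\kappa\psi}\; e_{-1}\; \cBF_{mN_\psi}^{f,f}\Bigr),
\]
where $e_{-1}=\tfrac12\bigl(1-[\sigma]\bigr)$ is the idempotent (with $\Zp$-coefficients, since $p$ is odd) cutting out the eigenspace on which complex conjugation $\sigma$ acts by $-1$, $\kappa$ is the cyclotomic character, and $\operatorname{Tw}_{\kappa\psi}$ is the standard twisting isomorphism by the Galois character $\kappa\psi$ on Iwasawa cohomology over $\QQ(\mu_{mN_\psi p^\infty})$. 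By the Corollary above, $e_{-1}\cBF_{mN_\psi}^{f,f}$ already takes values in the summand $\Sym^2 M_{L_\frP}(f)^*$ of $M_{L_\frP}(f\otimes f)^*$, so after the twist it is valued in $V=\Sym^2 M_{L_\frP}(f)^*(1)(\psi)$; the construction respects the natural lattices throughout, so $c_m\in H^1_{\Iw}(\QQ(\mu_{mp^\infty}),T)$. For $m=1$ this is exactly the definition of $\cBF_\psi^f$, so $c_1=\cBF_\psi^f$. The same sign bookkeeping that defines $\cBF_\psi^f$ (recording that $\sigma$ acts by $-1$ on $\Qp(1)$ and by $\nu=\psi(-1)$ through $\psi$) shows that $\operatorname{Tw}_{\kappa\psi}$ carries the $(-1)$-eigenspace isomorphically onto the $\nu$-eigenspace, and corestriction is eigenspace-preserving, so in fact $c_m\in H^1_{\Iw}(\QQ(\mu_{mp^\infty}),T)^\nu$.

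The substance of the theorem is then the norm relation, and the one genuinely external input is the norm-compatibility of the Beilinson--Flach classes in the prime-to-$p$ (tame) direction, proved in \cite{KLZ1b} (building on \cite{leiloefflerzerbes14, KLZ1a}): for a prime $\ell\nmid mN_\psi pN_f$,
\[
 \operatorname{cores}_{\QQ(\mu_{\ell m N_\psi p^\infty})}^{\QQ(\mu_{m N_\psi p^\infty})}\bigl(\cBF_{\ell m N_\psi}^{f,f}\bigr) = P_\ell(f\otimes f, \Frob_\ell^{-1})\; \cBF_{m N_\psi}^{f,f}.
\]
Given this, I would argue as follows. Fix $\ell$ with $m,\ell m\in\cR$; then $\ell\nmid 6pcN_fN_\psi$, so the displayed relation applies with $mN_\psi$ in place of $m$. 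The three operations building $c_{(-)}$ are all ``horizontal'' with respect to adjoining $\ell$: the operators $e_{-1}$ and $P_\ell(f\otimes f,\Frob_\ell^{-1})$ lie in the group algebra of $\Gal(\QQ(\mu_{mN_\psi p^\infty})/\QQ)$ and its quotients, hence commute with every corestriction in the tower, while $\operatorname{Tw}_{\kappa\psi}$ is compatible both with those operators and with corestriction between fields containing $\QQ(\mu_{N_\psi p^\infty})$. Using transitivity of corestriction along the square of fields with top vertex $\QQ(\mu_{\ell m N_\psi p^\infty})$, intermediate vertices $\QQ(\mu_{\ell m p^\infty})$ and $\QQ(\mu_{m N_\psi p^\infty})$, and bottom vertex $\QQ(\mu_{m p^\infty})$, one obtains
\begin{align*}
 \operatorname{cores}_{m}^{\ell m}(c_{\ell m})
 &= \operatorname{cores}_{\QQ(\mu_{m N_\psi p^\infty})}^{\QQ(\mu_{m p^\infty})}\, \operatorname{Tw}_{\kappa\psi}\; e_{-1}\; \operatorname{cores}_{\QQ(\mu_{\ell m N_\psi p^\infty})}^{\QQ(\mu_{m N_\psi p^\infty})}\bigl(\cBF_{\ell m N_\psi}^{f,f}\bigr) \\
 &= \operatorname{cores}_{\QQ(\mu_{m N_\psi p^\infty})}^{\QQ(\mu_{m p^\infty})}\, \operatorname{Tw}_{\kappa\psi}\; e_{-1}\; P_\ell(f\otimes f, \Frob_\ell^{-1})\; \cBF_{m N_\psi}^{f,f} \\
 &= P_\ell(f\otimes f\otimes\psi, \Frob_\ell^{-1})\; c_m,
\end{align*}
where in the last step the Euler operator $P_\ell(f\otimes f,\Frob_\ell^{-1})$ is transported through $\operatorname{Tw}_{\kappa\psi}$ to $P_\ell(f\otimes f\otimes\psi,\Frob_\ell^{-1})$. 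That last identification --- matching the normalisation of the Beilinson--Flach Euler factor against the effect of the Tate twist built into $\operatorname{Tw}_{\kappa\psi}$ --- is the only place where care is genuinely needed, and I expect it to be the main (though ultimately bookkeeping) obstacle in writing the argument out; everything else is formal.

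In short, once the Beilinson--Flach Euler system and its tame norm relations are taken as given, the theorem follows. It is worth underlining the one structural feature this argument exposes, which will dominate \S\ref{sect:kolyvagin}: because the Beilinson--Flach classes live in $M_{L_\frP}(f)^*\otimes M_{L_\frP}(f)^*$, the Euler factor we inherit is the degree-$4$ factor $P_\ell(f\otimes f\otimes\psi,X)$ of the full tensor product, not the degree-$3$ symmetric-square Euler factor that the Euler system formalism would predict for a system valued in $T$; the extra factor (a square of $1-\ell^{k-1}\varepsilon(\ell)\psi(\ell)\Frob_\ell^{-1}$ in the level-$1$ case) is harmless for the construction but forces the subsequent refinement of the Kolyvagin--Rubin machinery.
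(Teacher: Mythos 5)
Your proposal is correct and is essentially the paper's own argument: the paper proves Theorem \ref{thm:ES} simply by citing \cite[Theorem 11.4.1]{KLZ1b}, and what you have written out (defining $c_m$ by projecting $\cBF^{f,f}_{mN_\psi}$ to the symmetric-square eigenspace, twisting by $\kappa\psi$ and corestricting --- i.e.\ extending the paper's definition of $\cBF^f_\psi$ to all tame levels $m$ --- and then transporting the Beilinson--Flach tame norm relation through the twist) is exactly the ``translation into our present context'' that citation refers to. The only caveat, which you yourself flag, is the Euler-factor bookkeeping under the twist; that is indeed handled inside \cite{KLZ1b} rather than needing a new idea here.
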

   
   Here $P_\ell(f \otimes f \otimes \psi, X)$ is the local Euler factor at $\ell$ of the Rankin--Selberg convolution $L$-function, so that
   \[ P_\ell(f \otimes f \otimes \psi, X)  = (1 - \ell^{k-1} \psi\varepsilon(\ell) X) P_\ell(\Sym^2 f \otimes \psi, X).\]
   
   \begin{proof}
    This is simply a translation into our present context of \cite[Theorem 11.4.1]{KLZ1b}.
   \end{proof}
   
  \subsection{Regulator formulae}

   \begin{proposition}
    \label{prop:selmer}
    We may choose the classes $c_m$ in Theorem \ref{thm:ES} in such a way that for any $m \in \cR$, the localisation of the class $c_m$ at $p$ lies in 
    \[ H^1_{\Iw}(\QQ(\mu_{mp^\infty}) \otimes \Qp, \sF^1 T)^{\nu}.\]
    Equivalently, in the notation of Definition \ref{def:GreenbergSel} we have
    \[ c_m \in  H^1_{\Iw, \Gr, 1}(\QQ(\mu_{mp^\infty}), T)^{\nu}.\]
   \end{proposition}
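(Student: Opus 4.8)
The plan is to prove that the Beilinson--Flach classes $c_m$ land in the ``right'' local subspace at $p$ by a purely local argument: we must show that the localisation $\loc_p(c_m)$, viewed in $H^1_{\Iw}(\QQ(\mu_{mp^\infty}) \otimes \Qp, T)^\nu$, already lies in the submodule $H^1_{\Iw}(\QQ(\mu_{mp^\infty}) \otimes \Qp, \sF^1 T)^\nu$. The key structural input is the filtration $\sF^\bullet$ on $V = \Sym^2 M_{L_\frP}(f)^*(1)(\psi)$ described in the Lemma following \eqref{eq:filtration}, together with Lemma~\ref{lemma:cyclo-invariants}, which guarantees that $H^0(\Qpi, \Gr^0 V) = 0$. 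By the same argument as in Proposition~\ref{prop:BKSelIw}, this vanishing of invariants means that the natural map $H^1_{\Iw}(\Qpi, M) \to \varprojlim_r H^1(\QQ_{p,r}, M)$ is injective for $M = V / \sF^1 V = \Gr^0 V$ (and, $\nu$-eigenspace by $\nu$-eigenspace, the analogous statement over $\QQ(\mu_{mp^\infty}) \otimes \Qp$ holds since the extra ramified part only adds more invariants-free pieces). Hence it suffices to check that the image of $c_m$ in $\varprojlim_r H^1\big(\QQ(\mu_{mp^r}) \otimes \Qp, \Gr^0 V\big)$ vanishes.

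The main point, then, is to identify $\Gr^0 V$ with a ``big'' quotient and to recognise the image of $c_m$ there as a known object. First I would recall that $c_m$ is constructed by twisting and corestricting the Beilinson--Flach class $\cBF^{f,f}_{N_\psi m}$, whose image in $M_{L_\frP}(f\otimes f)^*$-cohomology has well-understood local behaviour at $p$: by \cite[\S 7.2, \S 8]{KLZ1b} the class $\cBF^{f,f}_m$ associated to the pair $(f_\alpha, f_\alpha)$ localises into the sub-$G_{\Qp}$-module of $M_{L_\frP}(f\otimes f)^*$ cut out by the product of the two ``unramified-$\alpha$'' subspaces, i.e.\ the bottom filtration step; equivalently, its image in the quotient by this subspace vanishes. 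Projecting to the symmetric square summand (which is where $c_m$ lives, by the Corollary to the symmetry property, since $\chi(-1) = -1$) and twisting by $(1)(\psi)$, the quotient $M_{L_\frP}(f\otimes f)^* / (\text{bottom step})$ surjects onto $V / \sF^1 V = \Gr^0 V$. So the vanishing of $\loc_p c_m$ in $\Gr^0 V$-cohomology follows from the corresponding vanishing for $\cBF^{f,f}_{N_\psi m}$, which is \cite[Proposition 8.1.7]{KLZ1b} (or the analogue thereof in that section). Finally, to arrange that this holds \emph{on the nose} rather than merely up to adjusting by elements of the kernel of the map to $\varprojlim H^1(\QQ_{p,r},-)$, one uses the injectivity established in the first paragraph: since the target is torsion-free and the map is injective on the relevant quotient, any class mapping to zero downstairs is already in $\sF^1$.

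The step I expect to be the main obstacle is the bookkeeping between the $\Gr$-filtration on the symmetric square representation and the tensor-product filtration on $M_{L_\frP}(f\otimes f)^*$ after the twist by $(1)(\psi)$ --- in particular, checking that the bottom step of the tensor-product filtration really maps \emph{onto} $\sF^1 V$ (not just into it) and that the induced map on $\Gr^0$ pieces is an isomorphism, so that vanishing upstairs is equivalent to vanishing downstairs. This is where the precise Galois-character computation of the Lemma after \eqref{eq:filtration} (the $\operatorname{unram}(\alpha^2\psi(p))$ appearing on $\Gr^0 V$) has to be matched against $\operatorname{unram}(\alpha)\otimes\operatorname{unram}(\alpha)$ from the two factors. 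Once that dictionary is in place, the rest is a formal consequence of the already-cited $\Qpi$-invariants vanishing and the local properties of Beilinson--Flach classes proved in \cite{KLZ1b}.
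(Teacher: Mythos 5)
Your proposal is essentially the paper's own argument: the proof there consists of citing \cite[Proposition 8.1.8]{KLZ1b} for the vanishing of the localised Beilinson--Flach class in the cohomology of the rank-one quotient $\sF^{--} M_{\cO_{L,\frP}}(f\otimes f)^*(1+\psi)$, together with the observation that $\Gr^0 T = T/\sF^1 T$ projects isomorphically onto that quotient --- precisely the reduction and bookkeeping you outline. Two minor points: since the $\operatorname{unram}(\alpha)$ piece of $M_{L_\frP}(f)^*$ is a \emph{quotient} rather than a subspace, the KLZ1b input is vanishing in the quotient $\sF^{--}$ (the formulation you actually use) and not membership in a ``product of unramified subspaces''; and the injectivity step via Lemma \ref{lemma:cyclo-invariants} is superfluous here, as the exact sequence $0 \to H^1_{\Iw}(\sF^1 T) \to H^1_{\Iw}(T) \to H^1_{\Iw}(\Gr^0 T)$ already gives the lifting integrally.
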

   
   \begin{proof}
    This is a special case of \cite[Proposition 8.1.8]{KLZ1b}. We are regarding $T$ as a direct summand of the module $M_{\cO_{L, \frP}}(f \otimes f)^*(1 + \psi)$; and the top graded piece $\Gr^0 T = T / \sF^1 T$ projects isomorphically onto the quotient $\sF^{--} M_{\cO_{L, \frP}}(f \otimes f)^*(1 + \psi)$ in the notation of \emph{op.cit.}. So the cited proposition shows that the image of $c_m$ under projection to $\Gr^0 T$ is zero, as required.
   \end{proof}
   
   We now study the image of this class in Iwasawa cohomology of $\Gr^1 T = \sF^1 T / \sF^2$.
   
   \begin{definition}
    \[ \mathcal{L}: H^1_{\Iw}\left(\Qpi, \Gr^1 V \right)^\nu \rTo \Lambda(\Gamma)^\nu \otimes \Dcris(\Gr^1 V) \]
    be the Perrin-Riou big logarithm (or big dual exponential) map.
   \end{definition}
   
   \begin{remark}
    The usual definition of the Perrin-Riou logarithm for $\Gr^1 V$ would give a map taking values in distributions of order $k$, since $k$ is the Hodge--Tate weight of $\Gr^1 V$. This is undesirable, so here we are actually using the Perrin-Riou regulator for the unramified representation $\Gr^1 V(-k)$ twisted by $k$, which coincides with the regulator for $\Gr^1 V$ divided by a product of $k$ logarithm factors. Cf.~the discussion in \cite[Remark 8.2.9]{KLZ1b}.
    
    One also needs to make a small modification if $\varepsilon\psi(p) = 1$. In this case the logarithm does not take values in $\Lambda$ but rather in $J^{-1} \Lambda$, where $J$ is the ideal that is the kernel of evaluation at $s = k-1$. If $\varepsilon\psi$ is trivial (or merely even) then this is not an issue, as $s = k-1$ has sign $-\nu$, so $(J^{-1} \Lambda)^\nu = \Lambda^\nu$. If $\varepsilon\psi$ is odd, but contains $p$ in its kernel, then the map does genuinely have a pole at $s = k-1$; but in any case the element $\cBF_\psi^f$ will still land in $\Lambda(\Gamma)^\nu \otimes \Dcris(\Gr^1 V)$, by Theorem 8.2.3 of \cite{KLZ1b}, and besides this specialisation does not correspond to a critical value of the symmetric square $L$-function, so we can legitimately ignore it.
   \end{remark}
   
   \begin{definition}\mbox{~}
    \begin{enumerate}[(i)]
     \item Let $\omega_f \in \Fil^1 H^1_{\dR}(Y_1(N_f) / \overline{\QQ}, \Sym^{k-2} \sH_{\dR}^\vee)$ be the algebraic differential form attached to $f$, and let 
     \[ \omega_f' = G(\varepsilon^{-1}) \cdot \omega_f, \]
     so that $\omega_f'$ is an $L$-basis vector of $\Fil^1 \Dcris(M_{L_{\frP}}(f))$ (cf.~\cite[Lemma 6.1.1]{KLZ1a}).
     \item Let $\eta_f^\alpha$ be the vector in $\Dcris(M_{L_\frP}(f))^{\varphi = \alpha} \otimes \Qpbar$ defined in \cite[Proposition 6.1.3]{KLZ1a}, and let 
     \[ 
      \eta_f' = G(\varepsilon^{-1}) \cdot \eta_f^\alpha \in \Dcris(M_{L_{\frP}}(f))^{\varphi = \alpha}.
     \]
     \item Let
     \[ 
      \xi_f = \tfrac12\left( \eta_f' \otimes \omega_f' + \omega_f' \otimes \eta_f'\right) \in \Dcris\left(\Sym^2 M_{L_{\frP}}(f)\right),
     \]
     and
     \[ \xi_{f, \psi} = G(\psi^{-1}) \cdot \xi_f \in \Dcris\left(V^*(1))\right).\]
    \end{enumerate}
   \end{definition}
   
   Via the canonical pairing $\Dcris(V) \otimes \Dcris(V^*(1)) \to \Dcris(L_{\frP}(1)) \cong L_{\frP}$, we can identify $\xi_{f, \psi}$ with a linear functional on $\Dcris(V)$. It is easy to see that this functional gives an isomorphism
   \[ \Dcris(\Gr^1 V) \to L_{\frP}. \]
   
   \begin{theorem}
    \label{thm:explicitrecip}
    We have
    \begin{align*}
     \left\langle \mathcal{L}\left(\cBF^f_\psi\right), \xi_{f, \psi}\right\rangle &= (-1)^s (c^2 - c^{2s - 2k + 2}  \psi\varepsilon(c)^{-2})  G(\psi^{-1})^2 G(\varepsilon^{-1})^2 L_p^\imp(f, f, \psi, s)\\
     &= (-1)^s (c^2 - c^{2s - 2k + 2}  \psi\varepsilon(c)^{-2})  G(\psi^{-1})^2 G(\varepsilon^{-1})^2 \\&\qquad  \times L_p^\imp(\Sym^2 f, \psi, s) L_{p, N_f N_\psi}(\varepsilon\psi, s - k + 1).
    \end{align*}
    In particular, the image of $\cBF^f_\psi$ in $H^1_{\Iw}(\Qpi, \Gr^1 V)^\nu$ is non-torsion.
   \end{theorem}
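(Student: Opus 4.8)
The plan has three parts: derive the first displayed equality from the explicit reciprocity law for Beilinson--Flach elements proved in \cite{KLZ1b}, obtain the second equality by substituting the imprimitive form of Dasgupta's factorisation, and read off the non-triviality assertion from the resulting formula.

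For the first equality I would specialise the explicit reciprocity law of \cite{KLZ1b}, which computes the image of $\cBF_m^{f, g}$ under the Perrin--Riou regulator of the relevant graded piece of $M_{L_\frP}(f \otimes g)^*$, paired against a vector assembled from $\omega_g$ and $\eta_f^\alpha$, in terms of the geometric $p$-adic Rankin $L$-function $L_p^\imp(f, g, s)$. Taking $g = f$, I would then: (a) twist by $\psi$ and corestrict, as in the definition of $\cBF_\psi^f$; (b) invoke Proposition \ref{prop:selmer} to see that the resulting class has $p$-localisation in $\sF^1 T$, so that its regulator is controlled by $\Gr^1 V$, on which the pairing with $\xi_{f,\psi}$ induces the isomorphism $\Dcris(\Gr^1 V) \cong L_\frP$ entering the left-hand side; (c) observe that, since $\cBF_m^{f,f}$ projects into the symmetric square on the sign-$(-1)$ part (the corollary on the eigenspaces of the involution $s$), the two mixed terms $\eta_f' \otimes \omega_f'$ and $\omega_f' \otimes \eta_f'$ appearing in $\xi_f$ contribute equally, which is exactly offset by the $\tfrac12$ in the definition of $\xi_f$; and (d) track the remaining normalisation factors, namely the Gauss sums $G(\varepsilon^{-1})^2$ coming from $\omega_f' = G(\varepsilon^{-1})\omega_f$ and $\eta_f' = G(\varepsilon^{-1})\eta_f^\alpha$, the Gauss sum $G(\psi^{-1})^2$ coming from $\xi_{f,\psi}$ together with the $\psi$-twist, the sign $(-1)^s$, and the factor $c^2 - c^{2s - 2k + 2}\psi\varepsilon(c)^{-2}$ arising from the dependence of $\cBF_m^{f,f}$ on the auxiliary parameter $c$ combined with the Tate twist by $1 + \psi$. (In the exceptional case $\varepsilon\psi(p) = 1$ one uses the modified normalisation of $\mathcal{L}$ recorded above; this is harmless for what follows.)

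The second equality is then immediate from the imprimitive form of Dasgupta's factorisation recalled at the end of \S\ref{sect:samit}. For the final assertion, I would argue as follows. The regulator $\mathcal{L}$ is $\Lambda(\Gamma)$-linear with image in the $\Lambda(\Gamma)^\nu$-module $\Lambda(\Gamma)^\nu \otimes \Dcris(\Gr^1 V)$, which is torsion-free; hence $\mathcal{L}$ annihilates the $\Lambda(\Gamma)$-torsion submodule of $H^1_{\Iw}(\Qpi, \Gr^1 V)^\nu$, and it suffices to show that the right-hand side of the formula is non-zero as an element of $\Lambda(\Gamma)^\nu \otimes \CC_p$. The Gauss-sum factors are units, and $c^2 - c^{2s - 2k + 2}\psi\varepsilon(c)^{-2}$ is a non-zero-divisor, being $c^2$ minus a non-constant analytic function on $\cW$ since $c \notin \{\pm 1\}$. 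Since $(s + \chi)(-1) = (-1)^s\chi(-1)$, the interpolation points with $s \in \{k, \dots, 2k-2\}$, $\chi$ of conductor $p^r > 1$, and $(-1)^s\chi(-1) = \nu$ all lie in the $\nu$-eigenspace and form an infinite (indeed Zariski-dense) subset of the $\nu$-component of $\cW$; at such points the multiplier $\mathcal{E}_p'$ is non-zero and $L(\Sym^2 f \otimes \psi\chi^{-1}, s) \ne 0$ by Remark \ref{rmk:nonvanishing}, so $L_p(\Sym^2 f \otimes \psi)$, and hence $L_p^\imp(\Sym^2 f, \psi, s)$ (which differs from it by finitely many Euler factors that are non-zero-divisors), is non-zero on the $\nu$-component; and $L_{p, N_f N_\psi}(\varepsilon\psi, s - k + 1)$ is likewise non-zero. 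Therefore the product is non-zero, so $\mathcal{L}(\cBF_\psi^f) \ne 0$, and the image of $\cBF_\psi^f$ in $H^1_{\Iw}(\Qpi, \Gr^1 V)^\nu$ is non-torsion.

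The main obstacle is step (d): the reciprocity law of \cite{KLZ1b} is proved for a general auxiliary eigenform $g$ in a normalisation adapted to the Rankin--Selberg rather than the symmetric square setting, so the real content is the bookkeeping that correctly matches the two sets of period and differential normalisations, the Gauss sums, the symmetrisation factor, and the $c$-factor. The remaining steps are essentially formal.
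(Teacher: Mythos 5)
Your overall architecture matches the paper's: the first equality from the explicit reciprocity law of \cite{KLZ1b} applied with $g = f$, the second from Dasgupta's factorisation, and the non-torsion claim from nonvanishing of the right-hand side. Your treatment of the second equality and of the final assertion is sound (the paper leaves the latter implicit; your observation that the regulator annihilates $\Lambda$-torsion and that a single nonvanishing interpolated value already shows the product is a nonzero element of $\Lambda^\nu \otimes \CC_p$ is correct, if more than is needed). For $\psi = 1$ your argument is exactly the paper's.

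There is, however, a genuine gap at the step you defer to ``bookkeeping'': the case of nontrivial $\psi$. Theorem B of \cite{KLZ1b} computes the regulator of the Beilinson--Flach class of tame level $1$ attached to a pair of eigenforms; it does not apply, as stated, to $\cBF^f_\psi$, which is by definition the corestriction to $\Qi$ of the $\psi$-twist of the tame-level-$N_\psi$ class $\cBF^{f,f}_{N_\psi}$. Your steps (a) and (d) --- ``twist by $\psi$ and corestrict'' and then ``track the normalisation factors'' --- presuppose a regulator formula for this corestricted twist, and that formula is precisely what is missing; no amount of tracking Gauss sums and $c$-factors produces it. The paper supplies the missing mechanism: the twisting operator $R_\psi$, a correspondence from $Y_1(N_\psi^2 N_f)$ to $Y_1(N_f)$ over $\QQ(\mu_{N_\psi})$, whose pushforward sends $\cBF_{1, N_\psi^2 N_f}$ to $\cBF_{\psi, N_f}$ and whose pullback sends $\eta_f \otimes \omega_f$ to $G(\psi^{-1}) \cdot \eta_f \otimes \omega_{f_\psi}$, where $f_\psi$ is the $\psi$-twisted eigenform. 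This reduces the computation to the reciprocity law for the pair $(f, f_\psi)$ at tame level $1$, where Theorem B does apply, combined with the identity $L_p^\imp(f, f_\psi, s) = L_p^\imp(f, f, \psi, s)$ --- the same device used for the complex $L$-value in \cite[\S 4.3]{leiloefflerzerbes14}; it is also where the Gauss sum $G(\psi^{-1})$ genuinely enters (the definition $\xi_{f,\psi} = G(\psi^{-1})\xi_f$ is rigged to absorb it). Without this reduction, or an equivalent one, the first displayed equality is not proved.
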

   
   (Here $s$ denotes a point of $\Spec \Lambda(\Gamma)^\nu$, i.e.~a continuous character of $\Gamma$ with $(-1)^s = \nu$; $c^s$ is the value of the character $s$ at $c \in \Zp^\times$, and similarly for $(-1)^s$.)
   
   \begin{proof}
    The second equality is Dasgupta's factorisation formula, so it suffices to prove the first equality. 
    
    If $\psi = 1$, then this is a direct consequence of the explicit reciprocity law for Beilinson--Flach elements (Theorem B of \cite{KLZ1b}), applied with $f = g$. To include the case of general $\psi$, we shall reduce it to the explicit reciprocity law for $f$ and $f \otimes \psi$, using exactly the same method as was used for the complex $L$-function in \cite[\S 4.3]{leiloefflerzerbes14}. Consider the ``twisting operator'' 
    \[ R_{\psi} \coloneqq \sum_{a \in (\ZZ / N_\psi \ZZ)^*} \psi(a)^{-1} \left[ \begin{pmatrix} 1 & 0 \\ 0 & 1\end{pmatrix} \times \begin{pmatrix} 1 & a/N_\psi \\ 0 & 1\end{pmatrix}\right].\]
    This gives a correspondence from $Y_1(N_\psi^2 N_f)$ to $Y_1(N_f)$ over $\QQ(\mu_{N_\psi})$, and pushforward via this correspondence sends $\cBF_{1, N_\psi^2 N_f}$ to $\cBF_{\psi, N_f}$. Dually, pullback via this correspondence sends $\eta_f \otimes \omega_f$ to $G(\psi^{-1}) \cdot \eta_f \otimes \omega_{f_\psi}$, where $f_{\psi} = \sum_{(n, N_\psi) = 1} a_n(f) \psi(n) q^n$. (Note that $f_{\psi}$ is an eigenform, although it may not be new at the primes dividing $N_\psi$.) It is clear that
    \[ L^\imp_p(f, f_\psi, s) = L^\imp_p(f, f, \psi, s), \]
    and the result follows.
   \end{proof}
   
   \begin{remark}
    The construction of the Beilinson--Flach elements $\cBF^f_\psi$, and the proof of the reciprocity law, rely on analytic continuation from characters of the form $s + \chi$ with $1 \le s \le k-1$ (satisfying, of course, the parity condition $(-1)^s \chi(-1) = \psi(-1)$). In this range, the Dirichlet $L$-value $L(\psi \varepsilon, s - k + 1)$ is critical, while the symmetric square $L$-value $ L(\Sym^2 f \otimes \psi, s)$ vanishes to degree 1; so the existence of nontrivial motivic cohomology classes for the symmetric square is to be expected from Beilinson's formula.
   
    On the other hand, we shall now specialise this formula at characters of the form $s + \chi$ with $k \le s \le 2k-1$ (but still satisfying the same parity condition). In this case everything is swapped: the symmetric square $L$-value is now a critical value, and it is the Dirichlet $L$-value that is non-critical.
   \end{remark}

  \subsection{Removing junk factors}
  
   The following proposition will help us to remove some of the factors in Theorem \ref{thm:explicitrecip}.
   
   \begin{proposition}
    Let $j \in \{k, \dots, 2k-2\}$, and let $\chi$ be a finite order character of $\Gamma$ such that $(-1)^j \chi(-1) = \psi(-1)$, so $j + \chi \in \Spec \Lambda^\nu$.
    
    \begin{enumerate}
     \item If $(\varepsilon\psi)^2 \ne 1$, or if $j \ne k$, or if $\chi^2 \ne 1$, then we may find some integer $c > 1$ coprime to $6 p N_f N_\psi$ such that $c^2 - c^{2s - 2k + 2}\varepsilon\psi(c)^{-2}$ is non-vanishing at $s = j + \chi$.
          
     \item Suppose $\varepsilon\psi \ne 1$ and $j = k$. Then $L_{p, N_f N_\psi}(\varepsilon\psi, s - k + 1)$ is non-vanishing at $s = j + \chi$. If $\varepsilon\psi = 1$ then this holds as long as $\chi \ne 1$.

     \item Suppose $\varepsilon\psi = 1$. Then the $p$-adic zeta function $L_{p, N_f N_\psi}(\varepsilon\psi, s - k + 1) = \zeta_{p, N_f}(s - k + 1)$ has a simple pole at $s = k$, and for any integer $c > 1$ coprime to $p$, the product
     \[ (c^2 - c^{2s - 2k + 2}) \zeta_{p, N_f}(s - k + 1) \]
     is holomorphic and non-zero at $s = k$.
     
     \item Suppose $\varepsilon\psi = 1$ and either $j \ne k$ or $\chi \ne 1$. If either $p$ is a regular prime, or $j + \chi = k \bmod p-1$ (i.e.~the characters $j + \chi$ and $k$ lie in the same component of weight space), then $\zeta_{p, N_f}(s - k + 1)$ is non-vanishing at $s = j + \chi$.
    \end{enumerate}
   \end{proposition}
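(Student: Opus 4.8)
The plan is to dispatch the four assertions one at a time, each by reduction to a classical input. \emph{Part (1).} I would write the quantity at $s=j+\chi$ as $c^{2}\bigl(1-c^{2(j-k)}\chi(c)^{2}(\varepsilon\psi)(c)^{-2}\bigr)$, so that it suffices to produce a single prime $c>3$ with $c\neq p$ and $c\nmid N_fN_\psi$ for which the second factor does not vanish. If $j\neq k$, pick (by Dirichlet's theorem on primes in arithmetic progressions) such a $c$ congruent to $1$ modulo the product of the conductors of $\chi$, $\varepsilon$ and $\psi$: then $\chi(c)=\varepsilon(c)=\psi(c)=1$, while $c^{2(j-k)}\neq1$ because $c$ lies in the torsion-free group $1+p\Zp$ ($p$ odd), $c\neq1$, and $2(j-k)\neq0$. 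If $j=k$ the power of $c$ drops out, and one needs the Dirichlet character $\chi^{2}(\varepsilon\psi)^{-2}$ to be non-trivial; since $\chi$ has $p$-power conductor while $\varepsilon\psi$ has conductor prime to $p$, the hypothesis that $\chi^{2}\neq1$ or $(\varepsilon\psi)^{2}\neq1$ already forces this, and Dirichlet again supplies an admissible prime in a residue class where the character does not vanish.

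\emph{Parts (2) and (4).} First I would discard the finitely many Euler factors removed in passing from $L_p$ to its $N_fN_\psi$-imprimitive version (resp.\ from $\zeta_p$ to $\zeta_{p,N_f}$): at the point $s-k+1=(j-k+1)+\chi$ these have the shape $1-(\varepsilon\psi)(\ell)\,\ell^{-(j-k+1)}\chi(\ell)^{-1}$ with $\ell\neq p$ and $j-k+1\geq1$, so they are non-zero, and it suffices to treat the full Kubota--Leopoldt object. For (2), the strategy is: apply the functional equation of the $p$-adic $L$-function to bring the evaluation point into the range of interpolation (the points of range (ii) reflect to non-positive integers); the interpolation formula then identifies the value, up to a non-zero Euler factor at $p$ and a non-zero Gauss sum, with a classical special value of $L(\varepsilon\psi\chi^{-1},-)$ (equivalently, a generalised Bernoulli number) of a \emph{non-trivial} character, and the relevant parity --- forced by the hypothesis $(-1)^{j}\chi(-1)=\psi(-1)$ together with $\varepsilon(-1)=(-1)^{k}$ --- is precisely the one in which that quantity is non-zero. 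The one evaluation point escaping this is the near-central value reached when $\varepsilon\psi\neq1$, $\chi$ is trivial and $j=k$, where $L_p(\varepsilon\psi,-)$ is not given by interpolation: there one invokes Brumer's theorem (Leopoldt's conjecture for abelian number fields), which gives the required non-vanishing for $\varepsilon\psi$ even and non-trivial. For (4) one has $\varepsilon\psi=1$, and the object is the $p$-adic zeta function evaluated at a point $\neq\kappa$; if $p$ is regular it has no zeros away from its unique pole (a classical consequence of regularity, via the vanishing of the Iwasawa $\lambda$- and $\mu$-invariants), while if $j+\chi\equiv k\pmod{p-1}$ the evaluation point lies on the same branch of $\Lambda(\Gamma)$ as the pole, on which the $\lambda$-invariant vanishes unconditionally --- the only eigenspace of the (minus part of the) cyclotomic Iwasawa module that could obstruct this is the $\omega$-component, which is trivial (by the reflection theorem, since the units of $\QQ$ are trivial; equivalently it would be detected by the numerator of $B_{p-1}$, a $p$-adic unit by von Staudt--Clausen) --- so once more the function is a unit times the inverse of the height-one prime at $\kappa$, hence non-vanishing at our point.

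\emph{Part (3).} For this part I would observe that here $\varepsilon\psi=1$ and the evaluation point is $s=k$ itself. Then $\zeta_{p,N_f}(s-k+1)$ has a \emph{simple} pole at $s=k$ (the removed factors $1-\ell^{-1}$ do not vanish there, so the simple pole of $\zeta_p$ at $s=1$ survives), while $c^{2}-c^{2s-2k+2}$ has a simple zero at $s=k$: its derivative in $s$ there equals $-2c^{2}\log_p(c)$, and $\log_p(c)\neq0$ because the integer $c>1$ is not a root of unity. Hence the product is holomorphic at $s=k$ with value $-2c^{2}\log_p(c)\cdot\operatorname{res}_{s=k}\zeta_{p,N_f}(s-k+1)\neq0$.

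\emph{Where the difficulty lies.} No single deep theorem is needed beyond Brumer's theorem; the real work is the parity-and-range bookkeeping in part (2) --- checking, case by case, that the hypothesis $(-1)^{j}\chi(-1)=\psi(-1)$ always places the evaluation point on a branch of weight space on which the $p$-adic $L$-function is not identically zero, and at (or, after the functional equation, reflected to) a point at which the matching classical $L$-value or Bernoulli number is non-vanishing, and isolating the unique residual near-central point. The parallel subtlety in part (4) is the vanishing of $\lambda(\zeta_p)$ on the branch of its pole: classical, but not a formal consequence of the constructions.
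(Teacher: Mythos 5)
There is a genuine gap in your treatment of part (2). Since part (2) only concerns $j=k$, the quantity in question is always $L_p(\varepsilon\psi, 1+\chi)$, i.e.\ (up to harmless non-vanishing Euler factors) the value at the point ``$s=1$'' of the Kubota--Leopoldt $p$-adic $L$-function of the even, non-trivial character $\tau=\chi^{-1}\varepsilon\psi$. This point is \emph{never} in the interpolation range, whether or not $\chi$ is trivial (interpolation only sees $s=1-n$ with $n\ge 1$), and the Kubota--Leopoldt function has no functional equation that would let you reflect it there; even arguing heuristically through the complex functional equation, $s=1$ reflects to $s=0$, where $L(\tau,0)=-B_{1,\tau}=0$ because $\tau$ is even and non-trivial, so no non-vanishing could be extracted. (Indeed, if a reflection-to-interpolation argument of the sort you describe were available, it would also give non-vanishing of $L_p(\tau,n)$ for $n\ge 2$, i.e.\ Schneider's conjecture, which the paper explicitly notes is open.) So your dichotomy --- interpolation for $\chi\ne 1$, Brumer only for the single point with $\chi=1$ --- does not work. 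The paper's proof treats all of (2) uniformly: Leopoldt's formula expresses $L_p(\varepsilon\psi,1+\chi)$ as a non-zero multiple of $\sum_{a}\tau(a)\log_p(1-\zeta_m^a)$, and Brumer's theorem (Leopoldt's conjecture for abelian fields) gives the non-vanishing. Your Brumer step is exactly the right tool; it is needed for every $\chi$ satisfying the parity condition, not just the trivial one.

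The rest of your argument is essentially sound. Parts (1) and (3) agree with the paper (which dismisses (1) as trivial and, for (3), cancels the simple pole of $\zeta_{p,N_f}$ against the simple zero of $c^2-c^{2s-2k+2}$, the value being a non-zero multiple of $\log_p c$); your observation that the imprimitivity factors $1-\ell^{k-1-s}\varepsilon\psi(\ell)$ cannot vanish for $j\ge k$ is also the paper's first reduction. For part (4) your route on the branch of the pole --- Ferrero--Washington plus triviality of the relevant $\omega$-eigenspace (via reflection/Mazur--Wiles) to see that the numerator is a unit --- is correct but considerably heavier than the paper's: the paper just notes that ${}_c\zeta_p=(c^2-c^{2s})\zeta_p(s)$ is bounded by $1$ on the component of weight space containing $s=1$, and that its value $2(1-\tfrac1p)\log_p(c)$ there is a unit when $c$ is a topological generator of $\Zp^\times$, so ${}_c\zeta_p$ is a unit on that whole component; the regular-prime case is handled as you say. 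If you repair part (2) by running the Leopoldt--Brumer argument for all $\chi$, the proof goes through.
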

   
   \begin{proof}
    Part (1) is trivial. The remaining parts are simply standard facts about the $p$-adic zeta function. We note first that the finite product of local factors 
    \[ \frac{L_{p, N_f N_\psi}(\varepsilon\psi, s-k+1)}{L_p(\varepsilon \psi, s-k+1)} = \prod_{\substack{\ell \mid N_f N_\psi \\ \ell \nmid N_{\varepsilon\psi}}}(1 - \ell^{k-1-s} \varepsilon\psi(\ell))\]
    cannot vanish at $s = j + \chi$ unless $j \le k-1$, so it suffices to treat the primitive $p$-adic zeta functions.
    
    For (2), our hypotheses force the character $\tau = \chi^{-1} \varepsilon \psi$ to be a non-trivial even character. Hence Leopoldt's formula expresses $L_p(\varepsilon \psi, 1 + \chi)$ as a non-zero multiple of
    \[ \sum_{a \in (\ZZ / m \ZZ)^*} \tau(a) \log_p(1 - \zeta_m^a), \]
    where $m$ is the conductor of $\tau$. Since Leopoldt's conjecture is known to hold for abelian extensions of $\QQ$, this is non-zero.
    
    For parts (3) and (4), we note that $\zeta_p(s)$ is well known to have a simple zero at $s = 1$. Hence the function ${}_c \zeta_p(s) = (c^2 - c^{2s})\zeta_p(s)$ is holomorphic and nonzero at $s = 1$, which is (3). For (4), we note that ${}_c \zeta_p(s)$ is bounded in norm by 1 on the component of weight space containing $s = 1$; and since ${}_c \zeta_p(1) = 2 (1 - 1/p)\log_p(c)$, if we choose $c$ to be a topological generator of $\Zp^\times$, then the bound is attained. It then follows that ${}_c \zeta_p(s)$ is invertible on this component and hence non-vanishing there. Finally, if $p$ is a regular prime, then the $p$-adic zeta function takes unit values on all other components of weight space, and hence it cannot vanish anywhere.
   \end{proof}
   
   \begin{definition}
    In the setting of the previous proposition, we shall say that the character $j + \chi$ is \emph{unfortunate} if there is no integer $c > 1$ coprime to $6pN_f N_\psi$ such that $\left(c^2 - c^{2s - 2k + 2}\varepsilon\psi(c)^{-2}\right) L_{p, N_f N_\psi}(\varepsilon\psi, s - k + 1)$ is non-vanishing at $s = j + \chi$.
   \end{definition}
      
   \begin{remark}
    An annoying, but ultimately minor, issue is what happens when $j = k$ and $\varepsilon \psi$ and $\chi$ are both quadratic, but not both trivial. In this case, we cannot get rid of the $c$ factor, but nor can we use it profitably to cancel out a pole in the zeta function (because there is no such pole), so $k + \chi$ is unfortunate. (A conceptual interpretation of this phenomenon is that if we allowed $f$ to be of CM type, then the symmetric square of $f$ would be reducible, and an appropriate twist of $L(\Sym^2 f, s)$ would then genuinely have a pole at $s = k$ which could be cancelled by the $c$ factor.)
    
    A much more serious problem is that we do not know if $L_p(\chi, n) \ne 0$ for all integers $n \ge 2$, other than the easy case where $\chi = 1$ and $p$ is regular. This is one of the many equivalent forms of Schneider's conjecture \cite{schneider79}.
   \end{remark}
   
   \begin{corollary}
    Let $j$ be an integer with $k \le j \le 2k-2$, and $\chi$ a finite-order character of $\Gamma$ with $(-1)^j \chi(-1) = \psi(-1)$. Assume that $f$ is minimal among its twists, and that $j + \chi$ is not unfortunate. If we are in the exceptional-zero case (i.e. $\varepsilon\psi(p) = 1$, $j = k$, and $\chi = 1$), assume also that $L_p(\Sym^2 f, \psi, s)$ vanishes to order exactly 1 at $s = k$.
    
    Then the image of $\cBF_{\psi}^{f}$ in $H^1\left(\Qp, \Gr^1 V(-j-\chi)\right)$ is non-zero. 
   \end{corollary}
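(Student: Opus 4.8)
The plan is to read the statement off from the explicit reciprocity law of Theorem~\ref{thm:explicitrecip}, after checking that none of the factors on its right-hand side vanishes at $s=j+\chi$ (in the non-exceptional range), or that they combine to a zero of order exactly one (in the exceptional range). Write
\[
F(s) = (-1)^s\bigl(c^2 - c^{2s-2k+2}\,\psi\varepsilon(c)^{-2}\bigr)\,G(\psi^{-1})^2 G(\varepsilon^{-1})^2\;L_p^\imp(\Sym^2 f,\psi,s)\;L_{p,N_f N_\psi}(\varepsilon\psi,s-k+1)
\]
for the analytic function on $\cW^\nu$ produced by Theorem~\ref{thm:explicitrecip}, so that $F = \langle\mathcal{L}(\cBF^f_\psi),\xi_{f,\psi}\rangle$. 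By Perrin--Riou's interpolation property for the big logarithm (cf.\ \cite[\S 8.2]{KLZ1b}), specialising this identity at the character $j+\chi$ expresses $F(j+\chi)$ as the product of an elementary non-zero $\Gamma$-factor (finite since $j\ge k$), the local Euler factor of $\Gr^1 V$ at $p$, and $\langle\exp^*(z_{j+\chi}),\xi_{f,\psi}\rangle$, where $z_{j+\chi}$ denotes the image of $\cBF^f_\psi$ in $H^1(\Qp,\Gr^1 V(-j-\chi))$ and $\exp^*$ is the Bloch--Kato dual exponential. By Lemma~\ref{lemma:cyclo-invariants} the Euler factor of $\Gr^1 V$ is non-zero, and $\exp^*$ on $H^1(\Qp,\Gr^1 V(-j-\chi))$ is injective, except precisely in the exceptional-zero case $j=k$, $\chi=1$, $\varepsilon\psi(p)=1$; so outside that case it suffices to show $F(j+\chi)\ne 0$.

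\textbf{The non-exceptional case.} Here I would verify $F(j+\chi)\ne 0$ factor by factor. The sign $(-1)^j\chi(-1)=\nu$ and the Gauss sums $G(\psi^{-1})^2$, $G(\varepsilon^{-1})^2$ are non-zero. Since $j+\chi$ is assumed not unfortunate, one may pick $c$ so that $\bigl(c^2-c^{2s-2k+2}\varepsilon\psi(c)^{-2}\bigr)L_{p,N_f N_\psi}(\varepsilon\psi,s-k+1)$ does not vanish at $s=j+\chi$. It then remains to see $L_p^\imp(\Sym^2 f,\psi,j+\chi)\ne 0$: applying Theorem~\ref{thm:padicL}(ii) to the imprimitive $L$-function (legitimate, since it differs from the primitive one by a Galois-equivariant rational function of the Fourier coefficients of $f$ and values of $\psi$) writes this value as a non-zero elementary factor times $\mathcal{E}_p'(j,\chi)$ times $\chi(N_{\psi\varepsilon})^2 G(\chi)^2$ times $L^\imp(\Sym^2 f,\psi\chi^{-1},j)/(\pi^{2j-k+1}\langle f,f\rangle)$; here $\mathcal{E}_p'(j,\chi)\ne 0$ by the vanishing criterion for the $p$-adic multipliers (we are not in the exceptional case), and $L^\imp(\Sym^2 f,\psi\chi^{-1},j)\ne 0$ by Remark~\ref{rmk:nonvanishing} together with Proposition~\ref{prop:imp-zeros} and the minimality of $f$ --- the primitive value $L(\Sym^2 f\otimes\psi\chi^{-1},j)$ is non-zero for $j\ge k$, and the imprimitive-to-primitive ratio is entire with zeroes only on $\Re(s)=k-1$. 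Hence $F(j+\chi)\ne 0$, so $\exp^*(z_{j+\chi})\ne 0$ and a fortiori $z_{j+\chi}\ne 0$.

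\textbf{The exceptional-zero case} ($j=k$, $\chi=1$, $\varepsilon\psi(p)=1$). Now the Euler factor of $\Gr^1 V$ at $p$ vanishes, so $\exp^*(z_k)=0$ automatically, and $z_k$ has to be detected instead through the Bloch--Kato logarithm on the line $\Hf(\Qp,\Gr^1 V(-k))$. I would invoke the derivative (or $\mathcal{L}$-invariant) form of the reciprocity law, in the style of \cite[\S 8.2]{KLZ1b} and \cite[Appendix B]{leiloefflerzerbes14b}, which identifies that logarithm with a non-zero multiple of the leading term of $F$ at $s=k$; thus it suffices to prove $\ord_{s=k}F=1$. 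The $\Gamma$-factor, Gauss sums and sign contribute order $0$. The hypothesis that $k+\chi$ (with $\chi=1$) is not unfortunate excludes, by the remark following the definition of ``unfortunate'', the subcase in which $\varepsilon\psi$ is quadratic; so either $\varepsilon\psi$ is non-quadratic, and then one may choose $c$ with $c^2\bigl(1-\varepsilon\psi(c)^{-2}\bigr)\ne 0$ while $L_{p,N_f N_\psi}(\varepsilon\psi,1)\ne 0$ by part~(2) of the proposition on removing junk factors, so these factors contribute net order $0$; or $\varepsilon\psi=1$, and then $c^2-c^{2s-2k+2}$ has a simple zero at $s=k$ while $\zeta_{p,N_f}(s-k+1)$ has a simple pole there, by parts~(3)--(4), again net order $0$. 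Finally, $L_p^\imp(\Sym^2 f,\psi,s)$ vanishes to order exactly $1$ at $s=k$: it vanishes because $\mathcal{E}_p'(k,1)=0$, and the order is exactly $1$ by the hypothesis on $L_p(\Sym^2 f,\psi,s)$ --- the primitive and imprimitive functions having equal order of vanishing at $s=k$ by minimality of $f$. Therefore $\ord_{s=k}F=1$; by the identification above this forces the Bloch--Kato logarithm of $z_k$ to be non-zero, whence $z_k\ne 0$.

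I expect the exceptional-zero case to be the main obstacle: there one cannot simply quote non-vanishing of a single critical $L$-value, but must track orders of vanishing through all three factors that can degenerate at $s=k$ (the $c$-factor, the $p$-adic Dirichlet $L$-function, and the $p$-adic symmetric square $L$-function), and it is exactly this bookkeeping that forces the hypotheses ``$j+\chi$ not unfortunate'' and ``$L_p(\Sym^2 f,\psi)$ has order exactly $1$ at $s=k$'' into the statement. A secondary technical point is the passage, via Perrin--Riou's machinery, from non-vanishing of $F$ (respectively of its leading term) to non-vanishing of the localised class itself, including the degeneration of the dual exponential into the Bloch--Kato logarithm in the exceptional case; for this I would rely on the Iwasawa-theoretic inputs already cited above.
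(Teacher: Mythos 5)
Your non-exceptional case is essentially the paper's own argument: specialise the Perrin--Riou regulator identity of Theorem \ref{thm:explicitrecip} at $j+\chi$ (the paper packages this as a commutative square of $1$-dimensional spaces with bottom map $R\cdot\exp^*$, $R \neq 0$), use ``not unfortunate'' to kill the $c$-factor and the Dirichlet factor, and use Jacquet--Shalika plus Schmidt's result on the imprimitive/primitive ratio (minimality of $f$) to get non-vanishing of the symmetric-square factor for $\Re(s)\ge k$. That part is fine, up to the cosmetic point that the non-vanishing of the interpolation Euler factor outside the exceptional case is not literally Lemma \ref{lemma:cyclo-invariants} but the same computation that underlies the vanishing criterion for $\mathcal{E}_p'$.

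The exceptional-zero case, however, contains a genuine error in the local mechanism. You assert that ``$\exp^*(z_k)=0$ automatically'' and that $z_k$ ``has to be detected through the Bloch--Kato logarithm on the line $\Hf(\Qp,\Gr^1 V(-k))$''. But in the exceptional case $\Gr^1 V(-k)$ is the \emph{trivial} representation of $G_{\Qp}$: all of $\Dcris$ is $\Fil^0$, the Bloch--Kato exponential is zero, $H^1_{\mathrm{e}}=0$, and there is no logarithm to evaluate; what is true is that the kernel of $\exp^*$ is exactly the unramified line $\Hf(\Qp,\Qp)$. Worse, your picture presupposes $z_k\in\Hf$, which is false in general: the paper's own later theorem (via Venerucci's improved dual exponential $\widetilde\exp^*$ and the improved Rankin $L$-function) shows that when $\varepsilon\psi=1$ the Beilinson--Flach class does \emph{not} lie in $\Hf$ at $p$ --- it is precisely its ramified component that is non-trivial. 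So, taken literally, your exceptional-case argument is trying to prove non-vanishing of a map that is identically zero on a subspace the class need not belong to. The correct mechanism (the one the paper uses) is different: since $\Gr^1 V(-k)$ is trivial, the image of $\mathcal{L}$ lies in the ideal $I_k$, and one compares $\mathcal{L}\bmod I_k^2$ (i.e.\ the derivative $\mu\mapsto\mu'(k)$) with the specialisation map $H^1_{\Iw}(\Qpi,\Gr^1 V)/I_k\to H^1(\Qp,\Gr^1 V(-k))$; both vertical maps are isomorphisms onto $1$-dimensional spaces, so a (not explicitly described) bottom map exists, and non-vanishing of the first derivative of $F$ at $k$ forces $z_k\neq 0$. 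Your order-of-vanishing bookkeeping for $\ord_{s=k}F=1$ (the $c$-factor against the pole of $\zeta_p$ when $\varepsilon\psi=1$, Leopoldt when $\varepsilon\psi\neq 1$ non-quadratic, the ``unfortunate'' exclusion of the quadratic subcase, and transfer of the order-$1$ hypothesis from $L_p$ to $L_p^{\imp}$) is correct and is exactly what the paper's hypotheses encode; once you replace ``Bloch--Kato logarithm on $\Hf$'' by the derivative-of-regulator (equivalently $\widetilde\exp^*$-type) map, the argument closes.
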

      
   \begin{proof}
    We first treat the non-exceptional case. Let $I_{j + \chi}$ denote the prime ideal of $\Lambda(\Gamma)$ which is the kernel of the character $j + \chi$. Then we have a commutative diagram of isomorphisms
    \begin{diagram}
     H^1_{\Iw}(\Qpi, \Gr^1 V)/ I_{j + \chi} &\rTo^{\mathcal{L}} & \Dcris(\Gr^1 V) \otimes \Lambda(\Gamma)/I_{j + \chi}\\
     \dTo & & \dTo\\
     H^1(\Qp, \Gr^1 V(-j-\chi)) & \rTo^{R \cdot \exp^*} & \Dcris(\Gr^1 V)
    \end{diagram}
    where the right-hand vertical arrow is evaluation at $j + \chi$, and $R$ is an explicit non-zero factor. (Note that all the spaces in this diagram are 1-dimensional.)
    
    Since $j + \chi$ is not unfortunate or exceptional, and the imprimitive $L$-function $L^\imp(\Sym^2 f, \psi, s)$ does not vanish for $\Re(s) \ge k$ by Remark \ref{rmk:nonvanishing}, we deduce that $\mathcal{L}(\cBF^f_\psi)(j + \chi) \ne 0$, i.e.~the image of $\cBF^f_\psi$ is non-zero in the bottom right corner of the diagram. Because all the maps in the diagram are isomorphisms, $\cBF^f_\psi$ cannot map to zero in the bottom left corner, as required.
    
    Now let us treat the exceptional case. In this situation, $\Gr^1 V(-k)$ is the trivial representation of $G_{\Qp}$, so the image of the Perrin-Riou regulator $\mathcal{L}$ is contained in $I_k$. We now erect the diagram
    \begin{diagram}
     H^1_{\Iw}(\Qpi, \Gr^1 V) / I_k &\rTo^{\mathcal{L}} & I_k / I_k^2\\
     \dTo & & \dTo\\
     H^1(\Qp, \Gr^1 V(-k)) & \rTo & L_\frP.
    \end{diagram}
    Now the right-hand vertical arrow is given by $\mu \mapsto \mu'(k)$, and the bottom horizontal arrow is somewhat messy to describe (however, it must exist, since the vertical arrows are both isomorphisms). Our assumption that the $L$-invariant is non-zero again implies that the Beilinson--Flach element is non-zero in the bottom right-hand corner, and we are done. 
   \end{proof}
   
   \begin{remark}
    By Greenberg's conjecture, proved in \cite{Dasgupta-factorization}, one knows that in the the exceptional-zero case the first derivative of  $L_p(\Sym^2 f, \psi, s)$ at $s = k$ is given by a (non-zero) classical $L$-value multiplied by an ``$L$-invariant'', which can be expressed as the deriviative of the $U_p$-eigenvalue with respect to the weight variable of the Hida family passing through $f$. However, we cannot at present rule out the possibility that this $L$-invariant might be 0.
   \end{remark}

  \subsection{The exceptional zero case}
  
   We now prove a refinement of Theorem \ref{thm:explicitrecip} in the ``exceptional zero'' setting. Here we will use the fact the $p$-stabilized eigenform $f_\alpha$ is the weight $k$ specialisation of some Hida family $\mathbf{f}$ -- a $p$-adic family of ordinary eigenforms of varying weight, indexed by some finite covering $\cW_\mathbf{f} = \Spec \Lambda_{\mathbf{f}}$ of a component of $\cW$.
   
   \begin{theorem}[Hida]
    Let $\tilde{L}_p(\mathbf{f}, \mathbf{f}, \psi)$ be the unique meromorphic function on $\cW_\mathbf{f} \times \cW_\mathbf{f}$ such that the 3-variable $p$-adic $L$-function $L_p(\mathbf{f}, \mathbf{f}, \psi)(k, k', s)$ for $s = k'$ factors as
    \[ L_p(\mathbf{f}, \mathbf{f}, \psi)(k, k', s)|_{s = k'} = \left(1 - \frac{\alpha(k')}{\alpha(k) \psi(p) \varepsilon(p)}\right) \cdot \tilde{L}_p(\mathbf{f}, \mathbf{f}, \psi)(k, k'). \]
   
    If $\varepsilon\psi$ is the trivial character, then for any integer point $k \ge 2 \in \cW_\mathbf{f}$, the meromorphic function $x \mapsto \tilde{L}_p(\mathbf{f}, \mathbf{f}, \psi)(k, x)$ on $\cW_\mathbf{f}$ has a simple pole at $x = k$. If $\varepsilon\psi$ is non-trivial, then this function is holomorphic at $x = k$.
   \end{theorem}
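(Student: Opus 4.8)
The plan is to read the statement off Hida's construction of the three-variable Rankin--Selberg $p$-adic $L$-function $L_p(\mathbf{f}, \mathbf{g}, \psi)(k_1, k_2, s)$, together with the classical analytic theory of the Rankin--Selberg convolution of $f$ with itself. I would first recall the interpolation formula for $L_p(\mathbf{f}, \mathbf{g}, \psi)$ at classical points $(k_1, k_2, s)$ in the ``$\mathbf{f}$-dominant'' range $k_2 \le s \le k_1 - 1$: the value is an elementary archimedean factor, times a modified $p$-Euler factor $\mathcal{E}_p(k_1, k_2, s)$, times $L(f_{k_1} \otimes g_{k_2} \otimes \psi, s)/\bigl(\pi^\bullet \langle f_{k_1}, f_{k_1}\rangle\bigr)$. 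Here $\mathcal{E}_p$ is a product of four terms, one of which is $\mathcal{E}_1 = 1 - p^{s-1}\bigl(\psi(p)\alpha_f(k_1)\beta_g(k_2)\bigr)^{-1}$; using $\alpha_g(k_2)\beta_g(k_2) = \varepsilon(p)p^{k_2-1}$ this rewrites as $1 - p^{s - k_2}\alpha_g(k_2)\bigl(\psi(p)\varepsilon(p)\alpha_f(k_1)\bigr)^{-1}$, which --- $\alpha(-)$ being rigid-analytic and unit-valued along the Hida family --- is manifestly the restriction of an analytic function on $\cW_{\mathbf{f}}\times\cW_{\mathbf{g}}\times\cW$. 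Taking $\mathbf{g}=\mathbf{f}$, $k_1 = k$ and $s = k_2 = k'$ turns $\mathcal{E}_1$ into precisely $1 - \alpha(k')\bigl(\psi(p)\varepsilon(p)\alpha(k)\bigr)^{-1}$. Since the classical points are Zariski-dense and $L_p(\mathbf{f},\mathbf{f},\psi)(k,k',s)|_{s=k'}$ is meromorphic in $k'$, the quotient
\[ \tilde{L}_p(\mathbf{f},\mathbf{f},\psi)(k,k') := \frac{L_p(\mathbf{f},\mathbf{f},\psi)(k,k',s)\big|_{s=k'}}{1 - \alpha(k')\bigl(\psi(p)\varepsilon(p)\alpha(k)\bigr)^{-1}}\]
is a well-defined meromorphic function on $\cW_{\mathbf{f}}\times\cW_{\mathbf{f}}$ (the denominator is not identically zero, as $\alpha$ is non-constant); this is the asserted factorisation, and uniqueness is automatic.

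For the behaviour at $k' = k$ I would compare with complex $L$-values. At a classical point $k'+\chi$ with $2 \le k' \le k-1$ and $\chi$ of $p$-power conductor --- a set dense in $\cW_{\mathbf{f}}$ provided $k$ is large compared to $p$ --- the point $(k, k'+\chi, k'+\chi)$ lies in the $\mathbf{f}$-dominant range, and unwinding the definition of $\tilde{L}_p$, with $\mathcal{E}_1$ now cancelling against the denominator, shows that $\tilde{L}_p(\mathbf{f},\mathbf{f},\psi)(k, k'+\chi)$ equals a nonzero elementary factor, times the remaining factors $\mathcal{E}_2\mathcal{E}_3\mathcal{E}_4$ of $\mathcal{E}_p$, times $L(f_k\otimes f_{k'}\otimes\psi\chi^{-1}, k')/\bigl(\pi^\bullet\langle f_k, f_k\rangle\bigr)$. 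A short case-check shows $\mathcal{E}_2\mathcal{E}_3\mathcal{E}_4$ is a $p$-adic unit at $k'=k$, so the behaviour of $k' \mapsto \tilde{L}_p(k, k')$ near $k$ is governed by that of $L(f_k\otimes f_{k'}\otimes\psi, k')/\langle f_k, f_k\rangle$ as $k'\to k$. Here I invoke Shimura's analysis of the Rankin--Selberg integral: this normalised value is holomorphic away from $k'=k$, and at $k'=k$ has a pole --- namely the pole of the real-analytic Eisenstein series entering the integral, which is \emph{simple}, with residue a nonzero multiple of $\langle f_k, f_k\rangle$ --- exactly when $f_{k'}\otimes\psi$ coincides with $f_k$ up to twist at $k'=k$, i.e.\ when $\varepsilon\psi$ is trivial; when $\varepsilon\psi$ is non-trivial the relevant Petersson product, hence the residue, vanishes and the value is holomorphic. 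Dividing by $\langle f_k, f_k\rangle$ cancels the residue, and since $\tilde{L}_p(k,-)$ is meromorphic and agrees with these values on a dense set, it has a simple pole at $k$ when $\varepsilon\psi$ is trivial and is holomorphic there otherwise.

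The hard part will be the bookkeeping of normalisations and periods --- matching the $p$-adic interpolation formula (with Hida's congruence-module periods varying along $\cW_{\mathbf{f}}$) to the complex Rankin--Selberg formula, and verifying that $\mathcal{E}_1$ is exactly the factor which degenerates on the lower boundary $s = k_2$, so that dividing it out introduces no spurious zeros or poles --- together with the weights $k$ small compared to $p$, where the density argument above must be replaced by a family argument as in Hida's treatment. It is worth emphasising that the $L$-invariant of $\Sym^2 f$, essentially the derivative of $\alpha$ along $\cW_{\mathbf{f}}$ at $k$ and not known to be non-zero, does \emph{not} enter: the vanishing order of $1 - \alpha(k')\bigl(\psi(p)\varepsilon(p)\alpha(k)\bigr)^{-1}$ at $k'=k$ occurs with equal multiplicity in the numerator $L_p(\mathbf{f},\mathbf{f},\psi)(k,k',k')$, where it is literally one of the interpolation factors, and in the divided-out denominator, so it cancels and the simple pole is forced unconditionally by the simple pole of the Rankin--Selberg convolution. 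An essentially equivalent route, sidestepping the density argument, identifies $\tilde{L}_p$ along the diagonal --- via a Hida-family form of Dasgupta's factorisation --- with the congruence ideal of $\mathbf{f}$ times an adjoint $p$-adic $L$-function, the pole then reflecting that of the $p$-adic zeta function at $s=1$. In either case the details are those of Hida's work on congruence modules and three-variable Rankin--Selberg $p$-adic $L$-functions, which the proof ultimately cites.
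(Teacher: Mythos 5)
Note first that the paper does not prove this statement at all: it is quoted as a theorem of Hida (his work on the Rankin--Selberg $p$-adic $L$-function and congruence modules, cf.\ \cite{hida90}), so there is no argument in the text to compare yours with step by step. Your first step is fine and is the standard Greenberg--Stevens-style observation: the Euler factor $1 - p^{s-1}\bigl(\psi(p)\alpha(k)\beta(k')\bigr)^{-1}$ in the interpolation formula becomes $1 - \alpha(k')\bigl(\psi(p)\varepsilon(p)\alpha(k)\bigr)^{-1}$ on $s = k'$, it is analytic on $\cW_{\mathbf{f}}\times\cW_{\mathbf{f}}$ and not identically zero, so existence and uniqueness of the meromorphic quotient $\tilde L_p$ are immediate.

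The genuine gap is in the second step, which is where all the content of the theorem lies. You propose to determine the behaviour of $x \mapsto \tilde L_p(\mathbf{f},\mathbf{f},\psi)(k,x)$ at $x = k$ from Shimura's archimedean residue computation (the simple pole of the real-analytic Eisenstein series in the Rankin--Selberg integral), transported through Zariski-density of classical points $k'+\chi$ with $k' \le k-1$. This does not work. The point $(k,k,k)$ lies outside the interpolation range ($s=k>k-1$), and at the nearby classical points the interpolated values are all \emph{finite} algebraic numbers; a meromorphic function on weight space agreeing with finite values on a Zariski-dense set not containing $x=k$ is consistent both with a pole and with holomorphy at $x=k$. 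Whether a pole occurs is governed by the $p$-adic valuations of those values as $k'+\chi$ approaches $k$ $p$-adically, and Shimura's complex-analytic pole of $s \mapsto L(f_k\otimes f_k\otimes\psi,s)$ at $s=k$ (a limit in the cyclotomic direction, with both weights equal) says nothing about that; your phrase ``as $k'\to k$'' conflates the weight direction with the cyclotomic direction and an archimedean limit with a $p$-adic one. The correct input is genuinely $p$-adic: either Hida's direct computation of the diagonal restriction, in which the pole along $k'=k$ comes from the constant term of the $p$-adic Eisenstein family --- i.e.\ from the Kubota--Leopoldt function $L_p(\varepsilon\psi,\cdot)$, which has a simple pole at $1$ precisely when $\varepsilon\psi$ is trivial, with residue matched against the congruence number/adjoint $L$-value --- or a families version of the factorisation into symmetric-square and Kubota--Leopoldt factors, combined with Hida's holomorphy theorem for the symmetric-square $p$-adic $L$-function to rule out a pole when $\varepsilon\psi \ne 1$ (holomorphy in that case is itself a nontrivial theorem, not a consequence of finiteness of classical values). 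You do gesture at exactly this route in your closing remark, but only as an aside; as written, the main argument rests on the invalid density-plus-archimedean-residue step and so does not establish either the simple pole or the holomorphy assertion.
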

   
   We refer to $\tilde{L}_p(\mathbf{f}, \mathbf{f}, \psi)$ as the ``improved'' $p$-adic Rankin--Selberg $L$-function. We claim that this $L$-function is the image of a Beilinson--Flach class under a corresponding ``improved'' Perrin-Riou logarithm.
   
   \begin{theorem}[Venerucci]
    If $M$ is a $p$-torsion-free, pro-$p$ abelian group with a continuous, unramified action of $G_{\Qp}$, then there is an isomorphism
    \[ \widetilde\exp^*_{\Qp, M}: \frac{H^1(\Qp, M)}{H^1_{\mathrm{nr}}(\Qp, M)} \to \DD(M), \]
    where $\DD(M) = (M \mathop{\hat\otimes} \widehat{\ZZ}_p^{\mathrm{nr}})^{G_{\Qp}}$, such that
    \begin{itemize}
     \item the construction of $\widetilde\exp^*_{\Qp, M}$ is functorial in $M$;
     \item if $M$ is a finitely-generated $\Zp$-module, then we have
     \[ \widetilde\exp^*_{\Qp, M} = -(1 - p^{-1} \varphi^{-1})^{-1} \exp^*_{\Qp, M} \]
     where $\exp^*_{\Qp, M}$ is the usual Bloch--Kato dual exponential map for $M[1/p]$;
     \item we have
     \[ \mathcal{L}_{M}(0) = -(1 - \varphi) \widetilde\exp^*_{\Qp, M},\]
     where $0$ denotes the trivial character of $\Gamma$.
    \end{itemize}
   \end{theorem}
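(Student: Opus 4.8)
The plan is to construct $\widetilde\exp^*_{\Qp, M}$ intrinsically and then to identify it, after inverting $p$, with a rescaling of the classical Bloch--Kato dual exponential. First I would reduce to the case where $M$ is finitely generated and free over $\Zp$: writing $M = \varprojlim_n M_n$ with each $M_n$ a finite free quotient carrying the induced unramified action, both $H^1(\Qp, M)/H^1_{\mathrm{nr}}(\Qp, M)$ and $\DD(M)$ commute with this inverse limit — the completed tensor product in the definition of $\DD$ is there precisely to guarantee this — so it suffices to produce $\widetilde\exp^*$ functorially for the $M_n$. For $M$ finite free the Galois action factors through $G_{\FF_p} = \langle\varphi\rangle$, and local Tate duality gives $H^2(\Qp, M) = H^0(\Qp, M^*(1))^\vee = 0$, since $M^*(1)$ is ramified (it involves the cyclotomic character) whereas any Galois-fixed line would have to be unramified. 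Consequently the Bloch--Kato dimension formulae force $H^1_{\mathrm{nr}}(\Qp, M[1/p]) = \Hf(\Qp, M[1/p]) = H^1_{\mathrm{g}}(\Qp, M[1/p])$, and the dual exponential descends to an isomorphism $\exp^*_{\Qp, M}\colon H^1(\Qp, M[1/p])/H^1_{\mathrm{nr}}(\Qp, M[1/p]) \xrightarrow{\ \sim\ } \Dcris(M[1/p])$, both sides of dimension $\dim_{\Qp} M[1/p]$.

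The core of the construction is the short exact sequence of $G_{\Qp}$-modules
\[ 0 \to M \to M\mathop{\hat\otimes}\widehat{\ZZ}_p^{\mathrm{nr}} \xrightarrow{\,1-\varphi\,} M\mathop{\hat\otimes}\widehat{\ZZ}_p^{\mathrm{nr}} \to 0, \]
where $\varphi$ is $\mathrm{id}_M$ tensored with the Witt-vector Frobenius: it is exact because $M$ is $p$-torsion-free and $1-\varphi$ is surjective on $\widehat{\ZZ}_p^{\mathrm{nr}}$ with kernel $\Zp$, and it is $G_{\Qp}$-equivariant precisely because $M$ is unramified, so that $\varphi$ commutes with the abelian residual action. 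Taking continuous $G_{\Qp}$-cohomology, and feeding in $H^2(\Qp, M)=0$ together with an analysis of $H^\bullet(\Qp, M\mathop{\hat\otimes}\widehat{\ZZ}_p^{\mathrm{nr}})$, identifies $H^1(\Qp, M)/H^1_{\mathrm{nr}}(\Qp, M)$ with $H^1\bigl(\Qp, M\mathop{\hat\otimes}\widehat{\ZZ}_p^{\mathrm{nr}}\bigr)^{\varphi = 1}$; the remaining point is that this last group is canonically $\DD(M)$, which I would establish on the level of the $(\varphi,\Gamma)$-module (or Wach module) of $M$, where the relevant integral lattices are transparent. This produces $\widetilde\exp^*_{\Qp, M}$, and functoriality in $M$ is then automatic, every step being natural. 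Comparing this integral map with $\exp^*_{\Qp, M}$ after inverting $p$, by tracking the Euler-factor correction between Bloch--Kato's $B_{\mathrm{cris}}$- and $B_{\mathrm{dR}}$-fundamental exact sequences, yields $\widetilde\exp^*_{\Qp, M} = -(1-p^{-1}\varphi^{-1})^{-1}\exp^*_{\Qp, M}$ whenever $1-p^{-1}\varphi^{-1}$ is invertible on $\Dcris(M[1/p])$.

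For the last identity I would specialise Perrin-Riou's explicit reciprocity law — in Berger's form, which expresses the big logarithm $\mathcal{L}_M$ at each character of $\Gamma$ in terms of the (dual) exponential twisted by the appropriate local Euler factor — at the trivial character $0$ of $\Gamma$. For an unramified $M$ the Euler factor occurring there is $(1-\varphi)(1-p^{-1}\varphi^{-1})^{-1}$ (up to sign and normalisation); combining this with the rescaling of the previous paragraph gives $\mathcal{L}_M(0) = -(1-\varphi)\widetilde\exp^*_{\Qp, M}$. Note that the right-hand side is deliberately allowed to be non-invertible, its defect being governed by $\DD(M)^{\varphi=1}$ — which is exactly the exceptional-zero phenomenon that forced the passage from $\exp^*$ to $\widetilde\exp^*$ in the first place.

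The main obstacle is the middle step. One cannot simply \emph{define} $\widetilde\exp^*$ by the rescaling formula, since $1-p^{-1}\varphi^{-1}$ can genuinely fail to be invertible — for instance on a bottom graded piece attached to a weight-$2$ eigenform — so $\widetilde\exp^*$ must be built intrinsically; and pinning down the integral structure of $H^\bullet(\Qp, M\mathop{\hat\otimes}\widehat{\ZZ}_p^{\mathrm{nr}})$, i.e.\ the Wach/$(\varphi,\Gamma)$-module computation identifying $H^1(\Qp, M\mathop{\hat\otimes}\widehat{\ZZ}_p^{\mathrm{nr}})^{\varphi=1}$ with $\DD(M)$, is where the real work lies.
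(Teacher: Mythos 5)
The paper does not reprove this statement at all: its proof is the citation to \cite{venerucci16}, Proposition 3.8, together with the remark that the argument given there for one specific Hida-theoretic module uses nothing beyond unramifiedness and $p$-torsion-freeness of $M$. Your sketch is in the right circle of ideas --- the sequence $0 \to M \to M \mathop{\hat\otimes} \widehat{\ZZ}_p^{\mathrm{nr}} \xrightarrow{1-\varphi} M \mathop{\hat\otimes} \widehat{\ZZ}_p^{\mathrm{nr}} \to 0$, comparison with $\exp^*$ through a crystalline Euler factor, and Perrin-Riou reciprocity at the trivial character are exactly the ingredients of Venerucci's construction --- but it defers precisely the step that constitutes the theorem. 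In the long exact sequence you must show (a) that the connecting map $\DD(M) \to H^1(\Qp, M)$ has image exactly $H^1_{\mathrm{nr}}(\Qp, M)$, via $\DD(M)/(1-\varphi) \cong M_{\mathrm{Frob}}$, and (b) that the resulting quotient, a subgroup of $H^1(\Qp, M \mathop{\hat\otimes} \widehat{\ZZ}_p^{\mathrm{nr}})$, is canonically $\DD(M)$; note that $H^1(\Qp, \widehat{\QQ}_p^{\mathrm{nr}}) \ne 0$ (it contains the image of $\log \chi_{\mathrm{cyc}}$), so no formal vanishing statement is available and ``this is where the real work lies'' is an acknowledgement of the gap, not a proof. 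The opening reduction is also shaky: a general $p$-torsion-free pro-$p$ module --- and the case needed in this paper is a $\Lambda$-adic one --- is not in any evident way an inverse limit of torsion-free finitely generated quotients, so one must either work with finite quotients and control the error terms or argue directly with the big module, as Venerucci does.

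Your diagnosis of why the intrinsic construction is needed is also off. Since $M$ is unramified, Frobenius acts on $\DD(M)$ as a $\Zp$-linear automorphism, so its eigenvalues are $p$-adic units and $1 - p^{-1}\varphi^{-1}$ is \emph{automatically} invertible on $\Dcris(M[1/p])$ for finitely generated $M$; your proposed weight-$2$ counterexample cannot occur here (in this paper the unramified piece $\Gr^1 V(-k)$ has Frobenius eigenvalue $\varepsilon\psi(p)$, a unit). What genuinely forces the intrinsic construction is, first, integrality --- the rescaling formula only produces a map after inverting $p$, whereas the theorem asserts an isomorphism of the integral quotient onto $\DD(M)$ --- and, second, the big-coefficient case, where $\exp^*$ is simply undefined; the operator whose degeneration matters is $1 - \varphi$ in the third bullet (the exceptional zero). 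Finally, for that third bullet, invoking reciprocity ``up to sign and normalisation'' leaves unverified exactly the content of the identity $\mathcal{L}_M(0) = -(1-\varphi)\widetilde\exp^*_{\Qp,M}$, and the normalisation must be matched against the specific twisted regulator used in this paper. The efficient proof, and the one the paper intends, is to quote \cite{venerucci16} and verify that the argument there is insensitive to the particular choice of $M$.
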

   
   \begin{proof}
    See \cite[Proposition 3.8]{venerucci16}. (The argument is given there for a specific module $M$ arising from a Hida family, but the same argument works in full generality.)
   \end{proof}
   
   \begin{theorem}
    We have
    \begin{multline*}
     \left\langle \widetilde\exp^*_{\Gr^1 V(-k)}\left( \cBF^f_\psi\right), \xi_{f, \psi} \right\rangle \\= (-1)^{k + 1} \frac{G(\varepsilon^{-1})^2 G(\psi^{-1})^2}{(1 - \tfrac\beta\alpha)(1 - \tfrac{\beta}{p\alpha})} \cdot \left( \lim_{x \to k}(c^2 - c^{2 + k - x}\varepsilon(c)^{-2}\psi(c)^{-2})   \tilde L_p(\mathbf{f}, \mathbf{f}, \psi)(k, x)\right).
    \end{multline*}
    In particular, if $\psi = \varepsilon^{-1}$ then the image of $\cBF^f_\psi$ in $H^1(\Qp, V(-k))$ does not lie in $\Hf$.
   \end{theorem}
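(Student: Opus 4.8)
The plan is to obtain this formula as the ``derivative along the weight variable'' of the explicit reciprocity law of Theorem~\ref{thm:explicitrecip}, with Venerucci's map $\widetilde\exp^*$ supplying the meaning of that derivative in the exceptional situation. The character $\psi$ plays no essential role here and is dealt with exactly as in the proof of Theorem~\ref{thm:explicitrecip}, so I suppress it in what follows. First I would use that $f_\alpha$ is the weight-$k$ specialisation of the Hida family $\mathbf{f}$, so that $\cBF^f_\psi$ is the weight-$k$ specialisation of a big Beilinson--Flach class $\cBF^{\mathbf{f}}_\psi$ in the Iwasawa cohomology of the Rankin product of $f$ with $\mathbf{f}$ (twisted by $\psi$), as in \cite{KLZ1b}. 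Projecting the localisation at $p$ of $\cBF^{\mathbf{f}}_\psi$ to the graded piece $\Gr^1$ of the big representation (where it lives, by the Hida-family analogue of Proposition~\ref{prop:selmer}), and then specialising the cyclotomic variable to the weight variable $\mathbf{k}'$ of $\mathbf{f}$, produces a class $\mathbf{z}\in H^1(\Qp,\mathcal{M})$, where $\mathcal{M}$ is a $p$-torsion-free, pro-$p$, \emph{unramified} $\Lambda_{\mathbf{f}}$-module on which (suitably normalised) Frobenius acts by the unit $u(\mathbf{k}')=\alpha(\mathbf{k}')\big/\big(\alpha(k)\,\varepsilon(p)\,\psi(p)\big)$. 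Its weight-$k$ fibre is $\Gr^1 V(-k)$, and that of $\mathbf{z}$ is the image of $\cBF^f_\psi$ (lifted canonically to $H^1(\Qp,\sF^1 V(-k))$, since $H^0(\Qp,\Gr^0 V(-k))=0$) in $H^1(\Qp,\Gr^1 V(-k))$.

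Next I would assemble the reciprocity law over $\Lambda_{\mathbf{f}}$. The three-variable explicit reciprocity law of \cite{KLZ1b}, restricted to the line $s=\mathbf{k}'$, identifies $\mathcal{L}_{\mathcal{M}}(\mathbf{z})$ (paired with $\xi_{f,\psi}$) with $L_p(\mathbf{f},\mathbf{f},\psi)(k,\mathbf{k}',s)\big|_{s=\mathbf{k}'}$, decorated by the $c$-smoothing factor and by the same Gauss sums and local factor $(1-\tfrac{\beta}{\alpha})(1-\tfrac{\beta}{p\alpha})^{-1}$ that occur in Theorem~\ref{thm:explicitrecip}. By Hida's factorisation this equals $(1-u(\mathbf{k}'))\cdot\tilde L_p(\mathbf{f},\mathbf{f},\psi)(k,\mathbf{k}')$ up to those same factors, while Venerucci's relation $\mathcal{L}_{\mathcal{M}}=-(1-\varphi)\,\widetilde\exp^*_{\mathcal{M}}=-(1-u(\mathbf{k}'))\,\widetilde\exp^*_{\mathcal{M}}$ allows one to cancel the common non-zero-divisor $1-u(\mathbf{k}')$. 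This yields an identity over $\Lambda_{\mathbf{f}}$, meromorphic near $\mathbf{k}'=k$, for $\big\langle\widetilde\exp^*_{\mathcal{M}}(\mathbf{z}),\,\xi_{f,\psi}\big\rangle$, the minus sign in Venerucci's relation turning the sign $(-1)^k$ of Theorem~\ref{thm:explicitrecip} at $s=k$ into the $(-1)^{k+1}$ of the statement. Specialising at $\mathbf{k}'=k$ and using the functoriality of $\widetilde\exp^*$, the left side becomes $\big\langle\widetilde\exp^*_{\Gr^1 V(-k)}(\cBF^f_\psi),\,\xi_{f,\psi}\big\rangle$, and the right side is the limit as $\mathbf{k}'\to k$; this limit exists, since $\tilde L_p(\mathbf{f},\mathbf{f},\psi)(k,x)$ has at worst a simple pole at $x=k$ (Hida) while the $c$-smoothing factor has a simple zero there.

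For the last assertion, take $\psi=\varepsilon^{-1}$, so $\varepsilon\psi$ is trivial. Then $\varepsilon(c)^{-2}\psi(c)^{-2}=1$, so $c^2-c^{2+k-x}$ has a genuine simple zero at $x=k$ with leading coefficient a non-zero multiple of $\log_p c$, while by Hida's theorem $\tilde L_p(\mathbf{f},\mathbf{f},\psi)(k,x)$ now has a genuine simple pole at $x=k$; taking $c$ to be a topological generator of $\Zp^\times$, the limit in the formula is therefore finite and non-zero (the computation being exactly the one made above for ${}_c\zeta_p$). Since $\xi_{f,\psi}$ induces an isomorphism $\Dcris(\Gr^1 V)\cong L_{\frP}$, hence $\DD(\Gr^1 V(-k))\cong L_{\frP}$, we conclude $\widetilde\exp^*_{\Gr^1 V(-k)}(\cBF^f_\psi)\neq 0$. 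On the other hand, by Proposition~\ref{prop:selmer} the image of $\cBF^f_\psi$ in $H^1(\Qp,V(-k))$ lies in the image of $H^1(\Qp,\sF^1 V(-k))$, and a class in that image which lies in $\Hf(\Qp,V(-k))$ must have image in $H^1(\Qp,\Gr^1 V(-k))$ contained in $H^1_{\mathrm{nr}}(\Qp,\Gr^1 V(-k))=\ker\widetilde\exp^*$ --- a routine consequence of the crystallinity of $V(-k)$ and the exactness of $\Dcris$ on the graded pieces of the filtration. Therefore the image of $\cBF^f_\psi$ in $H^1(\Qp,V(-k))$ does not lie in $\Hf(\Qp,V(-k))$.

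The step I expect to be the main obstacle is the assembly in the second paragraph: pinning down, with the correct normalisations, Euler factors, Gauss sums and --- above all --- signs, the compatibility between the three-variable reciprocity law of \cite{KLZ1b}, the improved dual exponential of \cite{venerucci16}, and Hida's improved Rankin--Selberg $L$-function. The remaining ingredients are either direct citations or routine (if occasionally delicate) local computations.
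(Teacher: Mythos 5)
Your proposal is correct and follows essentially the same route as the paper's own (much terser) proof: apply the explicit reciprocity law of \cite{KLZ1b} over the Hida family (with the same $\psi$-twisting trick as in Theorem \ref{thm:explicitrecip}), use Hida's factorisation and Venerucci's relation $\mathcal{L}_M(0) = -(1-\varphi)\widetilde\exp^*_M$ to cancel the Euler factor $1 - \alpha(x)/(\alpha(k)\varepsilon\psi(p))$ as a meromorphic function on $\cW_{\mathbf{f}}$, and pass to the limit $x \to k$; the final assertion is exactly the paper's pole--zero cancellation between $\tilde L_p$ and the $c$-factor when $\psi = \varepsilon^{-1}$. Your extra details (the unramified module $\mathcal{M}$, the sign bookkeeping, and the reduction of membership in $\Hf$ to unramifiedness of the $\Gr^1$-image) are consistent with what the paper leaves implicit.
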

   
   \begin{proof}
    If $\varepsilon(p)\psi(p) \ne 1$, so the Euler factor $\left(1 - \frac{\alpha(k')}{\alpha(k) \psi(p) \varepsilon(p)}\right)$ does not vanish at $s = k$, this is a restatement of Theorem \ref{thm:explicitrecip}. In the bad cases, the result is obtained by applying Theorem B of \cite{KLZ1b} (modified as above when $\psi \ne 1$) to obtain an equality of meromorphic functions on $\mathcal{W}_\mathbf{f} \times \mathcal{W}_\mathbf{f}$, cancelling the Euler factor (since it is invertible almost everywhere), and taking the limit as $x \to k$.
    
    The final statement follows from the fact that when $\psi = \varepsilon^{-1}$, the simple pole of $\tilde L_p$ cancels with the zero of $c^2 - c^{2 + k - x}$ to give a non-vanishing value at $x = k$. So $\widetilde\exp^*(\cBF^f_\psi)$ is non-zero, implying that $\cBF^f_\psi$ does not lie in $\Hf$.
   \end{proof}
   
   \begin{remark}
    If $\varepsilon\psi$ is not the trivial character, but maps $p$ to 1, then we conclude that (for a suitable choice of $c$) we have $\widetilde\exp^*_{\Gr^1 V(-k)}\left( \cBF^f_\psi\right) \ne 0$ if and only if $\tilde L_p(\mathbf{f}, \mathbf{f}, \psi)(k, k) \ne 0$. 
    
    It seems likely that $\tilde L_p(\mathbf{f}, \mathbf{f}, \psi)(k, k)$ is never zero, and it may be possible to prove this using an adaptation of the methods of \cite{Dasgupta-factorization}. We hope to return to this issue in a future paper.
   \end{remark}

%%%%%%%%%%%%%%%%%%%%%%%%%%%%%%%%%%%%%%%%%%%%%
  
 \section{Bounding the Selmer group}
  \label{sect:kolyvagin}
  
  \subsection{Setup}
  
   Let $V = \Sym^2 M_{L_\frP}(f)^* (1 + \psi)$, as before, and let $V' = \bigwedge^2  M_{L_\frP}(f)^* (1 + \psi)$, so that $M_{L_\frP}(f \otimes f)^*(1 + \psi) = V \oplus V'$. Let $T$ and $T'$ be the $\cO_{L, \frP}$-lattices in $V$ and $V'$ induced by $ M_{\cO_{L, \frP}}(f)^*$.
   
   We shall impose the following hypotheses:
   \begin{itemize}
    \item $p \ge 7$;
    \item $\frP$ is a degree 1 prime of $L$, so $L_{\frP} \cong \Qp$;
    \item the image of $G_{\QQ}$ in $\Aut M_{L_\frP}(f) \cong \operatorname{GL}_2(\Qp)$ contains a conjugate of $\SL_2(\Zp)$;
    \item there exists $u \in (\ZZ / N_f N_\psi \ZZ)^\times$ such that $\varepsilon\psi(u) \ne \pm 1 \bmod \frP$ and $\psi(u)$ is a square modulo $\frP$;
    \item $\varepsilon\psi(p) \ne 1$.
   \end{itemize}
   
  \subsection{Verifying the hypotheses} We now collect together some technical facts about the Galois action on $T$ that will be needed in the Euler system argument.
   
   \begin{proposition}
    Under the above hypotheses, there exists $\tau \in G_{\QQ}$ such that:
    \begin{itemize}
     \item $\tau$ acts trivially on $\mu_{p^\infty}$,
     \item $T / (\tau - 1)T$ is free of rank 1 over $\Zp$,
     \item $\tau - 1$ acts invertibly on $T'$.
    \end{itemize}
   \end{proposition}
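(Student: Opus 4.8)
The plan is to produce $\tau$ inside $H \coloneqq \Gal(\QQbar / \QQ(\mu_{p^\infty})) = \ker\kappa$, which makes the first condition automatic, and to translate the other two into conditions on the matrix $g \coloneqq W(\tau) \in \GL_2$, where $W$ denotes the representation $M_{L_\frP}(f)^*$. On $H$ the cyclotomic character is trivial, so, using $\det W = \kappa^{k-1}\varepsilon$, the element $\tau - 1$ acts on the rank-one lattice $T'$ (a lattice in $\bigwedge^2 W(1+\psi)$, on which $G_{\QQ}$ acts through $\kappa^k\varepsilon\psi$) by the scalar $\varepsilon\psi(\tau) - 1$, and acts on $T$ (a lattice in $\Sym^2 W(1+\psi)$) as $\psi(\tau)\,\Sym^2(g) - 1$. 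Since $\frP$ has residue degree $1$ we have $\cO_{L, \frP} = \Zp$, and every root of unity in $\Zp^\times$ has order dividing $p - 1$; reduction modulo $\frP$ is therefore injective on such roots of unity, so $\varepsilon\psi(\tau) \ne 1$ already forces $\varepsilon\psi(\tau) - 1 \in \Zp^\times$, and the two hypotheses on $u$ become the genuine identities $\varepsilon\psi(u) \ne \pm 1$ and $\psi(u) \in (\Zp^\times)^2$.

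Next I would fix the local shape we are aiming for. Suppose $g = W(\tau)$ is conjugate over $\Zp$ to $\operatorname{diag}(\lambda, \mu)$ with $\lambda, \mu \in \Zp^\times$, $\lambda\mu = \det g = \varepsilon(\tau)$, and $\psi(\tau)\lambda^2 = 1$. Then in a suitable $\Zp$-basis $\tau$ acts on $T$ as $\operatorname{diag}\bigl(1,\ \varepsilon\psi(\tau),\ \varepsilon\psi(\tau)^2\bigr)$, the last entry because $\psi(\tau)\mu^2 = \psi(\tau)\varepsilon(\tau)^2 / \lambda^2 = (\varepsilon\psi(\tau))^2$. If moreover $\varepsilon\psi(\tau) \ne \pm 1$, the two nonzero diagonal entries of $\tau - 1$ lie in $\Zp^\times$, so $T / (\tau - 1)T \cong \Zp$ is free of rank $1$, and $\tau - 1$ acts invertibly on $T'$ via the unit $\varepsilon\psi(\tau) - 1$. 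It therefore suffices to find $\tau \in H$ with $\varepsilon(\tau) = \varepsilon(u)$, $\psi(\tau) = \psi(u)$, and $W(\tau) = \operatorname{diag}(\lambda, \mu)$, where $\lambda \in \Zp^\times$ is a square root of $\psi(u)^{-1}$ (which exists as $\psi(u)$ is a square) and $\mu \coloneqq \varepsilon(u)/\lambda$; the units $\lambda$, $\mu$ are then distinct and are not negatives of each other, precisely because $\varepsilon\psi(u) \ne \pm 1$.

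The core of the proof is the Galois-theoretic construction of such a $\tau$. After choosing the basis of $M_{L_\frP}(f)^*$ suitably and rescaling the lattice --- legitimate since an $\SL_2(\Zp)$-stable lattice in $\Qp^2$ is unique up to homothety --- we may assume $M_{\cO_{L, \frP}}(f)^* = \Zp^2$ and that the image of $G_{\QQ}$ under $W$ lies in $\GL_2(\Zp)$ and contains $\SL_2(\Zp)$. Since $p \nmid N_f N_\psi$, the fields $\QQ(\mu_{p^\infty})$ and $\QQ(\mu_{N_f N_\psi})$ are linearly disjoint over $\QQ$, so $H$ surjects onto $(\ZZ / N_f N_\psi \ZZ)^\times$; choose $h_0 \in H$ lifting $u$, so that $(\varepsilon(h_0), \psi(h_0)) = (\varepsilon(u), \psi(u))$. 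Set $H' \coloneqq \ker(\varepsilon|_H) \cap \ker(\psi|_H)$, a normal subgroup of $H$ with abelian quotient. The key claim is $W(H') \supseteq \SL_2(\Zp)$: since $W(H)/W(H')$ is a quotient of the abelian group $H/H'$, the group $\SL_2(\Zp) / \bigl(\SL_2(\Zp) \cap W(H')\bigr)$ is an abelian quotient of $\SL_2(\Zp)$, hence trivial, because $\SL_2(\Zp)$ is topologically perfect for $p \ge 5$. Consequently $W(h_0 H') = W(h_0)\,\SL_2(\Zp)$ is exactly the coset of matrices in $\GL_2(\Zp)$ of determinant $\varepsilon(u)$, which contains $\operatorname{diag}(\lambda, \mu)$; any $\tau \in h_0 H'$ with $W(\tau) = \operatorname{diag}(\lambda, \mu)$ then has all three required properties.

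The step I expect to be the main obstacle is the claim $W(H') \supseteq \SL_2(\Zp)$, i.e.\ that deleting the ramification at $p$ and the abelian characters $\varepsilon$ and $\psi$ does not shrink the image of $W$ below $\SL_2(\Zp)$. This rests on the topological perfectness of $\SL_2(\Zp)$ for $p \ge 5$ --- comfortably covered by the hypothesis $p \ge 7$ --- together with the linear disjointness of the cyclotomic $p$-tower from the prime-to-$p$ cyclotomic tower. The remaining ingredients --- the eigenvalue bookkeeping for $\Sym^2$ and $\bigwedge^2$, the passage between congruences modulo $\frP$ and honest relations among roots of unity, and the reduction to a statement over $H$ --- are all routine.
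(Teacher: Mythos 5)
Your proposal is correct and follows essentially the same route as the paper: both produce $\tau$ with $\kappa(\tau)=1$ and prescribed values of $\varepsilon,\psi$ at $u$, acting on $M_{L_\frP}(f)^*$ by a diagonal matrix $\operatorname{diag}(\lambda,\mu)$ with $\lambda\mu=\varepsilon(u)$ and one entry a square root of $\psi(u)^{-1}$, the existence of such $\tau$ being guaranteed because the (closure of the) derived subgroup of the image is $\SL_2(\Zp)$. Your extra bookkeeping (lifting the mod-$\frP$ hypotheses to genuine statements in $\Zp^\times$, and the $H'$/topological-perfectness argument) just makes explicit what the paper's shorter proof leaves implicit.
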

   
   \begin{proof}
    Let us choose $t \in (G_{\QQ})^{\mathrm{ab}}$ mapping to $u$ under the mod $N_f N_\psi$ cyclotomic character. Since $p \nmid N_f N_\psi$, we may assume that $\kappa(t) = 1$, where $\kappa$ is the $p$-adic cyclotomic character.
    
    Since the derived subgroup of $\GL_2(\Zp)$ is $\operatorname{SL}_2(\Zp)$, we see that the preimage of $t$ in $G_{\QQ}$ must contain elements acting on $M_{\cO_{L, \frP}}(f)^*$ by \emph{any} given matrix having determinant $\varepsilon(u)$. In particular, we may find an element $\tau$ which acts on $M_{\cO_{L, \frP}}(f)^*$ by a conjugate of 
    \(
     \begin{pmatrix} 
     \varepsilon(u) \psi(u)^{1/2} & 0 \\ 
      0 & \psi(u)^{-1/2}
     \end{pmatrix}
    \). 
    Hence $\tau$ acts on $T$ via a conjugate of $\begin{pmatrix} \varepsilon^2 \psi^2(u) & \\ & \varepsilon\psi(u) \\  & & 1\end{pmatrix}$, and on $T'$ as $\varepsilon\psi(u)$; so we are done.
   \end{proof}
   
   We also have the following lemma. For $r \ge 1$, 
   
   \begin{lemma}
    Let $B$ denote the image of $\SL_2(\Zp)$ in $\SL_3(\Zp)$ via the adjoint representation (or the symmetric square, since these coincide as representations of $\SL_2$). Suppose $p \ge 7$. Then we have
    \[ H^1\left(B, \mathbf{F}_p^3\right) = 0.\]
   \end{lemma}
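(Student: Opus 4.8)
The plan is to descend the computation to finite groups using the $p$-adic congruence filtration. Write $W = \FF_p^3 \cong \Sym^2(\FF_p^2)$ for the module in question; since $p$ is odd, $W \cong \mathfrak{sl}_2(\FF_p)$ as a representation of $\SL_2(\FF_p)$. The group $B$ is the image of $\SL_2(\Zp)$ under the adjoint representation, whose kernel is $\{\pm 1\}$; as this has order prime to $p$ and acts trivially on $W$, inflation gives $H^1(B, W) \cong H^1(\SL_2(\Zp), W)$. Let $\Gamma(p) = \ker\big(\SL_2(\Zp) \to \SL_2(\FF_p)\big)$, which acts trivially on $W$. The five-term inflation--restriction sequence then reads
\[ 0 \to H^1(\SL_2(\FF_p), W) \to H^1(\SL_2(\Zp), W) \xrightarrow{\ \mathrm{res}\ } H^1(\Gamma(p), W)^{\SL_2(\FF_p)} \xrightarrow{\ \operatorname{tg}\ } H^2(\SL_2(\FF_p), W), \]
so it suffices to prove \textbf{(a)} that $H^1(\SL_2(\FF_p), W) = 0$, and \textbf{(b)} that the transgression $\operatorname{tg}$ is injective.

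For (a), I would restrict to the unipotent subgroup $U = \left\{\left(\begin{smallmatrix}1&\ast\\0&1\end{smallmatrix}\right)\right\}$, a Sylow $p$-subgroup of $\SL_2(\FF_p)$. Since $U$ is cyclic, a standard transfer argument (Burnside's fusion theorem for the abelian Sylow $U$, noting $N_{\SL_2(\FF_p)}(U)/U \cong \FF_p^\times$ and that $U$ acts trivially on its own cohomology) identifies $H^1(\SL_2(\FF_p), W)$ with $H^1(U, W)^{T}$, where $T \cong \FF_p^\times$ is the diagonal torus. As a module over $U = \langle u\rangle$ the representation $W$ is a single Jordan block of size $3$, with cyclic vector $e_2^2$; since $3 < p$ the norm element of $\FF_p[U]$ acts as zero on $W$, so $H^1(U, W) \cong W/(u-1)W$ is one-dimensional, spanned by the class of $e_2^2$. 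Unwinding the torus action — conjugation by $\operatorname{diag}(a,a^{-1})$ sends $u$ to $u^{a^{-2}}$, while $\operatorname{diag}(a,a^{-1})$ scales $e_2^2$ by $a^{-2}$ — one finds that $T$ acts on this line through the character $a \mapsto a^{-4}$. This character is nontrivial precisely when $p-1 \nmid 4$, i.e.\ for $p \ge 7$, which is exactly our hypothesis; hence $H^1(U, W)^T = 0$ and (a) follows.

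For (b), recall that for $p \ge 5$ the pro-$p$ group $\Gamma(p)$ is uniform, with Frattini quotient $\Gamma(p)/\Gamma(p^2) \cong \mathfrak{sl}_2(\FF_p)$ via $1 + pX \mapsto X \bmod p$, the isomorphism being $\SL_2(\FF_p)$-equivariant for the adjoint action. Hence
\[ H^1(\Gamma(p), W)^{\SL_2(\FF_p)} = \Hom_{\SL_2(\FF_p)}\big(\Gamma(p)/\Gamma(p^2),\, W\big) = \Hom_{\SL_2(\FF_p)}(W, W), \]
which is one-dimensional by Schur's lemma, since $W$ is absolutely irreducible for $p \ge 5$. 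By the standard description of the transgression in the Lyndon--Hochschild--Serre spectral sequence, the generator of this line (the identity endomorphism, under $\Gamma(p)/\Gamma(p^2) \cong W$) is carried by $\operatorname{tg}$ to the class in $H^2(\SL_2(\FF_p), W)$ of the group extension
\[ 1 \to \Gamma(p)/\Gamma(p^2) \to \SL_2(\ZZ/p^2) \to \SL_2(\FF_p) \to 1. \]
Thus $\operatorname{tg}$ is injective if and only if this extension is non-split, and it is non-split: no lift to $\SL_2(\ZZ/p^2)$ of the unipotent element $\left(\begin{smallmatrix}1&1\\0&1\end{smallmatrix}\right)$ can have order $p$, since a short computation (using $\binom{p}{2} \equiv \binom{p}{3} \equiv 0 \bmod p$ for $p \ge 5$) shows that every such lift $g$ satisfies $g^p \equiv I + p\left(\begin{smallmatrix}0&1\\0&0\end{smallmatrix}\right) \not\equiv I \pmod{p^2}$, so $\SL_2(\FF_p)$ has no lift to a subgroup of $\SL_2(\ZZ/p^2)$. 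This gives (b), and the lemma follows.

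The main obstacle is part (b). It rests on two ingredients needing care: the identification of the transgression with the extension class (standard, but it relies on $\Gamma(p)$ acting trivially on $W$ and on $\Gamma(p)/\Gamma(p^2)$ being the full Frattini quotient), and the non-splitness of $\SL_2(\ZZ/p^2) \onto \SL_2(\FF_p)$; the latter genuinely uses $p \ge 5$, and indeed analogous reduction maps split for $p = 2, 3$, where the lemma itself can fail. Part (a) is, by comparison, a routine Sylow computation, and it is there — through the torus character $a \mapsto a^{-4}$ — that the hypothesis $p \ge 7$ enters decisively.
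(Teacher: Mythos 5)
Your proof is correct, and it is worth noting that the paper itself gives no argument for this lemma at all: its ``proof'' is the single sentence ``This follows exactly as in [Fla92, Lemma 1.2]''. What you have written is in effect a complete, self-contained version of that standard argument: inflation--restriction through the congruence kernel $\Gamma(p)$ (after first quotienting by the prime-to-$p$ centre $\{\pm 1\}$), the finite-level vanishing $H^1(\SL_2(\FF_p), W) = 0$ for $p \ge 7$, and the non-splitness of $\SL_2(\ZZ/p^2) \to \SL_2(\FF_p)$ to kill the $E_2^{0,1}$-term. Two small remarks. In part (a) you do not need the full stable-element/Burnside identification: restriction to the Borel $UT$ is already injective on $H^1(\SL_2(\FF_p), W)$ because the index $p+1$ is prime to $p$, and $H^1(UT, W) = H^1(U, W)^T$ since $|T|$ is prime to $p$; your computation of the torus character $a \mapsto a^{-4}$ is right, and it is exactly here that $p \ge 7$ is genuinely used (the character is trivial for $p = 5$, the known exceptional case for the adjoint representation). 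In part (b), the identification of the transgression with (up to sign) the pushforward of the extension class of $1 \to \Gamma(p)/\Gamma(p^2) \to \SL_2(\ZZ/p^2) \to \SL_2(\FF_p) \to 1$ is standard also when $Q$ acts nontrivially on $W$, provided $N$ acts trivially (e.g.\ Neukirch--Schmidt--Wingberg, Prop.\ 1.6.7), and your verification that no lift of the unipotent element to $\SL_2(\ZZ/p^2)$ has order $p$ (using $p \ge 5$) correctly establishes non-splitness, hence injectivity of the transgression on the one-dimensional space $\Hom_{\SL_2(\FF_p)}(W, W)$. So the argument is complete, and it supplies the details the paper delegates to Flach.
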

   
   \begin{proof}
    This follows exactly as in \cite[Lemma 1.2]{flach92}.
   \end{proof}
   
   Now let $\Omega$ be the smallest extension of $\QQ$ such that $G_\Omega$ acts trivially on $T$, on $T'$, and on $\Zp(1)$.
   
   \begin{proposition}\label{prop:trivomegacoh}
    We have 
    \[ H^1(\Gal(\Omega / \QQ), T \otimes \FF_p) = H^1(\Gal(\Omega / \QQ), T^*(1) \otimes \FF_p) = 0.\]
   \end{proposition}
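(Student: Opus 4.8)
The plan is to replace $\Omega$ by a larger field of more transparent structure and then run a Hochschild--Serre argument anchored on the preceding Lemma of Flach. First I would set $\Omega_0 = \QQ(\rho_f)\cdot\QQ(\mu_{p^\infty})\cdot\QQ(\zeta_{N_\psi})$, where $\QQ(\rho_f)$ denotes the field cut out by the Galois representation on $M_{\cO_{L,\frP}}(f)$. The action on each of $T$, $T'$ and $\Zp(1)$ is built from $\rho_f$, the cyclotomic character and $\psi$, all of which are trivial on $G_{\Omega_0}$; hence $G_{\Omega_0}$ acts trivially on $T$, $T'$ and $\Zp(1)$, so $\Omega\subseteq\Omega_0$. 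Thus $\Gal(\Omega/\QQ)$ is a quotient of $G_0:=\Gal(\Omega_0/\QQ)$, and both $T\otimes\FF_p$ and $T^*(1)\otimes\FF_p$ are inflated from it; since inflation is injective, it suffices to prove $H^1(G_0,T\otimes\FF_p)=H^1(G_0,T^*(1)\otimes\FF_p)=0$.

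The key step is to locate a copy of $\SL_2(\Zp)$ inside $G_0$. Restriction embeds $G_0$ into $\operatorname{im}(\rho_f)\times\Delta$, with $\Delta=\Gal\big(\QQ(\mu_{p^\infty})\QQ(\zeta_{N_\psi})/\QQ\big)$ abelian, and I would set $H:=G_0\cap\big(\SL_2(\Zp)\times\{1\}\big)$, which is normal in $G_0$ with abelian quotient. The claim is that the first projection restricts to an isomorphism $H\cong\SL_2(\Zp)$: given $g\in\SL_2(\Zp)$, any $\sigma\in G_\QQ$ with $\rho_f(\sigma)=g$ has $\det\rho_f(\sigma)=1$, and since $\operatorname{im}(\rho_f)\supseteq\SL_2(\Zp)$ is perfect for $p\ge5$ the abelian part $\QQ(\rho_f)\cap\QQ^{\mathrm{ab}}$ is cut out by $\det\rho_f$ alone; hence $\sigma$ is trivial on $\QQ(\rho_f)\cap\QQ(\mu_{p^\infty})\QQ(\zeta_{N_\psi})$ and so, after multiplying by a suitable element of $G_{\QQ(\rho_f)}$, becomes trivial on all of $\QQ(\mu_{p^\infty})\QQ(\zeta_{N_\psi})$, i.e.\ $(g,1)\in G_0$. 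For $(g,1)\in H$ the cyclotomic character and $\psi$ both evaluate to $1$, so $H$ acts on $T\otimes\FF_p$ purely through the symmetric-square map $\SL_2(\Zp)\onto\Sym^2\SL_2(\FF_p)$, with underlying module $\FF_p^3$ (the adjoint/symmetric-square representation); the same holds for $T^*(1)\otimes\FF_p$, using that the standard representation of $\SL_2$ is self-dual.

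Finally I would feed this into the five-term exact sequence for $1\to H\to G_0\to G_0/H\to1$,
\[ 0 \to H^1\big(G_0/H,\,(T\otimes\FF_p)^H\big) \to H^1(G_0,T\otimes\FF_p) \to H^1(H,T\otimes\FF_p)^{G_0/H}. \]
Since $p\ge7$, the module $\FF_p^3$ is irreducible and non-trivial over $\SL_2(\FF_p)$, so $(T\otimes\FF_p)^H=0$ and the left-hand term dies. For the right-hand term, inflation--restriction along the central subgroup $\{\pm1\}=\ker\big(\SL_2(\Zp)\onto\Sym^2\SL_2(\FF_p)\big)$, which has order prime to $p$, identifies $H^1(H,T\otimes\FF_p)=H^1(\SL_2(\Zp),\FF_p^3)$ with $H^1(B,\FF_p^3)$ (notation of the preceding Lemma), which is zero. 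Hence $H^1(G_0,T\otimes\FF_p)=0$, and the identical computation gives $H^1(G_0,T^*(1)\otimes\FF_p)=0$; inflating back to $\Gal(\Omega/\QQ)$ yields the proposition. The only point I expect to need genuine care is the claim $H\cong\SL_2(\Zp)$ --- that $\SL_2(\Zp)\times\{1\}$ really lies inside $G_0$ --- which rests on pinning down $\QQ(\rho_f)\cap\QQ^{\mathrm{ab}}$; everything downstream of that is formal manipulation with Flach's Lemma and the irreducibility of $\FF_p^3$.
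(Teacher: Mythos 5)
Your argument is correct and is essentially the paper's: both rest on Flach's lemma $H^1(B,\FF_p^3)=0$, the irreducibility and nontriviality of the mod-$p$ adjoint representation, and an inflation--restriction argument exploiting that the symmetric-square image of $\SL_2(\Zp)$ sits as a normal subgroup of the relevant Galois group with (pro-)abelian quotient, the quotient being killed because the invariants vanish. The only difference is organizational: the paper stays with $\Omega$ and the image $D \subseteq \GL_3(\Zp)$, running two inflation--restriction steps (through $B \trianglelefteq D$, then through the splitting field of $T$ inside $\Omega$), whereas you enlarge to $\Omega_0$ and extract an honest copy of $\SL_2(\Zp)$, reducing to $B$ via the prime-to-$p$ kernel $\{\pm 1\}$ --- both routes are sound.
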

   
   \begin{proof}
    We give the proof for $T$ (the argument for $T^*(1)$ being essentially identical). Let $D$ be the image of $G_{\QQ}$ in $\Aut T \cong \GL_3(\Zp)$.
    
    By our assumptions on $\frP$, the image of $G_\QQ$ in $\Aut M_{L_\frP}(f)$ contains a conjugate of $\SL_2(\Zp)$, so the image of the commutator subgroup $(G_{\QQ})^{\mathrm{der}}$ must be conjugate to $\SL_2(\Zp)$. Hence the group $D$ contains, as a normal subgroup, a copy of the group $B$ from the preceding proposition. Therefore, in the exact sequence
    \[ H^1(D/B, H^0(B, \FF_p^3)) \rTo^{\operatorname{inf}} H^1(D, \FF_p^3) \rTo^{\mathrm{res}} H^1(H, \FF_p^3), \]
    the first and last terms are zero; so $H^1(D, \FF_p^3) = 0$.
    
    If $\Omega_T \subseteq \Omega$ is the splitting field of $T$, then $\Gal(\Omega_T / \QQ)$ maps isomorphically to $D$, so we have another exact sequence
    \[ 
     H^1(D, \FF_p^3) \rTo^{\operatorname{inf}} H^1(\Gal(\Omega / \QQ), \FF_p^3) \rTo^{\mathrm{res}} \Hom_D(\Gal(\Omega / \Omega_T), \FF_p^3).
    \]
    The first term is 0, as we have just seen; and in the third term, the action of $D$ on $\Gal(\Omega / \Omega_T)$ is by conjugation, but this action is trivial, since $\Omega$ is the composite of $\Omega_T$ and an abelian extension of $\QQ$. Since $(\FF_p^3)^D$ is evidently zero, we conclude that $H^1(\Gal(\Omega / \QQ), \FF_p^3)$ is zero as required.
   \end{proof}

%%%%%%%%%%%%%%
      
  \subsection{Modifying the Euler system}
  
   We continue to assume the hypotheses of the previous section.

   \begin{lemma}
    For any prime $\ell \nmid pcN_f N_\psi$, any Dirichlet character $\eta$ of $p$-power conductor, and any $k \ge 0$, the map $\Frob_\ell^{p^k} - 1$ is injective on $T(\eta^{-1})$.
   \end{lemma}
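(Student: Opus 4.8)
The plan is to pass from the stated $\Zp$-module injectivity to invertibility over $L_{\frP}$, and then to a statement about the eigenvalues of $\Frob_\ell$ that can be settled by an $\ell$-adic valuation computation. Since $T(\eta^{-1})$ is $\Zp$-torsion free, $\Frob_\ell^{p^k}-1$ will be injective on it as soon as it is invertible on the ambient $L_{\frP}$-representation $V(\eta^{-1})$; and for this it suffices to prove the stronger assertion that no eigenvalue of $\Frob_\ell$ on $V(\eta^{-1})$ is a root of unity. Note that once this is done the exponent $p^k$ plays no role at all: any power of $\Frob_\ell$ would work equally well.

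The next step is to read off the eigenvalues of $\Frob_\ell$. Because $\ell\nmid pN_fN_\psi$, the representation $V(\eta^{-1})=\Sym^2 M_{L_\frP}(f)^*\otimes\kappa\otimes\psi\eta^{-1}$ is unramified at $\ell$. From $\det\bigl(1-X\Frob_\ell^{-1}:M_{L_\frP}(f)\bigr)=(1-\alpha_\ell X)(1-\beta_\ell X)$ one sees that $\Frob_\ell$ acts on $M_{L_\frP}(f)^*$ with eigenvalues $\alpha_\ell,\beta_\ell$, hence on $\Sym^2 M_{L_\frP}(f)^*$ with eigenvalues $\alpha_\ell^2,\ \alpha_\ell\beta_\ell,\ \beta_\ell^2$. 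Since $\kappa$ is the cyclotomic character we have $\kappa(\Frob_\ell)=\ell$, and $\psi$, $\eta$ (being unramified away from $p$) contribute the scalars $\psi(\ell)$ and $\eta(\ell)^{-1}$. Thus the eigenvalues of $\Frob_\ell$ on $V(\eta^{-1})$ are precisely $\gamma\,\ell\,\psi(\ell)\eta(\ell)^{-1}$ for $\gamma\in\{\alpha_\ell^2,\alpha_\ell\beta_\ell,\beta_\ell^2\}$, where $\alpha_\ell,\beta_\ell$ are the roots of $X^2-a_\ell(f)X+\ell^{k-1}\varepsilon(\ell)$.

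Finally I would run the valuation estimate. The Hecke polynomial is monic with coefficients in $\cO_L$, so $\alpha_\ell$ and $\beta_\ell$ are algebraic integers; fixing a place $\lambda$ of $\QQbar$ above $\ell$ with valuation $v_\lambda$ normalised by $v_\lambda(\ell)=1$, we get $v_\lambda(\alpha_\ell),v_\lambda(\beta_\ell)\ge 0$, together with $\alpha_\ell\beta_\ell=\ell^{k-1}\varepsilon(\ell)$. Hence $v_\lambda(\alpha_\ell^2\ell)=2v_\lambda(\alpha_\ell)+1\ge 1$, similarly $v_\lambda(\beta_\ell^2\ell)\ge 1$, and $v_\lambda(\alpha_\ell\beta_\ell\ell)=k\ge 1$ (using the weight hypothesis $k\ge 2$). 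As $\psi(\ell)$ and $\eta(\ell)$ are roots of unity we have $v_\lambda(\psi(\ell))=v_\lambda(\eta(\ell))=0$, so every eigenvalue $\gamma\,\ell\,\psi(\ell)\eta(\ell)^{-1}$ has $v_\lambda>0$; in particular it is not a unit, and hence not a root of unity. This proves the claim, and with it the lemma. There is no real obstacle here; the only point requiring a little care is fixing the sign of the Tate twist correctly, i.e.\ that $\kappa(\Frob_\ell)=\ell$ rather than $\ell^{-1}$ — with the wrong sign the eigenvalue $\alpha_\ell\beta_\ell\,\ell^{-1}\psi(\ell)\eta(\ell)^{-1}$ would become a root of unity when $k=2$ — but this sign is forced by the conventions already in play, for instance by the formula $\Gr^0 V=\kappa\times\operatorname{unram}(\alpha^2\psi(p))$ recorded above.
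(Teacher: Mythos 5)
Your proof is correct, and it establishes the statement by the same overall reduction as the paper -- namely, that no eigenvalue of $\Frob_\ell$ on $V(\eta^{-1})$ is a root of unity, so $\Frob_\ell^{p^k}-1$ is injective on the torsion-free lattice $T(\eta^{-1})$ -- but the key input differs. The paper's proof is a one-line appeal to purity: the eigenvalues of $\Frob_\ell$ on $T$ are Weil numbers of nonzero weight (this uses the Deligne/Ramanujan--Petersson bound $|\alpha_\ell| = |\beta_\ell| = \ell^{(k-1)/2}$, which the paper also invokes in Lemma \ref{lemma:cyclo-invariants}), so they cannot be roots of unity. You instead argue $\ell$-adically: $\alpha_\ell,\beta_\ell$ are algebraic integers with $\alpha_\ell\beta_\ell = \ell^{k-1}\varepsilon(\ell)$, so after the Tate twist every eigenvalue $\gamma\,\ell\,\psi(\ell)\eta(\ell)^{-1}$, $\gamma \in \{\alpha_\ell^2,\alpha_\ell\beta_\ell,\beta_\ell^2\}$, has strictly positive valuation at a place above $\ell$ and hence is not a unit, let alone a root of unity. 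This is more elementary -- it needs only integrality and the determinant of the Hecke polynomial, not the archimedean weight bounds -- and your identification of the eigenvalues (arithmetic Frobenius acting on $M_{L_\frP}(f)^*$ with eigenvalues $\alpha_\ell,\beta_\ell$, the Tate twist contributing $\kappa(\Frob_\ell)=\ell$, and $\psi,\eta$ contributing roots of unity) is consistent with the paper's conventions, as your cross-check against $\Gr^0 V = \kappa \times \operatorname{unram}(\alpha^2\psi(p))$ confirms. The only cosmetic caution is the clash of notation between the exponent $k$ in $\Frob_\ell^{p^k}$ and the weight $k$ of $f$, which you already flag by noting the exponent is irrelevant.
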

   
   \begin{proof}
    This is clear from the fact that the eigenvalues of $\Frob_\ell$ on $T$ are Weil numbers of weight $2k > 0$.
   \end{proof}
   
   \begin{definition}
    Let $\mathcal{P}'$ be the set of primes $\ell \nmid pcN_f N_\psi$ such that
    \begin{itemize}
     \item $\ell = 1 \bmod p$,
     \item $T / (\Frob_\ell - 1) T$ is a cyclic $\Zp$-module,
     \item $\Frob_\ell - 1$ is bijective on $T'$.
    \end{itemize}
    Let $\mathcal{R}'$ be the set of square-free products of primes in $\mathcal{P}'$, and for $m \in \mathcal{R}'$, let $\QQ(m)$ be the unique abelian $p$-extension of $\QQ$ contained in $\QQ(\mu_m)$.
   \end{definition}
   
   (Note that $\mathcal{P}'$ is not empty: if $\tau$ is the element of $G_\QQ$ from the previous section, and $Y$ is a finite extension of $\QQ$ that acts trivially on $T/pT$, $T'/pT'$ and $\mu_p$, then any prime $\ell$ whose Frobenius in $\Gal(Y / \QQ)$ is conjugate to $\tau$ will lie in $\mathcal{P}$.)
   
   \begin{theorem}
    There exists a collection of classes $\mathbf{c}' = (c_m')_{m \in \mathcal{R}'}$, where
    \[ c_m' \in H^1_{\Iw}(\QQ(m)(\mu_{p^\infty}), T)^\nu, \]
    such that
    \begin{enumerate}[(i)]
     \item $c_1' = \cBF^f_\psi$;
     \item $c_m' \in H^1_{\Iw, \Gr, 1}(\QQ(m)(\mu_{p^\infty}), T)^\nu$ for all $m$;
     \item if $m \in \mathcal{R}'$ and $\ell$ is a prime with $\ell m \in \mathcal{R}'$, then 
     \[ 
       \operatorname{cores}_m^{\ell m}(c_{\ell m}') = 
       P_\ell(\Sym^2 f \otimes \psi, \Frob_\ell^{-1}) c_m'.
     \]
    \end{enumerate}
   \end{theorem}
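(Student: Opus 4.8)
The plan is to obtain $\mathbf{c}'$ by \emph{modifying} the Euler system of Theorem~\ref{thm:ES}, dividing away the spurious degree-one factors. Write $Q_\ell(X) = 1 - \ell^{k-1}\varepsilon\psi(\ell)X$, which (as recorded after Theorem~\ref{thm:ES}) is the factor coming from the alternating-square summand $T' = \bigwedge^2 M_{\cO_{L,\frP}}(f)^*(1+\psi)$, so that $P_\ell(f\otimes f\otimes\psi,X) = Q_\ell(X)\,P_\ell(\Sym^2 f\otimes\psi,X)$. First I would take the family $(c_m)_{m\in\cR}$ produced by Theorem~\ref{thm:ES}, chosen as in Proposition~\ref{prop:selmer} so that $c_m\in H^1_{\Iw,\Gr,1}(\QQ(\mu_{mp^\infty}),T)^\nu$ for every $m$. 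Every prime of $\mathcal{P}'$ is $\equiv 1\bmod p$, hence coprime to $6$, so $\mathcal{R}'\subseteq\cR$ and these classes are defined for all $m\in\mathcal{R}'$. Then for $m\in\mathcal{R}'$ I would set
\[
 c_m' \coloneqq \Big( \prod_{\ell \mid m} Q_\ell(\Frob_\ell^{-1})^{-1} \Big) \cdot \operatorname{cores}_{\QQ(m)(\mu_{p^\infty})}^{\QQ(\mu_{mp^\infty})}(c_m),
\]
where $\Frob_\ell$ denotes the image of the arithmetic Frobenius in $\Gal(\QQ(m)(\mu_{p^\infty})/\QQ)$, acting on Iwasawa cohomology through the natural module structure. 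Descending from $\QQ(\mu_m)$ to the maximal $p$-subextension $\QQ(m)$ is essential here, and is exactly why $\mathcal{P}'$ is taken to consist of primes $\equiv 1 \bmod p$.

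The main point -- and the only step that is not formal -- is that each $Q_\ell(\Frob_\ell^{-1})$ acts \emph{invertibly} on $H^1_{\Iw}(\QQ(m)(\mu_{p^\infty}),T)^\nu$, so that $c_m'$ is a genuine class with coefficients in $T$ and not merely in $T[1/p]$. Here is how I would check it. The group $\Gal(\QQ(m)(\mu_{p^\infty})/\QQ)$ is abelian and splits as $\Gal(\QQ(m)/\QQ)\times\Gamma$, with complex conjugation acting trivially on the pro-$p$ factor $\Gal(\QQ(m)/\QQ)$; so after applying $e_\nu$ the relevant coefficient ring is $\cO_{L,\frP}[[\Gal(\QQ(m)/\QQ)\times(\Zp^\times/\{\pm1\})]]$, a finite product of complete local rings with finite residue fields. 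Since $\ell\equiv1\bmod p$, the image of $\Frob_\ell$ lies in the pro-$p$ subgroup $\Gal(\QQ(m)/\QQ)\times(1+p\Zp)$, so it is $\equiv1$ modulo the maximal ideal of each local factor; while $\ell^{k-1}\varepsilon\psi(\ell)\equiv\varepsilon\psi(\ell)\pmod{\frP}$, and this is $\ne1$ precisely because $\Frob_\ell-1$ acts bijectively on $T'$ -- one of the defining conditions of $\mathcal{P}'$, and, since $\ell\equiv1\bmod p$, equivalent to $\varepsilon\psi(\ell)\not\equiv1\bmod\frP$. Hence $Q_\ell(\Frob_\ell^{-1}) = 1 - \ell^{k-1}\varepsilon\psi(\ell)\Frob_\ell^{-1}$ reduces mod the maximal ideal to $1-\varepsilon\psi(\ell)\ne0$ in each residue field, so it is a unit.

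Granting the invertibility, the three assertions follow formally, and I would dispatch them in turn. Property~(i) is immediate: $\QQ(1)=\QQ$, no modification occurs, and $c_1 = \cBF^f_\psi$ by Theorem~\ref{thm:ES}. For~(ii), both corestriction and multiplication by the group-ring units $Q_\ell(\Frob_\ell^{-1})^{-1}$ preserve the Greenberg local conditions (the condition at $p$ is stable under the Iwasawa-algebra action, and corestriction of a class satisfying it again satisfies it, by functoriality), so $c_m'\in H^1_{\Iw,\Gr,1}(\QQ(m)(\mu_{p^\infty}),T)^\nu$. For~(iii), I would use that the operators $Q_{\ell'}(\Frob_{\ell'}^{-1})^{-1}$ for $\ell'\mid m$ and $P_\ell(\Sym^2 f\otimes\psi,\Frob_\ell^{-1})$ are compatible under the projection $\Gal(\QQ(\ell m)(\mu_{p^\infty})/\QQ)\onto\Gal(\QQ(m)(\mu_{p^\infty})/\QQ)$ (Frobenius maps to Frobenius, units to the corresponding units), hence commute with $\operatorname{cores}_{\QQ(m)(\mu_{p^\infty})}^{\QQ(\ell m)(\mu_{p^\infty})}$, which is semilinear over that projection. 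Composing the corestrictions $\QQ(\mu_{\ell mp^\infty})\to\QQ(\mu_{mp^\infty})\to\QQ(m)(\mu_{p^\infty})$ and inserting the norm relation of Theorem~\ref{thm:ES} together with the factorisation $P_\ell(f\otimes f\otimes\psi,X) = Q_\ell(X)P_\ell(\Sym^2 f\otimes\psi,X)$, the factor $Q_\ell(\Frob_\ell^{-1})$ introduced by corestricting $c_{\ell m}$ is cancelled exactly by the extra $Q_\ell(\Frob_\ell^{-1})^{-1}$ in the definition of $c_{\ell m}'$, leaving $\operatorname{cores}_m^{\ell m}(c_{\ell m}') = P_\ell(\Sym^2 f\otimes\psi,\Frob_\ell^{-1})\,c_m'$. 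The hard part, to repeat, is the unit assertion for $Q_\ell(\Frob_\ell^{-1})$; the remainder is routine manipulation of corestriction maps and group-ring operators, of the kind already carried out in \cite[\S12]{KLZ1b}.
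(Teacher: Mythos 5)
Your construction is essentially the paper's own proof: project the classes of Theorem \ref{thm:ES} down to $\QQ(m)(\mu_{p^\infty})$ and multiply by the inverses of the degree-one factors, which are units in the Iwasawa algebra precisely because $\ell \equiv 1 \bmod p$ forces the Frobenius into the pro-$p$ part and the $T'$-condition in the definition of $\mathcal{P}'$ gives $\varepsilon\psi(\ell) \not\equiv 1 \bmod \frP$ (a point you make more explicit than the paper does). The one detail the paper treats more carefully is that for $\ell \mid m$ the prime $\ell$ ramifies in $\QQ(m)$, so ``$\Frob_\ell$'' in $\Gal(\QQ(m)(\mu_{p^\infty})/\QQ)$ is only defined up to the inertia subgroup $\Gal(\QQ(\ell)/\QQ)$; the paper fixes the unique lift acting trivially on $\QQ(\ell)$ and mapping to $\Frob_\ell$ at level $m/\ell$, which is exactly what is needed to make your ``Frobenius maps to Frobenius'' compatibility, and hence the cancellation in (iii), hold on the nose across levels.
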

   
   \begin{proof}
    We shall construct the classes $c_m'$ by modifying the $c_m$ of Theorem \ref{thm:ES} appropriately. For a first approximation we consider the classes $c_m''$ defined as the projections of the $c_m$ from $\QQ(\mu_{mp^\infty})$ to its subfield $\QQ(m)(\mu_{p^\infty})$. These classes already satisfy (i) (by definition), and (ii) (by Proposition \ref{prop:selmer}); so we must modify them to obtain (iii), getting rid of the extra degree 1 factor in the norm relations.
    
    For $m \in \mathcal{R}$, let $\Gamma_m = \Gal(\QQ(m)(\mu_{p^\infty}) / \QQ)$. For each prime $\ell \mid m$, let $F_\ell \in \mathcal{R}'$ denote the unique element of $\Gamma_m$ which acts trivially on $\QQ(\ell)$ and maps to $\Frob_\ell$ in $\Gamma_{m/\ell}$. Then $1 - \ell^{k-1} \varepsilon\psi(\ell) F_\ell^{-1}$ is invertible in $\Zp[[\Gamma_m]]$ (because $\ell = 1 \bmod p$, so $F_\ell = 1$ modulo the radical of $\Zp[[\Gamma_m]]$).
    
    We now take
    \[ c_m' = \left( \prod_{\ell \mid m} (1 - \ell^{k-1} \varepsilon\psi(\ell) F_\ell^{-1})^{-1}\right) c_m''.\]
    It is clear that these classes have property (iii). Property (i) is obvious, since $c_1' = c_1$ by definition. Finally, property (ii) is safe, since the Greenberg Selmer groups are $\Zp[[\Gamma_m]]$-modules.
   \end{proof}
   
   We are now in a position to apply the theory of \cite[\S 12]{KLZ1b}. We fix a character $\eta$ of $\Gamma_{\mathrm{tors}}$, and let $\bT = T(\eta^{-1}) \otimes \Lambda(\Gamma_1)(-\mathrm{can})$, where $\Lambda(\Gamma_1)$ is the Iwasawa algebra of the cyclotomic $\Zp$-extension and ``can'' denotes the canonical character $G_{\QQ} \to \Lambda_{\cO}(\Gamma_1)^\times$. We equip $\bT$ with the Greenberg Selmer structure $\cF_{\Gr, 1}$, for which the local condition at $p$ is given by the cohomology of $\sF^1 \bT$, as in the previous sections.
   
   \begin{theorem}
    There is a generalized Kolyvagin system
    \[ \pmb{\kappa} \in \overline{\mathbf{KS}}(\bT, \cF_{\Gr, 1}, \cP'),\]
    in the notation of \cite{mazurrubin04}, such that $\mathbf{\kappa}_1$ is the $\eta$-isotypical projection of $\cBF^f_{\psi}$.
   \end{theorem}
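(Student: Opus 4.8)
The plan is to obtain $\pmb{\kappa}$ by running the Euler system $(c_m')_{m \in \cR'}$ of the preceding theorem through the general Euler-system-to-Kolyvagin-system construction, in the form valid for ``big'' Iwasawa-algebra coefficients and Greenberg-type local conditions developed in \cite[\S 12]{KLZ1b} and \cite[Appendix B]{leiloefflerzerbes14b} (following Mazur--Rubin \cite{mazurrubin04}). That construction is purely formal once its input hypotheses hold, so the task reduces to matching our setting to those hypotheses.

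First I would record the Euler-system input. Forming $\bT = T(\eta^{-1}) \otimes \Lambda(\Gamma_1)(-\mathrm{can})$ as above and passing to the $\eta$-isotypical component, the classes $c_m'$ become an Euler system for $\bT$ with the ``correct'' Euler factors $P_\ell(\Sym^2 f \otimes \psi, \Frob_\ell^{-1})$ --- this is property (iii) of the preceding theorem --- and, by property (ii), every member of this system already lies in the Greenberg local condition; it is precisely this which lets us build a Kolyvagin system for the Selmer structure $\cF_{\Gr,1}$ rather than for the strict structure $\cF_{\Gr,0}$.

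Next I would check the hypotheses on the Galois module. Since the image of $G_\QQ$ in $\Aut M_{L_\frP}(f)$ contains a conjugate of $\SL_2(\Zp)$ and $p \ge 7$, the residual representation attached to $\bT$ is a twist of the symmetric square of the standard representation of $\SL_2$, hence absolutely irreducible; and Proposition \ref{prop:trivomegacoh}, combined with the fact that adjoining the prime-to-$p$ character $\eta$ and the cyclotomic coefficients introduces no new mod-$p$ cohomology, yields $H^1(\Gal(\Omega/\QQ), \bT \otimes \FF_p) = H^1(\Gal(\Omega/\QQ), \bT^*(1) \otimes \FF_p) = 0$. The element $\tau \in G_\QQ$ constructed earlier plays the role of the distinguished element of the machine: it acts trivially on $\mu_{p^\infty}$, has $T/(\tau-1)T$ free of rank $1$ over $\Zp$, and acts invertibly on $T'$, which together are exactly the conditions forcing $(\bT, \cF_{\Gr,1})$ to have core rank $1$. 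Finally, $\cP'$ is by its very definition a set of admissible Kolyvagin primes for $\bT$: each $\ell \in \cP'$ is congruent to $1$ mod $p$, has $T/(\Frob_\ell-1)T$ cyclic, and has $\Frob_\ell - 1$ acting invertibly on $T'$.

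With all of this in hand, the construction of \cite[\S 12]{KLZ1b} produces a generalized Kolyvagin system $\pmb{\kappa} \in \overline{\mathbf{KS}}(\bT, \cF_{\Gr,1}, \cP')$ whose value at the trivial modulus is the image of $c_1'$ under the twisting and $\eta$-projection maps above; since $c_1' = \cBF^f_\psi$, this value is the $\eta$-isotypical projection of $\cBF^f_\psi$, as required. I expect the only delicate point to be the algebraic bookkeeping --- confirming that $\bT$ is free over $\Lambda_\cO(\Gamma_1)$ and that $\cF_{\Gr,1}$ has core rank exactly $1$ --- but every ingredient for this has been assembled in the foregoing propositions, so it amounts to applying \emph{loc.\ cit.}\ with our data substituted. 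The need to work with \emph{generalized} Kolyvagin systems (the bar in $\overline{\mathbf{KS}}$) stems from the fact that $\cF_{\Gr,1}$ is not Cartesian at $p$; this is handled exactly as in \emph{loc.\ cit.}\ and does not affect the argument.
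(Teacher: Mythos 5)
Your overall route is the same as the paper's: feed the system $(c_m')_{m \in \cR'}$ into the Euler-system-to-Kolyvagin-system machinery of Mazur--Rubin (Theorem 5.3.3 of \cite{mazurrubin04}, as adapted in \cite[\S 12]{KLZ1b}), and use property (ii) of the preceding theorem to see that the resulting derivative classes satisfy the Greenberg condition at $p$, with $\kappa_1$ equal to the $\eta$-projection of $\cBF^f_\psi$; your checks of residual irreducibility and of the vanishing of $H^1(\Gal(\Omega/\QQ),-)$ via Proposition \ref{prop:trivomegacoh} are also the right ones. The gap is at the one point where this situation genuinely differs from the standard setting. You assert that ``$\cP'$ is by its very definition a set of admissible Kolyvagin primes for $\bT$''; admissibility of each individual prime is indeed automatic, but Mazur--Rubin's hypothesis (H.5) is a \emph{lower} bound on the prime set: it requires $\cP$ to contain the whole set $\cP_t$ of primes cut out by congruence conditions involving $\bT$ alone. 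Since $\cP'$ excludes every prime for which $\Frob_\ell - 1$ is not invertible on $T'$ --- a condition having nothing to do with $\bT$ --- no such containment holds, (H.5) fails, and the machine cannot be invoked as a black box. This is exactly the issue the paper flags after the theorem and resolves in Appendix \ref{appendix}: the Chebotarev argument of \cite[Proposition 3.6.1]{mazurrubin04} must be rerun over the larger splitting field $\QQ(T, T', \mu_{p^k})$, and the element $\tau$ is required to act invertibly on $T'$ precisely so that primes whose Frobenius is conjugate to $\tau$ land inside $\cP'$. Your proposal instead attributes the invertibility of $\tau - 1$ on $T'$ to the computation of the core rank of $(\bT, \cF_{\Gr,1})$, which depends only on $\bT$ and its local conditions; that misattribution is the sign that the actual difficulty was not addressed.

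A minor further slip: the bar in $\overline{\mathbf{KS}}$ is not caused by $\cF_{\Gr,1}$ failing to be Cartesian at $p$ (the classes are first produced for the relaxed condition at $p$ and only afterwards observed to satisfy the Greenberg one). Generalized Kolyvagin systems enter because of the $\Lambda$-adic coefficients: $\overline{\mathbf{KS}}$ is by definition an inverse limit over the quotients $\bT/\frm^k\bT$ of direct limits over shrinking sets of admissible primes, and it is this limit formalism (together with the sharpened prime sets of the appendix) that the construction naturally produces.
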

   
   \begin{proof}
    This follows by applying Theorem 5.3.3 of \cite{mazurrubin04} to the collection $(c_m')_{m \in \cR'}$. This gives a generalized Kolyvagin system for the relaxed local condition at $p$, but since our $c_m'$ are in $H^1_{\Iw, \Gr, 1}$, the resulting Kolyvagin classes also satisfy the Greenberg local condition at $p$, as shown in \cite[\S 12]{KLZ1b}.
   \end{proof}
   
   It is important to note that, although the hypotheses (H.0)--(H.4) of \cite[\S 3.5]{mazurrubin04} are satisfied for the module $\bT$, the hypothesis (H.5) is not, since we need to exclude from $\cP'$ all primes for which $\Frob_\ell - 1$ does not act invertibly on $T'$. However, we will be choosing our auxilliary primes to have Frobenii close to $\tau$, and $\tau$ was selected so that $\tau -1$ acts bijectively on $T'$, so this is not a problem; the necessary modification of Mazur and Rubin's arguments is sketched in Appendix \ref{appendix}. 
  
  \subsection{Selmer bounds over \texorpdfstring{$\Qi$}{Qinf}}
  
   \begin{theorem}
    Let $\eta$ be a character of $\Gamma_{\mathrm{tors}}$ with $\eta(-1) = \psi(-1)$. Then:
    \begin{itemize}
     \item The Greenberg Selmer group
     \[ e_\eta H^1_{\Iw, \Gr, 1}(\Qi, T) \coloneqq e_{\eta} \ker\Big( H^1_{\Iw}(\Qi, T) \to H^1_{\Iw}(\Qpi, T / \sF^1 T)\Big) \]
     of $T$ is free of rank 1 over $e_\eta \Lambda_{\cO}$, and $e_\eta \cdot \cBF^f_\psi$ is a non-torsion element of this module.
     \item The Greenberg Selmer group
     \[ e_{\eta} \cdot H^1_{\Gr, 2}(\Qi, A)^\vee\]
     of $A$ is a torsion $e_\eta \Lambda$-module.
     \item We have the divisibility of characteristic ideals
     \[ \Char_{\Lambda}\left(e_\eta \cdot  H^1_{\Gr, 2}(\Qi, A)^\vee\right) \mid \Char_{\Lambda} \left(e_\eta \cdot \frac{H^1_{\Iw, \Gr, 1}(\Qi, T)}{\Lambda \cdot \cBF^f_{\psi}}\right).\]
    \end{itemize}
   \end{theorem}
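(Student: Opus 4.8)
The plan is to feed the generalized Kolyvagin system $\pmb\kappa \in \overline{\mathbf{KS}}(\bT, \cF_{\Gr,1}, \cP')$ into the structure theory of Kolyvagin systems over $\Lambda$ (\cite[\S 5.3]{mazurrubin04}, in the form modified in Appendix \ref{appendix}, following \cite[\S 12]{KLZ1b}), and then to translate its output back into Greenberg Selmer groups over $\Qi$. For this three things are needed: that the Selmer structure $\cF_{\Gr,1}$ on $\bT = T(\eta^{-1})\otimes\Lambda(\Gamma_1)(-\mathrm{can})$ has core rank $1$ over $\Lambda$; that the running hypotheses of \cite{mazurrubin04} hold (with the modification of the appendix); and that the bottom class $\kappa_1$, which is the $\eta$-isotypical projection of $\cBF^f_\psi$, is non-torsion. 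The first is immediate: it is exactly the rank formula of the Lemma in \S \ref{sect:selmer}, taken with $i = 1$, which for $\eta(-1) = \psi(-1)$ reads $\operatorname{rank} e_\eta H^1_{\Iw, \Gr, 1}(\Qi, T) - \operatorname{rank} e_\eta H^1_{\Gr, 2}(\Qi, A)^\vee = 1$.

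For the non-triviality of $\kappa_1$ I would use the explicit reciprocity law. By Proposition \ref{prop:selmer} the localisation of $\cBF^f_\psi$ at $p$ lies in $H^1_{\Iw}(\Qpi, \sF^1 T)^\nu$, hence has a well-defined image in $H^1_{\Iw}(\Qpi, \Gr^1 V)^\nu$, on which the Perrin--Riou regulator $\mathcal{L}$ may be evaluated; by Theorem \ref{thm:explicitrecip} the pairing of $\mathcal{L}(\cBF^f_\psi)$ with $\xi_{f, \psi}$ equals, up to a unit and the factor $c^2 - c^{2s-2k+2}\psi\varepsilon(c)^{-2}$, the product $L_p^\imp(\Sym^2 f, \psi, s)\cdot L_{p, N_f N_\psi}(\varepsilon\psi, s - k + 1)$. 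Neither factor vanishes identically on the branch of weight space attached to $\eta$ --- for the symmetric square factor this follows from the non-vanishing of critical symmetric square $L$-values recalled in Remark \ref{rmk:nonvanishing}, and for the Dirichlet factor from the standard properties of Kubota--Leopoldt $p$-adic $L$-functions --- so the $e_\eta$-component of $\mathcal{L}(\cBF^f_\psi)$ is nonzero. Since $\mathcal{L}$ takes values in the $\Lambda$-torsion-free module $\Lambda(\Gamma)^\nu\otimes\Dcris(\Gr^1 V)$, this forces $\kappa_1 = e_\eta\cBF^f_\psi$ to be non-torsion, and in particular $\pmb\kappa$ is non-trivial.

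Next I would record the hypotheses. Hypotheses (H.0)--(H.4) of \cite[\S 3.5]{mazurrubin04} hold for $\bT$ because the image of $G_\QQ$ in $\Aut M_{L_\frP}(f)$ contains a conjugate of $\SL_2(\Zp)$, so that for $p \ge 7$ the residual representation $T \otimes \FF_p$ is absolutely irreducible with large image; hypothesis (H.5) fails, but is replaced by the variant of Appendix \ref{appendix}, which restricts the auxiliary primes using the element $\tau \in G_\QQ$ of \S \ref{sect:kolyvagin} --- chosen so that $T/(\tau - 1)T$ is free of rank $1$ over $\Zp$ while $\tau - 1$ acts invertibly on $T'$. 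The vanishing $H^1(\Gal(\Omega/\QQ), T\otimes\FF_p) = H^1(\Gal(\Omega/\QQ), T^*(1)\otimes\FF_p) = 0$ of Proposition \ref{prop:trivomegacoh} is what guarantees that $H^1_{\cF_{\Gr,1}}(\QQ, \bT)$ has no nonzero finite $\Lambda$-submodule. Granting all this, since $\kappa_1$ is non-torsion and the core rank is $1$, the structure theorem gives that $H^1_{\cF_{\Gr,1}}(\QQ, \bT)$ is free of rank $1$ over $\Lambda$, that the dual Selmer group $H^1_{\cF_{\Gr,1}^*}(\QQ, \bT^*(1))^\vee$ is $\Lambda$-torsion, and that $\Char_\Lambda\left(H^1_{\cF_{\Gr,1}^*}(\QQ, \bT^*(1))^\vee\right) \mid \Char_\Lambda\left(H^1_{\cF_{\Gr,1}}(\QQ, \bT)/\Lambda\kappa_1\right)$.

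Finally, Shapiro's lemma identifies $H^1_{\cF_{\Gr,1}}(\QQ, \bT)$ with $e_\eta H^1_{\Iw, \Gr, 1}(\Qi, T)$ and $\kappa_1$ with $e_\eta\cBF^f_\psi$; and, since $\sF^i A$ is by definition the orthogonal complement of $\sF^{3-i} T$, Poitou--Tate duality identifies the dual Selmer structure $\cF_{\Gr,1}^*$ with the Greenberg condition $\sF^2$ on $A$, so that $H^1_{\cF_{\Gr,1}^*}(\QQ, \bT^*(1))^\vee = e_\eta H^1_{\Gr, 2}(\Qi, A)^\vee$. Substituting these identifications into the three conclusions above gives the three assertions of the theorem. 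I expect the main obstacle to lie in this last step: making the Poitou--Tate duality between the local conditions $\sF^1 T$ and $\sF^2 A$ completely precise --- including its compatibility with the cyclotomic and $\eta$-twists and with passage to the Iwasawa limit --- and checking that Proposition \ref{prop:trivomegacoh} genuinely upgrades the torsion-freeness provided by the Lemma of \S \ref{sect:selmer} to the freeness needed in the Mazur--Rubin framework.
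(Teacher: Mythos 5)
Your proposal is correct and follows essentially the same route as the paper, whose proof consists of citing \cite[Corollary 12.3.5]{KLZ1b} together with the modification of the Mazur--Rubin machinery in Appendix \ref{appendix}; your write-up simply unpacks that citation (core rank $1$ from the rank formula, non-torsionness of $\kappa_1$ via Theorem \ref{thm:explicitrecip} and Remark \ref{rmk:nonvanishing}, hypotheses (H.0)--(H.4) plus the sharpened (H.5) via $\tau$, and the identification of the dual Selmer structure with $H^1_{\Gr,2}(\Qi,A)$ by Shapiro's lemma and local duality, since $\sF^2 A$ is the orthogonal complement of $\sF^1 T$).
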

   
   \begin{proof}
    See \cite[Corollary 12.3.5]{KLZ1b} (and the appendix below).
   \end{proof}
   
   We shall now derive from this a bound for the group $H^1_{\Gr, 1}(\Qi, A)^\vee$.
   
   \begin{theorem}
    \label{mainthm}
    \mbox{~}
    \begin{itemize}
     \item The group $e_\eta \cdot H^1_{\Iw, \Gr, 2}(\Qi, T)$ is zero. 
     \item The group $e_{\eta} \cdot H^1_{\Gr, 1}(\Qi, A)^\vee$ is a torsion $e_\eta \Lambda$-module.
     \item We have the divisibility of characteristic ideals
     \[ \Char_{\Lambda}\left(e_\eta \cdot H^1_{\Gr, 1}(\Qi, A)^\vee\right) \mid d \cdot e_\eta \cdot L_p^{\imp}(\Sym^2 f, \psi, s) \cdot L_{p, N_f N_\psi}(\varepsilon \psi, s - k + 1), \]
     where $d$ is any generator of the congruence ideal of $f$.
    \end{itemize}
   \end{theorem}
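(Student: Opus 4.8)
The plan is to deduce Theorem~\ref{mainthm} from the Selmer bound of the preceding theorem (which we may use for $\eta$ with $\eta(-1) = \psi(-1)$) by a global duality argument comparing the Greenberg Selmer structures $\cF_{\Gr, 1}$ and $\cF_{\Gr, 2}$ on $T$, the residual local term at $p$ being controlled by the explicit reciprocity law of Theorem~\ref{thm:explicitrecip}. First I would prove the vanishing of $e_\eta H^1_{\Iw, \Gr, 2}(\Qi, T)$. This module is torsion-free over $e_\eta \Lambda_\cO$ (as recalled above), so it suffices to exhibit it as the kernel of an injective map. Since $\varepsilon\psi(p) \ne 1$, Lemma~\ref{lemma:cyclo-invariants} gives $H^0(\Qpi, \Gr^i V) = 0$ for $i = 0, 1, 2$; as in the proof of Proposition~\ref{prop:BKSelIw} this forces the maps $H^1_{\Iw}(\Qpi, \sF^2 T) \hookrightarrow H^1_{\Iw}(\Qpi, \sF^1 T) \hookrightarrow H^1_{\Iw}(\Qpi, T)$ to be injective and $H^2_{\Iw}(\Qpi, \sF^2 T) = 0$, so $e_\eta H^1_{\Iw, \Gr, 2}(\Qi, T)$ is precisely the kernel of
\[ \rho \colon e_\eta H^1_{\Iw, \Gr, 1}(\Qi, T) \xrightarrow{\ \loc_p\ } e_\eta H^1_{\Iw}(\Qpi, \sF^1 T) \longrightarrow e_\eta H^1_{\Iw}(\Qpi, \Gr^1 T). \]
The source is free of rank $1$ over $e_\eta \Lambda_\cO$ and $\cBF^f_\psi$ is a non-torsion element of it (preceding theorem), while by Theorem~\ref{thm:explicitrecip} the image of $\cBF^f_\psi$ in $H^1_{\Iw}(\Qpi, \Gr^1 V)^\nu$ is non-torsion; hence $\rho$ is injective, so its kernel is zero.

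Next I would feed this into the Poitou--Tate sequence for the pair $\cF_{\Gr, 2} \subseteq \cF_{\Gr, 1}$ on $T$ (dual on $A$ to $\cF_{\Gr, 1}$ and $\cF_{\Gr, 2}$ respectively), which differ only at $p$. Identifying the local quotient at $p$ with $H^1_{\Iw}(\Qpi, \Gr^1 T)$ by the injectivity and vanishing just noted, one obtains an exact sequence of $e_\eta \Lambda$-modules
\[ 0 \to e_\eta H^1_{\Iw, \Gr, 2}(\Qi, T) \to e_\eta H^1_{\Iw, \Gr, 1}(\Qi, T) \xrightarrow{\ \rho\ } e_\eta H^1_{\Iw}(\Qpi, \Gr^1 T) \to e_\eta H^1_{\Gr, 1}(\Qi, A)^\vee \to e_\eta H^1_{\Gr, 2}(\Qi, A)^\vee \to 0, \]
with $\rho$ as above; as elsewhere in the paper this is cleanest via Nekov\'ar's Selmer complexes, exactly as in the proof of Proposition~\ref{prop:descent}. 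The target of $\rho$ has $e_\eta \Lambda_\cO$-rank $1$ by the local Euler characteristic formula (using $\varepsilon\psi(p) \ne 1$), so $\operatorname{coker} \rho$ is torsion; combined with the torsionness of $e_\eta H^1_{\Gr, 2}(\Qi, A)^\vee$ from the preceding theorem, the sequence shows $e_\eta H^1_{\Gr, 1}(\Qi, A)^\vee$ is torsion (the second bullet) and
\[ \Char_\Lambda\!\left(e_\eta H^1_{\Gr, 1}(\Qi, A)^\vee\right) = \Char_\Lambda(\operatorname{coker} \rho) \cdot \Char_\Lambda\!\left(e_\eta H^1_{\Gr, 2}(\Qi, A)^\vee\right). \]
Since $\Lambda \cdot \cBF^f_\psi$ sits inside $e_\eta H^1_{\Iw, \Gr, 1}(\Qi, T)$ and $\rho$ is injective, combining this identity with the divisibility of the preceding theorem gives
\[ \Char_\Lambda\!\left(e_\eta H^1_{\Gr, 1}(\Qi, A)^\vee\right) \ \Big|\ \Char_\Lambda\!\left(\frac{e_\eta H^1_{\Iw}(\Qpi, \Gr^1 T)}{\Lambda \cdot \rho(\cBF^f_\psi)}\right). \]

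It remains to identify the right-hand side. The Perrin-Riou regulator $\mathcal{L}$, paired against $\xi_{f, \psi}$, identifies $e_\eta H^1_{\Iw}(\Qpi, \Gr^1 T)$ with an ideal of $e_\eta \Lambda_\cO$ that is principal up to a generator $d$ of the congruence ideal of $f$ (the discrepancy arising because $\xi_{f, \psi}$, built from $\eta_f^\alpha$, need not be a basis of a natural lattice in $\Dcris(\Gr^1 V)$), and by Theorem~\ref{thm:explicitrecip} it sends $\rho(\cBF^f_\psi)$ to a unit times $\left(c^2 - c^{2s - 2k + 2} \varepsilon\psi(c)^{-2}\right) L_p^\imp(\Sym^2 f, \psi, s)\, L_{p, N_f N_\psi}(\varepsilon\psi, s - k + 1)$. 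Since the Euler system, and hence this entire argument, is available for every $c > 1$ coprime to $6 p N_f N_\psi$, taking the greatest common divisor of the resulting divisibilities removes the spurious factor: the ideal generated by $c^2 - c^{2s - 2k + 2} \varepsilon\psi(c)^{-2}$ over all such $c$ is the unit ideal, since by the analysis of the subsection on removing junk factors no height-one prime of $\Lambda_\cO$ can divide all of them (here one uses that $\psi$ is non-quadratic and $\varepsilon\psi(p) \ne 1$). This yields the asserted divisibility of $\Char_\Lambda\!\left(e_\eta H^1_{\Gr, 1}(\Qi, A)^\vee\right)$ by $d \cdot e_\eta \cdot L_p^\imp(\Sym^2 f, \psi, s)\, L_{p, N_f N_\psi}(\varepsilon\psi, s - k + 1)$.

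The main obstacle is the duality step: establishing the five-term exact sequence with the local term at $p$ correctly identified as $H^1_{\Iw}(\Qpi, \Gr^1 T)$ and the connecting map equal to the $\rho$ appearing in the reciprocity law, and tracking the congruence-ideal factor $d$ and the auxiliary $c$-factor accurately when passing to characteristic ideals. The remaining ingredients are already in place in the earlier sections.
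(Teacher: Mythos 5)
Your argument is essentially the paper's own proof: the paper deduces the theorem from the preceding Selmer bound via exactly the same five-term Poitou--Tate sequence (with the first bullet coming from the injectivity of localisation on the rank-one module $e_\eta H^1_{\Iw,\Gr,1}(\Qi,T)$, forced by the non-vanishing of $\mathcal{L}(\cBF^f_\psi)$), and then identifies $\Char_\Lambda\bigl(H^1_{\Iw}(\Qpi,\Gr^1 T)/\operatorname{image}\cBF^f_\psi\bigr)$ with $\Char_\Lambda\bigl(d^{-1}\Lambda / L_p^{\imp}(f,f,\psi)\bigr)$ by the explicit reciprocity law. Your additional care in removing the auxiliary $c$-factor by varying $c$ (where the relevant hypothesis is $(\varepsilon\psi)^2 \neq 1$, supplied by the $u$-hypothesis of the setup, rather than just ``$\psi$ non-quadratic'') and in tracking the congruence-ideal denominator of $\xi_{f,\psi}$ merely fills in steps the paper's terse proof leaves implicit.
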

   
   \begin{proof}
    This follows from the previous theorem via the Poitou--Tate duality exact sequence
    \begin{multline*}
     0 \rTo H^1_{\Iw, \Gr, 2}(\Qi, T) \to H^1_{\Iw, \Gr, 1}(\Qi, T) \to H^1_{\Iw}(\Qpi, \Gr^1 T) \\ \to H^1_{\Gr, 1}(\Qi, A)^\vee \to H^1_{\Gr, 2}(\Qi, A)^\vee \to 0,
    \end{multline*}
    using the fact that
    \[ \Char_{\Lambda} \left(\frac{ H^1_{\Iw}(\Qi, \Gr^1 T)}{\operatorname{image} \cBF^f_{\psi}}\right) = \Char_{\Lambda}\left( \frac{d^{-1} \Lambda}{L_p^\mathrm{imp}(f, f, \psi)}\right)\]
    by the explicit reciprocity law.
   \end{proof}
   
   \begin{remark}
    Even when our rather strong hypotheses are satisfied, this result is ``non-optimal'', in two ways. Firstly, it involves the imprimitive $L$-function, rather than the primitive one; and secondly, there is the unwanted factor of the $p$-adic Dirichlet $L$-function.
   \end{remark}
   
   \begin{corollary}  
    Let $j \in \{k, \dots, 2k-2\}$ be an integer with $(-1)^j = \psi(-1)$, so that $L(\Sym^2 f, \psi, j)$ is a non-zero critical value. If $j > k$, then assume that the $p$-adic $L$-value $L_p(\varepsilon\psi, j - k + 1)$ is non-zero. Then $\Hf(\QQ, T(-j)) = 0$ and $\Hf(\QQ, A(j))$ is finite.
   \end{corollary}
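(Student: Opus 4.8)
The plan is to read off the corollary from Theorem~\ref{mainthm} by evaluating the characteristic-ideal divisibility proved there at the single character $s=j$ and checking that the right-hand side does not vanish there.

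First I would treat the Selmer group of $A$. Since $k\le j\le 2k-2$ and $L(\Sym^2 f\otimes\psi,j)$ is critical, the corollary following Proposition~\ref{prop:descent} reduces finiteness of $\Hf(\QQ,A(j))$ to the non-vanishing of $L_p^{\mathrm{alg}}(\Sym^2 f\otimes\psi,j)$. Now $j$, read as a character of $\Gamma$, has sign $(-1)^j=\psi(-1)$ on $\Gamma_{\mathrm{tors}}$, so on the component of weight space containing $j$ Theorem~\ref{mainthm} tells us that $L_p^{\mathrm{alg}}(\Sym^2 f\otimes\psi)$ divides $d\cdot L_p^{\imp}(\Sym^2 f,\psi,s)\cdot L_{p, N_f N_\psi}(\varepsilon\psi,s-k+1)$; so it is enough to see this product is non-zero at $s=j$. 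The element $d$ is a non-zero generator of the congruence ideal. The factor $L_p^{\imp}(\Sym^2 f,\psi,j)$ is non-zero: by the interpolation property and our standing hypothesis $\varepsilon\psi(p)\ne 1$ (which keeps us out of the exceptional-zero case) it is a non-zero multiple of the complex value $L^{\imp}(\Sym^2 f,\psi,j)$, and that value is non-zero because the primitive value $L(\Sym^2 f\otimes\psi,j)$ is non-zero (Remark~\ref{rmk:nonvanishing}: Jacquet--Shalika when $j=k$, absolute convergence when $j>k$) while the finitely many Euler factors relating the two $L$-functions have all their reciprocal roots of complex absolute value $<1$ at $s=j\ge k$. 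Finally $L_{p, N_f N_\psi}(\varepsilon\psi,j-k+1)$ differs from $L_p(\varepsilon\psi,j-k+1)$ by finitely many factors $1-\ell^{k-1-j}\varepsilon\psi(\ell)$, none of which can vanish (for $j\ge k$ the integer $\ell^{j-k+1}>1$ is not a root of unity), and $L_p(\varepsilon\psi,j-k+1)$ is itself non-zero: when $j=k$ this is $L_p(\varepsilon\psi,1)\ne 0$, a known case of Leopoldt's conjecture for abelian fields ($\varepsilon\psi$ being a non-trivial, since $\varepsilon\psi(p)\ne 1$, even character), and when $j>k$ it is precisely the hypothesis imposed in the statement. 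This gives $L_p^{\mathrm{alg}}(\Sym^2 f\otimes\psi,j)\ne 0$, hence $\Hf(\QQ,A(j))$ is finite.

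For the Selmer group of $T$, I would use the first assertion of Theorem~\ref{mainthm}, namely $e_\eta H^1_{\Iw,\Gr,2}(\Qi,T)=0$. A control argument (the inflation--restriction comparison underlying the control proposition, valid because $H^0(\Qi,V(-j))=0$ --- the representation $V$ being irreducible of dimension $3$ and remaining non-trivial over the abelian field $\Qi$) then forces $H^1_{\Gr,2}(\QQ,T(-j))=0$; since $k\le j\le 2k-2$ gives $m=2$ in Proposition~\ref{prop:globalBKSel}, this group has finite index in $\Hf(\QQ,T(-j))$, so $\Hf(\QQ,T(-j))$ is finite. Finally $H^1(\QQ,T(-j))$ is torsion-free, because the hypothesis that the image of Galois contains $\SL_2(\Zp)$ makes $H^0(\QQ,(T/pT)(-j))=0$; a finite torsion-free group is trivial, so $\Hf(\QQ,T(-j))=0$. (Alternatively, $\Hf(\QQ,T(-j))=0$ can be deduced from the finiteness of $\Hf(\QQ,A(j))$ and the Bloch--Kato global Euler-characteristic formula, which in the range $k\le j\le 2k-2$ equates $\dim\Hf(\QQ,V(-j))$ with $\dim\Hf(\QQ,V^*(1+j))$.)

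The step I expect to require the most care is the non-vanishing at $s=j$ of the two analytic factors produced by Theorem~\ref{mainthm}: for the imprimitive symmetric-square factor one must move carefully between the primitive and imprimitive $L$-functions and invoke the Jacquet--Shalika non-vanishing theorem, and for the Dirichlet factor with $j>k$ the needed statement $L_p(\varepsilon\psi,j-k+1)\ne 0$ is an instance of Schneider's conjecture --- which is exactly why that hypothesis is imposed; the case $j=k$ is more satisfactory, as it reduces to the classical Leopoldt conjecture for abelian number fields.
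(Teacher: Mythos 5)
Your proposal is correct and follows essentially the same route as the paper's (two-line) proof: evaluate the divisibility of Theorem \ref{mainthm} at $s=j$, check that the analytic factors do not vanish there (non-vanishing interpolation factor because $\varepsilon\psi(p)\ne 1$, Jacquet--Shalika/absolute convergence for the complex symmetric-square value and the harmless bad Euler factors for $s\ge k$, Leopoldt for $j=k$ and the stated hypothesis for $j>k$), and then descend via the control theorem and Proposition \ref{prop:globalBKSel}. The one caveat is that your primary derivation of $\Hf(\QQ,T(-j))=0$ by inflation--restriction from $e_\eta H^1_{\Iw,\Gr,2}(\Qi,T)=0$ glosses over the fact that this group is an inverse limit rather than $H^1(\Qi,\cdot)^{\Gamma}$, so the descent is not literally an inflation--restriction statement; but your parenthetical alternative --- finiteness of $\Hf(\QQ,A(j))$ plus the global Euler-characteristic identity, combined with torsion-freeness of $H^1(\QQ,T(-j))$ coming from the big-image hypothesis --- is sound and closes this point, and is in any case no less precise than the paper's own brief appeal to ``the control theorem''.
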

   
   \begin{proof}
    Since our running assumptions certainly imply that $\varepsilon\psi$ is not trivial or quadratic, and $\varepsilon(p) \psi(p) \ne 1$, the hypotheses of the corollary imply that the product of $p$-adic $L$-functions on the right-hand side of Theorem \ref{mainthm} is non-zero at $s = j$. By the control theorem, we deduce that $H^1_{\Gr, 2}(\QQ, T) = 0$ and $H^1_{\Gr, 1}(\QQ, A(j))$ is finite; and these Greenberg Selmer groups coincide with the Bloch--Kato Selmer groups by Proposition \ref{prop:globalBKSel}.
   \end{proof}
     
%%%%%%%%%%%%%%%%%%%%%%%%%%%%%%%%%%%%%%%%%%%    
   
 \section{A numerical example}
  \label{sect:example}
  
  To illustrate the results above, we give an example, computed using Dokchitser's \textsc{ComputeL} software for computing $L$-functions \cite{dokchitser04}. (We are grateful for a post on a web forum by Martin Raum, illustrating how to use Dokchitser's software for computing symmetric square $L$-functions.)
  
  \subsection{Computing the L-value}
   
   Let $F$ be the normalised eigenform of level 1 and weight 16. Note that the image of the mod $p$ Galois representation of $f$ contains a conjugate of $\SL_2(\Zp)$ for all primes $p$ except $\{2, 3, 5, 7, 11, 31, 59, 3617\}$ \cite{swinnertondyer72}, and the congruence ideal of $F$ is generated by $3617$. Since $F$ has level 1, we have $L(\Sym^2 f \otimes \psi, s) = L^{\mathrm{imp}}(\Sym^2 F, \psi, s)$ for any Dirichlet character $\psi$.
  
   Using Tim Dokchitser's $L$-function calculator and the relation
   \[ L(\Sym^2 f, k) = \frac{2^{2k-1} \pi^{k+1}}{(k-1)!} \langle f, f \rangle, \]
   (which is valid for any cuspidal level 1 eigenform of weight $k \in 2\ZZ$), one computes that
   \[ \langle F, F \rangle \cong 0.00000216906134759\dots \]
   
   We take for the character $\psi$ one of the two order 3 characters of $(\ZZ / 7 \ZZ)^\times$; we choose $\psi$ such that $\psi(3) = e^{2\pi i / 3}$. Since $\psi(-1) = 1$, the $L$-values $L(\Sym^2 F \otimes \psi, s)$ are critical for even integers $s$ in the range $16 \le s \le 30$. Using Dokchitser's software we computed the ratio
   \[ \tilde L(\Sym^2 F \otimes \psi, s) \coloneqq \frac{(s - 1)! (s - 16)! G(\psi^{-1})^2}{2^{2s + 1} \pi^{2s-15} \langle F, F \rangle} L(\Sym^2 F \otimes \psi, s) \]
   for $s$ in this range; Sturm's rationality theorem implies that these ratios should lie in $\QQ(\sqrt{-3})$. 
   
   We found that for $s = 22$ this ratio is $7.8323\dots + i \cdot 10.2324\dots$, and it agrees to over 250 decimal places\footnote{This computation required the calculation of the first 15000 $q$-expansion coefficients of $F$, and took around three minutes of CPU time.} with
   \[ \frac{136547867422656337144320 + 102994007489228654461440\sqrt{-3}}{17433892055631543710491}.\]
   
   \begin{remark}
    In principle this computation can be done exactly, by following the same steps used to prove the rationality of the $L$-values; but we have not attempted to do this.
   \end{remark}
   
   Let us assume that this approximate computation is in fact correct. The denominator of $\tilde L(\Sym^2 F \otimes \psi, 22)$ is a unit outside $\{ 7, 13 \}$ (note that 13 is a non-ordinary prime for $f$). The numerator is divisible only by primes of $\QQ(\sqrt{-3})$ above the following rational primes:
   \[ \{2, 3, 5, 43, 67, 103, 141264461964750634089522953623\}. \]
   
  \subsection{The case \texorpdfstring{$p = 37$}{p = 37}}
  
   If we take $p = 37$, for instance, and choose either of the two embeddings of $\QQ(\sqrt{-3})$ into $\Qp$, then $\tilde L(\Sym^2 F \otimes \psi, 22)$ is a $p$-adic unit. Moreover, $F$ is ordinary at $p$ and we have 
   \[ \alpha_p = 11 + 7 \times 37 + 25 \times 37^2 + \dots.\]
   The factor $\mathcal{E}'_p(s, \mathrm{id})$ appearing in Theorem \ref{thm:padicL} is evidently a unit for $s = 22$ (indeed it is congruent to 1 modulo $p^6$), so we deduce that $L_p(\Sym^2 F \otimes \psi, 22)$ is a $p$-adic unit. There are no exceptional-zero phenomena, since $\psi(p) \ne 1$. Finally, the $p$-adic zeta value $L_p(\psi, 7)$ will be congruent modulo $p$ to 
   \[ L_p(\psi, 7 - (p-1)) = L_p(\psi, -29) = (1 - p^{29} \psi(p))\left(\frac{-B_{30, \psi}}{30}\right) = 12 + 36 \times 37 + 23 \times 37^2 + \dots.\]
   which is in $\Zp^\times$. From Theorem \ref{mainthm}, we can now deduce that 
   \[ H^1_{\Gr, 1}(\QQ^{\mathrm{cyc}}, \Sym^2 M_{\Zp}(F)(\psi)(22) \otimes \Qp/\Zp) \]
   is pseudo-null, where $\QQ^{\mathrm{cyc}}$ is the cyclotomic $\ZZ_{37}$-extension of $\QQ$; that is, the $22$-nd isotypical component of the algebraic $p$-adic $L$-function is a unit.
   
   The strong form of the control theorem given in Proposition \ref{prop:descent}, together with Proposition \ref{prop:globalBKSel}, now shows that
   \[ \Hf(\QQ, \Sym^2 M_{\Zp}(F)(\psi)(22) \otimes \Qp/\Zp) = 0.\]
 
  \subsection{The case \texorpdfstring{$p = 67$}{p = 67}}
  
   The case $p = 67$ is more interesting, since if we embed $\QQ(\sqrt{-3})$ in $\Qp$ by using the prime $8 + \sqrt{-3}$, then $\tilde L(\Sym^2 F \otimes \psi, 22)$ is not a $p$-adic unit: it has valuation 1 at $p$. Arguing as before (and using the fact that $B_{60, \psi}$ is a 67-adic unit) we deduce that $\Hf(\QQ, \Sym^2 M_{\Zp}(F)(\psi)(22) \otimes \Qp/\Zp)$ is either trivial, or cyclic of order $p$. The main conjecture predicts that this group has order exactly $p$, but we cannot prove this by our methods.
   
  \subsection{The case \texorpdfstring{$p = 439$}{p = 439}}
  
   For $p = 439$ an interesting phenomenon occurs. In this case, if we take the embedding $\QQ(\sqrt{-3}) \into \Qp$ corresponding to the ideal $\frP = 14 + 9 \sqrt{-3}$, then $L_p(\Sym^2 F \otimes \psi, 22) \in \Zp^\times$, but the parasitic Dirichlet $L$-value $L_p(\psi, 7)$ appearing in Theorem \ref{mainthm} is not a unit (it is $148 \times 439 + 232 \times 439^2 + \dots$). Thus the main conjecture predicts that $\Hf(\QQ, \Sym^2 M_{\Zp}(F)(\psi)(22) \otimes \Qp/\Zp) = 0$ should be zero, but our methods can only prove that its order is at most $p$.
  
\appendix

 \section{Kolyvagin systems for direct sums}
  \label{appendix}
 
  In this section we prove a simple modification of a crucial lemma from \cite{mazurrubin04}, in order to permit us to work with Euler systems for reducible Galois representations (and their associated Kolyvagin systems). Our aim is to show a precise form of the following statement:
  
  \begin{quotation}
   ``A Kolyvagin system for a direct sum $T_1 \oplus T_2$ that happens to take values in $T_1$ is almost as good as a Kolyvagin system for $T_1$''.
  \end{quotation}
  
  \subsection{Setup}
  
   In this section we will use the following notations:
   \begin{itemize}
    \item $p$ is a prime, 
    \item $R$ is a complete Noetherian local ring, with finite residue field $\bk$ of characteristic $p$,
    \item $\frm$ is the maximal ideal of $R$.
   \end{itemize}
   
   We are interested in Kolyvagin systems for modules of the form $T_1 \oplus T_2$, where $T_i$ are $R[G_{\QQ}]$-modules, both free of finite rank over $R$. We let $\Sigma$ be a finite set of places of $\QQ$ containing $p$, $\infty$, and all the primes at which either $T_1$ or $T_2$ is ramified; and we choose a \emph{Selmer structure}\footnote{In the sense of \cite{mazurrubin04}, i.e.~a simple Selmer structure in the sense of \cite{KLZ1b}.} $\cF$ for $T_1$ with $\Sigma(\cF) = \Sigma$. 
   
   For $t \ge 1$, we define the following set of primes (cf.~\cite[Definition 3.1.6]{mazurrubin04}): $\cP_t^\sharp$ is the set of all primes $\ell \notin \Sigma$ such that
   \begin{itemize}
    \item $\ell = 1 \bmod \mathfrak{m}^k \cap \ZZ$,
    \item $T_1 / (\mathfrak{m}^k T_1 + (\Frob_\ell - 1) T_1)$ is free of rank 1 over $R / \mathfrak{m}^k$,
    \item $\Frob_\ell - 1$ acts bijectively on $T_2$.
   \end{itemize}
   It is clear that $\cP_1^\sharp \supseteq \cP_2^\sharp \supseteq \cP_3^\sharp$ and so on; and if $\frm^k = 0$, then $\cP_t^\sharp = \cP_k^\sharp$ for $t \ge k$. Finally, we let $\cP$ be a set of primes disjoint from $\Sigma$.
   
   If $T$ is any $\Zp[G_\QQ]$-module, we shall write $T^\vee = \Hom(T, \mu_{p^\infty})$.
   
  \subsection{Hypotheses}
  
   We shall impose a set of hypotheses (H.$0^\sharp$)--(H.$6^\sharp$) on the collection $(T_1, T_2, \cF, \cP)$. These are slight adaptations of the hypotheses (H.0)--(H.6) of \cite[\S 3.5]{mazurrubin04}.
   
   \begin{itemize}
    \item (H.$0^\sharp$) The $T_i$ are free $R$-modules.
    \item (H.$1^\sharp$) $T_1 / \mathfrak{m} T_1$ is an absolutely irreducible $\bk$-representation.
    \item (H.$2^\sharp$) There is a $\tau \in G_{\QQ}$ such that $\tau = 1$ on $\mu_{p^\infty}$, $T_1 / (\tau - 1) T_1$ is free of rank 1 over $R$, and $\tau - 1$ is bijective on $T_2$.
    \item (H.$3^\sharp$) If $\Omega = \QQ(T_1, T_2, \mu_{p^\infty})$ is the smallest extension of $\QQ$ acting trivially on $T_1$, $T_2$, and $\mu_{p^\infty}$, then $H^1(\Omega / \QQ, T_1 / \mathfrak{m} T_1) = H^1(\Omega / \QQ, T_1^\vee[\mathfrak{m}]) = 0$.
    \item (H.$4^\sharp$) Either:
    \begin{itemize}
     \item (H.4a$^\sharp$) $\Hom_{\FF_p[G_\QQ]}(T_1/\mathfrak{m} T_1, T_1^\vee[\mathfrak{m}]) = 0$, or
     \item (H.4b$^\sharp$) $p \ge 5$.
    \end{itemize}
    \item (H.$5^\sharp$) We have $\cP_1^\sharp \supseteq \cP \supseteq \cP_t^\sharp$ for some $t$.
    \item (H.$6^\sharp$) For every $\ell \in \Sigma$, the local condition $\cF$ at $\ell$ is Cartesian on the category of quotients of $R$ in the sense of \cite[Definition 1.1.4]{mazurrubin04}.
   \end{itemize}
   
   Note that hypotheses (H.$1^\sharp$), (H.$4^\sharp$), (H.$5^\sharp$) and (H.$6^\sharp$) are identical to Mazur and Rubin's hypotheses (H.i) for $T = T_1$; it is only (H.$2^\sharp$) and (H.$3^\sharp$) which are different. Note, also, that if $T_2 = \{0\}$ then all our hypotheses are identical to their non-sharpened versions.
  
  \subsection{Choosing useful primes}
   
   In this section, we suppose that (H.$0^\sharp$)--(H.$5^\sharp$) are satisfied, and that the coefficient ring $R$ is Artinian and principal.
   
   \begin{proposition}(cf.~\cite[Proposition 3.6.1]{mazurrubin04})
    \label{prop:goodprimes}
    Let $c_1, c_2 \in H^1(\QQ, T_1)$ and $c_3, c_4 \in H^1(\QQ, T_1^\vee)$ be non-zero elements. For every $k \ge 1$ there is a positive-density set of primes $S \subseteq \cP_k^\sharp$ such that for all $\ell \in S$, the localisations $(c_j)_\ell$ are all non-zero.
   \end{proposition}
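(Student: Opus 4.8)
The plan is to adapt the proof of \cite[Proposition 3.6.1]{mazurrubin04} to the present setting; the only genuinely new point is the clause ``$\Frob_\ell - 1$ acts bijectively on $T_2$'' in the definition of $\cP_k^\sharp$, and this will cost nothing. I will only produce primes $\ell$ whose Frobenius, in the \emph{finite} extension $\Omega_k = \QQ(T_1, T_2, \mu_{p^k})$ of $\QQ$, lies in the conjugacy class of the image of the element $\tau$ furnished by (H.$2^\sharp$). For such an $\ell$: the congruence $\ell \equiv 1 \bmod \frm^k \cap \ZZ$ holds since $\tau$ is trivial on $\mu_{p^\infty}$; one has $T_1/(\frm^k T_1 + (\Frob_\ell - 1)T_1) \cong T_1/(\frm^k T_1 + (\tau-1)T_1) \cong R/\frm^k$ since $\Frob_\ell$ acts as $\tau$ on $T_1/\frm^k T_1$; and $\Frob_\ell - 1$ is bijective on $T_2$ since it acts there as $\tau - 1$, hence is surjective, hence (the module being finite) bijective. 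Thus every such $\ell$ lies in $\cP_k^\sharp$, and the problem is reduced to arranging that $(c_1)_\ell, \dots, (c_4)_\ell$ are all non-zero.

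For this I would follow \cite{mazurrubin04} closely. Since $R$ is Artinian and principal and $T_1/\frm T_1$ is irreducible by (H.$1^\sharp$), one reduces to the case where $c_1, c_2 \in H^1(\QQ, T_1/\frm T_1)$ and $c_3, c_4 \in H^1(\QQ, T_1^\vee[\frm])$ are non-zero (replace each class by a suitable scalar multiple; a localisation of the original is non-zero whenever that of the multiple is). Since $G_\Omega$ acts trivially on both $T_1/\frm T_1$ and $T_1^\vee[\frm] = \Hom(T_1/\frm T_1, \mu_p)$ — as $\Omega \supseteq \QQ(T_1, \mu_{p^\infty})$ — inflation--restriction together with the vanishing of $H^1(\Omega/\QQ, T_1/\frm T_1)$ and $H^1(\Omega/\QQ, T_1^\vee[\frm])$ from (H.$3^\sharp$) shows that the restriction map
\[ H^1(\QQ, T_1/\frm T_1) \hookrightarrow \Hom_{\Gal(\Omega/\QQ)}(G_\Omega, T_1/\frm T_1) \]
and its analogue for $T_1^\vee[\frm]$ are injective. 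Hence the $c_j$ produce non-zero $\Gal(\Omega/\QQ)$-equivariant homomorphisms $\phi_1, \phi_2 \colon G_\Omega \to T_1/\frm T_1$ and $\phi_3, \phi_4 \colon G_\Omega \to T_1^\vee[\frm]$. Let $\Omega'$ be the fixed field of $\bigcap_j \ker \phi_j$, a finite extension of $\Omega$, and $W = \Gal(\Omega'/\Omega)$, a $\Gal(\Omega/\QQ)$-submodule of $(T_1/\frm T_1)^2 \oplus (T_1^\vee[\frm])^2$ each of whose four coordinate projections is non-zero, hence --- by (H.$1^\sharp$) and irreducibility of the dual --- surjective.

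Then I would run the Chebotarev argument. Fix a preimage $\tilde\tau \in \Gal(\Omega'/\QQ)$ of $\bar\tau = \tau|_\Omega$, so $\tilde\tau W$ is the set of $g \in \Gal(\Omega'/\QQ)$ with $g|_\Omega = \bar\tau$. As in \cite{mazurrubin04}, for each $j$ the elements $g \in \tilde\tau W$ forcing $(c_j)_\ell \ne 0$ are exactly those lying outside a certain coset $B_j$ of a subgroup of index at least $\#(T_1/\frm T_1) \ge p \ge 5$ (here one uses (H.$4^\sharp$) and the surjectivity of the $j$-th projection of $W$); four such cosets cannot cover $\tilde\tau W$, so there is a common good $g_0 \in \tilde\tau W$. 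One then checks, exactly as in the proof of \cite[\S 3.6]{mazurrubin04}, that any prime $\ell \notin \Sigma$ whose Frobenius, in a suitable finite extension $\Omega''/\QQ$ recording both the $\Omega_k$-behaviour and the ramification data governing the singular parts of the $(c_j)_\ell$, lies in the conjugacy class of $g_0$ satisfies $\ell \in \cP_k^\sharp$ (by the first paragraph) and $(c_j)_\ell \ne 0$ for all $j$. The Chebotarev density theorem then yields a positive-density set $S$ of such primes.

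The hard part will be the step of the third paragraph: one must check that enlarging Mazur and Rubin's field $\QQ(T_1, \mu_{p^\infty})$ to $\Omega = \QQ(T_1, T_2, \mu_{p^\infty})$ still leaves enough room to impose all four non-vanishing conditions at once, and that the conjugacy class one selects is non-empty and compatible with the $T_2$-constraint. This is precisely what the sharpened hypotheses provide: (H.$2^\sharp$) supplies a $\tau$ which is simultaneously ``generic'' on $T_1$ and has $\tau - 1$ invertible on $T_2$, so the $\bar\tau$-coset in $\Gal(\Omega'/\QQ)$ is non-empty and every prime it yields automatically satisfies the $T_2$-clause of $\cP_k^\sharp$; and (H.$3^\sharp$), the vanishing of the two $H^1(\Omega/\QQ, -)$ groups over the larger $\Omega$, keeps the $\phi_j$ non-zero, so $W$ still has non-zero coordinate projections and the counting argument goes through unchanged. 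When $T_2 = 0$ these specialise to Mazur--Rubin's (H.2) and (H.3), so nothing has been lost.
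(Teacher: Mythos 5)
Your proposal is correct and follows essentially the same route as the paper: the paper's proof is precisely to rerun Mazur--Rubin's Proposition 3.6.1 with their field enlarged to $\QQ(T_1, T_2, \mu_{p^k})$ and to observe that any prime whose Frobenius there is conjugate to the $\tau$ of (H.$2^\sharp$) automatically satisfies the $T_2$-bijectivity clause of $\cP_k^\sharp$, which is exactly your first paragraph, with the remaining paragraphs spelling out the Mazur--Rubin argument that the paper cites. (Only a cosmetic slip: the index of the ``bad'' subgroup is $\#\bk$ rather than $\#(T_1/\frm T_1)$, but since $\#\bk \ge p \ge 5$ the coset-counting conclusion is unaffected.)
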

   
   \begin{proof}
    We may assume without loss of generality that $\frm^k = 0$. We simply imitate the arguments of \emph{op.cit.} for $T = T_1$, but with the field $F$ replaced by the slightly larger field
    \[ 
     F^\sharp = \QQ\Big(T_1, T_2, \mu_{p^k}\Big).
    \]
    If $\tau \in G_{\QQ}$ is as in (H.$2^\sharp$), then any prime $\ell$ whose Frobenius in $\Gal(F^\sharp / \QQ)$ is conjugate to $\tau$ acts invertibly on $T_2$, so the set of primes $S$ constructed in \emph{op.cit.} is automatically contained in $\cP_k^\sharp$.
   \end{proof}
   
   \begin{remark}
    In fact a slight refinement of this statement is true: if $R = \tilde R / I$ for some larger ring $\tilde R$, and $T_i = \tilde T_i \otimes R$, then we may arrange that $S$ is contained in the set $\tilde \cP_k^\sharp$ defined with the $\tilde T_i$ in place of the $T_i$. Note that $\tilde \cP_k^\sharp$ will in general be smaller than $\cP_k^\sharp$, even in the case $T_2 = \{0\}$. This minor detail appears to have been overlooked in \cite{mazurrubin04}, and this slight strengthening of Proposition 3.6.1 is actually needed for some of the proofs in \emph{op.cit.}, in particular for those dealing with the module $\overline{\mathrm{KS}}(T)$ of generalised Kolyvagin systems.
   \end{remark}

  \subsection{Bounding the Selmer group}
  
   With Proposition \ref{prop:goodprimes} in place of Mazur and Rubin's Proposition 3.6.1, we obtain generalisations of all of the theorems of \cite{mazurrubin04} to this setting. For example, we have the following generalisation of Theorem 5.2.2 (and Remark 5.2.3) of \emph{op.cit.}:
   
   \begin{corollary}
    \label{prop:selbound}
    Suppose $R$ is the ring of integers of a finite extension of $\Qp$, and $(T_1, T_2, \cF, \cP)$ satisfy (H.$0^\sharp$)--(H.$6^\sharp$). Let
    \[ 
     \pmb\kappa \in \overline{\mathrm{KS}}(T, \cF, \cP) 
     \coloneqq \varprojlim_k \left(\varinjlim_j \mathrm{KS}(T / \mathfrak{m}^k T, \cF, \cP \cap \cP_j^\sharp)\right).
    \]
    Then
    \[ 
     \operatorname{length}_R\left(H^1_{\cF^\vee}(\QQ, T^\vee)\right) \le \max \left\{ j : \kappa_1 \in \mathfrak{m}^j H^1(\QQ, T)\right\}.
    \]
   \end{corollary}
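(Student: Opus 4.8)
The plan is to deduce the corollary from \cite[Theorem 5.2.2, Remark 5.2.3]{mazurrubin04} by checking that the proof of that theorem survives the passage from Mazur and Rubin's hypotheses (H.0)--(H.6) to the sharpened versions (H.$0^\sharp$)--(H.$6^\sharp$), and from their prime sets $\cP_t$ to our $\cP_t^\sharp$. First I would record that (H.$0^\sharp$), (H.$1^\sharp$), (H.$4^\sharp$) and (H.$6^\sharp$) are word-for-word the hypotheses of \emph{op.\,cit.} applied to $T = T_1$, and that (H.$2^\sharp$), (H.$3^\sharp$) are genuine strengthenings: the element $\tau$ furnished by (H.$2^\sharp$) has all the properties demanded of the element ``$\tau$'' in (H.2), and since $\Omega \supseteq \QQ(T_1,\mu_{p^\infty})$ an inflation--restriction argument shows that (H.$3^\sharp$) implies (H.3) for $T_1$. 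Hence every formal consequence of (H.2) and (H.3) drawn in \cite{mazurrubin04} continues to hold here.

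The heart of the matter is to isolate \emph{where} the hypotheses (H.2), (H.3) are actually used in \cite{mazurrubin04}. I would argue, by inspection of their text, that the only essential use is in the proof of \cite[Proposition 3.6.1]{mazurrubin04}, the ``useful primes'' lemma; every other appeal to (H.2) uses only the bare properties of $\tau$ listed above. We have already replaced Proposition 3.6.1 by Proposition \ref{prop:goodprimes}, whose proof runs the same Chebotarev-and-cohomology argument but over the enlarged field $\QQ(T_1, T_2, \mu_{p^k})$, and therefore produces primes in the \emph{smaller} set $\cP_k^\sharp$. Granting this, the core Selmer module theory and descent argument of \cite[\S\S 4--5]{mazurrubin04} --- which refer only to the Selmer structure $\cF$ on $T_1$ and never to $T_2$ --- apply without change.

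Next I would treat the set $\cP$. In the induction underlying \cite[Theorem 5.2.2]{mazurrubin04} the useful prime chosen at stage $k$ comes from Proposition \ref{prop:goodprimes} and so lies in $\cP_k^\sharp$; since the $\cP_j^\sharp$ form a decreasing chain, (H.$5^\sharp$) (that is, $\cP_1^\sharp \supseteq \cP \supseteq \cP_t^\sharp$) ensures both that every prime of $\cP$ has the cyclicity property needed to define the transverse local condition at it, and that $\cP_k^\sharp \subseteq \cP$ once $k \ge t$ --- so, enlarging $k$ if necessary, the construction stays inside $\cP$. For the statement about $\overline{\mathrm{KS}}$ rather than honest Kolyvagin systems I would invoke the refinement of Proposition \ref{prop:goodprimes} noted in the remark following it (landing in $\tilde\cP_k^\sharp$ when $R = \tilde R / I$), precisely as \cite[Remark 5.2.3]{mazurrubin04} uses the analogous refinement of their Proposition 3.6.1.

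The main obstacle is the bookkeeping claim of the second paragraph: one must check carefully that no argument of \cite{mazurrubin04} --- in particular in the ``useful pairs'' material of \cite[\S 3.7]{mazurrubin04} or in the core-rank computations --- secretly uses a property of $\tau$, or of the field $\QQ(T_1,\mu_{p^\infty})$, beyond what we have reproved or strengthened. Once that verification is complete, the corollary follows by transcribing the proof of \cite[Theorem 5.2.2]{mazurrubin04} verbatim with Proposition \ref{prop:goodprimes} substituted for their Proposition 3.6.1.
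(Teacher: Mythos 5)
Your proposal is correct and follows essentially the same route as the paper: the paper's (implicit) proof is precisely that, with Proposition \ref{prop:goodprimes} substituted for Mazur--Rubin's Proposition 3.6.1 (and its refinement over $\tilde\cP_k^\sharp$ handling the generalised Kolyvagin systems $\overline{\mathrm{KS}}$), the arguments of \cite[Theorem 5.2.2 and Remark 5.2.3]{mazurrubin04} carry over verbatim, since the sharpened hypotheses imply the original ones for $T_1$. Your additional bookkeeping about where (H.2)--(H.3) are used and about (H.$5^\sharp$) is exactly the verification the paper leaves implicit.
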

   
   The same also applies to Theorem 5.3.10 of \emph{op.cit.}~(bounding a Selmer group over the cyclotomic $\Zp$-extension of $\QQ$) and to Theorem 12.3.5 of \cite{KLZ1b} (allowing a ``Greenberg-style'' local condition at $p$).

\providecommand{\bysame}{\leavevmode\hbox to3em{\hrulefill}\thinspace}
\providecommand{\MR}[1]{\relax}
\renewcommand{\MR}[1]{%
 MR \href{http://www.ams.org/mathscinet-getitem?mr=#1}{#1}.
}
\providecommand{\href}[2]{#2}
\newcommand{\articlehref}[2]{\href{#1}{#2}}

\end{document}